\newtheorem{theorem}{Theorem}[section]
\newtheorem{definition}[theorem]{Definition}
\newtheorem{lemma}[theorem]{Lemma}
\newtheorem{prop}[theorem]{Proposition}
\newtheorem{defprop}[theorem]{Definition and Proposition}
\newtheorem{example}[theorem]{Example}
\newtheorem{remark}[theorem]{Remark}
\newtheorem*{intro-remark}{Remark}
\newcommand{\abs}[1]{\lvert#1\rvert}
\newcommand{\norm}[1]{\| #1 \|}
\renewcommand{\epsilon}{\varepsilon}
\newcommand{\pertdata}{{\{\iota_k,\chi_k\}}}
\DeclareMathAlphabet{\mathpzc}{OT1}{pzc}{m}{it}
\newcommand{\bA}{{\bf A}}
\newcommand{\N}{\mathbb{N}}
\newcommand{\Z}{\mathbb{Z}}
\newcommand{\C}{\mathbb{C}}
\newcommand{\Q}{\mathbb{Q}}
\newcommand{\R}{\mathbb{R}}
\renewcommand{\qed}{$\hfill \square$ \bigskip \\}
\newcommand{\vphi}{\varphi}
\newcommand{\tensor}{\otimes}
\newcommand{\Hom}{\text{Hom}}
\renewcommand{\det}{\text{det}}
\newcommand{\id}{\text{id}}
\newcommand{\Hol}{\text{Hol}}
\renewcommand{\k}{{\bf k}}
\newcommand{\A}{\mathscr{A}}
\newcommand{\su}{\mathfrak{su}}
\renewcommand{\bar}{\overline}
\newcommand{\G}{\mathscr{G}}
\newcommand{\tr}{\text{tr}}
\begin{document}
\thispagestyle{empty}
\title[Irreducible representations of $\Z$-homology 3-spheres]{Integer homology 3-spheres admit irreducible representations in $\text{\em SL}(2,\C)$}
\author[Raphael Zentner]{Raphael Zentner} 
\date{May 26, 2016, first revision July 2016, second revision October 2017, third minor revision January 2018}
\address {Fakult\"at f\"ur Mathematik \\
Universit\"at Regensburg\\
Germany}
\email{raphael.zentner@mathematik.uni-regensburg.de}

\maketitle

\begin{abstract}
We prove that  the fundamental group of any integer homology 3-sphere different from the 3-sphere admits irreducible representations of its fundamental group in $\text{\em SL}(2,\C)$. For hyperbolic integer homology spheres this comes with the definition, and for Seifert fibered integer homology spheres this is well known. We prove that the splicing of any two non-trivial knots in $S^3$ admits an irreducible $SU(2)$-representation. By work of Boileau, Rubinstein, and Wang, the general case follows. 

Using a result of Kuperberg, we get the corollary that the problem of 3-sphere recognition is in the complexity class $\mathsf{coNP}$, provided the generalised Riemann hypothesis holds.

To prove our result, we establish a topological fact about the image of the $SU(2)$-representation variety of a non-trivial knot complement into the representation variety of its boundary torus, a pillowcase. For this, we use holonomy perturbations of the Chern-Simons function in an exhaustive way -- we show that any area-preserving self-map of the pillowcase fixing the four singular points, and which is isotopic to the identity, can be $C^0$-approximated by maps which are realised geometrically through holonomy perturbations of the flatness equation in a thickened torus. To conclude, we use a stretching argument in instanton gauge theory, and a non-vanishing result of Kronheimer and Mrowka for Donaldson's invariants of a 4-manifold which contains the $0$-surgery of a knot as a splitting hypersurface.

%

\end{abstract}

\section*{Introduction}
A lot of what is known about 3-manifolds comes from the knowledge of their fundamental groups and the representations of these groups in fairly simple Lie groups like $SO(3)$, $SU(2)$ and $\text{\em SL}(2,\C)$. Of particular interest are integer homology spheres since their fundamental groups have trivial abelianisation.  
For instance, the $SU(2)$-representation varieties of Seifert-fibred homology spheres have been described quite explicitly by Fintushel and Stern in terms of linkages \cite{Fintushel-Stern}, using the explicit descriptions of the fundamental groups of such 3-manifolds. Hyperbolic integer homology 3-spheres come with irreducible representations of their fundamental group in $\text{\em SL}(2,\C)$, because the projectivisation $\text{\em PSL}(2,\C) = \text{\em SL}(2,\C) / \{ \pm \id \}$ is the orientation-preserving isometry group of hyperbolic 3-space. 

In general, it is quite difficult to describe these representation varieties. A prominent result in this direction, known as the {\em Property P} for knots in $S^3$, was proved by Kronheimer and Mrowka. They have established \cite{KM_P,KM_Dehn} that for a fraction $p/q$ such that $\abs{p/q} \leq 2$ the $p/q$-surgery on a non-trivial knot in $S^3$ always admits irreducible representations of its fundamental group in $SU(2)$. Their proof is indirect, using highly non-trivial instanton gauge theory. More recently, Baldwin and Sivek have related the existence of irreducible $SU(2)$-representations to Stein fillings, see \cite{Baldwin-Sivek, Baldwin-Sivek_contact}. 
\\

One of the main results that we establish is the following

{
\renewcommand{\thetheorem}{\ref{main theorem 3}}
\begin{theorem}\label{main result 3 intro}
	Let $Y$ be an integer homology 3-sphere different from the 3-sphere. Then there is an irreducible representation $\rho\colon \pi_1(Y) \to \text{SL}(2,\C)$. 
\end{theorem}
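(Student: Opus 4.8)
The plan is to reduce --- via geometrisation and the work of Boileau, Rubinstein and Wang --- to three model cases, the only difficult one being the splice of two nontrivial knots, which is then handled with gauge theory. If $Y$ is hyperbolic, the holonomy of the complete hyperbolic metric is a discrete faithful representation $\pi_1(Y)\to\mathrm{PSL}(2,\mathbb{C})$; since $Y$ is an integer homology sphere it is aspherical with $H^2(\pi_1(Y);\mathbb{Z}/2)=H^2(Y;\mathbb{Z}/2)=0$, so this representation lifts to $\mathrm{SL}(2,\mathbb{C})$, and any lift is again irreducible. If $Y$ is Seifert fibred it is a Brieskorn homology sphere $\Sigma(a_1,\dots,a_n)$ with $n\ge 3$, and the Fintushel--Stern linkage description of its $SU(2)$-representation variety exhibits irreducible $SU(2)$-representations whenever $Y\ne S^3$. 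By geometrisation, the remaining integer homology spheres have a nontrivial JSJ decomposition, and by Boileau--Rubinstein--Wang the fundamental group of such a $Y$ surjects onto that of the splice $Y'$ of two nontrivial knots in $S^3$ (indeed there is a degree-one map $Y\to Y'$); pulling back, it then suffices to produce an irreducible $SU(2)$-representation of $\pi_1(Y')$, which is the heart of the matter.

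So let $Y=X_1\cup_T X_2$ be the splice of nontrivial knots $K_1,K_2$, where $X_i=S^3\setminus\nu(K_i)$ and the boundary tori are identified so that the meridian of one side is glued to the longitude of the other. By van Kampen, $\pi_1(Y)=\pi_1(X_1)\ast_{\mathbb{Z}^2}\pi_1(X_2)$, so an $SU(2)$-representation of $\pi_1(Y)$ is exactly a pair of $SU(2)$-representations $\rho_i$ of $\pi_1(X_i)$ that agree on $T$; since the meridian normally generates a knot group, one $\rho_i$ is trivial if and only if the other is, and since $Y$ is an integer homology sphere any nontrivial such representation is automatically irreducible in $SU(2)$, hence in $\mathrm{SL}(2,\mathbb{C})$. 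Restriction to $T$ gives maps $r_i\colon R(X_i)\to R(T)$ to the pillowcase $R(T)$ --- a $2$-sphere with four orbifold points coming from the central representations of $\pi_1(T)$ --- under which a nontrivial (equivalently, irreducible) representation of $\pi_1(Y)$ corresponds to an intersection point of $r_1(R(X_1))$ with $\sigma\bigl(r_2(R(X_2))\bigr)$ other than the trivial representation, where $\sigma$ is the coordinate swap of the pillowcase induced by the splice gluing. The obstacle is that each $r_i(R(X_i))$ contains the abelian arc but may otherwise be only a tiny arc emanating from it, so that a priori there is no reason for $r_1(R(X_1))$ and $\sigma\bigl(r_2(R(X_2))\bigr)$ to meet away from the trivial point.

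To get around this one enlarges the images geometrically, by holonomy perturbations of the flatness equation (equivalently, of the Chern--Simons functional) in a thickened torus inserted at the splicing surface. The decisive technical input --- which I expect to be the main obstacle, and which is the ``exhaustive'' use of holonomy perturbations advertised in the abstract --- is that every area-preserving self-diffeomorphism of the pillowcase which fixes the four orbifold points and is isotopic to the identity can be $C^0$-approximated by self-maps realised geometrically through holonomy perturbations of the flatness equation in $T\times[0,1]$. The other essential ingredient is a lower bound on the size of $r_i(R(X_i))$ coming from instanton gauge theory: taking Kronheimer and Mrowka's non-vanishing theorem for Donaldson's invariant of a closed $4$-manifold that contains the $0$-surgery $Y_0(K_i)$ as a splitting hypersurface, and running a neck-stretching argument which forces the relevant anti-self-dual connections to converge to (perturbed) flat connections on the pieces, one concludes that after a suitable holonomy perturbation the image attached to each $K_i$ contains a path which, by the topology of the pillowcase sphere and the swap $\sigma$, is forced to meet the one attached to the other knot at an interior point. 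This interior intersection is the sought representation: unwinding the correspondence above --- with the instanton-theoretic non-vanishing again doing the work of promoting the resulting configuration to a genuine flat connection on $Y$ --- produces an irreducible $\rho\colon\pi_1(Y)\to SU(2)\subset\mathrm{SL}(2,\mathbb{C})$, as required.
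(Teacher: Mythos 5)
Your proposal follows essentially the same route as the paper: reduction via Boileau--Rubinstein--Wang degree-one maps to the hyperbolic, Seifert-fibred, and splice cases, with the splice case handled by enlarging the pillowcase images of the knot-complement representation varieties via holonomy perturbations in a thickened torus and invoking Kronheimer--Mrowka's non-vanishing theorem for Donaldson invariants through a neck-stretching argument. The only cosmetic divergence is in the Seifert-fibred case, where you invoke the Fintushel--Stern linkage description directly while the paper gives a short independent argument via Montesinos knots and the sutured-instanton non-vanishing theorem; both are valid.
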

\addtocounter{section}{-1}
}
%
\smallskip
Hence this is a characterisation of the $3$-sphere among integer homology 3-spheres. 
 
Problem 3.105 in Kirby's problem list asks whether any integer homology 3-sphere different from the 3-sphere admits an irreducible representation in $SU(2)$. Theorem \ref{main result 3 intro} answers this question for the group $\text{\em SL}(2,\C)$. 
\\

Using results of Boileau, Rubinstein and Wang on the theory of dominations of 3-manifolds by degree-1 maps, see Theorem \ref{degree one map} below, Theorem \ref{main result 3 intro} follows from the following theorem that we establish (stated here in a simplified way).

{
\renewcommand{\thetheorem}{\ref{irreducible representations splicing}}
\begin{theorem}\label{splicing intro}
Let $K$ and $K'$ be two non-trivial knots in $S^3$. Then there is an irreducible representation $\rho\colon \pi_1(Y_{K,K'}) \to SU(2)$ of the fundamental group of the splicing $Y_{K,K'}$ of $K$ and $K'$. 
\end{theorem}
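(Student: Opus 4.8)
The plan is to recast the existence of an irreducible $SU(2)$-representation of $\pi_1(Y_{K,K'})$ as a non-emptiness statement about an intersection of two curves in the ``pillowcase'' $P := \Hom(\Z^2,SU(2))/\mathrm{conj} \cong T^2/\{\pm\id\}$, and then to force that intersection by instanton gauge theory, using holonomy perturbations of the flatness equation to make the relevant restriction images as flexible as possible.

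First I would record the elementary reduction. Write $Y_{K,K'} = X_K \cup_T X_{K'}$ for the splicing along the splicing torus $T$, with $X_K, X_{K'}$ the knot exteriors and the gluing identifying $\mu_K \leftrightarrow \lambda_{K'}$, $\lambda_K \leftrightarrow \mu_{K'}$. By van Kampen's theorem an $SU(2)$-representation of $\pi_1(Y_{K,K'})$ is the same as a pair of representations of $\pi_1(X_K)$ and $\pi_1(X_{K'})$ whose restrictions to $\pi_1(T)=\Z^2$ agree after the meridian--longitude swap. Since $H_1(Y_{K,K'};\Z)=0$, every reducible $SU(2)$-representation of $\pi_1(Y_{K,K'})$ is trivial, so it suffices to produce a non-central one. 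Letting $r_K\colon R(X_K)\to P$ and $r_{K'}\colon R(X_{K'})\to P$ be the boundary restriction maps and $\bar\tau\colon P\to P$ the involution induced by the swap, we must show that the curves $\Gamma_K := \mathrm{im}(r_K)$ and $\bar\tau(\Gamma_{K'})$ meet at a point other than the corner representing the trivial representation; at such a non-corner intersection point the two local representations automatically glue to a non-central one. Both curves contain the arc of abelian representations (a corner-to-corner edge arc of $P$), and these two abelian arcs meet only at the trivial representation -- consistent with $H_1(Y_{K,K'})=0$ -- so the point is precisely to show that the remaining, irreducible parts of $\Gamma_K$ and $\bar\tau(\Gamma_{K'})$ cannot be disjoint.

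The technical heart is a flexibility statement for holonomy perturbations. A holonomy perturbation $\pi$ of the flatness equation supported in a collar $T\times[0,1]\subset X_K$ deforms the (perturbed) flat moduli space of $X_K$, and, via the restriction to the two ends of the collar, induces a correspondence $P\dashrightarrow P$; because $P$ carries the natural Atiyah--Bott--Goldman area form and holonomy perturbations preserve it, this correspondence is, generically, an area-preserving self-map of $P$ fixing the four orbifold points (central representations of $\Z^2$) and isotopic to the identity. The key theorem I would prove -- and this is the main obstacle, being the analytically heaviest and least formal step -- is the converse \emph{exhaustion}: every area-preserving self-homeomorphism of $P$ that fixes the four singular points and is isotopic to the identity can be $C^0$-approximated by correspondences arising from holonomy perturbations in $T\times[0,1]$. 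My strategy for this is to realize a generating family of ``elementary'' area-preserving moves on the four-punctured sphere -- small localized fingers and twists supported in disks away from the singular points -- by explicit, small-support holonomy perturbations, keeping careful control of the induced map on boundary holonomies; to decompose an arbitrary area-preserving, identity-isotopic, puncture-fixing diffeomorphism of the four-punctured sphere into such moves by a Moser/factorization argument; and to assemble the $C^0$-approximation from these pieces with a compactness argument. Granting this, the perturbed image $\Gamma_K^\pi$ can be made to $C^0$-approximate $h(\Gamma_K)$ for essentially any such $h$.

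Finally I would bring in instanton gauge theory to convert this flexibility into a genuine flat connection. Here one uses a closed $4$-manifold $Z$ which contains the $0$-surgery $S^3_0(K)$ (equivalently, after stretching, the torus $T\subset Y_{K,K'}$) as a splitting hypersurface and whose Donaldson polynomial invariant is non-zero -- this is the non-vanishing theorem of Kronheimer and Mrowka, and it is exactly here that the non-triviality of the knot is used. A neck-stretching argument along the splitting hypersurface expresses this non-zero invariant as an intersection-theoretic Floer pairing of the relative invariants of the two sides, realized concretely as an intersection number of $\Gamma_K$ (pushed by an admissible holonomy perturbation) against $\bar\tau(\Gamma_{K'})$ inside $P$ -- where the perturbations are precisely what makes the intersection transverse relative to the orbifold points and the count well defined. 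Since the invariant is a diffeomorphism invariant of $Z$, this count is independent of the chosen admissible perturbation; but if $\Gamma_K$ and $\bar\tau(\Gamma_{K'})$ met only at corner points, the exhaustion theorem would let us choose an area-preserving, identity-isotopic, corner-fixing perturbation separating the two curves entirely, making the count zero and contradicting Kronheimer--Mrowka. Hence $\Gamma_K$ and $\bar\tau(\Gamma_{K'})$ must intersect at a non-corner point, which is an honest flat connection on $Y_{K,K'}$ with non-central holonomy, i.e. an irreducible $\rho\colon\pi_1(Y_{K,K'})\to SU(2)$. The chief difficulty throughout is the exhaustion theorem together with the bookkeeping needed to keep the neck-stretching compatible with it and with the orbifold structure of $P$.
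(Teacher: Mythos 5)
Your reduction to an intersection statement in the pillowcase, the role of holonomy perturbations as realising area-preserving self-maps, and the appeal to Kronheimer--Mrowka via neck-stretching are all the right ingredients, but two of your steps contain genuine gaps. The first is in your plan for the exhaustion theorem. A single holonomy perturbation supported in the collar $T^2\times[0,1]$ acts on boundary holonomies only through a class function of the holonomy around one fixed homology class of the torus; the induced self-map of the pillowcase is therefore a \emph{shearing} --- it translates every point of each leaf of a linear foliation by the same amount in a fixed integral direction --- and compositions of such maps are what you have to work with. A shearing is inherently non-local: no holonomy perturbation, however small its support in the $3$-manifold, induces a map supported in a small disk of the pillowcase. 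So ``small localized fingers and twists supported in disks away from the singular points'' are exactly the elementary moves that holonomy perturbations \emph{cannot} realise, and a fragmentation into such moves cannot be the route. What does work is to pass to the time-dependent divergence-free vector field generating the isotopy and expand it in a Fourier series: divergence-freeness forces each Fourier mode to be a shearing vector field in an integral direction, and one then controls the accumulated error (Gronwall-type estimates) when the flow is replaced by iterated flows of finitely many modes.

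The second, more serious, gap is the endgame. The non-vanishing theorem you invoke produces a symplectic $X$ with $D^v_X\neq 0$ containing the $0$-surgery $Y_0(K)$ of a \emph{single} non-trivial knot as a separating hypersurface, where the admissible bundle ($c_1$ odd on the capped Seifert surface) is what excludes reducibles along the stretched neck. It says nothing about a $4$-manifold containing the splice $Y_{K,K'}$ or its splicing torus; since $Y_{K,K'}$ is a homology sphere, no admissible bundle exists over it, the trivial connection cannot be excluded, and the stretching argument breaks down. Moreover, expressing the Donaldson invariant as an intersection number of $\Gamma_K$ with the swapped curve of $K'$ is an Atiyah--Floer-type gluing formula that is not available (and your proposed contradiction would in any case founder on the fact that the two curves always meet at the corner $(0,0)$ through their abelian arcs, which no corner-fixing isotopy can undo). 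The assembly that closes is one knot at a time: if some corner-avoiding embedded path from $(0,\pi)$ to $(\pi,\pi)$ missed $i^*(R(K))$, isotope the line $\{\beta=\pi\}$ onto it by an area-preserving (Moser) isotopy, realise that isotopy by perturbations in a collar inside $Y_0(K)$, and conclude that the perturbed flat moduli space of $Y_0(K)$ is empty, contradicting $D^v_X\neq 0$. This yields a curve in $i^*(R(K))$ that is homologically essential in the cut-open pillowcase; the splicing theorem is then elementary topology --- the essential curve for $K$ and the coordinate-swapped essential curve for $K'$ must cross, and the crossing avoids the edges because meridians normally generate the knot groups --- with no gauge theory performed on $Y_{K,K'}$ itself.
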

\addtocounter{section}{-1}
}


Here the splicing is defined as follows: Given two knots $K,K'$ in $S^3$ with tubular neighbourhoods $N(K)$ and $N(K')$, the splicing is the integer homology 3-sphere
\[
	Y_{K,K'} = Y(K) \cup_\vphi Y(K') \, ,
\]
 obtained by glueing the knot complements $Y(K) = S^3 \setminus N(K)^\circ$ and $Y(K') = S^3 \setminus N(K')^\circ$ along their boundary tori by a map $\vphi$ which maps a meridian of $K$  to a longitude of $K'$, and vice versa. To prevent confusion, we make clear that by a longitude of a knot we understand a parallel copy of a knot which is trivial in the first homology group of the knot complement. 

Quite a few facts are known about the representation variety 
\begin{equation}\label{representation variety knot}
	R(K) = \Hom(\pi_1(Y(K)),SU(2))/SU(2).
\end{equation}
of the knot complement and its image in the representation variety of the boundary torus. For instance, for non-trivial knots $K$ there are always irreducible representations, and these have to contain 1-dimensional semi-algebraic varieties. However, it is not clear a priori whether the two images from $R(K)$ and $R(K')$ intersect in the representation variety of the splitting torus of $Y_{K,K'}$ in a way which yields an irreducible representation of $\pi_1(Y_{K,K'})$. (It is well-known, for instance, that this holds if $K$ and $K'$ are non-trivial torus knots.)

Our proof of Theorem \ref{irreducible representations splicing} will rely on a topological result (Theorem \ref{main theorem 1}) about the image of $R(K)$ in the representation variety of the boundary torus that we will establish for any non-trivial knots. This topological result ensures that the images of the two representation varieties $R(K)$ and $R(K')$ in the representation variety of the boundary torus intersect in a representation of the boundary torus which extends as an irreducible representation of $\pi_1(Y(K))$ and of $\pi_1(Y(K'))$, and hence gives rise to an irreducible representation of $\pi_1(Y_{K,K'})$. 

Before stating this result, we make a few general comments about Theorem \ref{main result 3 intro}. 

\begin{intro-remark}
	A statement analogous to Theorem \ref{main result 3 intro} does not hold for the more general class of rational homology 3-spheres, see \cite{Motegi}. 
\end{intro-remark}
\smallskip 

\begin{intro-remark}
	Theorem \ref{main result 3 intro} would hold with the group $SU(2)$ in place of $\text{SL}(2,\C)$ if one could show that any integer homology 3-sphere which is {\em hyperbolic} admits irreducible $SU(2)$-representations of its fundamental group. 
\end{intro-remark}
It isn't so clear whether one can expect that this holds. For instance, a corresponding statement fails for some hyperbolic rational homology 3-spheres:
\begin{intro-remark}
	 By Cornwell's results in \cite{Cornwell}, the branched double cover $\Sigma_2(8_{18})$ of the knot $8_{18}$ does not admit irreducible $SU(2)$-representations. The rational homology sphere $\Sigma_2(8_{18})$ is hyperbolic. In fact, the knot $8_{18}$ is the Turk's head knot with notation $(2 \times 4)^*$ in \cite[Section 4.3]{Bonahon-Siebenmann}, and in this reference it is shown that its branched double cover is hyperbolic.
\end{intro-remark}

Theorem \ref{main result 3 intro} is a result about 3-manifolds, and not about groups. In fact, there are superperfect groups which admit no non-trivial representations in $\text{\em SL}(2,\C)$, see Proposition \ref{representations Schur covers}. By a result of Kervaire which uses higher-dimensional surgery theory, all of these appear as fundamental groups of homology spheres of dimension $n \geq 5$. Those which admit a presentation of deficiency $0$ even appear as the fundamental group of a 4-dimensional homology sphere. Hence we obtain the following result, contrasting Theorem \ref{main result 3 intro}.

{
\renewcommand{\thetheorem}{\ref{higher dimensional case}}
\begin{theorem}	In any dimension $n \geq 4$ there is an integer homology sphere $W^n$ different from the n-sphere $S^n$ such that the fundamental group $\pi_1(W)$ has only the trivial representation in $\text{SL}(2,\C)$. 
\end{theorem}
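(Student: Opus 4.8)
The plan is to reduce this to a purely group-theoretic input together with a standard realisation result for fundamental groups of homology spheres. By Proposition~\ref{representations Schur covers}, fix a nontrivial finitely presented group $G$ which is superperfect, i.e.\ $H_1(G;\Z) = H_2(G;\Z) = 0$, and such that every homomorphism $G \to \text{SL}(2,\C)$ is trivial; for the case $n = 4$ one uses in addition that such a $G$ can be chosen to admit a presentation $\langle x_1, \dots, x_g \mid r_1, \dots, r_g \rangle$ of deficiency zero. Once $G$ is realised as $\pi_1(W^n)$ of a homology $n$-sphere $W^n$, the theorem follows at once: $W^n$ is not simply connected, so $W^n \neq S^n$, and any representation $\pi_1(W^n) \to \text{SL}(2,\C)$ is a representation of $G$, hence trivial.

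For $n \geq 5$ I would invoke Kervaire's theorem: a finitely presented group occurs as the fundamental group of a smooth homology $n$-sphere for some, equivalently every, $n \geq 5$ precisely when it is superperfect. Applied to $G$, this produces the required $W^n$.

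For $n = 4$ one carries out the construction by hand. Start from $M = \#^{g}(S^1 \times S^3)$, whose fundamental group is the free group on $x_1, \dots, x_g$, and represent the relators $r_1, \dots, r_g$ by pairwise disjoint embedded circles $\gamma_1, \dots, \gamma_g \subset M$ (possible since loops in a $4$-manifold can be made embedded and disjoint, and an embedded circle in an oriented $4$-manifold has trivial normal bundle). Performing surgery on the $\gamma_j$ yields a closed oriented $4$-manifold $W^4$ with $\pi_1(W^4) \cong G$ by van Kampen's theorem. A Mayer--Vietoris computation shows $W^4$ is a homology $4$-sphere: $H_1(W^4) \cong H_1(G) = 0$, and $H_2(W^4)$ is governed by the $g \times g$ matrix obtained by abelianising the relators, whose cokernel is $H_1(G) = 0$, so that matrix is unimodular and $H_2(W^4) = 0$; Poincar\'e duality gives the remaining vanishing. (Equivalently, this is the classical statement, due to Kervaire, that every deficiency-zero superperfect group is the fundamental group of a homology $4$-sphere.)

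The genuine difficulty is not in this topology, which is standard, but in exhibiting the group $G$ with all the required properties simultaneously: nontrivial, superperfect, only the trivial $\text{SL}(2,\C)$-representation, and --- for the four-dimensional case --- of deficiency zero. This is exactly the content of Proposition~\ref{representations Schur covers}, which I would take as an input here. The one bookkeeping point to get right in the $n = 4$ argument is that the surgered manifold has vanishing $H_2$ and not merely vanishing $H_1$; this is precisely where the deficiency-zero hypothesis (making the abelianised relator matrix square) combined with $H_1(G) = 0$ (making it unimodular) is used.
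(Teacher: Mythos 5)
Your proposal is correct and follows essentially the same route as the paper: Kervaire's realisation theorem for $n \geq 5$, the group-theoretic input from Proposition~\ref{representations Schur covers} (Schur covers of finite non-abelian simple groups other than $A_5$, with a deficiency-zero presentation for the $n=4$ case), and an explicit handle/surgery construction in dimension four. Your $n=4$ construction by surgery on relator circles in $\#^{g}(S^1\times S^3)$ is the double of the paper's handlebody $V$, so the two arguments coincide up to the bookkeeping of the homology computation (your unimodular relator matrix versus the paper's Euler characteristic count).
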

\addtocounter{section}{-1}
}


Theorem \ref{main result 3 intro} also has the following application to the complexity of the problem of 3-sphere recognition, due to work of Kuperberg \cite{Kuperberg}. (We are thankful to John Baldwin and Steven Sivek for turning our attention to this reference.)
{
\renewcommand{\thetheorem}{\ref{coNP}}
\begin{theorem}\label{coNP intro}
Let $Y$ be an integer homology 3-sphere, described by a Heegaard diagram. 
Then the assertion that $Y$ is not the 3-sphere lies in the complexity class $\mathsf{NP}$, provided the generalized Riemann hypothesis holds. 
\end{theorem}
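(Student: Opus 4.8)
The plan is to feed Theorem~\ref{main result 3 intro} into the arithmetic machine of Kuperberg~\cite{Kuperberg}. First I would convert the combinatorial input into an algebraic one: from a genus-$g$ Heegaard diagram in which the $\alpha$- and $\beta$-curves meet in $N$ points one reads off, in the standard way, a presentation $\pi_1(Y) = \langle x_1,\dots,x_g \mid r_1,\dots,r_g\rangle$ whose $j$-th relator is the word spelled out by the intersections of $\beta_j$ with the $\alpha$-curves and hence has length $O(N)$; so the presentation has size polynomial in that of the diagram. Next I would record the elementary fact that, $Y$ being an integer homology sphere, $\pi_1(Y)$ is perfect, so that any reducible representation $\pi_1(Y)\to SL(2,\C)$ is conjugate into the upper-triangular Borel subgroup, its composition with the diagonal quotient is an abelian representation of a perfect group and hence trivial, whence the representation is unipotent and therefore, being valued in the abelian group $(\C,+)$, trivial as well. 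Combining this with Theorem~\ref{main result 3 intro} and Perelman's resolution of the Poincar\'e conjecture gives
\[
	Y \neq S^3 \iff \pi_1(Y) \neq 1 \iff \pi_1(Y) \text{ has a nontrivial } SL(2,\C)\text{-representation} .
\]

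Next I would write down the $SL(2,\C)$-representation scheme of the presentation: the tuples $(M_1,\dots,M_g)\in SL(2)^g$ satisfying $r_j(M_1,\dots,M_g)=I$ for all $j$ form an affine scheme $\mathscr{R}$ of finite type over $\Z$, and the nontrivial (equivalently irreducible) representations form a constructible subset, cut out by the relator equations together with the requirement that some word in the generators have commutator trace different from $2$, which one turns into an affine system by adjoining one inverse. The number of variables and the degrees and heights of all defining polynomials are bounded polynomially in the input size, and by the equivalence just displayed ``$Y\neq S^3$'' holds if and only if $\mathscr{R}$ has a nontrivial $\C$-point, equivalently a $\overline{\Q}$-point since $\mathscr{R}$ is of finite type over $\Q$. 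At this point I would invoke the heart of \cite{Kuperberg}: effective Nullstellensatz bounds together with an effective Chebotarev estimate, valid under the generalised Riemann hypothesis, ensure that a scheme over $\Z$ of such controlled complexity, once nonempty over $\overline{\Q}$, already has an $\mathbb{F}_{p^{k}}$-point for some prime $p$ of bit-length polynomial in the input and some polynomially bounded $k$. The $\mathsf{NP}$-certificate for ``$Y\neq S^3$'' is then the pair $(p,k)$ and matrices $M_1,\dots,M_g\in SL(2,\mathbb{F}_{p^{k}})$ (together with two short words in the generators if one wants to display irreducibility explicitly); the verifier checks in polynomial time that $p$ is prime, that the reductions $r_j(M_1,\dots,M_g)=I$ hold in $SL(2,\mathbb{F}_{p^{k}})$, and that not all of the $M_i$ are the identity. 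Soundness is immediate, since a nontrivial homomorphism $\pi_1(Y)\to SL(2,\mathbb{F}_{p^{k}})$ forces $\pi_1(Y)\neq 1$ and hence $Y\neq S^3$; equivalently, $3$-sphere recognition lies in $\mathsf{coNP}$ under the generalised Riemann hypothesis.

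The one genuinely new ingredient in this argument is Theorem~\ref{main result 3 intro}: it is precisely what makes $\mathscr{R}$ nonempty over $\C$ as soon as $Y\neq S^3$. For a general finitely presented, or even superperfect, group this step fails outright, as Theorem~\ref{higher dimensional case} records, so there is no purely group-theoretic substitute and the three-dimensional statement is indispensable here. The rest I expect to be routine verification that Kuperberg's hypotheses are met; the only places needing a little care are the bookkeeping of degrees and heights when the irreducibility inequation is cleared to an equation, and the observation that reading a group presentation off a Heegaard diagram does not blow up the relator lengths beyond a polynomial bound. Thus the main obstacle has already been dealt with in the earlier sections, and this theorem is a formal consequence of Theorem~\ref{main result 3 intro} together with \cite{Kuperberg}.
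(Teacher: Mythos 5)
Your proposal is correct and follows essentially the same route as the paper: read a polynomial-length presentation of $\pi_1(Y)$ off the Heegaard diagram, use Theorem \ref{main theorem 3} to produce a representation with non-commutative image in $\text{SL}(2,\C)$, and apply Kuperberg's GRH-conditional descent to a small finite field to obtain the certificate, which the verifier checks in polynomial time. The only cosmetic differences are that you unpack Kuperberg's Theorem 3.4 into its scheme-theoretic ingredients (effective Nullstellensatz plus effective Chebotarev) rather than citing it as a black box at the level of group homomorphisms, and your verifier checks nontriviality of the image where the paper checks non-commutativity — both suffice for soundness since $\pi_1(S^3)=1$.
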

\addtocounter{section}{-1}
}
\smallskip
A proof of the same result has been announced by Hass and Kuperberg in the Oberwolfach reports \cite{Hass-Kuperberg}, but using different methods.

\subsection*{The pillowcase and the $SU(2)$-representation variety of knot complements}
The space of $SU(2)$-representations of the fundamental group of a two-dimensional torus $T^2$ modulo conjugation,
\[
	R(T^2) = \Hom(\Z^2, SU(2))/SU(2)
\]
is homeomorphic to the {\em pillowcase}, a 2-dimensional sphere. In fact, if we denote generators of $\pi_1(T^2) \cong \Z^2$ by $m$ and $l$, then for a representation $\rho$ we may suppose that  
\[
	\rho(m) = \begin{bmatrix} e^{i \alpha} & 0 \\ 0 & e^{-i \alpha} \end{bmatrix}, \hspace{0.5cm} 
	\text{and} \hspace{0.5cm}
	\rho(l) = \begin{bmatrix} e^{i \beta} & 0 \\ 0 & e^{-i \beta} \end{bmatrix},
\]
and hence we can associate to $\rho$ a pair $(\alpha,\beta) \in [0,2\pi] \times [0,2 \pi]$, which we also can think of as being a point on the two-dimensional torus $T = \R^2 / 2 \pi \Z^2$. However, it is easily seen that a representation to which we associate $(2\pi - \alpha, 2 \pi - \beta)$ is conjugate to $\rho$. This is the only ambiguity, however, as the trace of an element in $SU(2)$ determines its conjugacy class. Therefore $R(T^2)$ is isomorphic to the quotient of the torus $T$ by the hyperelliptic involution $\tau\colon (\alpha,\beta) \mapsto (-\alpha,-\beta)$. This has four fixed points, and its quotient
\begin{equation*}
		R(T^2) = T/\tau
\end{equation*} 
is homeomorphic to a two-dimensional sphere. It can also be seen as the quotient of the fundamental domain $[0,\pi] \times [0,2 \pi]$ for $\tau$ by identifications on the boundary as indicated in Figure \ref{pillowcase trefoil} below. 

\begin{figure}[h!]
\def\svgwidth{0.6\columnwidth}
\begingroup%
  \makeatletter%
  \providecommand\color[2][]{%
    \errmessage{(Inkscape) Color is used for the text in Inkscape, but the package 'color.sty' is not loaded}%
    \renewcommand\color[2][]{}%
  }%
  \providecommand\transparent[1]{%
    \errmessage{(Inkscape) Transparency is used (non-zero) for the text in Inkscape, but the package 'transparent.sty' is not loaded}%
    \renewcommand\transparent[1]{}%
  }%
  \providecommand\rotatebox[2]{#2}%
  \ifx\svgwidth\undefined%
    \setlength{\unitlength}{368.50393066bp}%
    \ifx\svgscale\undefined%
      \relax%
    \else%
      \setlength{\unitlength}{\unitlength * \real{\svgscale}}%
    \fi%
  \else%
    \setlength{\unitlength}{\svgwidth}%
  \fi%
  \global\let\svgwidth\undefined%
  \global\let\svgscale\undefined%
  \makeatother%
  \begin{picture}(1,0.61538463)%
    \put(0,0){\includegraphics[width=\unitlength]{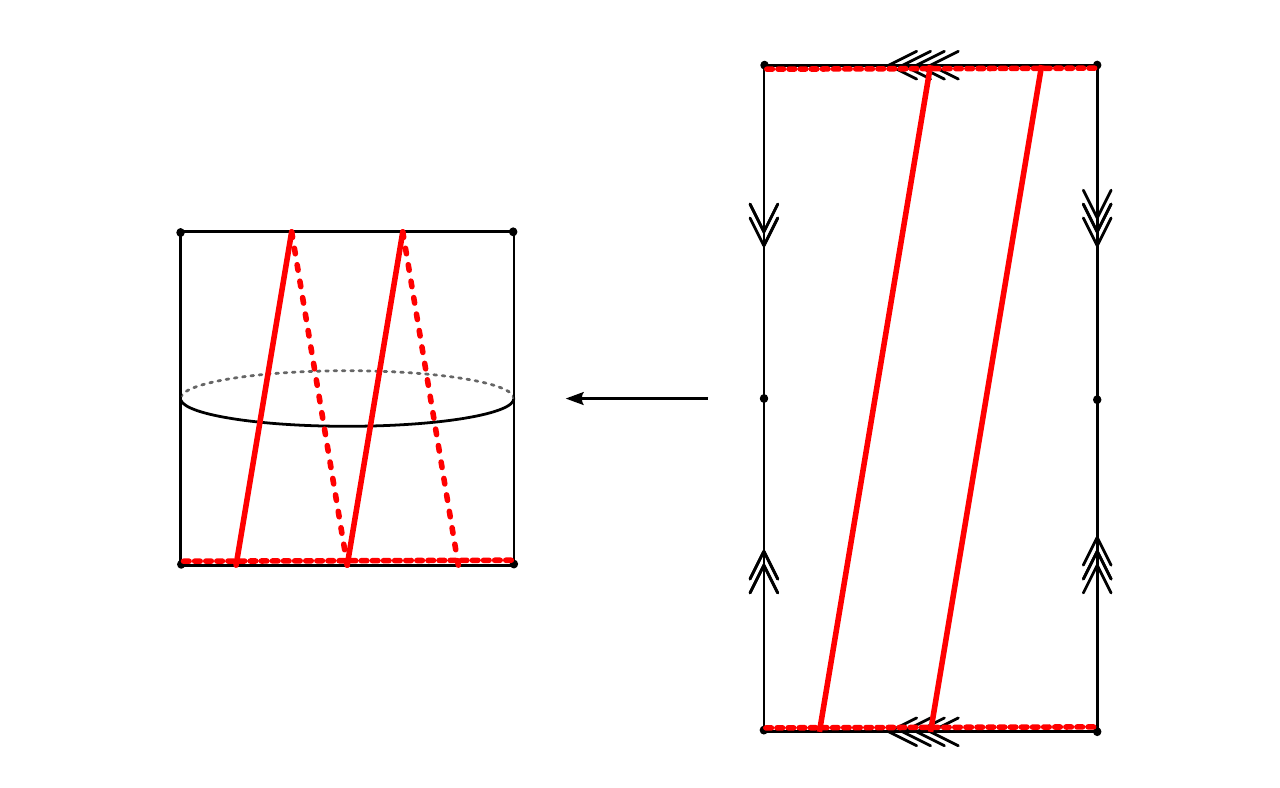}}%
    \put(0.3300436,0.38135272){\color[rgb]{1,0,0}\makebox(0,0)[lb]{\smash{$i^*(R(K))$}}}%
    \put(0.75062192,0.11094933){\color[rgb]{1,0,0}\makebox(0,0)[lb]{\smash{$i^*(R(K))$}}}%
  \end{picture}%
\endgroup%
\caption{The glueing pattern for obtaining the pillowcase from a rectangle, 
and the image of the representation variety $R(K)$ of the trefoil in the pillowcase}
\label{pillowcase trefoil}
\end{figure}

Having a non-trivial knot $K$ in $S^3$, the 3-manifold $Y(K) = S^3 \setminus N(K)^\circ$ obtained by removing a tubular neighbourhood of $K$ from $S^3$ is a 3-manifold with boundary a two-dimensional torus. Associated is the representation variety $R(K)$, defined in (\ref{representation variety knot}) above. 

We can restrict representations from $R(K)$ to its boundary torus, thereby obtaining a map 
\begin{equation} \label{restriction pillowcase}
i^*\colon R(K) \to R(T^2)
\end{equation}
 into the pillowcase, induced by the natural inclusion $i:T^2=\partial Y(K) \to Y(K)$. Figure \ref{pillowcase trefoil} above shows the image of $R(K)$ when $K$ is the right handed trefoil, once in the pillowcase, and once in the fundamental domain $[0,\pi] \times [0,2 \pi]$. Here the first coordinate corresponds to $\rho(m_K)$, where $m_K$ is a meridian to the knot $K$, and the second coordinate corresponds to $\rho(l_K)$, where $l_K$ is a longitude of the knot $K$. 

All abelian representations map to the red line `at the bottom'. In fact, as $l_K$ is a product of commutators in the fundamental group of the knot complement, an abelian representation has to map $l_K$ to the identity, and hence its image in $R(T^2)$ lies on the bottom line 
$\{ \beta = 0 \mod{2\pi \Z} \}$, and for any $\alpha$ we can find an abelian representation corresponding to $(\alpha,0)$. 

As a meridian normally generates the knot group, the two lines $\{ \alpha = 0 \mod{2 \pi \Z} \, \}$ and $ \{ \alpha = \pi \mod{2 \pi \Z} \}$ only contain the two central representations from $R(K)$, both with $ \{ \beta = 0 \mod{2 \pi \Z} \}$. \\

If we cut the pillowcase open along these two lines we obtain a cylinder $C = [0,\pi] \times \R/2 \pi \Z$ (in the glueing pattern of Figure \ref{pillowcase trefoil} this means that we do not perform the identifications along the four indicated vertical boundary lines.)
There has been some evidence that the image $i^*(R(K))$ in the cylinder $C$ always contains a closed curve which is homologically non-trivial in the cylinder. For instance, it is always the case for non-trivial torus knots. Furthermore, the author has seen images of $R(K)$ in the pillowcase that have been determined numerically by Culler \cite{Culler}, where this property was also satisfied in all examples. One of our main result states that this is always the case.

\begin{figure}[h!]
\def\svgwidth{0.7\columnwidth}
\begingroup%
  \makeatletter%
  \providecommand\color[2][]{%
    \errmessage{(Inkscape) Color is used for the text in Inkscape, but the package 'color.sty' is not loaded}%
    \renewcommand\color[2][]{}%
  }%
  \providecommand\transparent[1]{%
    \errmessage{(Inkscape) Transparency is used (non-zero) for the text in Inkscape, but the package 'transparent.sty' is not loaded}%
    \renewcommand\transparent[1]{}%
  }%
  \providecommand\rotatebox[2]{#2}%
  \ifx\svgwidth\undefined%
    \setlength{\unitlength}{396.85039063bp}%
    \ifx\svgscale\undefined%
      \relax%
    \else%
      \setlength{\unitlength}{\unitlength * \real{\svgscale}}%
    \fi%
  \else%
    \setlength{\unitlength}{\svgwidth}%
  \fi%
  \global\let\svgwidth\undefined%
  \global\let\svgscale\undefined%
  \makeatother%
  \begin{picture}(1,0.57142858)%
    \put(0,0){\includegraphics[width=\unitlength]{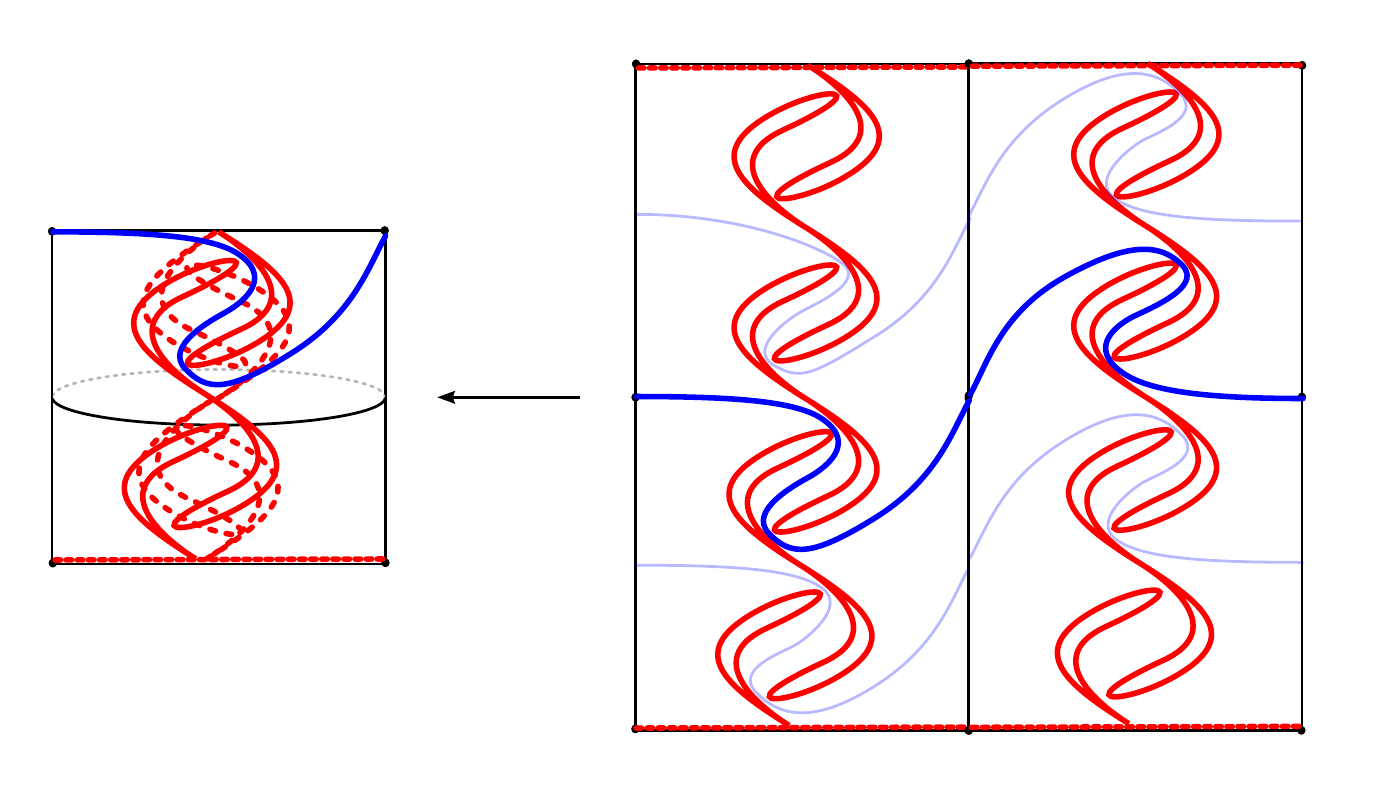}}%
    \put(0.00604762,0.40115873){\color[rgb]{0,0,0}\makebox(0,0)[lb]{\smash{$P$}}}%
    \put(0.28524603,0.40213609){\color[rgb]{0,0,0}\makebox(0,0)[lb]{\smash{$Q$}}}%
    \put(0.42462789,0.28222222){\color[rgb]{0,0,0}\makebox(0,0)[lb]{\smash{$\hat{P}$}}}%
    \put(0.71170057,0.26879677){\color[rgb]{0,0,0}\makebox(0,0)[lb]{\smash{$\hat{Q}$}}}%
    \put(0.95300674,0.27214285){\color[rgb]{0,0,0}\makebox(0,0)[lb]{\smash{$\hat{P}$}}}%
    \put(0.22603365,0.30341454){\color[rgb]{0,0,1}\makebox(0,0)[lb]{\smash{c}}}%
    \put(0.65037497,0.17336504){\color[rgb]{0,0,1}\makebox(0,0)[lb]{\smash{$\hat{c}$}}}%
  \end{picture}%
\endgroup%

\caption{Can there be an embedded path $c$ from $P$ to $Q$ which does not intersect the image of the representation variety of the knot (in red), as in the picture?}
\label{pillowcase missingcurve}
\end{figure}

{
\renewcommand{\thetheorem}{\ref{main theorem 1}}
\begin{theorem}\label{main result 1 intro}
	Let $K$ be a non-trivial knot in $S^3$. Then the image $i^*(R(K))$ in the cut-open pillowcase $C = [0,\pi] \times (\R/2 \pi \Z)$ contains a topologically embedded curve which is homologically non-trivial in $H_1(C;\Z) \cong \Z$. 
\end{theorem}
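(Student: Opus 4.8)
We argue by contradiction: suppose $i^*(R(K))$ contains no topologically embedded curve that is homologically non-trivial in $H_1(C;\Z)$. Write $\Lambda_0\subset R(T^2)$ for the arc of abelian characters, i.e.\ the image of the line $\{\beta=0\}$; its two endpoints are the central characters $c_0=(0,0)$ and $c_\pi=(\pi,0)$, and $\Lambda_0$ is precisely the locus of characters of $\pi_1(T^2)$ extending over the $0$-framed solid torus. Write $\Lambda_\pi\subset R(T^2)$ for the image of $\{\beta=\pi\}$, an arc joining the two remaining singular points $P=(0,\pi)$ and $Q=(\pi,\pi)$; it is the locus of characters $[\rho]$ with $\rho(l)=-I$. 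Recall from the discussion above that $i^*(R(K))$ meets each boundary circle of $C$ only in $c_0$ and $c_\pi$; moreover near each of the two central representations the character variety of $Y(K)$ consists only of abelian characters (the relevant deformation obstruction vanishes, as $H^2(Y(K);\R)=0$), so the irreducible characters restrict into $C$ at positive distance from $c_0$ and $c_\pi$; in particular $P,Q\notin i^*(R(K))$.

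The first step I would carry out is a purely topological reduction: under the above assumption there exists a \emph{missing arc} as in Figure~\ref{pillowcase missingcurve}, namely an embedded arc $c$ in $C$ from $P$ to $Q$, isotopic rel endpoints to $\Lambda_\pi$ in the complement of $\{c_0,c_\pi\}$, and disjoint from $i^*(R(K))$. Indeed $i^*(R(K))$ is a compact semi-algebraic set of dimension at most $1$, it touches $\partial C$ only at $c_0,c_\pi$ and does not loop around either of those points; since by hypothesis it contains no essential embedded circle, a standard separation argument in the annulus $C$ shows that $P$ and $Q$ lie in the same component of $C\setminus i^*(R(K))$ and that the connecting arc can be taken in the isotopy class of $\Lambda_\pi$. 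Conversely, a missing arc cuts $C$ along an arc joining its two boundary circles and thereby displays $i^*(R(K))$ inside an embedded disc, which forbids an essential embedded circle; hence the theorem is equivalent to the non-existence of a missing arc, and it remains to rule one out.

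So suppose a missing arc $c$ exists; I claim this forces $K$ to be trivial. Equip the $0$-surgery $Y_0(K)=Y(K)\cup_l(S^1\times D^2)$ with the non-trivial admissible $SO(3)$-bundle $E$ (with $w_2(E)$ dual to the core circle). A flat connection in $E$ corresponds to an $SU(2)$-representation $\rho$ of $\pi_1(Y(K))$ with $\rho(l)=-I$, i.e.\ to a point of $(i^*)^{-1}(\Lambda_\pi)$, and every such $\rho$ is irreducible. Next I would install in the collar $T^2\times[0,1]\subset Y(K)$ a holonomy perturbation of the flatness equation which, across the collar, realises (up to a $C^0$-small error) an area-preserving homeomorphism $f$ of the pillowcase fixing the four singular points and isotopic to the identity; with such a perturbation the perturbed flat moduli space of $(Y_0(K),E)$ becomes $(i^*)^{-1}(f^{-1}(\Lambda_\pi))$. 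Invoking the geometric realisation theorem announced in the abstract -- and using that $c$ is isotopic rel endpoints to $\Lambda_\pi$ in the complement of $\{c_0,c_\pi\}$ and that $R(T^2)$ carries a natural area form -- one may choose such an $f$ with $f^{-1}(\Lambda_\pi)$ arbitrarily $C^0$-close to $c$; since $c$ has a neighbourhood disjoint from the compact set $i^*(R(K))$, for this perturbation the perturbed flat moduli space of $(Y_0(K),E)$ is \emph{empty}. This contradicts the non-vanishing result of Kronheimer and Mrowka quoted in the introduction: for a non-trivial knot $K$ there is a closed oriented $4$-manifold $X$ containing $Y_0(K)$ as a splitting hypersurface and carrying a non-zero Donaldson invariant for the admissible bundle extending $E$; stretching the neck of $X$ along $Y_0(K)$ while using the holonomy perturbation above as the abstract perturbation, the anti-self-dual connections computing this invariant converge to perturbed flat connections on $(Y_0(K),E)$, so a non-zero invariant is incompatible with an empty moduli space. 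Hence no missing arc exists, and $i^*(R(K))$ contains a homologically essential embedded curve.

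The step I expect to be the main obstacle is the geometric realisation: showing that \emph{every} area-preserving self-homeomorphism of the pillowcase which fixes the four singular points and is isotopic to the identity can be $C^0$-approximated by maps arising from holonomy perturbations of the flatness equation in a thickened torus. This splits into producing a sufficiently rich family of elementary perturbations -- realising, for instance, fibrewise shift-type and twist-type maps, especially near the singular strata -- and assembling them by composition into a $C^0$-approximation of an arbitrary such $f$, the delicate point being that the area-preserving normalisation can only be matched approximately near the four singular points. A secondary, by now more standard, technical ingredient is the control of the neck-stretching argument: compactness of the perturbed anti-self-dual moduli spaces on the stretched manifold, the absence of reducibles for the admissible bundle, and transversality, so that the limiting configurations are genuinely perturbed flat connections on $(Y_0(K),E)$.
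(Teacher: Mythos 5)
Your proposal is correct and follows essentially the same route as the paper: the reduction of the essential-curve statement to the non-existence of a ``missing arc'' from $P$ to $Q$ (the paper does this via Alexander duality), the realisation of an area-preserving isotopy carrying $\Lambda_\pi$ onto a neighbourhood of that arc by nested holonomy perturbations in a collar (the approximation by shearing maps being the main technical theorem, exactly as you anticipate), and the contradiction with Kronheimer--Mrowka's non-vanishing theorem via neck-stretching. One caveat: for the cobordism/neck-stretching step you must arrange that the \emph{entire} isotopy, not just its endpoints, keeps the moving curve away from the abelian arc $\{\beta=0\}$, so that the whole one-parameter family of perturbed flat moduli spaces on $Y_0(K)$ is reducible-free --- this is less routine than your ``by now more standard'' suggests, because with large holonomy perturbations the usual generic-metric argument for excluding reducibles from the parametrised instanton moduli space does not apply, and the paper has to prove a dedicated avoidance-of-reducibles statement under precisely that hypothesis on the isotopy.
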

\addtocounter{section}{-1}
}

Let us observe that if this result holds, then there cannot be a curve like the curve $c$ in Figure \ref{pillowcase missingcurve}, connecting the points $P$ and $Q$, and which does not intersect the image of $R(K)$.

Conversely, if we can show that any embedded path from $P$ to $Q$ as in Figure \ref{pillowcase missingcurve}, disjoint from the line $\{ \beta = 0 \mod{2 \pi \Z} \}$ (where the reducibles of any $R(K)$ map), does intersect $R(K)$, then Theorem \ref{main theorem 1} follows by an argument using Alexander duality, and the fact that the image of $R(K)$ in $R(T^2)$ is a compact embedded graph (see Lemma \ref{equivalence homologically non-trivial vs intersection}). As a consequence, Theorem \ref{main theorem 1} is equivalent to the following

{
\renewcommand{\thetheorem}{\ref{main theorem 2}}
\begin{theorem}\label{main result 2 intro}
Let $K$ be a non-trivial knot. Then any topologically embedded path from $P=(0,\pi)$ to $Q=(\pi,\pi)$ in the pillowcase, missing the line $\{ \beta = 0 \mod{2 \pi \Z} \}$, has an intersection point with the image of $R(K)$. 
\end{theorem}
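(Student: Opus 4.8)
The plan is to argue by contradiction: suppose $K$ is a non-trivial knot for which there exists a topologically embedded path $c$ from $P=(0,\pi)$ to $Q=(\pi,\pi)$ in the pillowcase, avoiding the reducible line $\{\beta=0\bmod 2\pi\Z\}$ and disjoint from $i^*(R(K))$. First I would lift this configuration to the torus $T=\R^2/2\pi\Z^2$: the path $c$ and its $\tau$-image $\hat c$ together bound an embedded loop (or an arc through fixed points), and by a compactness/thickening argument the open region between $c$ and $\hat c$ can be swept off the image of $R(K)$ by a suitable area-preserving self-homeomorphism of the pillowcase. The point of this first step is to encode "the curve $c$ misses $R(K)$'' as "there is an area-preserving, identity-isotopic self-map $f$ of the pillowcase, fixing the four singular points, that pushes the specific reducible arc $\{\alpha=\pi/2\}$ (say) entirely to one side, away from $i^*(R(K))$.'' This is where the hypotheses of Theorem~\ref{main theorem 1} (the stated $C^0$-approximation of such $f$ by maps realised through holonomy perturbations in a thickened torus) and the general structure of holonomy perturbations of the Chern--Simons functional enter.

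The second step is the gauge-theoretic core. Having realised $f$ (up to $C^0$-approximation) by holonomy perturbations $\pi$ supported in a collar $T^2\times[0,1]\subset Y(K)$, I would study the perturbed flat/ASD moduli space on the knot complement and observe that the perturbed restriction map $R_\pi(K)\to R(T^2)$ has image contained in $f(i^*(R(K)))$ together with the perturbation data; by construction the image then misses a neighbourhood of some fixed reducible arc. Next I would glue $Y(K)$ to $Y(K)$ along $\vphi$ — or more precisely build the closed $4$-manifold $X$ containing the $0$-surgery $Y_0(K)$ as a splitting hypersurface, as announced in the abstract — and run the stretching argument: as the neck $Y_0(K)\times[-R,R]$ is stretched, any finite-energy ASD instanton on $X$ limits to a flat connection on the separating hypersurface, whose restriction to the torus must land in the intersection of the two pushed-around images. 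The engineered emptiness of that intersection (after perturbation) forces the relevant relative moduli space to be empty, hence a certain Donaldson-type invariant of $X$ must vanish.

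The third step is to derive the contradiction from the non-vanishing theorem of Kronheimer and Mrowka for Donaldson invariants of a $4$-manifold containing the $0$-surgery of a knot as a splitting hypersurface: that result guarantees the invariant is \emph{non-zero}, contradicting the vanishing forced by the empty moduli space in Step~2. Therefore no such path $c$ can exist, which is exactly the statement of Theorem~\ref{main theorem 2}. Along the way I would need the standard facts that the image $i^*(R(K))$ is a compact semi-algebraic (in fact piecewise-smooth) graph in the pillowcase, that non-trivial knots have non-abelian $SU(2)$-representations so the image is genuinely one-dimensional near the reducible line (the "arcs emanating from the reducibles'' picture), and Poincar\'e--Lefschetz/Alexander duality to pass between "homologically non-trivial embedded curve'' and "every $P$-to-$Q$ path meets the image,'' which is Lemma~\ref{equivalence homologically non-trivial vs intersection}.

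I expect the main obstacle to be the faithful and exhaustive realisation of area-preserving self-maps of the pillowcase by holonomy perturbations: one must show that an \emph{arbitrary} identity-isotopic, singular-point-fixing, area-preserving $f$ is $C^0$-approximable by geometrically realised maps, and simultaneously keep quantitative control so that the stretching/compactness argument in instanton gauge theory still applies to the approximant rather than to the ideal $f$. Managing the interaction between the $C^0$-approximation error, the bubbling analysis in the neck-stretching limit, and the transversality needed to make the Donaldson invariant well-defined is the delicate part; the topological reductions in Steps~1 and~3 are comparatively routine given the cited results.
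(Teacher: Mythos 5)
Your proposal follows essentially the same route as the paper: isotope the standard arc from $P$ to $Q$ onto the hypothetical path, upgrade this to a $\Z/2$-equivariant \emph{area-preserving} isotopy of the torus (the Moser-trick Lemma \ref{Moser trick}), realise it up to $C^0$-error by nested holonomy perturbations in a collar of the torus inside the $0$-surgery $Y_0(K)$ (Theorem \ref{main technical result}), deduce that the perturbed representation variety $R^w_{\pertdata}(Y_0(K)) \cong R(K|\bar{\phi}_1(c_0))$ is empty while staying reducible-free along the whole one-parameter family, and contradict the Kronheimer--Mrowka non-vanishing theorem via the neck-stretching argument of Proposition \ref{stretching argument}. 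The only slip is the identification of the arc to be pushed around: it is $c_0=\{\beta=\pi\}$, whose points correspond to critical points of the Chern--Simons functional of the non-trivial $SO(3)$-bundle on $Y_0(K)$ (these are irreducible, not reducible, representations of the knot group), rather than an arc $\{\alpha=\pi/2\}$; otherwise your list of ingredients and of the delicate points matches the paper's proof.
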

\addtocounter{section}{-1}
}

\begin{intro-remark}
	To the best of the author's knowledge, there has been no previous constraint on the image of $R(K)$ in the pillowcase that would contradict a picture like in Figure \ref{pillowcase missingcurve}. The immersions in the shape of an `8' take into account results by Herald \cite{Herald} about the area that can be enclosed by the image of a circle or irreducibles in $R(K)$. 
\end{intro-remark}

\subsection*{Geometric realisation of isotopies by holonomy perturbations}
To establish Theorem \ref{main theorem 2}, we use holonomy perturbations of the flatness equation of a Hermitian rank-2 bundle on the $0$-surgery on the knot $K$ in an exhaustive way. At the heart of the argument is a technical result which is of independent interest. We state it here in a simplified way which is sufficient to understand the strategy of the proof of the main results.

{
\renewcommand{\thetheorem}{\ref{main technical result}}
\begin{theorem}\label{main technical result intro}
Let $(\psi^t)_{t \in [0,1]}\colon T \to T$ be an isotopy through area-preserving maps of the torus $T=\R^2/2\pi\Z^2$ (which we think of as the branched double cover of the pillowcase), such that $\psi^0 = \id$, and let $\epsilon > 0$ be given.
 Let $M = [0,1] \times S^1 \times S^1$ be a thickened torus, and let $E \to M$ be the trivial Hermitian rank-2 bundle. Then there is a 1-parameter family of maps $\theta^t$, defined on the space of all $SU(2)$-connections on $E$, with values in $\Omega^2(M;\su(E))$, (the space of $2$-forms with values in the adjoint bundle,) with compact support in the interior of $M$, such that any connection $A$ satisfying the perturbed flatness equation
\[
F_A = \theta^t(A) 
\]
is reducible, and such that the holonomies at the two boundaries are related in the following way: 
\[
	\operatorname{Hol}(A)|_{\{1\} \times S^1 \times S^1} = \phi_t (\operatorname{Hol}(A)|_{\{0\} \times S^1 \times S^1})\, , 
\]
where $\phi_t$ is $\epsilon$-close to $\psi^t$. Here we consider $\operatorname{Hol}(A)|_{\{i\} \times S^1 \times S^1}$ for $i=0,1$ as a point in the pillowcase determined by the holonomies of the two curves $m_{0} = \{ 0 \} \times S^1 \times \text{pt}$, and $l_{0} = \{ 0 \} \times \text{pt} \times S^1$, and likewise for $m_1$ and $l_1$. 
\end{theorem}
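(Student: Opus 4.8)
The plan is to factor the proof into a \emph{symplectic-topological} part and a \emph{gauge-theoretic} part, joined through one intermediate class of maps: the \emph{shears} $U_g\colon(\alpha,\beta)\mapsto(\alpha+g(\beta),\beta)$ and $V_h\colon(\alpha,\beta)\mapsto(\alpha,\beta+h(\alpha))$ of $T$, where $g,h$ range over smooth, $2\pi$-periodic, odd functions. (Being odd, these $g,h$ vanish at $0$ and $\pi$, so each $U_g,V_h$ is area-preserving, fixes the four fixed points of $\tau$, and is $\tau$-equivariant, hence descends to the pillowcase.) I will show (i) that an arbitrary isotopy $(\psi^t)$ as in the statement --- which, descending to the pillowcase, we may and do take to be $\tau$-equivariant, hence with vanishing flux --- can be $C^0$-approximated, uniformly in $t$, by a composition of finitely many shears whose defining functions depend continuously on $t$ and vanish at $t=0$; and (ii) that such a composition of shears is exactly what one realises on the reducible locus by stacking finitely many holonomy perturbations along the $[0,1]$-direction of $M$, while no irreducible solutions occur. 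The key lemma in (i) is: \emph{for every $\epsilon>0$ there are $N\in\N$ and continuous families $t\mapsto g_1^t,h_1^t,\dots$ of smooth odd $2\pi$-periodic functions, all $\equiv0$ at $t=0$, such that the composition $\Phi^t$ of the corresponding $N$ shears satisfies $\sup_t d_{C^0}(\Phi^t,\psi^t)<\epsilon$.}

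To prove this lemma I would first replace $(\psi^t)$ by a smooth approximating isotopy, and then use that a $\tau$-equivariant area-preserving isotopy starting at $\id$ is Hamiltonian (its flux classes are represented by closed $1$-forms anti-invariant under $\tau$, hence with vanishing periods) with a $\tau$-invariant time-dependent Hamiltonian. Discretising $[0,1]$ into $M$ short subintervals, the increment of $\psi$ over each is a Hamiltonian diffeomorphism $C^1$-close to $\id$, generated by a small $\tau$-invariant Hamiltonian, so it suffices to approximate the flow of a small $\tau$-invariant Hamiltonian by a product of the elementary shear flows. For this one checks that the Hamiltonian vector fields $g(\beta)\,\partial_\alpha$ and $h(\alpha)\,\partial_\beta$ generate a $C^0$-dense subalgebra of the $\tau$-invariant Hamiltonian vector fields --- their Poisson brackets produce product functions, to which Stone--Weierstrass applies --- so that a Trotter-type product of shear flows approximates any short Hamiltonian flow; concatenating over the subintervals and smoothing in $t$ yields the desired continuous families, with $\Phi^0=\id$.

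For the gauge-theoretic part, recall the construction of holonomy perturbations (Floer, Donaldson, Kronheimer--Mrowka): from an embedding $\iota\colon S^1\times D^2\hookrightarrow\operatorname{int}(M)$ whose core circle lies in a slice $\{s_i\}\times S^1\times S^1$ parallel to $l_0$ (resp.\ to $m_0$), together with a conjugation-invariant $\chi\colon SU(2)\to\R$, one gets a term $\theta_{\iota,\chi}$ as in the statement, supported in a small tube about $\iota(S^1\times D^2)$. Restricting to the reducible locus --- connections $L\oplus L^{-1}$ for a Hermitian line bundle $L$ --- the equation $F_A=\theta^t(A)$ becomes an ODE along $[0,1]$ whose solutions are flat off the perturbation supports, and crossing the $i$-th perturbation simply replaces $(\alpha,\beta)$ by $(\alpha+g_i(\beta),\beta)$ (resp.\ by $(\alpha,\beta+h_i(\alpha))$), where $g_i$ is a smooth $2\pi$-periodic odd function built from a derivative of $\chi_i$; a short computation shows that, conversely, \emph{every} smooth $2\pi$-periodic odd function arises this way. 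Hence $N$ such perturbations in disjoint slices $0<s_1<\dots<s_N<1$ realise, on their reducible locus, precisely the composition of the corresponding shears $U_{g_i},V_{h_i}$, expressed as the relation between the holonomies at the two ends of $M$ (both well-defined, since the connection is flat near each boundary torus). To exclude irreducible solutions altogether: each core loop is isotopic in $M$ to $m_0$ or $l_0$, so any solution is flat --- hence has abelian, diagonalisable holonomy --- on each region between consecutive perturbation supports, and crossing a perturbation multiplies a holonomy only by an element commuting with the holonomy around the core loop (for conjugation-invariant $\chi$ the gradient of $\chi$ at a point lies on the axis of that point, and vanishes at $\pm\id$); thus reducibility propagates across all of $M$.

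It then remains to assemble: take $N$ and $g_i^t,h_i^t$ from the approximation lemma, realise each $U_{g_i^t}$ (resp.\ $V_{h_i^t}$) by a holonomy perturbation $\theta_i^t$ supported in a small tube about a core loop in the slice $\{s_i\}\times S^1\times S^1$, and set $\theta^t=\sum_i\theta_i^t$; this has compact support in $\operatorname{int}(M)$, depends smoothly on $t$, forces every solution of $F_A=\theta^t(A)$ to be reducible, and relates the boundary holonomies by $\phi_t=\Phi^t$, which is $\epsilon$-close to $\psi^t$. The main obstacle, and the part demanding the most care, is the approximation lemma: promoting the infinitesimal (Lie-algebra) density of the two families of shears to a genuine, and \emph{uniform-in-$t$}, $C^0$-factorisation of the whole isotopy into finitely many shears. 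By comparison the gauge-theoretic input is, once the reducible locus has been isolated, a fairly routine if technical application of the standard holonomy-perturbation formalism, the one delicate point being the exclusion of irreducible solutions.
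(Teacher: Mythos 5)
Your overall architecture coincides with the paper's: approximate the isotopy by a finite composition of shearing maps whose defining functions vary continuously in $t$, realise each shear by a holonomy perturbation stacked along the $[0,1]$-direction of $M$, and argue separately that all solutions of the perturbed equation are reducible. The two proofs diverge, however, in how the approximation lemma is executed, and your version is substantially harder than it needs to be. The paper allows shears in \emph{every} primitive integer direction $(a,b)\in\Z^2$ (realised by embedding the perturbation region via an $SL(2,\Z)$ matrix). The payoff is that each Fourier mode of a divergence-free (equivalently, for odd fields, Hamiltonian) vector field is \emph{already} a shearing field in the direction $(-k_2,k_1)$ orthogonal to its frequency $\k$; so after discretising in time one only needs a first-order splitting $\phi^s_{W_1+\dots+W_m}\approx\phi^s_{W_m}\circ\dots\circ\phi^s_{W_1}$ with an $O(s^2)$ error, controlled by Gronwall. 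By restricting to the two coordinate families $U_g,V_h$, you lose this: the linear span of $g(\beta)\partial_\alpha$ and $h(\alpha)\partial_\beta$ is the proper closed subspace of Hamiltonians of the form $G(\beta)+H(\alpha)$, so density only holds for the \emph{Lie algebra} they generate, and a plain Trotter product of coordinate-shear flows cannot reach, say, the flow of $\cos(\alpha+\beta)$. You must therefore realise Poisson brackets by commutators of flows, i.e.\ an iterated $(\phi^{s}_X\phi^{s}_Y\phi^{-s}_X\phi^{-s}_Y)^{N}$-type construction with its $O(s^3)$-per-step error analysis, carried out uniformly in the isotopy parameter and compatibly with continuity in $t$ and $\Phi^0=\id$. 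This is the step you yourself flag as the main obstacle, and it is the one place where your sketch, as written (``a Trotter-type product''), does not yet contain the argument; it is true and provable, but it is a genuinely heavier piece of analysis than the paper's, and it is entirely avoidable by admitting non-coordinate shearing directions from the outset.

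On the gauge-theoretic side you also deviate: you support each perturbation in a small solid torus about a core curve, whereas the paper supports it on a full slab $\Sigma\times S^1$ with $\Sigma$ an annulus. For the reducible locus the two are equivalent (Stokes' theorem gives the same shear), but the reducibility argument changes character: the complement of a solid torus in a slab $[s_i-\delta,s_i+\delta]\times T^2$ has non-abelian fundamental group (a pair of pants times $S^1$), so flatness off the supports does \emph{not} by itself give abelian holonomy there. Your parenthetical remark contains the two facts that rescue the argument --- the core direction is central in that local complement, and $\chi'$ vanishes at $\pm\id$, so an irreducible flat piece would force the holonomy around the core to be central and hence kill the perturbation --- but this needs to be spelled out slab by slab, together with the patching of the preserved line subbundles across slices where the boundary holonomy may degenerate to a central representation. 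The paper's slab-supported perturbations make the covariant-constancy of the holonomy section, and hence reducibility, essentially immediate. In short: the proposal is a correct blueprint following the same strategy, but both of its delicate points (commutator approximation; reducibility for tube-supported perturbations) are self-inflicted complications relative to the published argument, and the first of them is left at the level of a plausibility claim rather than a proof.
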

\addtocounter{section}{-1}
}
\smallskip

An essential inspiration to a strategy of proof of Theorem \ref{main technical result intro} is due to a discussion that the author had with Frank Kutzschebauch, from whom he learned about the role of shearing maps in the area of complex geometry now known as Andersen-Lempert theory \cite{Andersen,Andersen-Lempert}, see also \cite{Kaliman-Kutzschebauch}. 

Holonomy perturbations of the flatness equation have been introduced by Floer \cite{Floer}, and since then used by various authors, mainly in an approach to transversality, see for instance work of Taubes, Donaldson or Herald \cite{Taubes, Donaldson_orientation,Herald}, or, more recently, Herald-Kirk \cite{Herald-Kirk}. {\em Big} perturbations as in our situation have been considered by Braam-Donaldson \cite{Braam-Donaldson} for establishing the surgery exact triangle in instanton Floer homology, by Kronheimer-Mrowka in \cite{KM_Dehn}, and more recently by Lin in \cite{Lin1,Lin2}. 

\subsection*{Strategy of the proof of Theorem \ref{main result 2 intro}}
Suppose there were a topologically embedded path from $P$ to $Q$, missing $i^*(R(K))$. Then there also is a smoothly embedded path missing $i^*(R(K))$. We lift the problem to the branched double cover of the pillowcase, the torus $T = \R^2/ 2 \pi \Z^2$. We can find a smoothly embedded closed curve $c$ which is invariant under the hyperelliptic involution, starting at the fixed point $\widehat{P}$, and passing through the fixed point $\widehat{Q}$, and which misses the double cover of the image of $R(K)$ in $T$. There is an isotopy $\vphi_t$ from $\vphi_0= \id$ to $\vphi_1$ such that we have $c = \vphi_1(c_0)$, where $c_0$ is the circle which is a lift of the straight line segment $\{ \beta = \pi \}$, hence the circle $c_0$ goes from $\widehat{P}$ to $\widehat{P}$ via $\widehat{Q}$. Furthermore, we may assume that the family of curves $c_t = \vphi_t (c_0)$ does not pass through the circle $\{ \beta = 0 \}$, where the reducibles map. By a lemma that we learned from Thomas Vogel, there is an isotopy through area-preserving maps $\psi^t\colon T \to T$, with $\psi^0 = \id$ which we can assume to satisfy $\vphi_t(c_0) = \psi^t(c_0)$ (as an equality of subsets of the torus $T$) for all $t \in [0,1]$. 

Elements of $R(K)$ that map to the line $c_0$ are those $SU(2)$-representations of the fundamental group of the knot complement that extend to $SO(3)$-representations of the fundamental group of the $0$-surgery $Y_0(K)$ which cannot be lifted to $SU(2)$-representations. We understand these as flat $SO(3)$-connections on an $SO(3)$-bundle over $Y_0(K)$, or equivalently, as critical points of the Chern-Simons function $\operatorname{CS}$. Our main technical result Theorem \ref{main technical result} allows us to define a holonomy perturbation $\Phi(t)$ of the Chern-Simons function, such that the  critical points of $\operatorname{CS} + \Phi(t)$ correspond to points of $R(K)$ which lie on a line $\phi_t(c_0)$, where $\phi_t$ can be chosen arbitrarily close to $\psi^t$ in the $C^0$-topology, and hence  $\phi_t(c_0)$ can be chosen arbitrarily close to the curve $c_t$ for all $t \in [0,1]$. 

Our assumption means that the Chern-Simons function $\operatorname{CS} + \Phi(1)$ has no critical points. But this yields a contradiction, if we use a non-vanishing theorem of Donaldson's invariants of a symplectic 4-manifold $X$ containing the 0-surgery $Y_0(K)$ as a separating hypersurface, due to Kronheimer and Mrowka. In fact, we perform a neck-stretching argument to show that if $\operatorname{CS} + \Phi(1)$ has no critical point, then Donaldson's invariant of $X$ must vanish, following the strategy of Kronheimer-Mrowka in \cite{KM_Dehn}. 

We deal with {\em big} holonomy perturbations here, and so we have to deal with the critical points of the family $\operatorname{CS} + \Phi(t)$, as well as with a corresponding one-parameter family of holonomy perturbations on $X$, with some care. (For instance, the usual argument to avoid reducibles in the instanton moduli spaces with only perturbations of the metric fails in our situation.)

\subsection*{Strategy of the proof of Theorem \ref{main technical result intro}}
For the simplest holonomy perturbations of the flatness equation $F_A = 0$ on $[0,1] \times S^1 \times S^1$, the restrictions of the holonomy to the two boundary components are related by a {\em shearing} map between the two pillowcases appearing as the representation varieties of the two boundaries. The prototype of such maps is given by
\begin{equation*}
 	\begin{split}
		\phi_f \colon \R^2/2\pi\Z^2 & \to \R^2/2\pi\Z^2 \\  \left(	\begin{matrix} \alpha \\ \beta \end{matrix} \right) & \mapsto \left( \begin{matrix} \alpha + f(\beta) \\ \beta \end{matrix} \right)
	\end{split}
\end{equation*}
for some $2\pi$-periodic odd function $f: \R \to \R$, in the technically slightly more convenient branched double cover of the pillowcase $R(T^2)$. By changing the directions of the holonomy perturbation, one can realise also any composition $A \circ \phi_f \circ A^{-1}$ for matrices $A \in \text{SL}(2,\Z)$. One can also realise iterations of shearing maps by a finite nested family of such holonomy perturbations. 

In order to prove Theorem \ref{main technical result intro}, one therefore has to show that the group generated by shearing diffeomorphisms is $C^0$-dense in the group of area-preserving diffeomorphisms of the pillowcase. Actually, we need a slightly more refined version: We have to show that any {\em isotopy} through area-preserving diffeomorphisms can be $C^0$-approximated by an isotopy through shearing maps. This is the content of Theorem \ref{approximation by shearings} which we state here in a simplified way (leaving out equivariance assumptions and conclusions, and a conclusion for one-parameter families):

{
\renewcommand{\thetheorem}{\ref{approximation by shearings}}
\begin{theorem}\label{intro theorem 3.2}
Let $\psi\colon T \to T$ be an area-preserving map of the $n$-dimensio\-nal torus $T$ for $n \geq 2$ which is volume-preservingly isotopic to the identity, and let $\epsilon > 0$ be given. Then there is a finite composition of shearing maps $\phi \colon T \to T$ which is $\epsilon$-close to $\psi$. Moreover, the whole isotopy can be realised $\epsilon$-close to an isotopy through finitely many shearing isotopies. 
\end{theorem}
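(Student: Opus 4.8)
The plan is to reduce the statement to a factorization problem for the time-one map (and the isotopy) of a volume-preserving flow, and then to a local, coordinate-by-coordinate statement that shearing maps in the various $\mathrm{SL}(n,\Z)$-directions can emulate. First I would recall that an area-preserving isotopy from the identity is generated by a time-dependent divergence-free vector field $X_t$; integrating in small time steps reduces the problem to approximating, for a single short time interval, the flow of $X_t$ by a finite composition of shearing flows, uniformly and with the isotopy tracked. So it suffices to produce a local statement: any divergence-free vector field can be $C^0$-approximated, over a short time, by a concatenation of vector fields each of which is (up to an $\mathrm{SL}(n,\Z)$-change of coordinates) of the ``shearing'' type $f(\beta)\,\partial_\alpha$ with $f$ odd $2\pi$-periodic — this is exactly the infinitesimal version of Andersen–Lempert-type shear decompositions, which is where the discussion with Kutzschebauch enters.

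The key steps, in order: (1) Pass from the isotopy $(\psi^t)$ to its generating time-dependent divergence-free vector field $X_t$ and subdivide $[0,1]$ into $N$ subintervals so that on each the flow is $C^0$-close to the time-$\tfrac1N$ flow of a fixed divergence-free field; this handles the ``moreover'' clause about isotopies, since a composition of shearing \emph{flows} is in particular an isotopy through compositions of shearing maps. (2) On the torus $T=\R^n/2\pi\Z^n$, expand the (periodic, divergence-free) vector field in a Fourier series; each Fourier mode $v_k\, e^{i\langle k,x\rangle}$ with $\langle k, v_k\rangle = 0$ can, after an $\mathrm{SL}(n,\Z)$-substitution sending $k$ to a standard basis vector, be written so that the ``frequency'' direction and the ``amplitude'' direction are coordinate directions — i.e.\ each mode, combined with its conjugate to stay real, is an $\mathrm{SL}(n,\Z)$-conjugate of a shearing field $g(\beta)\,\partial_\alpha$ for a trigonometric polynomial $g$. (3) Observe that the flow of a sum of commuting (or approximately commuting, over short time) shearing-type fields is $C^0$-approximated by the composition of the individual flows; handling the fact that distinct Fourier modes do \emph{not} commute is done exactly as in step (1), by time-slicing and the Lie product formula / Trotter-type estimate, all in the $C^0$-norm which is all the theorem demands. (4) Reassemble: composing over all retained Fourier modes and over all $N$ time steps gives a finite composition of shearing maps $\epsilon$-close to $\psi$, and the concatenation of the corresponding shearing flows gives the isotopy approximation; truncating the Fourier series introduces only a $C^0$-error controllable since the truncation error in the generating field is small over bounded time.

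The main obstacle I expect is step (3): controlling the accumulation of $C^0$-errors when one approximates the flow of a divergence-free field by a long composition of non-commuting shearing flows — one needs the number of time-slices $N$ and the Fourier truncation order to be chosen compatibly so that the Trotter-type error stays below $\epsilon$, and one must check that each intermediate map is genuinely area-preserving and, importantly for the application, fixes the relevant points (the $\mathrm{SL}(n,\Z)$-conjugates of shears automatically preserve the lattice and hence the would-be singular points of the pillowcase, so equivariance under the hyperelliptic involution is inherited — this is the part suppressed in the simplified statement). A secondary subtlety is ensuring the odd/parity constraint on the shearing functions $f$ is respected when one Fourier-decomposes an equivariant vector field; this forces working with the natural $\Z/2$-equivariant Fourier decomposition so that each retained mode is itself equivariant, which is precisely what makes the construction descend to the pillowcase.
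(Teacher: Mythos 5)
Your proposal follows essentially the same route as the paper's proof: time-slicing the isotopy into flows of frozen divergence-free fields, Fourier-decomposing each into modes that are automatically shearing fields in integer directions (with divergence-freeness forcing $\langle k, v_k\rangle=0$), splitting the flow of the finite Fourier sum into iterated flows of the individual modes via a Trotter/Lie-bracket estimate, and controlling the accumulated $C^0$-error with Gronwall-type inequalities; the $\Z/2$-equivariance is likewise handled by the sine-only (odd) Fourier decomposition. The one cosmetic difference is that the paper does not need an $\mathrm{SL}(2,\Z)$-substitution inside the Fourier lemma, since its definition of shearing vector field already allows an arbitrary integer direction orthogonal to the frequency vector.
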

\addtocounter{section}{-1}
}

\begin{intro-remark}
	The above approximation result is stated for the $C^0$-topology. However, Steven Sivek and the author have meanwhile improved this result to the identical statement with the $C^{k}$-topology on maps $T^2 \to T^2$ for any $k \geq 0$ in \cite{Sivek-Zentner}.  
\end{intro-remark}

For proving it, we pass from an area-preserving isotopy to a time-depen\-dent divergence-free vector field. The isotopy is then given by the `flow' of this vector field. We approximate the flow of the time-dependent vector field by a composition of flows of time-{\em independent} vector fields. For these we choose the time-dependent vector field at consecutive times, differing by just a small amount of time. We then take the Fourier series corresponding to these time-dependent vector fields. A fourier term turns out to be a vector field of shearing type -- its flow is an isotopy through shearing maps. So we first approximate the consecutive time-independent vector fields by finite Fourier series, and we estimate the difference of the corresponding flows. Finally, we approximate the flow of a finite Fourier sum by flows along the single Fourier terms.

\subsection*{Outline of the paper}
Section 1 contains a review of the Chern-Simons function and Floer's holonomy perturbation of it. Section 2 describes nested families of holonomy perturbations on a thickened torus and the induced maps between the pillowcases of the two boundaries. Section 3 contains the crucial approximation results. Section 4 just contains the main technical result which puts the results of Section 2 and 3 together. Section 5 discusses the holonomy perturbations on the $0$-surgery of a knot, and it describes the perturbed $SU(2)$-representation variety in terms of representations of the knot complement with an imposed boundary condition. Section 6 deals with Donaldson's invariants, Kronheimer-Mrowka's non-vanishing result, and with instanton moduli spaces on 4-manifolds which contain holonomy perturbations in a neck. Section 7 proves Theorems \ref{main result 1 intro}. Section 8 proves Theorem \ref{splicing intro}, our result about splicing of two non-trivial knots. Section 9 deals with general integer homology 3-spheres. Section 10 deals with the higher dimensional case, and Section 11 deals with the complexity of 3-sphere recognition. 

\setcounter{section}{0}
\section*{Acknowledgement}

The author would like to thank John Baldwin, Michel Boileau, Martin Bridson, Marc Culler, Stefan Fri\-edl, Michael Heusener, Paul Kirk, Frank Kutzschebauch, Tom\-asz Mrowka, Nikolai Saveliev, Steven Sivek and Thomas Vogel for help or inspiring discussions.  The author is grateful for support by the SFB `Higher Invariants' at the University of Regensburg, funded by the Deutsche Forschungsgesellschaft (DFG), and he is also grateful to the Mathematics Department of the Massachusetts Institute of Technology for hospitality during a recent stay. 

He would also like to thank two anonymous referees for their detailed reports which helped improving the exposition significantly, and in particular helped eliminating a fair amount of typos, ambiguities, inaccuracies, and minor flaws. 

\section{A class of holonomy perturbations of the Chern-Simons function}\label{holonomy perturbations Chern-Simons}
Let $Y$ be a closed oriented Riemannian 3-manifold, and let $E \to Y$ be a Hermitian bundle of rank $2$ with determinant line bundle $w \to Y$. Let $\theta$ be a connection in the line bundle $w$. We denote by $\mathscr{A}$ the affine space of Hermitian connections in $E$ which induce the connection $\theta$ in the determinant line bundle. This space of connections is the same as the space of $SO(3)$-connections in the associated bundle $\su(E)$. 
The group of automorphisms of $E$ with determinant $1$ is called the gauge group and is denoted by $\mathscr{G}$. It acts on $\mathscr{A}$ in a natural way. A connection is called {\em reducible} if its stabiliser under the $\mathscr{G}$-action is different from $\Z/2 \cong \pm \{\id\}$, and {\em irreducible} otherwise.  

We are interested in the critical points of the Chern-Simons function
\begin{equation*}
\begin{split}
	\operatorname{CS}\colon \mathscr{A} & \to \R \\
				A & \mapsto \int_Y \tr (2 a \wedge (F_{A_0})_0 + a \wedge d_{A_0} a + \frac{1}{3} a \wedge [a \wedge a]) \, , 
\end{split}
\end{equation*}
where $A_0$ is some fixed reference connection in $\mathscr{A}$ with respect to which we write $A = A_0 + a$ with $a \in \Omega^1(Y;\su(E))$, $F_A$ denotes the curvature of a connection $A$, and $(F_A)_0$ denotes its trace-free part, and where $d_{A_0}$ denotes the exterior derivative associated to the connection $A_0$. This trace-free part of the curvature is equal to the curvature of the connection, when viewed as the corresponding $SO(3)$-connection in the associated bundle $\su(E)$, and $\tr$ denotes the trace on $2 \times 2$ matrices. This function modulo $8\pi^2\Z$ is invariant under the action of the gauge group.
\\

The critical points of the Chern-Simons function $\operatorname{CS}$ correspond to connections $A$ satisfying the equation 
\[
(F_A)_0 = 0 \, . 
\]

\begin{definition}
 We denote by 
\begin{equation}\label{representation variety unperturbed}
	R^w(Y) = \{ [A] \in \mathscr{A}/\mathscr{G} \,  | \, (F_A)_0 = 0 \} \, 
\end{equation}
the space of equivalence classes of critical points. Via the holonomy, this space corresponds to conjugacy classes of representations $\rho\colon \pi_1(Y) \to SO(3)$ with second Stiefel-Whitney class $w_2(\rho) \equiv c_1(w) \mod{2}$, see \cite[Lemma 4]{KM_Dehn}. Therefore, $R^w(Y)$ is also called the $SO(3)$-representation variety of $Y$ associated to $w$. 
\end{definition}

\begin{remark}\label{re:hol correspondence}
The correspondence in the preceding definition is not a bijective correspondence in general. In fact, the $\mathscr{G}$-equivalence classes of $SO(3)$-connections in $\su(E)$ which we consider comprise orbits not of the full automorphism group, but of the automorphisms which {\em lift} to $SU(2)$-automor\-phis\-ms of the bundle $E$. There is an exact sequence
\[
	1 \to \mathscr{G}/(\Z/2) \to \overline{\mathscr{G}} \to H^1(Y;\Z/2) \to 0 \, ,
\]
where $\overline{\mathscr{G}}$ is the full automorphism group of the bundle $\su(E)$, and $H^1(Y;\Z/2)$ is the obstruction group for lifting such automorphisms to $\mathscr{G}$. The finite abelian group $H^1(Y;\Z/2)$ acts on $R^w(Y)$ (in general not freely), and the quotient correspons bijectively, via the holonomy, to the space of conjugacy classes of representations of the fundamental group as indicated above. As this paper only makes statements about the existence of conjugacy classes of representations, we may safely call $R^w(Y)$ a representation variety. 
\end{remark}

For what we have in mind, we will need a {\em perturbed} version of the Chern-Simons function, and hence for the flatness equations. The perturbation we will use have been introduced by Floer \cite{Floer} and are called holonomy perturbations. 

Let $\chi\colon SU(2) \to \R$ be a class function, that is, a smooth conjugation invariant function. Any element in $SU(2)$ is conjugate to a diagonal element, and hence there is a $2 \pi$-periodic even function $g\colon \R \to \R$ such that
\[
\chi \left(\begin{bmatrix} e^{it} & 0 \\ 0 & e^{-it} \end{bmatrix} \right) = g(t) \, 
\] 
for all $t \in \R$. Let furthermore $\Sigma$ be a compact surface with boundary, and let $\mu$ be a real-valued 2-form which has compact support in the interior of $\Sigma$ and with $\int_\Sigma \mu = 1$. Let $\iota\colon \Sigma \times S^1 \to Y $ be an embedding. Let $N \subseteq Y$ be a codimension-0 submanifold containing the image of $\iota$, and such that the bundle $E$ is trivialised over $N$ in such a way that the connection $\theta$ in $\det(E)$ induces the trivial product connection in the determinant line bundle of our trivialisation of $E$ over $N$. This means that connections in $\mathscr{A}$ can be understood as $SU(2)$-connections in $E$ when restricted to $N$. 
\\

Associated to this data, we can define a function 
\[\Phi\colon \mathscr{A} \to \R\] 
which is invariant under the action of the gauge group $\mathscr{G}$. For $z \in \Sigma$, we denote by $\iota_z\colon S^1 \to Y$ the circle $t \mapsto \iota(z,t)$. A connection $A \in \mathscr{A}$ provides a $SU(2)$-connection over the image of $\iota$,  the holonomy $\operatorname{Hol}_{\iota_z}(A)$ of $A$ around the loop $\iota_z$ (with variable starting point) is a section of the bundle of automorphism of $E$ with determinant $1$ over the loop. Since $\chi$ is a class function, $\chi(\operatorname{Hol}_{\iota_z}(A))$ is well-defined. We can therefore define
\begin{equation} \label{holonomy perturbation term}
	\Phi(A) = \int_{\Sigma} \chi(\operatorname{Hol}_{\iota_z}(A)) \, \mu(z) \, .
\end{equation}
\\

Later in this paper, we will consider a finite sequence of such embeddings, all supported in a submanifold $M$ of codimension 0 over which the bundle $E \to M$ is trivial. For some $n \in \N$, let 
$
	\iota_k \colon S^1 \times \Sigma \to M \subseteq	Y
$
 be a sequence of embeddings for $k=0, \dots, n-1$ such that the interior of the image of $\iota_k$ is disjoint from the interior of the image of $\iota_l$ for $k \neq l$. (The surface $\Sigma_k$ could also be taken to be dependent on $k$, but we will not need this in our situation.)

We also suppose class functions $\chi_k\colon SU(2) \to \R$ corresponding to even, $2 \pi$-periodic  functions $g_k\colon \R \to \R$ as above to be chosen, for $k = 0, \dots, n-1$, and we continue to assume that $\mu$ is a 2-form on $\Sigma$ with support in the interior of $\Sigma$ and integral $1$. Just as what lead to (\ref{holonomy perturbation term}), we obtain a finite sequence of functions 
\[
\Phi_k\colon \mathscr{A} \to \R \, , \hspace{1cm} k = 0, \dots, n-1. 
\]

The following is essentially proved in \cite{Braam-Donaldson}.
\begin{prop}\label{holonomy perturbations of CS}
  The critical points of the perturbed Chern-Simons function
  \[
  	\operatorname{CS} + \sum_{k = 0}^{n-1} \Phi_k \colon \mathscr{A} \to \R
  \]
are the elements $A \in \mathscr{A}$ which solve the equation
\begin{equation}\label{perturbed flatness}
	(F_A)_0 = \sum_{k=0}^{n-1} \chi'_k (\operatorname{Hol}_{\iota_k}(A)) \, \mu_k \, , 
\end{equation}
where $\chi'_k$ is an equivariant map $SU(2) \to \su(2)$ which is the dual to the derivative of $\chi_k$ with respect to the Killing form on $\su(2)$, and where the pull-back of $\mu_k$ by $\iota_k$ is the 2-form on $S^1 \times \Sigma$ which is obtained by pulling back $\mu$ from $\Sigma$ to $S^1 \times \Sigma$.   
\end{prop}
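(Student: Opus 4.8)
The plan is to compute the differential of the perturbed functional $\operatorname{CS} + \sum_{k=0}^{n-1}\Phi_k$ at a connection $A = A_0 + a$ and read off its zero set. For the unperturbed term one has the classical first-variation formula $d\operatorname{CS}_A(b) = 2\int_Y \tr\big(b \wedge (F_A)_0\big)$ for $b \in \Omega^1(Y;\su(E))$, obtained by expanding $\operatorname{CS}(A_0 + a + sb)$, differentiating at $s = 0$, integrating by parts the $a \wedge d_{A_0}b$ term, and using the cyclic symmetry of $\tr(b\wedge[a\wedge a])$; this is precisely the computation behind the fact that unperturbed critical points solve $(F_A)_0 = 0$. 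So everything reduces to showing that $d(\Phi_k)_A(b) = c\int_Y \tr\big(b \wedge \chi'_k(\operatorname{Hol}_{\iota_k}(A))\,\mu_k\big)$ for a universal constant $c$ matching the factor $2$ above, after which non-degeneracy of the pairing $(b,\eta)\mapsto\int_Y\tr(b\wedge\eta)$ between $\su(E)$-valued $1$- and $2$-forms forces the equation (\ref{perturbed flatness}), with $c$ and the factor $2$ absorbed into the Killing-form normalisation defining $\chi'_k$.

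The main step is the variation of $\Phi_k(A) = \int_\Sigma \chi_k(\operatorname{Hol}_{\iota_{k,z}}(A))\,\mu(z)$. Differentiating under the integral over $\Sigma$, one needs the first variation of the holonomy around a fixed loop $\gamma = \iota_{k,z}$, parametrised by $S^1$, based at $\gamma(0)$, and computed in the fixed trivialisation of $E$ over $M$: solving the linearised parallel-transport ODE by variation of constants gives
\[
\frac{d}{ds}\Big|_{s=0}\operatorname{Hol}_\gamma(A + sb) = -\,\operatorname{Hol}_\gamma(A)\int_0^1 \Ad_{\operatorname{Hol}_{\gamma|_{[0,t]}}(A)^{-1}}\!\big(b(\dot\gamma(t))\big)\,dt \, .
\]
Since $\chi_k$ is a class function its differential is encoded by the $\Ad$-equivariant map $\chi'_k\colon SU(2)\to\su(2)$ through $d(\chi_k)_u(u\xi) = \langle \chi'_k(u),\xi\rangle$ (the pairing being the Killing form). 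Substituting the above and using equivariance to move the $\Ad$ onto $\chi'_k$, one recognises $\Ad_{\operatorname{Hol}_{\gamma|_{[0,t]}}(A)}\chi'_k(\operatorname{Hol}_\gamma(A))$ as the value at $\gamma(t)$ of the section $\chi'_k(\operatorname{Hol}_{\iota_{k,z}}(A))$ of $\su(E)$ along the loop determined by the (base-point independent, as $\chi_k$ is central) holonomy. This yields the base-point independent expression
\[
d\big(\chi_k\circ\operatorname{Hol}_{\iota_{k,z}}\big)_A(b) = -\int_0^1 \big\langle \chi'_k(\operatorname{Hol}_{\iota_{k,z}}(A))\big|_{\iota_k(t,z)}, \, b(\partial_t\iota_k)\big\rangle\,dt \, .
\]

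It then remains to integrate this over $z \in \Sigma$ against $\mu$ and repackage the resulting integral over $S^1\times\Sigma$ as an integral over $Y$ via $\iota_k$. Because $\mu$ is a top-degree form on the surface $\Sigma$, in the product $b \wedge \mu_k$ — where $\mu_k$ is the pull-back of $\mu$ to $S^1\times\Sigma$, extended by zero to a real $2$-form on $Y$ supported in the interior of $\iota_k(S^1\times\Sigma)$ — only the $dt$-component of $b$ survives, so that $\int_{S^1\times\Sigma}\langle\chi'_k(\operatorname{Hol}_{\iota_k}(A)),b\rangle\wedge\mu_k$ equals $\int_\Sigma\!\int_{S^1}\langle\chi'_k, b(\partial_t)\rangle\,dt\,\mu$, matching the display above. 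Converting $\langle\cdot,\cdot\rangle$ into $\tr(\cdot\,\cdot)$ by the fixed Killing-form-to-trace constant and summing over $k$, the equation $d\operatorname{CS}_A + \sum_k d(\Phi_k)_A = 0$ becomes precisely (\ref{perturbed flatness}).

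I expect the only real difficulty to be bookkeeping rather than anything conceptual: pinning down the sign and the $\Ad$-placement in the holonomy variation (these depend on the parallel-transport convention), checking that the ``variable starting point'' contributions genuinely assemble into a single globally defined $\su(E)$-valued $2$-form on $Y$ rather than merely a family of local ones, and tracking the constants — the factor $2$ in $d\operatorname{CS}$ and the Killing form versus $\tr$ — so that the normalisation of $\chi'_k$ is the one for which the statement holds verbatim. None of this is hard, but all of it must be carried out consistently with the conventions of \cite{Braam-Donaldson}, from which the proposition is essentially quoted.
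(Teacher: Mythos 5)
Your proposal is correct: the paper gives no proof of this proposition, simply noting that it is ``essentially proved in \cite{Braam-Donaldson}'', and your argument is exactly the standard first-variation computation that underlies that reference — the formula $d\operatorname{CS}_A(b)=2\int_Y\tr(b\wedge(F_A)_0)$, the variation-of-holonomy formula combined with the $\Ad$-equivariance of $\chi'_k$, and the observation that wedging with $\mu_k$ isolates the $\partial_t\iota_k$-component of $b$. The caveats you flag (sign and $\Ad$-placement conventions, and the Killing-form versus trace normalisation absorbed into the definition of $\chi'_k$) are precisely the bookkeeping points one must fix to match the statement verbatim, and do not affect the validity of the argument.
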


\begin{definition}
	For choices made as above, we denote 
	\[
		R^{w}_{\{ \iota_k, \chi_k \}}(Y) =  \{ [A] \in \mathscr{A}/\mathscr{G} \, | \, (F_A)_0 = \sum_{k=0}^{n-1} \chi'_k (\operatorname{Hol}_{\iota_k}(A)) \, \mu_k \, \} \, ,
	\] 
	and we call this the perturbed $SO(3)$-representation variety of $Y$ associated to $w$ and the holonomy perturbation data $\{ \iota_k, \chi_k \}$. 
\end{definition}
\begin{remark}
	This definition depends also on the choice of a trivialisation of the bundle $E$ over some codimension-0 submanifold which contains the images of the $\iota_k$. 
\end{remark}

\begin{remark}
A similar statement as in Remark \ref{re:hol correspondence} above applies to the perturbed representation variety $R^w_{\{ \iota_k, \chi_k \}}(Y)$. 
\end{remark}

%
\bigskip

\section{Nested holonomy perturbations on a thickened torus}\label{section nested}
In this subsection we place ourselves in the situation of a particular 3-manifold, namely $M=[0,1] \times S^1 \times S^1$, a thickened torus. We suppose $E \to M$ is the trivial $SU(2)$-bundle. We will now study a family of holonomy perturbations with nested support in $M$. Such perturbations have also been studied recently by Herald and Kirk \cite{Herald-Kirk}, but in a different perspective. 
\\

Let $\Sigma = [0,1] \times S^1$ be the 2-dimensional annulus. For $k = 0, \dots, n-1$, let 
\[
	A_k = \begin{pmatrix} a_k & c_k \\ b_k & d_k \end{pmatrix} \in \text{\em SL}(2,\Z) \, ,
\]
be a sequence of matrices. We consider the sequence of embeddings $
	\iota_k \colon \Sigma \times S^1 \to M
$
given by 
\begin{equation} \label{holonomy perturbation directions}
\begin{split}
	\iota_k\colon [0,1] \times S^1 \times S^1 & \to M \\
		\left(t,\begin{pmatrix} z \\ w \end{pmatrix}\right) & \mapsto \left(\frac{k + t}{n} , \begin{pmatrix} a_k & c_k \\ b_k & d_k \end{pmatrix} \begin{pmatrix} z \\ w \end{pmatrix} \right) \, , 
\end{split}
\end{equation}
where we understand $S^1 = \R / \Z$, so that the matrix multiplication is understood in the usual sense. 

For $k = 0, \dots, n-1$, we also consider class functions $\chi_k\colon SU(2) \to \R$ corresponding to even, $2 \pi$-periodic  functions $g_k\colon \R \to \R$ determined by
\begin{equation}\label{class functions}
\chi_k \left(\begin{bmatrix} e^{it} & 0 \\ 0 & e^{-it} \end{bmatrix} \right) = g_k(t) \, ,
\end{equation}
 and we  assume that $\mu$ is a 2-form on $\Sigma$ with support in the interior of $\Sigma$ and integral $1$. We denote by $f_k:= g_k'$ the derivative of $g_k$, so that $f_k$ is a $2 \pi$-periodic odd function. 

Let $z_0 = (x_0,y_0) \in S^1 \times S^1$ be some chosen base point. For $k = 0, \dots, n$ we will consider the closed curves
\begin{equation}\label{curves tori}
\begin{split}
       m_k & = \left\{\frac{k}{n} \right\} \times S^1 \times \{y_0\} \hspace{1cm} \text{ and} \\
       l_k & = \left\{\frac{k}{n} \right\} \times \{x_0\} \times S^1.
\end{split}
\end{equation}

\begin{prop} \label{composed shearing maps from holonomy}
Let $A$ be an $SU(2)$-connection on the trivial bundle over the thickened torus $M=[0,1] \times S^1 \times S^1$ satisfying the perturbed flatness equation 
\begin{equation} \label{perturbed flatness equation}
F_A = \sum_{k=0}^{n-1} \chi'_k (\operatorname{Hol}_{\iota_k}(A)) \, \mu_k \, ,
\end{equation}
associated to the holonomy perturbation data $\{ \iota_k, \chi_k \}$ determined by equations (\ref{holonomy perturbation directions}) and (\ref{class functions}) above. Then $A$ is reducible, and up to a gauge-transformation we may suppose that $A$ respects the splitting $E = M \times (\C \oplus \C)$. 
If we write the holonomies along the curves $m_k, l_k$ as
\begin{equation*} 
\begin{split}
	\operatorname{Hol}_{m_k}(A)  = \begin{bmatrix} e^{i \alpha_k} & 0 \\ 0 & e^{-i \alpha_k} \end{bmatrix}, \hspace{0,2cm} \text{and} \hspace{0,2cm} 
	\operatorname{Hol}_{l_k}(A)  = \begin{bmatrix} e^{i \beta_k} & 0 \\ 0 & e^{-i \beta_k} \end{bmatrix},
\end{split}
\end{equation*}
for $k = 0, \dots, n$, then we have the relationships
\begin{equation} \label{holonomy map shearing 2}
 \begin{pmatrix} \alpha_{k+1} \\ \beta_{k+1} \end{pmatrix} = \begin{pmatrix} \alpha_{k} \\ \beta_{k} \end{pmatrix} 
		+ f_k ( -b_k\,  \alpha_k + a_k \, \beta_k)  \begin{pmatrix} a_k \,  \\ b_k  \end{pmatrix}
\end{equation}
for $k=0, \dots, n-1$. 
Here $f_k$ is the $2 \pi$-periodic odd function which is the derivative of the function $g_k$ associated to the class function $\chi_k$ by equation (\ref{class functions}) above. 
\end{prop}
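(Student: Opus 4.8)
The plan is to argue \emph{inductively} on the nested regions, reducing everything to a local analysis on each slab $\iota_k([0,1]\times S^1\times S^1)$. First I would analyse the structure of a solution $A$ to the perturbed flatness equation. On the complement of the (closed) supports of the perturbation 2-forms $\mu_k$, the equation reads $F_A=0$, so $A$ is genuinely flat there; between consecutive perturbation regions the thickened torus $[\frac{k}{n},\frac{k+1}{n}]\times S^1\times S^1$ carries a flat $SU(2)$-connection, whose holonomy representation $\pi_1(T^2)=\Z^2\to SU(2)$ has abelian image. Hence on each such region $A$ is reducible; I would then check that the reducibility propagates across the perturbation slabs as well, because the curvature on the slab $\iota_k$ is $\chi_k'(\operatorname{Hol}_{\iota_k}(A))\,\mu_k$, which takes values in the Lie subalgebra generated by the holonomy around $\iota_z$ — a single abelian direction — so by a gauge transformation (constant in the $S^1\times S^1$ directions, varying in $t$) one can put $A$ simultaneously in diagonal form $E=M\times(\C\oplus\C)$ over all of $M$. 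This gives the first assertion. The care needed here is exactly the point flagged in Remark \ref{re:hol correspondence}: one must track the lift to $\mathscr{G}$ versus $\overline{\mathscr{G}}$, but since we only claim reducibility up to gauge this is harmless.

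Next, once $A$ respects the splitting, it is a pair of $U(1)$-connections (dual to each other), so the holonomy computation becomes abelian: holonomy around a loop is $\exp$ of the integral of the (real, $\u(1)$-valued) connection form. I would then compute how the holonomies of $m_k,l_k$ change as $t$ runs through the $k$-th slab. Outside the support of $\mu$ within the slab the connection is flat, so only the passage through $\operatorname{supp}(\mu)$ contributes; using $\int_\Sigma\mu=1$ together with the explicit linear reparametrisation $\iota_k$ given by the matrix $A_k=\begin{pmatrix}a_k&c_k\\ b_k&d_k\end{pmatrix}$, the curve $\iota_z$ that the perturbation runs the holonomy around is, in the $(z,w)$ coordinates, the image under $A_k$ of a generator, i.e.\ it represents the class $(a_k,b_k)$ in $\pi_1(T^2)$ relative to the original framing $(m,l)$. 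Therefore the holonomy $\operatorname{Hol}_{\iota_k}(A)$ is conjugate to $\operatorname{diag}(e^{i(a_k\alpha_k+b_k\beta_k)},\dots)$ — here a sign/orientation convention enters and I expect it to come out as $-b_k\alpha_k+a_k\beta_k$ after matching $\mathrm{SL}(2,\Z)$-conventions with the pillowcase coordinates as in the introduction — and the perturbation adds a curvature concentrated on the 2-cycle Poincaré-dual to the $\iota_z$-direction, whose effect on the two boundary holonomies is to add $f_k$ of that argument, weighted by the components $(a_k,b_k)$ of the perturbation loop's homology class. Assembling the contribution of the $k$-th slab yields precisely equation (\ref{holonomy map shearing 2}); concatenating $k=0,\dots,n-1$ then relates the two boundaries.

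The main obstacle, I expect, is the bookkeeping in the middle step: pinning down the precise linear functional of $(\alpha_k,\beta_k)$ that appears as the argument of $f_k$, and the precise vector multiplying it, with all signs consistent with the $\mathrm{SL}(2,\Z)$-action on the pillowcase coordinates and with the orientation of $S^1\times S^1$. This amounts to carefully dualising: the loop carrying the perturbation has homology class $(a_k,b_k)^{T}$ (the first column of $A_k$), its holonomy sees the linear functional given by the symplectic pairing with that class — i.e.\ $(\alpha_k,\beta_k)\mapsto -b_k\alpha_k+a_k\beta_k$ — and the resulting curvature 2-form, integrated against the dual cycles of $m_k$ and $l_k$, reproduces the vector $(a_k,b_k)^{T}$. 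Everything else — the reducibility, the diagonalisation, the abelian holonomy integral, and the telescoping over the $n$ slabs — is routine once Proposition \ref{holonomy perturbations of CS} and the standard flat-connection-on-$T^2$ facts are in hand.
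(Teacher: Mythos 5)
Your treatment of the boundary holonomy relation --- reduce to a pair of $U(1)$-connections, compute the jump of the boundary holonomies by integrating the curvature against $\int_\Sigma\mu=1$, identify the argument of $f_k$ and the shearing direction from the homology class of the perturbation loop, and telescope over the $n$ slabs --- is essentially the paper's own argument (the paper does this via Stokes' theorem on the annulus $\Sigma$), and you are right that the only delicate point there is matching $\mathrm{SL}(2,\Z)$ and orientation conventions. The genuine gap is in the reducibility step, at exactly the point the paper works hardest on. You assert that on the $k$-th slab the curvature $\chi_k'(\operatorname{Hol}_{\iota_k}(A))\,\mu_k$ takes values in ``a single abelian direction,'' and propose to diagonalise by a gauge transformation depending only on the normal coordinate. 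Pointwise the curvature at $\iota_k(z,t)$ is indeed a multiple of $\chi_k'(\operatorname{Hol}_{\iota_z}(A))$, which lies in the one-dimensional subalgebra determined by the holonomy around the single circle $\iota_z$; but for varying $z\in\Sigma$ these holonomies are a priori non-commuting elements of $SU(2)$, so there is no single direction, and flatness of $A$ near the boundary tori of the slab does not by itself propagate a splitting through the support of $\mu_k$.

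The missing idea is that the section $z\mapsto\operatorname{Hol}_{\iota_z}(A)$ of the endomorphism bundle is \emph{covariant constant} with respect to $A$. This follows from the observation that $\mu_k$ pulls back to zero on every annulus $\gamma\times S^1\subseteq\Sigma\times S^1$ swept out by a path $\gamma$ in $\Sigma$ and the holonomy circle, so $\iota_k^*A$ restricted to each such annulus is genuinely flat; flatness there forces the fibrewise holonomies at different $z$ to be conjugate via parallel transport along $\gamma$. Only then do the eigenspaces of the holonomy section (in the non-central case; the central case forces the perturbation term, hence the curvature, to vanish) form an $A$-parallel line bundle and yield the global splitting $E=M\times(\C\oplus\C)$ on which your abelian computation rests. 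As written, the diagonalisation --- and with it the entire second half of your argument --- is unfounded without this lemma.
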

\begin{proof}
The connection $A$ satisfying equation (\ref{perturbed flatness equation}) must be reducible. This is proved in \cite[Lemma 4]{Braam-Donaldson}. For the sake of self-containedness, we will give a slightly different, although not essentially different proof here, and we provide more details than the mentioned reference. For simplicity, we assume that $n=1$ for the moment. 

We pull back $A$ via the map $\iota_1$ to obtain the connection $\tilde{A} := \iota_1^*A$ on $\Sigma \times S^1 = [0,1] \times S^1 \times S^1$. We claim that the section $T(\tilde{A}): (p,z) \mapsto \Hol_{p \times S^1}(\tilde{A})$ -- the holonomy of $\tilde{A}$ along the path $p \times S^1$, starting at $(p,z)$ -- is a covariant constant section of the endomorphism bundle over $\Sigma \times S^1$, with respect to $\tilde{A}$. In fact, by the definition of parallel transport, this section $T(\tilde{A})$ is covariant constant with respect to $\tilde{A}$ in the directions parallel to the paths $p \times S^1$, for any $p \in \Sigma$. 

To see that it is covariant constant in the directions parallel to $\Sigma$, we fix some arbitrary point $p_0 \in \Sigma$, and we choose an {\em embedded} path $\gamma:[0,1] \to \Sigma$ from $\gamma(0) = p_0$ to $\gamma(1) = p_1$. The covariant derivative of $T(\tilde{A})$ in the direction $\dot{\gamma}(0)$ only depends on the restriction of $\tilde{A}$ to the annulus $\gamma([0,1]) \times S^1 \subseteq \Sigma \times S^1$. We denote by $i:\gamma([0,1])\times S^1 \to \Sigma \times S^1$ the inclusion map. Now the crucial point is that the restriction $i^*\tilde{A}$ of the connection $\tilde{A}$ to this annulus has vanishing curvature. This follows from equation (\ref{perturbed flatness equation}), because the curvature behaves functorially under pull-back, and the pull-back of the form $\mu_1$ by the map $i$ vanishes since $\mu_1$ is constructed from the 2-form $\mu$ on $\Sigma$ by first pulling back to the product $\Sigma \times S^1$ and then pushing forward via $\iota_1$. 

As the curvature of $i^* \tilde{A}$ vanishes, it follows from standard facts that the parallel transports in $E \to \Sigma \times S^1$ first via the path $\gamma$ from $p_0 = \gamma(0)$ to $p_1=\gamma(1)$, and then via the loop $l_{p_1} = p_1 \times S^1$, has the same effect as the parallel transport first via the loop $l_{p_0} = p_0 \times S^1$, and then via the path $\gamma$ from $p_0$ to $p_1$. From the definition of the holonomy map $(p,z) \to \Hol_{p \times S^1}(\tilde{A})$ as the effect of parallel transport, it follows 
that it is covariant constant in the directions of $\Sigma$, at any point $(p,z) \in \Sigma \times S^1$. Therefore the section $T(\tilde{A})$ is covariant constant on $\Sigma \times S^1$. 

If $T(\tilde{A})$ were $\pm \id$ at some point, it would be $\pm \id$ everywhere, and then $\chi_1'(T(\tilde{A})) \equiv 0$ since $\chi_1$ is a class function, and so $\chi_1'(\Hol_{\iota_1}(A)) \equiv 0$. Equation (\ref{perturbed flatness equation}) then implies that the curvature $F_A$ vanishes, and as the fundamental group of $\Sigma \times S^1$ is abelian, it implies that the connection $A$ is reducible. 

If $T(\tilde{A})$ is not the $\pm \id$ at some point, it isn't so anywhere. $T(\tilde{A})$ is a special unitary automorphism in each fibre, so if it isn't $\pm \id$, it has two distinct eigenvalues $\lambda, \overline{\lambda}$. It is then standard to check that the $\lambda$-eigenspaces form a line bundle which is invariant under $\tilde{A}$. Hence $\tilde{A}$ and $A$ is reducible in this case also. 

The proof of reducibility for arbitrary $n$ now just follows along the same lines as above. 
	\\

Up to a gauge transformation, we may therefore suppose that 
\begin{equation} \label{holonomies on tori}
\begin{split}
	\operatorname{Hol}_{m_k}(A) & = \begin{bmatrix} e^{i \alpha_k} & 0 \\ 0 & e^{-i \alpha_k} \end{bmatrix} , \\ 
	\operatorname{Hol}_{l_k}(A) & = \begin{bmatrix} e^{i \beta_k} & 0 \\ 0 & e^{-i \beta_k} \end{bmatrix} 
\end{split}
\end{equation}
in the trivial bundle $E$ over $M$, for $k = 0, \dots, n$. The pairs $(\alpha_k,\beta_k) = (\alpha_k(A),\beta_k(A))$ are determined by $A$ up to sign, and up to addition of some pair $(n,m) \in 2 \pi \Z \times 2 \pi \Z$, as other such choices correspond to equal or conjugate holonomies.

The annulus $\Sigma$ has oriented boundary given by the two embedded circles $m_{\pm}\colon S^1 \to \partial \Sigma$, where $m_-$ runs around $\{0\} \times S^1$ in the orientation opposite to the boundary orientation, and $m_+$ runs around $\{1\} \times S^1$ in the sense of the boundary orientation. Similarly, we denote by $l_+$ the curve $\{1\} \times \{x_0\} \times S^1$, and by $l_-$ the curve $\{0\} \times \{x_0\} \times S^1$. For the connection $\tilde{A} = \iota_k^*A$ (for some fixed $k$ which we suppress from notation for the moment) we denote by 
\begin{equation*}
	\operatorname{Hol}_{m_\pm}(\tilde{A})  = \begin{bmatrix} e^{i \alpha_\pm} & 0 \\ 0 & e^{-i \alpha_\pm} \end{bmatrix}  
\end{equation*} 
the holonomy along the curves $m_\pm$. For any curve $l = \{ p_0 \} \times S^1$ we denote by 

\begin{equation*}
	\operatorname{Hol}_{l}(\tilde{A})  = \begin{bmatrix} e^{i \beta} & 0 \\ 0 & e^{-i \beta} \end{bmatrix} 
\end{equation*} 
its holonomy. In fact, as $A$ satisfies equation (\ref{perturbed flatness equation}), we have seen in the proof of reducibility of $A$ that the holonomy of $\tilde{A}$ does not depend on the choice of $p_0$. 

By \cite[Lemma 4]{Braam-Donaldson}, we must have 
\begin{equation}\label{eq:holonomy condition}
	\alpha_+ - \alpha_- = f_k(\beta) \, .
\end{equation}	
Again, for the sake of completeness, we provide a proof of this fact. In the above trivialisation the connection $\tilde{A}$ may be written as 
\[
	\tilde{A} = d + \begin{bmatrix}a & 0 \\ 0 & -a \end{bmatrix},
\]
with some imaginary valued one-form $a$ on $\Sigma \times S^1$. Here $d$ denotes the connection which in the trivial bundle is given by the exterior derivative. The curvature of $\tilde{A}$ is given by 
\[
	F_{\tilde{A}} =  \begin{bmatrix} da & 0 \\ 0 & -da \end{bmatrix}.
\]
The $\su(2)$-valued function $\chi_k'$ is the trace dual to $d \chi_k$, and this implies that we have 
\begin{equation}\label{derivative of chi}
	\chi_k'(\Hol_{l}(\tilde{A})) = \chi_k'\left(\begin{bmatrix} e^{i \beta} & 0 \\ 0 & e^{-i \beta} \end{bmatrix}\right) = \begin{bmatrix} i g_k'(\beta) & 0 \\ 0 & -i g_k'(\beta) \end{bmatrix}. 
\end{equation}
On the other hand it is easy to check from the definition of parallel transport that we have
\begin{equation*}
	\operatorname{Hol}_{m_\pm}(\tilde{A})  = \begin{bmatrix} \exp(\int_{m_\pm} a) & 0 \\ 0 & \exp(-\int_{m_\pm} a) \end{bmatrix} . 
\end{equation*} 
Now Stokes' theorem implies that 
\begin{equation*}
	\int_{m_+}a - \int_{m_-}a = \int_\Sigma da = \int_\Sigma i \, g_k'(\beta) \, \mu = i \, g_k'(\beta) \, ,
\end{equation*}
where in the second equation we have used equation (\ref{perturbed flatness equation}) together with the expression in (\ref{derivative of chi}) above. As $\int_{m_\pm} a = i \alpha_\pm$, the expression (\ref{eq:holonomy condition}) follows. 

Denoting $\beta_\pm:= \beta$, we can write the relationship (\ref{eq:holonomy condition}) also in the form
\begin{equation}\label{holonomy map}
	\chi_{f_k}\colon \begin{pmatrix} \alpha_- \\ \beta_- \end{pmatrix} \mapsto \begin{pmatrix} \alpha_+ \\ \beta_+ \end{pmatrix} = 
\begin{pmatrix} \alpha_- + f_k(\beta_-) \\ \beta_- \end{pmatrix} .
\end{equation} 

As the connection $A$ is flat on the tori $\{ k \} \times S^1 \times S^1$, its holonomy around loops in these tori only depends on the homology class of the loops. In particular, the holonomy of a loop homologous to $p \, m_k + q \, l_k$ is given by $\Hol_{m_k}(A)^p \circ \Hol_{l_k}(A)^q$.

Combining the map (\ref{holonomy map}) together with the relationship of the curves on the tori $\{\frac{k}{n} \} \times S^1 \times S^1$, $\{\frac{k+1}{n} \} \times S^1 \times S^1$  with the curves $m_\pm$ and $l_\pm$ induced by $\iota_k$, we obtain 
\begin{equation}\label{holonomy maps shearings 1}
	 \begin{pmatrix} \alpha_{k+1} \\ \beta_{k+1} \end{pmatrix} 
	 	= (A_k \circ \chi_{f_k} \circ A_k^{-1}) \begin{pmatrix} \alpha_{k} \\ \beta_{k} \end{pmatrix}.
\end{equation}
A straightforward computation now yields 
\begin{equation} \label{holonomy map shearings 2}
\begin{pmatrix} \alpha_{k+1} \\ \beta_{k+1} \end{pmatrix}
	= \begin{pmatrix} \alpha_{k} \\ \beta_{k} \end{pmatrix} 
		+ f_k ( -b_k\,  \alpha_k + a_k \, \beta_k)  \begin{pmatrix} a_k \\ b_k  \end{pmatrix} .
\end{equation}
\end{proof}

The maps $\chi_{f_k}$ defined in (\ref{holonomy map}) and the associated maps 
\begin{equation}\label{holonomy map 2} 
\zeta_k \colon= A_k \circ \chi_{f_k} \circ A_k^{-1}
\end{equation}
 appearing in equation (\ref{holonomy maps shearings 1})
are maps known as {\em shearings}. 

\begin{remark}
	In this article, there are many two-dimensional tori appearing: Those that come as boundaries or splitting hypersurfaces of various 3-manifolds, and those which appear as  $\R^2/2\pi\Z^2$ for the ``holonomy coordiantes''. As a hint to avoid confusion, all approximation results through shearing maps later on take place in the ``holonomy coordinate torus''. 
\end{remark}

\begin{defprop}[Shearing maps of the torus]
  Let $f\colon \R \to \R$ be a $2\pi$-periodic smooth function, and let $v \in \R^2$ be given. Let 
  \[
  	l\colon \R^2 \to \R
  \]
be a linear form which maps the lattice $2 \pi \Z^2$ to $2 \pi \Z$ and which contains the vector $v$ in its kernel. Then the map 
\begin{equation} \label{def shearing}
\begin{split}
	 \phi\colon \R^2 & \to \R^2 \\
		(\alpha,\beta) & \mapsto (\alpha,\beta) + 
		f(l((\alpha,\beta))) \cdot v
\end{split}
\end{equation} 
descends to a well-defined diffeomorphism $T^2 \to T^2$ that we also denote by $\phi$, and that we call a {\em shearing} in direction $v \in \R^2$ associated to the function $f$ and the linear function $l$. 
\end{defprop}
\begin{example}
The map $\zeta_k$ defined by equation (\ref{holonomy map 2}) above is a shearing in direction $v = (a_k,b_k) \in \Z^2$ associated to the $2\pi$-periodic function $f_k$ and the linear form $l$ given by taking the standard inner product with the vector $w_k = (-b_k,a_k) \in \Z^2$ which is orthogonal to $v$, 
\[l\colon u \mapsto \langle u, w \rangle \ . 
\]
\end{example} 

\begin{remark}
We notice that shearing maps are non-local, and therefore come with a certain rigidity: If a point $(\alpha,\beta) \in T^2$ is mapped by a shearing in direction $v \in \R^2$ to $(\alpha,\beta) + c \cdot v$   then any point $(\alpha,\beta) + d \cdot v$ is mapped to $(\alpha,\beta) + (d + c) \cdot v.$ In other words, any point on the circle or line $(\alpha,\beta) + \R \cdot v$ is moved by the same amount $c$ in direction $v$ along that line. For instance, an isotopy with support in a small ball in $T^2$ can never be realised by a shearing.
\end{remark}

Proposition \ref{composed shearing maps from holonomy} can be restated by saying that solutions $A$ to equation (\ref{perturbed flatness equation}) have holonomies at the boundaries which are related by the finite sequence of shearings $\zeta_k$ defined in (\ref{holonomy map 2}), up to a gauge transformation. 
There is a straightforward converse of Proposition \ref{composed shearing maps from holonomy}.

\begin{prop}\label{shearings to perturbations}
Given shearing maps $\phi_k\colon T^2 \to T^2$ in directions
\[
v_k = \begin{pmatrix} a_k \\ b_k \end{pmatrix} \in \Z^2 
\]
and associated to $2 \pi$-periodic {\em odd} functions $f_k\colon \R \to \R$ and linear forms $l_k (u) = \langle u, w_k \rangle$ with $w_k = (-b_k, a_k)$ for $k=0, \dots, n-1$, there is some holonomy perturbation data $\{ \iota_k, \chi_k \}_{k=0}^{n-1}$ such that the class functions $\chi_k$ associated to even $2\pi$-periodic functions $g_k$ according to equation (\ref{class functions}) satisfy 
\[
	g_k' = f_k \, , 
\]
and such that the embeddings $\iota_k$ are related to matrices $A_k$ as in (\ref{holonomy perturbation directions}) above, with the following significance: If $[A]$ solves the perturbed flatness equation (\ref{perturbed flatness equation}), then the resulting shearing maps $\zeta_k$, relating the holonomies as in equation (\ref{holonomy map 2}), coincide with $\phi_k$, for $k = 0, \dots, n-1$. 
\end{prop}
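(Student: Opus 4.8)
The plan is to reverse the construction carried out in the proof of Proposition~\ref{composed shearing maps from holonomy}: that proposition takes holonomy perturbation data $\{\iota_k,\chi_k\}$ and, via equations (\ref{holonomy perturbation directions}) and (\ref{class functions}), produces shearing maps $\zeta_k = A_k \circ \chi_{f_k} \circ A_k^{-1}$ with $f_k = g_k'$; here we are simply handed the $\zeta_k$ (renamed $\phi_k$) together with their defining data $v_k = (a_k,b_k)$, $f_k$, $l_k(u) = \langle u, w_k\rangle$, $w_k = (-b_k,a_k)$, and we must exhibit the data that maps to them. So the proof is essentially a matter of unwinding definitions and checking that each ingredient of $\{\iota_k,\chi_k\}$ can be chosen to satisfy the required constraints.

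First I would produce the matrices $A_k \in \text{\em SL}(2,\Z)$: since $\gcd(a_k,b_k)$ divides any vector of the lattice $2\pi\Z^2$ sent by $l_k$ into $2\pi\Z$, and $l_k(v_k) = \langle v_k, w_k\rangle = 0$ by the orthogonality $w_k \perp v_k$, the hypothesis that $\phi_k$ is a well-defined shearing on $T^2$ forces $v_k$ to be primitive (equivalently $\gcd(a_k,b_k)=1$); hence there exist $c_k,d_k \in \Z$ with $a_k d_k - b_k c_k = 1$, and we set $A_k = \begin{pmatrix} a_k & c_k \\ b_k & d_k \end{pmatrix} \in \text{\em SL}(2,\Z)$, so that $A_k$ has first column $v_k$ and $A_k^{-1} = \begin{pmatrix} d_k & -c_k \\ -b_k & a_k \end{pmatrix}$ has second row $w_k = (-b_k, a_k)$. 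This is precisely the matrix for which the computation in Proposition~\ref{composed shearing maps from holonomy} (the passage from (\ref{holonomy maps shearings 1}) to (\ref{holonomy map shearings 2})) yields $\zeta_k(\alpha,\beta) = (\alpha,\beta) + f_k(-b_k\alpha + a_k\beta)(a_k,b_k) = (\alpha,\beta) + f_k(l_k(\alpha,\beta))\, v_k = \phi_k(\alpha,\beta)$.

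Next I would choose the class functions. Since each $f_k$ is $2\pi$-periodic and odd, it has a primitive $g_k(t) = \int_0^t f_k$ which is $2\pi$-periodic (the integral over a full period vanishes because $f_k$ is odd) and even (primitive of an odd function), so by equation (\ref{class functions}) there is a genuine class function $\chi_k\colon SU(2)\to\R$ with $\chi_k(\mathrm{diag}(e^{it},e^{-it})) = g_k(t)$, and $g_k' = f_k$ as required. Then I would set up the embeddings: take $\Sigma = [0,1]\times S^1$ the annulus with its $2$-form $\mu$ of integral $1$ and compact support in the interior, and define $\iota_k$ by formula (\ref{holonomy perturbation directions}) using the $A_k$ just constructed; disjointness of the interiors of the images of the $\iota_k$ holds automatically because the images lie in the slabs $[\tfrac{k}{n},\tfrac{k+1}{n}]\times S^1\times S^1$. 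With this data, Proposition~\ref{composed shearing maps from holonomy} applies verbatim: any $[A]$ solving (\ref{perturbed flatness equation}) is reducible, its boundary holonomies satisfy (\ref{holonomy map shearings 2}), and the resulting shearing $\zeta_k$ of (\ref{holonomy map 2}) is the map computed above, namely $\phi_k$.

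The one genuine point requiring care — the "main obstacle," though it is minor — is the primitivity observation: one must check that the standing hypothesis "$\phi_k\colon T^2\to T^2$ is a (well-defined) shearing map associated to $l_k$ and $v_k$" already forces $\gcd(a_k,b_k)=1$, for otherwise no integral $A_k$ with first column $v_k$ exists in $\text{\em SL}(2,\Z)$. This follows from the Definition and Proposition of shearing maps: the linear form $l_k(u)=\langle u,w_k\rangle$ must send $2\pi\Z^2$ to $2\pi\Z$, i.e. $w_k = (-b_k,a_k)\in\Z^2$, and descending to a diffeomorphism of $T^2 = \R^2/2\pi\Z^2$ requires that translation by $f_k(l_k(\cdot))v_k$ respect the lattice; combined with $v_k\in\Z^2$ lying in $\ker l_k$, a short argument gives that $v_k$ must be primitive. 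Everything else is bookkeeping: matching $g_k'=f_k$, matching the direction vectors, and invoking Proposition~\ref{composed shearing maps from holonomy} for the conclusion about solutions of the perturbed flatness equation. \qed
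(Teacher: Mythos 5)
Your proposal is correct and is exactly the argument the paper intends: the paper states this proposition with an empty proof, presenting it as the ``straightforward converse'' of Proposition \ref{composed shearing maps from holonomy}, and your construction (complete $v_k$ to a matrix $A_k \in \text{SL}(2,\Z)$, let $g_k$ be the even $2\pi$-periodic primitive of the odd function $f_k$, define $\iota_k$ by (\ref{holonomy perturbation directions}), and invoke Proposition \ref{composed shearing maps from holonomy}) is that converse. One caveat on the point you single out: well-definedness of $\phi_k$ on $T^2$ does \emph{not} force $v_k$ to be primitive --- for instance $v=(2,0)$ with $l(u)=\langle u,(0,2)\rangle$ and any $2\pi$-periodic $f$ gives a perfectly good shearing of $T^2$ --- so your ``short argument'' does not exist; primitivity should instead be read as an implicit hypothesis (it is forced if one insists, as the conclusion does, that $\zeta_k=\phi_k$ with the \emph{same} function $g_k'=f_k$), or one first rewrites a non-primitive shearing in primitive direction by rescaling $f_k$, which preserves $2\pi$-periodicity and oddness.
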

\qed

\begin{remark}
We also notice that shearing maps are area-preserving for the area form associated to the standard Euclidean structure on $\R^2$ respectively $T^2 = \R^2 / 2\pi \Z^2$. 
\end{remark}

In the next section we will show that compositions of shearing maps are $C^0$-dense in the space of area-preserving maps of the 2-dimensional torus.

\section{Approximation of area-preserving isotopies by shearing isotopies}\label{approximation}
On the two-dimensional torus $T^2 = \R^2 / \Z^2$ we have the hyperelliptic involution given by $\tau(x,y) = (-x,-y)$. It has four fixed points, and its quotient is the {\em pillowcase}, a topological 2-sphere with four distinguished points, the fixed points of the involution. When we speak of the $\Z/2$-action on $T^2$, we shall mean this involution. 

\begin{definition}
	Let $X$ be a smooth vector field on $T^2$ that we naturally identify with a map $X\colon T^2 \to \R^2$. We shall say that $X$ is a vector field of shearing type if there is a direction $v \in \R^2$, a linear map $l\colon \R^2 \to \R$ containing $v$ in its kernel, and which takes the lattice $2 \pi \Z^2$ to $2 \pi \Z$, and a smooth $2\pi$-periodic function $f\colon \R \to \R$ such that we have 
	\[X(x,y) = f(l((x,y))) \cdot v\]
 for all $(x,y)$. 
\end{definition}
\begin{remark}
	If $X$ is a shearing vector field, then the associated map $(x,y) \mapsto (x,y) + X(x,y)$ is a shearing map, and conversely, every shearing map is of such type. 	
\end{remark}

\begin{theorem} \label{approximation by shearings}
	Let 
	\begin{equation*}
	\begin{split}
	\psi\colon [0,1] \times T^2 & \to T^2 \\
			(t,(x,y)) & \mapsto \psi^t(x,y)
	\end{split}
	\end{equation*}
	 be a $\Z/2$-equivariant smooth isotopy through area-preserving maps (which necessarily fixes the four fixed points of the hyperelliptic involution.)
	Then for any $\epsilon > 0$, there is a $\Z/2$-equivariant 
	map
	\begin{equation*}
	\begin{split}
	\phi\colon [0,1] \times T^2 & \to T^2 \\
			(t,(x,y)) & \mapsto \phi_t(x,y)
	\end{split}
	\end{equation*}	
	which is continuous in $t$, and smooth in $(x,y)$, such that we have	
	\begin{enumerate}[label=(\roman*)]
	\item
	\begin{equation*}
		d(\psi^t(x,y),\phi_t(x,y)) < \epsilon
	\end{equation*}
	for all $t \in [0,1]$ and all $(x,y) \in T^2$. Here $d$ denotes the metric on $T^2$ coming from the natural Euclidean structure. \\
	\item For each $t \in [0,1]$, the map $\phi_t$ is a finite composition of $\Z/2$-equivariant shearing maps, and there is a sequence $t_0, \dots, t_{n+1} \in [0,1]$ with $0 = t_0 < t_1 < \dots < t_n < t_{n+1} = 1$ such that for all $i = 0, \dots, n$ there is a $\Z/2$-equivariant shearing vector field $W_i\colon T^2 \to \R^2$ with direction $v \in \Z^2$, and we have
	\begin{equation*}
	   \phi_t (x,y) = \phi_{t_i}(x,y) + (t-t_i) \cdot W_i(x,y) \ 
	\end{equation*}
for $t_i \leq t \leq t_{i+1}$, and for all $(x,y) \in T^2$. 
	\end{enumerate}
In other words, the smooth isotopy $(\psi^t)$ can be $C^0$-approximated by isotopies $(\phi_t)$ which are (in the coordinate $t$) piecewise isotopies through shearing isotopies composed with the composition of shearing maps of previous times.
\end{theorem}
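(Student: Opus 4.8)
The plan is to pass from the isotopy to its time-dependent divergence-free generating vector field, and then to approximate that vector field in three layers: freeze the time variable on a fine partition, truncate a Fourier expansion, and split each resulting finite sum of vector fields by a Lie--Trotter type argument. The conceptual point that makes everything work is that each Fourier mode of a divergence-free, $\Z/2$-equivariant vector field on $T$ is exactly a shearing vector field, and that the flow of a shearing vector field is explicitly solvable and \emph{is} a shearing isotopy. To begin, since $(\psi^t)$ is a smooth isotopy with $\psi^0 = \id$, set $X_t := (\partial_t \psi^t) \circ (\psi^t)^{-1}$, a smooth time-dependent vector field whose flow is $\psi^t$. Area-preservation of $\psi^t$ makes $X_t$ divergence-free for all $t$; equivariance $\psi^t \circ \tau = \tau \circ \psi^t$ together with $d\tau = -\id$ on tangent spaces makes $X_t$ odd, $X_t(-p) = -X_t(p)$, so in particular $X_t$ vanishes at the four fixed points and has vanishing mean (zeroth Fourier coefficient) --- which is what removes any need for a ``drift'' translation in the construction.

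Next, fix a partition $0 = s_0 < \dots < s_N = 1$ and let $\Psi^s$, for $s \in [s_j, s_{j+1}]$, be the concatenation of the flows of the time-independent fields $X_{s_0}, \dots, X_{s_{j-1}}$ over their intervals, followed by the flow of $X_{s_j}$ for time $s - s_j$. A Gr\"onwall estimate, using compactness of $T$ and continuity of $t \mapsto X_t$ in $C^1$, shows that for a fine enough partition $\Psi^s$ is within $\epsilon/3$ of $\psi^s$ uniformly in $s$ and on $T$; each factor is area-preserving and $\Z/2$-equivariant because its generator is. Now expand a time-independent field as $X_{s_j}(x) = \sum_{\xi \in \Z^2} c_\xi e^{i\langle \xi, x\rangle}$ with $c_\xi \in \C^2$: reality gives $c_{-\xi} = \overline{c_\xi}$, oddness gives $c_{-\xi} = -c_\xi$ and hence $c_\xi = i a_\xi$ with $a_\xi \in \R^2$, $a_{-\xi} = -a_\xi$, $c_0 = 0$, and $\operatorname{div} X_{s_j} = 0$ gives $\langle \xi, a_\xi \rangle = 0$. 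Therefore the combined $\pm\xi$ term is
\[
	c_\xi e^{i\langle\xi,x\rangle} + c_{-\xi} e^{-i\langle\xi,x\rangle} = -2\, a_\xi \sin\langle \xi, x\rangle ,
\]
and since $a_\xi \perp \xi$ this is a shearing vector field in the sense introduced above: writing $a_\xi = r\, w$ with $w \in \Z^2$ primitive and $r \in \R$, it equals $f(l(x)) \cdot w$ with $l(x) = \langle \xi, x\rangle$ (which contains $w$ in its kernel and sends $2\pi\Z^2$ into $2\pi\Z$) and $f = -2r\sin$, a $2\pi$-periodic odd function.

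Since each $X_{s_j}$ is smooth its Fourier coefficients decay rapidly, so truncating to $|\xi| \leq R$ gives, for large $R$, a divergence-free odd field $X_{s_j}^{(R)}$ that is $C^1$-close to $X_{s_j}$ --- hence its flows are within $\epsilon/3$ of those of $X_{s_j}$ by Gr\"onwall again --- and $X_{s_j}^{(R)}$ is now a \emph{finite} sum $Y_1 + \dots + Y_m$ of shearing vector fields with integer direction. For small time $\delta$, the flow of $Y_1 + \dots + Y_m$ over $[0,\delta]$ differs from $\Theta^\delta_{Y_m} \circ \dots \circ \Theta^\delta_{Y_1}$ by $O(\delta^2)$; subdividing each $[s_j, s_{j+1}]$ finely and iterating this splitting, with errors controlled by a uniform bound on the $\| Y_i \|_{C^1}$, brings the total additional error below $\epsilon/3$. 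Because the flow of a shearing vector field $W(x) = f(l(x)) v$ is the \emph{explicit} map $x \mapsto x + s\, f(l(x))\, v$ --- $l$ being conserved along the flow --- each $\Theta^s_{Y_i}$ is a shearing isotopy, and concatenating all of these over all subintervals produces a map $\phi_t$ which, in the $t$-variable, is a piecewise sequence of shearing isotopies composed with the finitely many shearing maps already accumulated at earlier times; relabelling the breakpoints as $0 = t_0 < t_1 < \dots < t_{n+1} = 1$ and naming $W_i$ the shearing vector field active on $[t_i, t_{i+1}]$ gives the asserted structure. Oddness of every $Y_i$ (its $f$ a multiple of $\sin$) makes each $\Theta^s_{Y_i}$, hence $\phi_t$, $\Z/2$-equivariant; smoothness in $x$ and continuity in $t$ are immediate; and the three $\epsilon/3$ errors add up to $d(\psi^t, \phi_t) < \epsilon$.

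I expect the main difficulty to be uniformity and bookkeeping rather than any single hard estimate: all three approximation layers must be simultaneously uniform in $x \in T$ \emph{and} in the time parameter $t$ (so that the whole isotopy, not merely its time-$1$ map, is approximated), the error estimates must stay controlled even though the number of flow and shearing factors grows as the approximations are refined, and divergence-freeness and $\Z/2$-equivariance must be preserved at every single step. The conceptual heart --- identifying Fourier modes of equivariant divergence-free fields with shearing vector fields, and using the exact solvability of shearing flows --- is by comparison very short.
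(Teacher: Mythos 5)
Your proposal is correct and follows essentially the same three-layer strategy as the paper's proof: freeze the time-dependent divergence-free generator on a fine partition (Gr\"onwall), truncate the Fourier series after observing that each mode of an odd divergence-free field is a shearing vector field with integer direction orthogonal to the frequency, and split the resulting finite sum by a Lie--Trotter argument into explicitly solvable shearing flows, with equivariance preserved at every stage. The only differences are cosmetic (complex versus real Fourier series, and the paper reparametrizes the Trotter factors to run at speed $m$ over time $\delta/m$ so that the isotopy parameter lines up, which is the bookkeeping you defer to the relabelling of breakpoints).
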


\begin{remark}
	The above approximation result is stated for the $C^0$-topology. However, Steven Sivek and the author have meanwhile improved this result to the identical statement with the $C^{k}$-topology on maps $T^2 \to T^2$ for any $k \geq 0$ in \cite{Sivek-Zentner}. 
\end{remark}

The remainder of this section is devoted to the Proof of this theorem. 
The essential ingredient is the following Lemma. 
\begin{lemma}\label{Fourier approximation}
Let $X\colon T^2 \to \R^2$ be a divergence-free vector field,
\[
	\text{div}(X) = \frac{\partial X^1}{\partial x} + \frac{\partial X^2}{\partial y} \equiv 0 \, . 
\]
Then for any $\epsilon > 0$ there is a finite sequence of shearing vector fields $(X_i)_{i=0, \dots, m}$ with directions $v_i \in \Z^2$ such that we have 
\[
	\| X - \sum_{i=0}^{m} X_i \|_{L^\infty(T^2)} < \epsilon \, . 
\]
If $X$ is $\Z/2$-equivariant, then the vector fields $X_i$ can be chosen to be $\Z/2$-equivariant as well. 
\end{lemma}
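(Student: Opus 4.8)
The plan is to Fourier-expand $X$ on the torus $T^2=\R^2/2\pi\Z^2$ and to observe that, \emph{because} $X$ is divergence-free, each non-constant Fourier mode of $X$ is already a shearing vector field in the sense of the definition preceding the Lemma. Truncating the Fourier series then produces the desired finite approximating sum, and the smoothness of $X$ controls the truncation error in $L^\infty$.

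Concretely, I would write $X=(X^1,X^2)$ with $X^j(x,y)=\sum_{(m,n)\in\Z^2}c^j_{m,n}\,e^{i(mx+ny)}$, where $c^j_{-m,-n}=\overline{c^j_{m,n}}$ since $X^j$ is real. The condition $\mathrm{div}(X)=0$ becomes, mode by mode, $m\,c^1_{m,n}+n\,c^2_{m,n}=0$. For $(m,n)\neq(0,0)$ this forces $(c^1_{m,n},c^2_{m,n})=\lambda_{m,n}\,(n,-m)$ for some scalar $\lambda_{m,n}\in\C$, with $\lambda_{-m,-n}=-\overline{\lambda_{m,n}}$; so, grouping the modes $\pm(m,n)$, their combined (real) contribution to $X$ is
\[
W_{m,n}(x,y)=f_{m,n}(mx+ny)\cdot(n,-m)\,,\qquad f_{m,n}(s):=2\,\mathrm{Re}\!\left(\lambda_{m,n}e^{is}\right)\,,
\]
and $f_{m,n}\colon\R\to\R$ is smooth and $2\pi$-periodic. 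This $W_{m,n}$ is visibly a shearing vector field: direction $v=(n,-m)\in\Z^2$, linear form $l(x,y)=mx+ny$, which contains $v$ in its kernel and sends $2\pi\Z^2$ into $2\pi\Z$. Note that on the torus a divergence-free field need not be Hamiltonian: the left-over $(0,0)$-mode is the constant (harmonic) vector field $C=(c^1_{0,0},c^2_{0,0})$, which I would split as $c^1_{0,0}(1,0)+c^2_{0,0}(0,1)$ — each summand being a shearing vector field with a \emph{constant} function $f$ (e.g.\ $c^1_{0,0}(1,0)$ has direction $(1,0)$ and linear form $l(x,y)=y$). Finally I let $X_N$ be the partial sum over $\{\,m^2+n^2\le N^2\,\}$; since $X$ is smooth, its Fourier coefficients decay faster than any power of $(1+\abs m+\abs n)^{-1}$, so each $\sum_{(m,n)}\abs{c^j_{m,n}}$ is finite and the Fourier series converge absolutely and uniformly, whence $\norm{X-X_N}_{L^\infty(T^2)}<\epsilon$ for $N$ large. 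By the above, $X_N=C+\sum W_{m,n}$ (one representative per pair $\{(m,n),(-m,-n)\}$) is a finite sum of shearing vector fields with directions in $\Z^2$, as required.

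For the $\Z/2$-equivariant statement, equivariance of $X$ reads $c^j_{-m,-n}=-c^j_{m,n}$ on Fourier coefficients; in particular $c^j_{0,0}=0$, so the harmonic part $C$ drops out, and together with $c^j_{-m,-n}=\overline{c^j_{m,n}}$ this forces each $\lambda_{m,n}$ to be purely imaginary, hence each $f_{m,n}$ to be an \emph{odd} function of the form $s\mapsto-2r_{m,n}\sin s$. Since $l(-x,-y)=-l(x,y)$, oddness of $f_{m,n}$ makes every $W_{m,n}$ equivariant, so each truncation $X_N$ is a sum of $\Z/2$-equivariant shearing fields. I do not anticipate a serious obstacle: the only point that truly requires care is this equivariance bookkeeping (and the concomitant disappearance of the harmonic part); everything else rests on the standard fact that partial Fourier sums of a smooth function converge in $C^0$, together with the elementary observation that a divergence-free Fourier mode points in the integer direction orthogonal to its frequency.
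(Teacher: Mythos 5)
Your proposal is correct and follows essentially the same route as the paper: expand $X$ in Fourier series, observe that the divergence-free condition forces each non-constant mode to point in the integer direction orthogonal to its frequency vector (hence to be a shearing field with linear form $l(x,y)=mx+ny$), truncate using the rapid decay of Fourier coefficients of a smooth function, and note that equivariance kills the cosine/constant parts and makes each $f_{m,n}$ odd. The only (welcome) difference is that you explicitly dispose of the harmonic $(0,0)$-mode in the non-equivariant case, a point the paper leaves as "completely analogous."
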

\begin{proof}
	We start by denoting 
	\[  X (x,y) = X^1(x,y) \frac{\partial}{\partial x} +  X^2(x,y) \, .\frac{\partial}{\partial y}
	\]
Every smooth function $h\colon T^2 \to \R$ has a Fourier series

\begin{equation*}
\begin{split} 
	\mathscr{F}[h](x,y) & = \sum_{\k \in \Z^2} u_{\k} \, \sin(\k \cdot (x,y)) \\ 
		& + \sum_{\k \in \Z^2} v_{\k} \, \cos(\k \cdot (x,y)) \ , 
\end{split}
\end{equation*}
where the (real) coefficients $u_{\k}$, $v_{\k}$ are given by
\begin{equation*}
\begin{split} 
	u_{\k} & = \frac{1}{2\pi^2} \int_{T^2} h(x,y) \, \sin(\k \cdot (x,y)) \, d(x,y) \ ,  \\ 
	v_{\k} & = \frac{1}{2\pi^2} \int_{T^2} h(x,y) \, \cos(\k \cdot (x,y)) \, d(x,y)\ , 
\end{split}
\end{equation*}
if $\k \neq (0,0)$, and 
\begin{equation*}
	v_{{\bf 0}}  = \frac{1}{4\pi^2} \int_{T^2} h(x,y) \, d(x,y) \, .
\end{equation*}
It is well known that if $h$ is a smooth ($C^\infty$-) function, then the Fourier series $\mathscr{F}[h]$ converges to $h$ in any $C^n$-norm. (The corresponding statement is less neat if $h$ is just a $C^N$ function for some sufficiently big $N$.)

If $h$ is an odd function, meaning that we have $h(-(x,y)) = - h(x,y)$ for all $(x,y) \in T^2$, then all the coefficients $v_{\k}$ are zero. 

In what follows, we only suppose that $X$ is a $\Z/2$-equivariant vector field, since this is the case that is relevant to us here. The proof without this restriction is completely analogous.

So suppose the coefficients $X^1(x,y)$ and $X^2(x,y)$ of $X$ have Fourier series
\begin{equation*}
	X^i(x,y) = \sum_{\k \in \Z^2} u_{\k}^i \, \sin(\k \cdot (x,y)) \ , i=1,2. 
\end{equation*}
We will write $\k = (k_1,k_2)$. By assumption, we have
\begin{equation*}
	0 \equiv \text{div}(X) = \sum_{\k \in \Z^2} (k_1  u_{\k}^1 + k_2 u_{\k}^2) \cos(\k \cdot (x,y)) \, . 
\end{equation*}
As $u^i_{-\k} = - u^i_{\k}$, we must therefore have 
\begin{equation*}
	\begin{pmatrix} u^1_{\k} \\ u^2_{\k} \end{pmatrix} \cdot \k = 0
\end{equation*}
for all $\k \in \Z^2$.
Every Fourier term $W_{\bf k}$ of the Fourier expansion $X = \sum_{\k} W_{\k}$ of the divergence-free vector field $X$ is therefore of the form
\begin{equation} \label{prototype Fourier term}
	W_{\k} (x,y) = \sin(\k \cdot (x,y)) \, \begin{pmatrix} u_\k^1 \\ u_\k^2 					\end{pmatrix},
\end{equation}
where the vector $\begin{pmatrix} u_\k^1 \\ u_\k^2 \end{pmatrix}$ is {\em orthogonal} to $\k$, and hence a scalar multiple of $\overline{\k}:= (-k_2,k_1) \in \Z^2$, which is a rotation by angle $\pi/2$ applied to $\k$. 

In other words, $W_\k$ is a $\Z/2$-equivariant vector field of shearing type with integer shearing direction $\overline{\k}:= (-k_2,k_1) \in \Z^2$.
\end{proof}

If $X\colon  T^2 \to \R^2$ is a smooth vector field, it must be Lipschitz-continuous. In other words, there is a constant $L > 0$ such that 
\[
	\norm{ X(x,y) - X(x',y') } \leq L \, d((x,y),(x',y')) \\ , 
\]
for all $(x,y), (x',y') \in T^2$, where $d(-,-)$ is the Euclidean metric on $T^2$, and where $\norm{ . }$ denotes the Euclidean norm. We will find it convenient to denote by $(x,y) \in \R^2$ also some lift of the point on the torus. With this convention, the last inequality also implies
\[
	\norm{ X(x,y) - X(x',y') } \leq L \, \norm{(x,y) - (x',y')} \\ . 
\]
The proof of Theorem \ref{approximation by shearings} above will essentially rely on the preceding Lemma, as well as on some classical results: 

\begin{lemma}[Gronwall's inequality]\label{Gronwall} 
	Let $f, g\colon  [a, b] \to \R$ be two continuous non-negative functions, and suppose that one has
	\[
		f(t) \leq A(t) + \int_{a}^{t} f(s) g(s) \, ds 
	\]
for all $t$, for some non-negative increasing function $A\colon [a,b] \to \R$. Then one has 
	\begin{equation*}
		f(t) \leq A(t) \, \exp\left(\int_{a}^{t} g(s) \, ds \right)
	\end{equation*}
for all $t \in [a,b]$. 
\end{lemma}
This is Lemma 2 after Theorem 2.1.2 in \cite{Abraham-Marsden}, to which we refer for a proof. 
In fact, in this reference the proof is given for the corresponding statement with a constant $C$ rather than a non-negative increasing function $A(t)$. To adopt this to a non-negative increasing function, we see that the original proof gives the conclusion for any fixed $t \in [a,b]$ if we choose as constant $C = \sup\{A(s) \, | \, s \in [a,t]\}$ which is smaller than or equal to $A(t)$. 
\begin{lemma}\label{le:estimate flow}
	Let $X\colon T^2 \to \R^2$ be a vector field on $T^2$ with Lipschitz constant $L$. We denote by $\phi^t_X \colon  T^2 \to T^2$ its flow, satisfying
	\[
		\frac{d \phi^t_X}{dt}(p) = X (\phi^t_X(p)) \, ,
	\]
for all $p \in T^2$ and all $t \in \R$. 
Then we have 
\begin{equation} \label{eq:estimate flow}
	\norm{\phi^t_X(p) - \phi^t_X(q)} \leq e^{L t} \, \norm{p-q} 
\end{equation}
for all $p,q \in T^2$, and for all $t \geq 0$. 
\end{lemma}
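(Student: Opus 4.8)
The statement to prove is Lemma~\ref{le:estimate flow}: the flow $\phi_X^t$ of a Lipschitz vector field $X$ on $T^2$ satisfies $\norm{\phi_X^t(p)-\phi_X^t(q)} \le e^{Lt}\norm{p-q}$ for $t \ge 0$. This is a textbook Gronwall-type estimate, so the plan is short.

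First I would work in a fixed lift to $\R^2$: choose lifts of $p$ and $q$ so that $\norm{p-q}$ is realised, and lift the flow lines $t \mapsto \phi_X^t(p)$, $t \mapsto \phi_X^t(q)$ to paths in $\R^2$ (the vector field $X$, being $2\pi\Z^2$-periodic, lifts to a Lipschitz vector field on $\R^2$ with the same constant $L$, and the Euclidean metric downstairs is the quotient of the one upstairs, so distances of nearby points agree). Write $u(t) = \phi_X^t(p)$ and $v(t) = \phi_X^t(q)$ for these lifted curves. Then integrate the ODE: $u(t) - v(t) = (p - q) + \int_0^t \bigl( X(u(s)) - X(v(s)) \bigr)\, ds$.

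Next, take norms and apply the triangle inequality together with the Lipschitz bound $\norm{X(u(s)) - X(v(s))} \le L\,\norm{u(s) - v(s)}$, to get
\[
	\norm{u(t) - v(t)} \le \norm{p - q} + \int_0^t L\,\norm{u(s) - v(s)}\, ds .
\]
Now apply Gronwall's inequality (Lemma~\ref{Gronwall}) with $f(t) = \norm{u(t) - v(t)}$, $g(s) \equiv L$, and the constant function $A(t) \equiv \norm{p-q}$ (which is trivially non-negative and increasing); the continuity of $f$ follows from smooth dependence of the flow on initial conditions. This yields $\norm{u(t) - v(t)} \le \norm{p-q}\,\exp\bigl(\int_0^t L\, ds\bigr) = e^{Lt}\norm{p-q}$. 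Finally, project back down: since $\norm{\phi_X^t(p) - \phi_X^t(q)} \le \norm{u(t) - v(t)}$ for the chosen lifts (the downstairs distance is the infimum over lifts), the desired inequality follows for all $t \ge 0$.

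There is essentially no obstacle here; the only point requiring a word of care is the passage between $T^2$ and its universal cover $\R^2$, to make sense of the ODE in integral form and of the Lipschitz estimate — but since everything is local and $X$ is periodic, this is routine. One should also note that completeness of the flow (so that $\phi_X^t$ is defined for all $t$) is automatic because $T^2$ is compact.
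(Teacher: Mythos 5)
Your proof is correct; it is the standard integral-equation-plus-Gronwall argument, and the paper itself gives no proof but simply cites Abraham--Marsden (Lemma 3 after Theorem 2.1.2), where exactly this argument appears. Your extra care about passing to lifts in $\R^2$ is consistent with the paper's stated convention of denoting by $(x,y)$ a lift of a point on the torus, so there is nothing to add.
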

For the proof we refer to \cite[Lemma 3 after Theorem 2.1.2]{Abraham-Marsden}.

\begin{lemma}\label{estimate flow 1}
If $X$ and $Y$ are two vector fields on $T^2$ with flows $\phi^t_X$ and $\phi^t_Y$. Then we have 
\begin{equation} \label{estimate flow}
	\norm{\phi^t_X(p) - \phi^t_Y(p)} \leq t \norm{X - Y}_{L^\infty(T^2)} e^{L t} \,\ ,
\end{equation}
for all $p \in T^2$, and all $t \geq 0$. Here $L$ is a Lipschitz constant for either the vector field $X$ or $Y$, and $\norm{X}_{L^\infty(T^2)} = \max_{p \in T^2} \norm{X(p)}$ denotes the supremum norm, or equivalently the maximum norm as we are dealing with smooth, so in particular with continuous vector fields. 
\end{lemma}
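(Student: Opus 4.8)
The plan is to estimate the difference $\norm{\phi^t_X(p) - \phi^t_Y(p)}$ by converting both flows into their integral form and applying Gronwall's inequality (Lemma~\ref{Gronwall}). First I would write, for a fixed $p \in T^2$ and using lifts to $\R^2$,
\[
\phi^t_X(p) - \phi^t_Y(p) = \int_0^t \bigl( X(\phi^s_X(p)) - Y(\phi^s_Y(p)) \bigr)\, ds \, ,
\]
which follows from the defining ODEs and the fact that $\phi^0_X(p) = \phi^0_Y(p) = p$. Then I would insert the intermediate term $X(\phi^s_Y(p))$ and use the triangle inequality to split the integrand as
\[
\norm{ X(\phi^s_X(p)) - Y(\phi^s_Y(p)) } \leq \norm{ X(\phi^s_X(p)) - X(\phi^s_Y(p)) } + \norm{ X(\phi^s_Y(p)) - Y(\phi^s_Y(p)) } \, .
\]
The first summand is bounded by $L \norm{ \phi^s_X(p) - \phi^s_Y(p) }$ using the Lipschitz constant $L$ of $X$, and the second summand is bounded by $\norm{X-Y}_{L^\infty(T^2)}$ pointwise.

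Combining these, the function $f(t) := \norm{ \phi^t_X(p) - \phi^t_Y(p) }$ satisfies
\[
f(t) \leq t \, \norm{X-Y}_{L^\infty(T^2)} + \int_0^t L \, f(s) \, ds \, .
\]
Now I would apply Gronwall's inequality with $A(t) = t \, \norm{X-Y}_{L^\infty(T^2)}$ (which is indeed non-negative and increasing) and $g(s) \equiv L$, yielding
\[
f(t) \leq t \, \norm{X-Y}_{L^\infty(T^2)} \, \exp\left( \int_0^t L \, ds \right) = t \, \norm{X-Y}_{L^\infty(T^2)} \, e^{Lt} \, ,
\]
which is exactly the claimed estimate~(\ref{estimate flow}). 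If instead $Y$ (rather than $X$) is taken to have Lipschitz constant $L$, one inserts the intermediate term $Y(\phi^s_X(p))$ instead and runs the identical argument, bounding $\norm{ Y(\phi^s_X(p)) - Y(\phi^s_Y(p)) } \leq L \norm{\phi^s_X(p) - \phi^s_Y(p)}$; this gives the same conclusion.

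I do not expect any serious obstacle here: the only points requiring a little care are the passage to lifts in $\R^2$ so that the Euclidean norm and the integral of the velocity make literal sense (legitimate since the flows are continuous in $t$ and one can work on a short time interval or lift along the path), and checking that the hypotheses of Gronwall's inequality are met, in particular that $A(t) = t\,\norm{X-Y}_{L^\infty}$ is non-negative and increasing, which is immediate.
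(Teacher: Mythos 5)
Your proposal is correct and follows essentially the same route as the paper: write both flows in integral form, insert an intermediate term to split the integrand into a Lipschitz piece and a $\norm{X-Y}_{L^\infty}$ piece, and apply Gronwall's inequality with $A(t) = t\,\norm{X-Y}_{L^\infty(T^2)}$. The only difference is that you insert $X(\phi^s_Y(p))$ where the paper inserts $Y(\phi^s_X(p))$; these are the two symmetric variants the paper itself observes are interchangeable.
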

\begin{proof}
	Integrating the flow $\phi_X^t$ of the vector field $X$, we obtain the integral equation
	\[
		\phi^t_X (p) = p + \int_{0}^{t} X(\phi^s_X(p)) \, ds \, .
	\]
This yields 
\begin{equation*}
	\begin{split}
		\norm{\phi_X^t(p) - \phi_Y^t(p)} & \leq 
			\norm{\int_0^t (X(\phi^s_X(p)) - Y(\phi^s_X(p)) + Y(\phi^s_X(p)) - Y(\phi^s_Y(p))) ds } \\
			& \leq t \, \norm{X-Y}_{L^\infty(T^2)} + L \int_0^t \norm{\phi^s_X(p) - \phi^s_Y(p)} \, ds \, ,
	\end{split}
\end{equation*}
where in the second inequality we have used the Lipschitz property of the vector field $Y$, with Lipschitz constant $L$. Hence the function $f(t):= \norm{\phi_X^t(p) - \phi_Y^t(p)}$ satisfies the integral inequality
\begin{equation}\label{differential inequality}
f(t) \leq t \norm{X-Y}_{L^\infty(T^2)} + L \, \int_{0}^{t} f(s) \, ds \, . 
\end{equation}
This implies $f(t) \leq t \norm{X-Y}_{L^\infty(T^2)} \exp(L t) $
for all $t \geq 0$ by Gronwall's inequality. The role of $X$ and $Y$ can be interchanged in the proof, so $L$ can be a Lipschitz constant for either $X$ or $Y$. 
\end{proof}
We also state an immediate combination of the preceding two Lemmas, using the triangle inequality.

\begin{lemma}\label{estimate flow}
	Let $X,Y$ be two vector fields on the torus, and let $p,q \in T^2$. Let $L$ be a Lipschitz constant either for $X$ or for $Y$, as above. Then we have
	\[
		\norm{\phi^t_X(p) - \phi^t_Y(q)} \leq \norm{p-q} e^{L t} + t \norm{X-Y}_{L^\infty(T^2)} \, e^{L t} 
	\]
for all $p,q \in T^2$, and for all $t \geq 0$. 
\end{lemma}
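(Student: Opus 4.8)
The plan is to obtain this estimate as a routine consequence of the two preceding lemmas, inserting an intermediate flow line and applying the triangle inequality. Fix $t \geq 0$ and $p,q \in T^2$, and suppose first that $L$ is a Lipschitz constant for $X$. Then I would write
\[
\norm{\phi^t_X(p) - \phi^t_Y(q)} \leq \norm{\phi^t_X(p) - \phi^t_X(q)} + \norm{\phi^t_X(q) - \phi^t_Y(q)},
\]
bound the first term by $e^{Lt}\norm{p-q}$ using Lemma \ref{le:estimate flow} (which needs only that $L$ is a Lipschitz constant for $X$), and bound the second term by $t\norm{X-Y}_{L^\infty(T^2)}\,e^{Lt}$ using Lemma \ref{estimate flow 1} (which holds with $L$ a Lipschitz constant for either $X$ or $Y$). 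Adding the two bounds gives exactly the two terms on the right-hand side of the claimed inequality.

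To cover the symmetric case in which $L$ is instead a Lipschitz constant for $Y$, I would use the other intermediate point:
\[
\norm{\phi^t_X(p) - \phi^t_Y(q)} \leq \norm{\phi^t_X(p) - \phi^t_Y(p)} + \norm{\phi^t_Y(p) - \phi^t_Y(q)},
\]
where now the first summand is $\leq t\norm{X-Y}_{L^\infty(T^2)}\,e^{Lt}$ by Lemma \ref{estimate flow 1} and the second is $\leq e^{Lt}\norm{p-q}$ by Lemma \ref{le:estimate flow} applied to the flow of $Y$. Either way one recovers the stated bound; it is also worth recording (as is already implicit in Lemma \ref{le:estimate flow}) that since $T^2$ is compact the flows $\phi^t_X$ and $\phi^t_Y$ exist for all $t\in\R$, so all expressions are defined.

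I do not expect any real obstacle here: the content is entirely a bookkeeping combination of Lemmas \ref{le:estimate flow} and \ref{estimate flow 1}. The only point requiring a moment's care is matching up which of $X$ or $Y$ is the vector field to which the Gronwall-type contraction estimate of Lemma \ref{le:estimate flow} is applied, since that lemma genuinely consumes a Lipschitz constant for the field whose flow appears twice; the two choices of intermediate point above handle precisely the two permitted hypotheses on $L$.
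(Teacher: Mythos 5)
Your proof is correct and is essentially the paper's own argument: the lemma is stated there precisely as ``an immediate combination of the preceding two Lemmas, using the triangle inequality,'' which is exactly your decomposition via an intermediate flow line. Your extra care in choosing the intermediate point according to whether $L$ is a Lipschitz constant for $X$ or for $Y$ is a sensible refinement of the same idea.
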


Notice that an isotopy $(\psi^t)_{t \in [0,1]}$ as in the statement of Theorem \ref{approximation by shearings} gives rise to a time-dependent vector field $X_t$ which satisfies the differential equation
\begin{equation}\label{time dependent vector field}
	\frac{d \psi^t(p)}{dt} = X_t(\psi^t(p))
\end{equation}
for any $p \in T^2$, and any $t \in [0,1]$. The end point we reach from $p \in T^2$ when integrating the time-dependent vector field from time $t_0$ to time $t$ is given by 
	$\psi^t((\psi^{t_0})^{-1}(p))$. 
\\

\begin{proof}[Proof of Theorem \ref{approximation by shearings}]

Let $\epsilon > 0$ be given. 
The proof of Theorem \ref{approximation by shearings} is an approximation in three steps.
\begin{enumerate}
	\item (First step)	We approximate the `flow' $\psi^t$ of the time-dependent vector field $X_t$ by a composition of flows of the finitely many time-independent vector fields given by $X_t$ taken at times $\frac{i}{n}$,
	\[
		X_i := X_{i/n}, \hspace{0.5cm} \text{for $i = 0, \dots, n-1$.}
	\]
More precisely, we define a continuous family of isotopies $(\Theta^t)$
\begin{equation*}
	\Theta^t_{(X_j)} \colon  T^2 \to T^2
\end{equation*}
in the following way: For $\frac{i}{n} \leq t \leq \frac{i+1}{n}$, we denote
\begin{equation}\label{broken flow}
	\Theta^t_{(X_j)} := \phi^{t-i/n}_{X_i} \circ \phi^{1/n}_{X_{i-1}} \circ \dots \circ \phi^{1/n}_{X_0} \, . 
\end{equation}
	In other words, we first flow along the vector field $X_0$ over a period of time $\frac{1}{n}$, then we flow along $X_1$ over a period of time $\frac{1}{n}$, and finally we flow along $X_i$ over a period of time $t- \frac{i}{n}$. By keeping track of the accumulated error in each time interval of length $\frac{1}{n}$, we will show that 
	\[
	\norm{\Theta^t_{(X_j)} - \psi^t}_{L^\infty(T^2)} < \frac{\epsilon}{3}
	\]
for all $t \in [0,1]$ if we choose $n$ large enough. 
\\

	\item (Second step) Each of the divergence free vector fields $X_i$ can be $C^0$-approximated by a vector field $Z_i$ which is a finite Fourier series, by Lemma \ref{Fourier approximation} above. We define $\Theta^t_{(Z_j)}$ to be the map defined completely analogously to the one defined in (\ref{broken flow}) above, but with the vector fields $Z_i$ instead of the $X_i$.  
	
	Again, by keeping track of the accumulated error, we will find that 
	\[
	\norm{\Theta^t_{(X_j)} - \Theta^t_{(Z_j)}}_{L^\infty(T^2)} < \frac{\epsilon}{3}
	\]
for all $t \in [0,1]$, if we choose each $Z_j$ sufficiently close to $X_j$. \\

	\item (Third step) 

Each of the vector fields $Z_j$ is a finite Fourier series
\[
	Z_j = \sum_{r=0}^{m_j-1} W^{(j)}_r \, 
\]
where each $W^{(j)}_r$ is a Fourier term, and hence a shearing vector field in a direction in $\Z^2$ by Lemma \ref{Fourier approximation} above. The map $\Theta^{t}_{(Z_j)}$ of the second step is made out of a composition of flows $\phi_{Z_j}^t$ for some short period of time $t$ smaller than $\frac{1}{n}$. We finally will approximate the flow along $Z_j$ by successive flows along the summands $W^{(j)}_{r}$ in the following way. 

For simplicity we omit the index $j$ from the notation in the vector field $Z_j$ and $W_r^{(j)}$ for a moment. We fix some $k \in \N$ (that also depends on $j=0, \dots, n-1$). Suppose we have 
\[
\frac{1}{n} \left(\frac{i}{k} + \frac{r}{km} \right) \leq t  \leq \frac{1}{n} \left( \frac{i}{k} + \frac{r+1}{km} \right) \, ,
\]
for some $i=0, \dots, k-1$, and some $r=0, \dots, m-1$, and in particular $0 \leq t \leq \frac{1}{n}$, where $n$ was fixed in the first step. Then we define
\begin{equation} \label{Xi}
\begin{split}
	\Xi_{(W_r)}^t\colon =  & \phi^{m (t - \frac{im+r}{kmn})}_{W_r} \circ  \phi_{W_{r-1}}^{\frac{1}{kn}} \circ \dots \circ \phi_{W_0}^{\frac{1}{kn}} \\ & \circ(\phi^{\frac{1}{kn}}_{W_{m-1}} \circ \dots \circ \phi^{\frac{1}{kn}}_{W_0})^{i} \, .
\end{split}
\end{equation}
In other words, we first flow along $W_0$ during a period of time $\frac{1}{kmn}$, but with speed $m$, then along $W_1$ during a period of time $\frac{1}{kmn}$ with speed $m$, and so on. After we have flown along $W_{m-1}$, we repeat this process, flowing again along $W_0$ during a period of time $\frac{1}{kmn}$ with speed $m$, then along $W_1$ etc. In other words, $\Xi_{(W_r)}$ is a repeated iteration of flows along the vector fields $W_0, \dots, W_{m-1}$. 

We will finally compare $\Theta^t_{(Z_j)}$ to the map $\Omega^t_{(Z_j)}$ which we define as follows.  For $\frac{i}{n} \leq t \leq \frac{i+1}{n}$, we denote with slight abuse of notation by 
\begin{equation}\label{very broken flow}
	\Omega^t_{(Z_j)} := \Xi^{t-i/n}_{(W^{(i)}_r)} \circ \Xi^{1/n}_{(W^{(i-1)}_r)} \circ \dots \circ \Xi^{1/n}_{(W^{(0)}_r)} \, , 
\end{equation}
for $i = 0, \dots, n-1$. 
We will show that if for each sum $Z_i = \sum_{r=0}^{m_i-1} W^{(i)}_r$ we choose the `fineness' $k_i$ in the definition of $\Xi_{(W^{(i)}_r)}$ large enough, we will have  
	\[
	\norm{\Theta^t_{(Z_j)} - \Omega^t_{(Z_j)}}_{L^\infty(T^2)} < \frac{\epsilon}{3}
	\]
for all $t \in [0,1]$.

\end{enumerate}

\subsection{First step}
Let $L > 0$ be a Lipschitz constant for the time dependent vector fields $X_t$, $t \in [0,1]$. In other words, we assume that 
\[
	\norm{X_t(p) - X_t(q)} \leq L \norm{p-q}
\] 
for all $p,q \in T$, and all $t \in [0,1]$. The existence of such an $L$ follows from a standard compactness argument when we consider the time-dependent vector field $X_t$ as a map $T^2 \times [0,1] \to \R^2 $. It fact, it turns out useful to consider the time-{\em independent} vector fields $X_i$ as maps $T^2 \times [0,1] \to \R^2$ which are constant in the second factor.

As in the proof of Lemma \ref{estimate flow 1}, we integrate the vector fields $X_0$ as well as the time-dependent vector field $X_t$ to obtain

\begin{equation*}
	\begin{split}
		\norm{\psi^t(p) - \phi_{X_0}^t(p)} & \leq 
			\int_0^t \norm{(X_s(\psi^s(p)) - X_0(\phi^s_{X_0}(p))} \, ds  \\
			&  \leq \int_{0}^{t} (L \norm{\psi^{s}(p) - \phi_{X_0}^s(p) } + \norm{X_s - X_0}_{L^\infty(T^2\times[0,t])} ) \, ds \, .
	\end{split}
\end{equation*}
Gronwall's inequality now implies
\begin{equation} \label{estimate 1}
\norm{\psi^t(p) - \phi_{X_0}^t(p)} \leq t \, \norm{X_s - X_0}_{L^\infty(T^2 \times [0,t])}  \, e^{L t} 
\end{equation}
for all $t \geq 0$, and all $p \in T^2$. 

Similarly, for some integer $i \geq 0$, we can integrate both the time-dependent vector field $X_t$ and $X_{i} = X_{i/n}$ starting at time $\frac{i}{n}$, and the same argument gives 
\begin{equation} \label{estimate 2}
\norm{\psi^t((\psi^{\frac{i}{n}})^{-1}(p)) - \phi_{X_{i}}^{t-\frac{i}{n}}(p)} \leq (t-\frac{i}{n}) \, \norm{X_s - X_i}_{L^\infty(T^2 \times[\frac{i}{n},t])}  \, e^{L (t-\frac{i}{n})} 
\end{equation}
for all $t \geq \frac{i}{n}$, and all $p \in T^2$. 

However, at time $\frac{i}{n}$ we will in general already have taken up an error between flowing along the time-dependent vector field, and between flowing first along $X_0$ during time $\frac{1}{n}$, then along $X_1$ during time $\frac{1}{n}$, and so on. What we therefore need is the following inequality that we simply obtain from (\ref{estimate 2}) and Lemma \ref{le:estimate flow}, using the triangle inequality: 

\begin{equation} \label{estimate 3}
\begin{split}
\norm{\psi^t((\psi^{\frac{i}{n}})^{-1}(p)) - \phi_{X_{i}}^{t-\frac{i}{n}}(q)} 
&
\leq (t-\frac{i}{n}) \, \norm{X_s - X_i}_{L^\infty(T^2 \times[\frac{i}{n},t])}  \, e^{L (t-\frac{i}{n})}  \\ & + \norm{p -q} \, e^{L(t-\frac{i}{n})} \ ,
\end{split}
\end{equation}
for all $t \geq \frac{i}{n}$, and all $p,q \in T^2$. We are now able to estimate the accumulated error.

\begin{lemma}\label{accumulated error 1}
For $\Theta^t_{(X_j)}$ as above, and for any $k = 0, \dots, n-1$, we have
\begin{equation} \label{accumulated error 2} 
\begin{split}
	\norm{\psi^t(p) - \Theta^t_{(X_j)}(p)} & \leq \frac{1}{n} \, 
	\sum_{i=0}^{k-1} \norm{X_s - X_{i}}_{L^\infty(T^2 \times [\frac{i}{n},	\frac{i+1}{n}])} \, e^{L (t-\frac{i}{n})} \\
		& + (t-\frac{k}{n}) \, \norm{X_s - X_{k}}_{L^\infty(T^2 \times [\frac{k}{n},	t])} \, e^{L (t-\frac{k}{n})}
\end{split}
\end{equation}
for all $t \geq \frac{k}{n}$ and all $p \in T^2$. 
In particular, we have
\begin{equation*}
	\norm{\psi^t(p) - \Theta^t_{(X_j)}(p)} \leq
	\frac{1}{n}	\sum_{i=0}^{n-1} \, \norm{X_s - X_{i}}_{L^\infty(T^2 \times [\frac{i}{n},	\frac{i+1}{n}]} \, e^{L} 
\end{equation*}
for all $t \in [0,1]$ and all $p \in T^2$. 
\end{lemma}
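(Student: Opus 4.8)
The plan is to prove the estimate (\ref{accumulated error 2}) by induction on $k$, using only the two analytic bounds (\ref{estimate 1}) and (\ref{estimate 3}); everything else is a careful tally of the error accumulated over the subintervals $[\tfrac{i}{n},\tfrac{i+1}{n}]$. Throughout I take $t$ in the $k$-th subinterval, $\tfrac{k}{n}\le t\le\tfrac{k+1}{n}$, which is the range in which $\Theta^t_{(X_j)}$ ends with a flow along $X_k$ and which is all that is needed here.

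The first ingredient is the cocycle property of the two flows. For the time-dependent flow one has $\psi^t(p)=\psi^t\bigl((\psi^{k/n})^{-1}(\psi^{k/n}(p))\bigr)$, so the motion from time $0$ to $t$ is the motion from time $\tfrac{k}{n}$ to $t$ applied to the point $\psi^{k/n}(p)$; for the broken flow, the definition (\ref{broken flow}) gives $\Theta^t_{(X_j)}(p)=\phi^{\,t-k/n}_{X_k}\bigl(\Theta^{k/n}_{(X_j)}(p)\bigr)$ on the $k$-th subinterval. Applying (\ref{estimate 3}) with $i=k$ to the pair of points $\psi^{k/n}(p)$ and $\Theta^{k/n}_{(X_j)}(p)$ (in the roles of $p$ and $q$ there) collapses its left-hand side to $\norm{\psi^t(p)-\Theta^t_{(X_j)}(p)}$ and produces the recursion
\[
\norm{\psi^t(p)-\Theta^t_{(X_j)}(p)}\ \le\ \bigl(t-\tfrac{k}{n}\bigr)\,\norm{X_s-X_k}_{L^\infty(T^2\times[\frac{k}{n},t])}\,e^{L(t-k/n)}\ +\ e^{L(t-k/n)}\,\norm{\psi^{k/n}(p)-\Theta^{k/n}_{(X_j)}(p)}\, .
\]

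Now I would run the induction. The base case $k=0$ is exactly (\ref{estimate 1}), since $\Theta^t_{(X_j)}=\phi^t_{X_0}$ on $[0,\tfrac1n]$ and the sum in (\ref{accumulated error 2}) is then empty. For the inductive step I would apply the hypothesis (\ref{accumulated error 2}) at index $k-1$ and time $t'=\tfrac{k}{n}$: its partial-interval term has length $t'-\tfrac{k-1}{n}=\tfrac1n$, so it merges with the sum and gives $\norm{\psi^{k/n}(p)-\Theta^{k/n}_{(X_j)}(p)}\le\tfrac1n\sum_{i=0}^{k-1}\norm{X_s-X_i}_{L^\infty(T^2\times[\frac{i}{n},\frac{i+1}{n}])}e^{L(k/n-i/n)}$. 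Substituting this into the recursion above and telescoping the exponentials via $e^{L(t-k/n)}e^{L(k/n-i/n)}=e^{L(t-i/n)}$ reproduces (\ref{accumulated error 2}) at index $k$. For the final ``in particular'' bound I would take $t\in[0,1]$, pick the $k$ with $\tfrac{k}{n}\le t\le\tfrac{k+1}{n}$, bound each factor $e^{L(t-i/n)}$ by $e^{L}$ (using $0\le t-\tfrac{i}{n}\le 1$) and $t-\tfrac{k}{n}$ by $\tfrac1n$, enlarge the supremum over $[\tfrac{k}{n},t]$ to the one over $[\tfrac{k}{n},\tfrac{k+1}{n}]$, and extend the sum from $i\le k$ to $i\le n-1$ by adding only non-negative terms.

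I do not expect a genuine obstacle here: once the two cocycle identities are in place, the argument is a routine Gronwall-style telescoping. The only step demanding real care is the bookkeeping of the time intervals over which the norms $\norm{X_s-X_i}_{L^\infty(T^2\times[\frac{i}{n},\frac{i+1}{n}])}$ are measured — in particular checking that the partial-interval term of the index-$(k-1)$ estimate, once evaluated at $t'=\tfrac{k}{n}$, fuses cleanly into the sum of the index-$k$ estimate — together with making sure the exponential factors line up so that they telescope rather than accumulate.
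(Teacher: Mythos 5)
Your proposal is correct and follows essentially the same route as the paper: induction on $k$ with base case given by (\ref{estimate 1}), the inductive step obtained by applying (\ref{estimate 3}) to the pair $\psi^{k/n}(p)$, $\Theta^{k/n}_{(X_j)}(p)$, invoking the induction hypothesis at time $\tfrac{k}{n}$ so that the partial-interval term fuses into the sum, and telescoping the exponential factors. Your restriction to $\tfrac{k}{n}\le t\le\tfrac{k+1}{n}$ is the correct reading of the statement, since that is the range on which $\Theta^t_{(X_j)}$ ends with a flow along $X_k$, and it is exactly how the estimate is used afterwards.
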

\begin{proof}
We proof the claim by induction on $k$. For $k=0$ the claim follows from what we have established in (\ref{estimate 1}) above. 

Suppose it is true for some $k \leq n-1$, and assume we have $t \geq \frac{k+1}{n}$. By inequality (\ref{estimate 3}) we have
\begin{equation*}
\begin{split}
		\norm{\psi^t(p) - \Theta^t_{(X_j)}(p)}  \leq & \
			\norm{\psi^t((\psi^{\frac{k+1}{n}})^{-1}(\psi^{\frac{k+1}{n}}(p))) - \phi_{X_{k+1}}^{t-\frac{k+1}{n}}
			(\Theta^{\frac{k+1}{n}}_{(X_j)}(p))} \\
			 \leq & \ (t-\frac{k+1}{n}) \, \norm{X_s - X_{k+1}}_{L^\infty(T^2 \times[\frac{k+1}{n},t])}  \, e^{L (t-\frac{k+1}{n})}  \\ & + 
			\norm{\psi^{\frac{k+1}{n}}(p) - \Theta^{\frac{k+1}{n}}_{(X_j)}(p)} \, e^{L(t-\frac{k+1}{n})} \ .
\end{split}
\end{equation*}
We can now use our induction hypothesis for the last term, which yields 
\begin{equation*}
\begin{split}
		\norm{\psi^t(p) & - \Theta^t_{(X_j)}(p)} 
			   \leq  \ (t-\frac{k+1}{n}) \, \norm{X_s - X_{k+1}}_{L^\infty(T^2 \times[\frac{k+1}{n},t])}  \, e^{L (t-\frac{k+1}{n})}  \\ & + 
			\left( \frac{1}{n} \, (\sum_{i=0}^{k}  \norm{X_s - X_{i}}_{L^\infty(T^2 \times [\frac{i}{n},	\frac{i+1}{n}])} \, e^{L (\frac{k+1}{n}-\frac{i}{n})} \right) \, e^{L(t-\frac{k+1}{n})} \ .
\end{split}
\end{equation*}
This simplifies to the statement we wished to show for $k+1$. 

\end{proof}

Finally, notice that the time-dependent vector field $X_t$, when seen as a map $T^2 \times [0,1] \to \R^2$, is {\em equicontinuous}. In particular, there exists some $\delta > 0$ such that whenever $\abs{s-t} < \delta$, we have 
\begin{equation} \label{equicontinuity}
	\norm{X_s(p) - X_{t}(p)} \leq \frac{\frac{\epsilon}{3}}{e^{L}} \, 
\end{equation}
for all $p \in T^2$. 

The desired first estimate now follows quickly. We choose the integer $n$ so large that $\frac{1}{n} < \delta$. Then we will have 
\begin{equation*}
	\norm{X_s - X_{i}}_{L^\infty(T^2 \times [\frac{i}{n},	\frac{i+1}{n}])} \leq \frac{\frac{\epsilon}{3}}{e^L}
\end{equation*}
for $i = 0, \dots, n-1$, and hence by the Lemma \ref{accumulated error 1} we see that 
	\[
	\norm{\psi^t - \Theta^t_{(X_j)}}_{L^\infty(T^2)} < \frac{\epsilon}{3}
	\]
for all $t \in [0,1]$. 

\subsection{Second step}
\begin{lemma}\label{accumulated error 2}
For $\Theta^t_{(X_j)}$ and $\Theta^t_{(Z_j)}$ as above, and for any $k = 0, \dots, n-1$, we have
\begin{equation*}
\begin{split}
	\norm{\Theta^t_{(X_j)}(p) - \Theta^t_{(Z_j)}(p)} & \leq
		\frac{1}{n} \, \sum_{i=0}^{k-1}  \norm{X_i-Z_i}_{L^\infty(T^2)} \, e^{L (t-\frac{i}{n})} \\
		& + (t-\frac{k}{n}) \, \norm{X_k - Z_k}_{L^\infty(T^2)} \, e^{L (t-\frac{k}{n})}
\end{split}
\end{equation*}
for all $t \geq \frac{k}{n}$ and all $p \in T^2$. 
In particular, we have
\begin{equation*}
	\norm{\Theta^t_{(X_j)}(p) - \Theta^t_{(Z_j)}(p)} \leq
	\frac{1}{n} \, \sum_{i=0}^{n-1}  \norm{X_i-Z_i}_{L^\infty(T^2)} \, e^{L} 
\end{equation*}
for all $t \in [0,1]$ and all $p \in T^2$. Here $L$ is the same Lipschitz constant that worked for the time-dependent vector field $X_t$, as above.
\end{lemma}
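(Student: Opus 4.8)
The plan is to run the same induction on $k$ that proved Lemma~\ref{accumulated error 1} in the first step, only now comparing the two broken flows $\Theta^t_{(X_j)}$ and $\Theta^t_{(Z_j)}$ to each other rather than comparing the time-dependent flow $\psi^t$ to $\Theta^t_{(X_j)}$. The two analytic inputs are Lemma~\ref{estimate flow 1} (the $t\,\norm{X-Y}_{L^\infty(T^2)}e^{Lt}$ estimate for flows of two different vector fields), used for the base case, and the combined Lemma~\ref{estimate flow}, used for the inductive step. Throughout one keeps the single Lipschitz constant $L$ that works for the time-dependent field $X_t$; since $X_i=X_{i/n}$, this $L$ is in particular a Lipschitz constant for every $X_i$.

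For the base case $k=0$, on $0\le t\le\tfrac1n$ one has $\Theta^t_{(X_j)}=\phi^t_{X_0}$ and $\Theta^t_{(Z_j)}=\phi^t_{Z_0}$, so Lemma~\ref{estimate flow 1} applied with $L$ (a Lipschitz constant for $X_0$) gives $\norm{\Theta^t_{(X_j)}(p)-\Theta^t_{(Z_j)}(p)}\le t\,\norm{X_0-Z_0}_{L^\infty(T^2)}\,e^{Lt}$, which is exactly the claimed bound for $k=0$ (empty sum). For the inductive step, assume the bound for some $k\le n-2$ and let $t\ge\tfrac{k+1}{n}$. By the definition~(\ref{broken flow}) both broken flows restart at time $\tfrac{k+1}{n}$, i.e.\ $\Theta^t_{(X_j)}=\phi^{\,t-(k+1)/n}_{X_{k+1}}\circ\Theta^{(k+1)/n}_{(X_j)}$ and likewise for the $Z_j$. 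Applying Lemma~\ref{estimate flow} with vector fields $X_{k+1}$ and $Z_{k+1}$, time $t-\tfrac{k+1}{n}$, the points $\Theta^{(k+1)/n}_{(X_j)}(p)$ and $\Theta^{(k+1)/n}_{(Z_j)}(p)$, and $L$ as a Lipschitz constant for $X_{k+1}$, bounds $\norm{\Theta^t_{(X_j)}(p)-\Theta^t_{(Z_j)}(p)}$ by
\[
(t-\tfrac{k+1}{n})\,\norm{X_{k+1}-Z_{k+1}}_{L^\infty(T^2)}\,e^{L(t-(k+1)/n)}+\norm{\Theta^{(k+1)/n}_{(X_j)}(p)-\Theta^{(k+1)/n}_{(Z_j)}(p)}\,e^{L(t-(k+1)/n)}.
\]
Now feed the induction hypothesis (the bound for index $k$, legitimately evaluated at the admissible time $\tfrac{k+1}{n}\ge\tfrac kn$) into the last term: its leading coefficient $(t-\tfrac kn)$ becomes $\tfrac1n$ and merges into the sum, and after multiplying through by $e^{L(t-(k+1)/n)}$ the exponentials combine into $e^{L(t-i/n)}$. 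This yields precisely the asserted bound with $k$ replaced by $k+1$, closing the induction; the ``in particular'' statement follows by taking $k=n-1$ and using $t\le1$, so that $e^{L(t-i/n)}\le e^{L}$ for every $i$ and $t-\tfrac{n-1}{n}\le\tfrac1n$.

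I do not expect a genuine obstacle; the computation is routine bookkeeping of an accumulating error, identical in shape to the first step. The only point worth a moment's attention is that one must use $L$ — a Lipschitz constant for the $X_i$ — and not invoke Lipschitz bounds for the $Z_i$, which is legitimate exactly because Lemma~\ref{estimate flow} (like Lemma~\ref{estimate flow 1}) allows $L$ to be a Lipschitz constant for \emph{either} of the two compared vector fields. Finally, to complete the second step of the proof of Theorem~\ref{approximation by shearings}, one invokes Lemma~\ref{Fourier approximation} to choose each $Z_j$ close enough to $X_j$ in $L^\infty(T^2)$ that $\tfrac1n\sum_{i=0}^{n-1}\norm{X_i-Z_i}_{L^\infty(T^2)}\,e^{L}<\tfrac{\epsilon}{3}$.
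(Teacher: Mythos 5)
Your proof is correct and is exactly the argument the paper intends: the paper's own proof of this lemma is the one-line remark that it follows from Lemma \ref{estimate flow 1} together with an induction analogous to that of Lemma \ref{accumulated error 1}, which is precisely the induction you carry out (base case from Lemma \ref{estimate flow 1}, inductive step from Lemma \ref{estimate flow} applied at the restart time $\tfrac{k+1}{n}$, with $L$ taken as a Lipschitz constant for the $X_i$). Your bookkeeping of the exponentials and the passage to the ``in particular'' statement match the paper's computation.
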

\begin{proof}
	This is a consequence of Lemma \ref{estimate flow 1} above, together with an induction argument analogous to the one of the Proof of Lemma \ref{accumulated error 1}. 
\end{proof}

In order to establish the second step approximation, 
 we choose $Z_i$ so that 
\[
	\norm{X_i - Z_i}_{L^\infty(T^2)} \leq \frac{\frac{\epsilon}{3}}{e^L} \, 
\]
for each $i=0, \dots, n-1$. Lemma \ref{accumulated error 2} now implies that we have
	\[
	\norm{\Theta^t_{(X_j)} - \Theta^t_{(Z_j)}}_{L^\infty(T^2)} < \frac{\epsilon}{3}
	\]
for all $t \in [0,1]$. 

\subsection{Third step}
We will need the following Lemma, which at least in weaker formulations seems to be a classical result.
\begin{lemma} \label{flows of sums}
	Suppose we have a finite sum of vector fields $Z = W_1 + \dots + W_m$ on $T^2$. Then there is a constant $C$, depending only on the vector fields $W_1, \dots, W_m$, such that we have 
	\[
		\norm{\phi^t_Z - \phi^t_{W_m} \circ \dots \circ \phi^t_{W_1} }_{L^\infty(T^2)}
		\leq \frac{t^2}{2} \, C \, e^{Lt} \, ,
	\]
for all $0 \leq t \leq 1$, and where $L$ is a Lipschitz constant for the vector field $Z$. 
\end{lemma}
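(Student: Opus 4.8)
The plan is to show that the composition $\Psi^t:=\phi^t_{W_m}\circ\cdots\circ\phi^t_{W_1}$ is an \emph{approximate flow} of $Z$, in the sense that its time derivative equals $Z(\Psi^t(p))$ up to a remainder which is $O(t)$ uniformly in $p$, and then to absorb that remainder by Gronwall's inequality, exactly in the spirit of the proof of Lemma~\ref{estimate flow 1}. Throughout, $T^2$ is compact, so all vector fields and the derivatives of their flows are globally defined and have finite sup-norms, and we restrict to $0\le t\le 1$.

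First I would fix $p\in T^2$ and introduce the partial compositions $\gamma_0(t):=p$ and $\gamma_r(t):=\phi^t_{W_r}(\gamma_{r-1}(t))$ for $r=1,\dots,m$, so that $\gamma_m(t)=\Psi^t(p)$. Differentiating the recursion $\gamma_r(t)=\phi^t_{W_r}(\gamma_{r-1}(t))$ and iterating yields
\[
\dot\gamma_m(t)=\sum_{r=1}^{m}\Big(D\phi^t_{W_m}\big|_{\gamma_{m-1}(t)}\cdots D\phi^t_{W_{r+1}}\big|_{\gamma_{r}(t)}\Big)\,W_r(\gamma_r(t)).
\]
At $t=0$ all the differentials are the identity and all $\gamma_r(0)=p$, so the right-hand side collapses to $Z(p)$. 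For $t>0$ I set $R^t(p):=\dot\gamma_m(t)-Z(\gamma_m(t))=\dot\gamma_m(t)-\sum_{r=1}^{m}W_r(\gamma_m(t))$, and the task is to bound $\norm{R^t}_{L^\infty(T^2)}$ by $C\,t$ with $C$ depending only on $W_1,\dots,W_m$.

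The estimate of $R^t$ rests on two elementary facts, both valid for $0\le t\le 1$. First, each flow displaces points by a bounded amount: $\norm{\phi^t_{W}(x)-x}\le t\,\norm{W}_{L^\infty(T^2)}$; hence all the points $p,\gamma_1(t),\dots,\gamma_m(t)$ lie within $O(t)$ of one another. Second, the differential of each flow is $O(t)$-close to the identity: from the variational equation $\tfrac{d}{dt}D\phi^t_{W}=DW(\phi^t_{W})\,D\phi^t_{W}$ with $D\phi^0_{W}=\id$, Gronwall's inequality (Lemma~\ref{Gronwall}) gives $\norm{D\phi^t_{W}}\le e^{\norm{DW}_{L^\infty}t}$ and then $\norm{D\phi^t_{W}-\id}_{L^\infty}\le e^{\norm{DW}_{L^\infty}t}-1\le C_W\,t$ with $C_W=\norm{DW}_{L^\infty}e^{\norm{DW}_{L^\infty}}$. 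Writing each summand of $R^t(p)$ as
\[
\Big(D\phi^t_{W_m}\big|_{\gamma_{m-1}}\cdots D\phi^t_{W_{r+1}}\big|_{\gamma_{r}}-\id\Big)W_r(\gamma_r(t))\;+\;\big(W_r(\gamma_r(t))-W_r(\gamma_m(t))\big),
\]
the first part is controlled by a telescoping estimate for the product of the near-identity differentials together with $\norm{W_r}_{L^\infty}$, and the second part by the Lipschitz constant of $W_r$ times $\norm{\gamma_r(t)-\gamma_m(t)}=O(t)$. Summing over $r$ gives $\norm{R^t}_{L^\infty(T^2)}\le C\,t$, where $C$ depends only on the sup-norms, Lipschitz constants and first derivatives of the $W_r$ — i.e.\ only on $W_1,\dots,W_m$.

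Finally I would integrate: from $\phi^t_Z(p)=p+\int_0^t Z(\phi^s_Z(p))\,ds$ and $\Psi^t(p)=p+\int_0^t\big(Z(\Psi^s(p))+R^s(p)\big)\,ds$, subtracting and using that $L$ is a Lipschitz constant for $Z$, the function $e(t):=\norm{\phi^t_Z(p)-\Psi^t(p)}$ satisfies $e(t)\le \frac{C}{2}t^2+L\int_0^t e(s)\,ds$. Applying Gronwall's inequality (Lemma~\ref{Gronwall}) with the non-negative increasing function $A(t)=\frac{C}{2}t^2$ and $g\equiv L$ gives $e(t)\le \frac{t^2}{2}C\,e^{Lt}$ for all $0\le t\le 1$, which is the assertion since $p$ was arbitrary. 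An alternative, avoiding the iterated-composition derivative formula, is to treat $m=2$ directly by the argument above and then induct on $m$: split $Z=W_1+Z'$ with $Z'=W_2+\cdots+W_m$, bound $\norm{\phi^t_Z-\phi^t_{Z'}\circ\phi^t_{W_1}}$ by the case $m=2$, bound $\norm{\phi^t_{Z'}\circ\phi^t_{W_1}-\phi^t_{W_m}\circ\cdots\circ\phi^t_{W_2}\circ\phi^t_{W_1}}$ by the inductive hypothesis applied at the point $\phi^t_{W_1}(p)$ (propagating the error through the outer flow $\phi^t_{Z'}$ via Lemma~\ref{le:estimate flow}), and add, obtaining a constant $C_m=C_2+C_{m-1}$. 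I expect the main obstacle to be purely organizational: getting the derivative of the iterated composition right and then arranging the telescoping bound on $R^t$ so that every constant depends only on the $W_r$ (and, as needed in the application, only on their first derivatives); the rest is a routine Gronwall estimate of the same type as Lemmas~\ref{le:estimate flow} and~\ref{estimate flow 1}.
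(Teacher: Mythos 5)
Your proof is correct, and it shares the paper's overall skeleton — show that the composition $\Psi^t=\phi^t_{W_m}\circ\cdots\circ\phi^t_{W_1}$ is an approximate flow of $Z$ with an $O(t)$ defect in its time derivative, integrate, and close with Gronwall exactly as in Lemma~\ref{estimate flow 1} — but the technical core is handled differently. The paper identifies the defect via the pushforward expansion $(\phi^t_Y)_*X = X\circ\phi^t_Y + t\,[X,Y]\circ\phi^t_Y + R_{X,Y}$, so the leading error term is explicitly the Lie bracket $[W_1,W_2]$ (the obstruction to the flows commuting), and the constant $C$ is expressed in terms of $\norm{[W_1,W_2]}_{L^\infty}$ plus the higher-order remainder; the paper only carries this out for $m=2$ and asserts that the general case is analogous, with iterated commutators in the constant. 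You instead compute the derivative of the iterated composition by the chain rule and bound the defect by two elementary estimates — the $O(t)$ displacement $\norm{\phi^t_W(x)-x}\le t\norm{W}_{L^\infty}$ and the $O(t)$ bound on $D\phi^t_W-\id$ from the variational equation plus Gronwall — combined with the Lipschitz continuity of the $W_r$. What your route buys is a uniform treatment of all $m$ at once with a constant explicitly controlled by the sup-norms and first derivatives of the $W_r$, at the cost of not exhibiting the commutator structure of the leading error; the paper's route is conceptually cleaner for $m=2$ (it is the Lie--Trotter heuristic made quantitative) but leaves the general case to the reader. Your concluding Gronwall step, with $A(t)=\tfrac{C}{2}t^2$ as the non-negative increasing function, matches the paper's use of Lemma~\ref{Gronwall} exactly, and your fallback induction on $m$ is also a valid alternative. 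One minor point to make explicit in a final write-up: differences of points on $T^2$ should be understood via lifts to $\R^2$, as the paper's convention before Lemma~\ref{Gronwall} already stipulates.
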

\begin{proof}
For simplicity, we will give a proof of this result in the case $m=2$. It will be clear that the more general result is completely analogous. 

We recall the definition (or one of various equivalent ones) of the Lie-bracket of two vector fields $X$ and $Y$,
\[
[X,Y](p) = \lim_{h \to 0} \frac{(\phi^h_Y)_*X(\phi_Y^{-h}(p)) - X(p)}{h}
		= \frac{d}{dt} [(\phi^t_Y)_*(X)](p) |_{t=0} \, ,
\]
where $(\phi^t_Y)_*$ denotes the derivative of the map $\phi_Y^t$. Stated slightly differently, we can say that 
\begin{equation}\label{commuting flow}
	(\phi^t_Y)_{*}X(p) = X(\phi^t_Y(p)) + t [X,Y](\phi_Y^t(p)) + R_{X,Y}(\phi^t_Y(p),t) \, ,
\end{equation}
where the `error' $R_{X,Y}$ satisfies
\[
\lim_{t \to 0} \frac{R_{X,Y}(p,t)}{t} = 0 \, 
\]
and this convergence is uniform in $p$, as we are on a compact manifold. 

Using this, we start by differentiating the composite of the flows along $W_1$ and $W_2$ and obtain
\begin{equation}\label{diff flows}
	\begin{split}
			\frac{d}{dt}  \phi^t_{W_2} (\phi^t_{W_1}(p))  = & \ \frac{d \phi_{W_2}^t}{dt}(\phi^t_{W_1}(p)) + (\phi^t_{W_2})_* (\frac{d \phi_{W_1}^t(p)}{dt}) \\
			 = & \ W_2( \phi_{W_2}^t(\phi^t_{W_1}(p))) + (\phi^t_{W_2})_* W_1(\phi^t_{W_1}(p)) \\
			 = & \ W_2( \phi_{W_2}^t(\phi^t_{W_1}(p))) + 
				W_1( \phi_{W_2}^t(\phi^t_{W_1}(p))) \\
			& + t [W_1,W_2](\phi^t_{W_2} (\phi^t_{W_1}(p))) 
				+ R_{W_1,W_2}(\phi^t_{W_2} (\phi^t_{W_1}(p)),t) \, ,
	\end{split}
\end{equation} 
where we have made use of (\ref{commuting flow}). We can integrate this equation and compare this to the flow $\phi^t_{W_1+W_2}$ of the vector field $W_1 + W_2$, obtaining
\begin{equation*}
	\begin{split}
		\norm{\phi^t_{W_1  + W_2}& (p)  - \phi^t_{W_2}(\phi^t_{W_1}(p))} \\
				& 
			\leq  \int_{0}^{t} 
					\norm{(W_1+W_2)(\phi^s_{W_1 + W_2} (p)) - (W_1 + W_2) 						\phi^s_{W_2}(\phi^s_{W_1}(p))} \, ds \\
				 & +  \int_{0}^{t} s (\norm{[W_1,W_2](\phi^s_{W_2}								(\phi^s_{W_1}(p))) +  \frac{1}{s} \, R_{W_1,W_2} 								(\phi^s_{W_2}(\phi^s_{W_1}(p)),s)}) \, ds \\
				 & \leq  L \int_{0}^{t} \norm{\phi^s_{W_1  + W_2} (p)  - 								\phi^s_{W_2}(\phi^s_{W_1}(p))}\,  ds + \frac{t^2}{2} 										C_{W_1,W_2} \ , 
	\end{split}
\end{equation*} 
where $L$ is a Lipschitz constant for the vector field $W_1 + W_2$, and where $C_{W_1,W_2}$ is a constant that can be taken to be
\begin{equation*}
	C_{W_1,W_2} = \norm{[W_1,W_2]}_{L^\infty(T^2)} + \norm{R_{W_1,W_2}/s}_{L^\infty(T^2 \times [0,1])} \, .
\end{equation*}
Using Gronwall's inequality, this yields
\begin{equation} 
	\norm{\phi^t_{W_1  + W_2}  - \phi^t_{W_2}\circ \phi^t_{W_1}}_{L^\infty(T^2)} \leq \frac{t^2}{2} \, C_{W_1,W_2} \, e^{Lt} 
\end{equation}
for all $0 \leq t \leq 1$. 
The general case follows similarly, only the constant $C_{W_1, \dots, W_m}$ will contain a more complicated expression in terms of commutators, and commutators of commutators etc. 
\end{proof}
Using the triangle inequality and Lemma \ref{flows of sums}, we obtain
\begin{equation} \label{flows of sums 2}
	\norm{\phi_{Z}^t(p) - \phi^t_{W_{m-1}} \circ \dots \circ \phi^t_{W_0}(q) }
	\leq \norm{p-q} e^{Lt} + \frac{t^2}{2} \, C \, e^{Lt} \ ,
\end{equation}
for all $p,q \in T^2$, where $L$ and $Z$ are as in the preceding Lemma. 

We now have to compare the flows of the vector fields $Z_j$ to the repeated iteration of flows along the vector fields $W_{0}^{(j)}, \dots, W_{m_{j}-1}^{(j)}$ occurring in the definition of $\Xi^t$ defined in (\ref{Xi}) above. This is easier at the time intervals which are multiples of $\frac{1}{n k_j}$, and we need a more coarse estimate at the times in between.
\begin{lemma}\label{accumulated error 3 bis}
Suppose we have a finite sum of vector fields $Z = W_0 + \dots + W_{m-1}$ on $T^2$, and we assume $L$ is a Lipschitz constant for the vector field $Z$. Let $C$ be a constant assuring the conclusion of Lemma \ref{flows of sums} above. 
\begin{enumerate}[label=(\roman*)]
\item For $i=0, \dots, k-1$, we have the estimate
	\begin{equation*}
		\norm{\phi_Z^{\frac{i}{kn}}(p) - \Xi^{\frac{i}{kn}}_{(W_r)} (q)}
			\leq \norm{p-q} \, e^{L \frac{i}{kn}} + i \, \frac{(\frac{1}{kn})^2}{2} \, C \, e^{L \frac{1}{kn}} \, ,
	\end{equation*}
for any $p,q \in T^2$. \\
\item For $\frac{i}{kn} \leq t \leq \frac{i+1}{kn}$ we have 
\begin{equation} \label{intermediate estimate}
\begin{split}
	\norm{\phi_Z^{t}(p) - & \Xi^{t}_{(W_r)} (q)}  \leq 
		\norm{p-q} \, e^{L \frac{i}{kn}} 				
		+
			\, i \, \frac{(\frac{1}{kn})^2}{2} \, C \, e^{L \frac{1}{kn}} \\
			+  & (t- \frac{i}{kn}) (\norm{Z}_{L^\infty(T^2)} + \norm{W_{m-1}}_{L^\infty(T^2)} + \dots + 
					\norm{W_0}_{L^\infty(T^2)}) \, . 
\end{split}
\end{equation}
\end{enumerate}
\end{lemma}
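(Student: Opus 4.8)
The plan is to prove both estimates together by a telescoping/induction argument resting on the single-step comparison \eqref{flows of sums 2} and the Lipschitz estimate for flows (Lemma \ref{le:estimate flow}). The key preliminary observation is that at the special times $t=\tfrac{i}{kn}$ the definition \eqref{Xi} degenerates: taking $r=0$ and $t=\tfrac1n(\tfrac ik)$, the leading factor $\phi^{m(t-\frac{im+r}{kmn})}_{W_r}$ is the identity, so
\[
\Xi^{\frac{i}{kn}}_{(W_r)} = (\phi^{1/(kn)}_{W_{m-1}}\circ\dots\circ\phi^{1/(kn)}_{W_0})^{\,i} =: \Psi^{\,i}, \qquad
\phi_Z^{\frac{i}{kn}} = (\phi_Z^{1/(kn)})^{\,i} =: \Phi^{\,i}.
\]
Thus part (i) is exactly a comparison of the $i$-th iterates of the two maps $\Phi$ and $\Psi$, which by Lemma \ref{flows of sums} (in its $m$-fold form) and \eqref{flows of sums 2} differ by $O((1/(kn))^2)$ on each application; part (ii) then only adds a short ``endgame'' estimate on the partial sub-interval $[\tfrac{i}{kn},t]$.

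For part (i) I would induct on $i$. For $i=0$ both sides equal $\norm{p-q}$. Assuming the bound for $i$, write $\Phi^{\,i+1}(p)=\Phi(\Phi^{\,i}(p))$ and $\Psi^{\,i+1}(q)=\Psi(\Psi^{\,i}(q))$ and apply \eqref{flows of sums 2} to the sum $Z=W_0+\dots+W_{m-1}$ at time $t=\tfrac1{kn}$ with the two base points $\Phi^{\,i}(p)$ and $\Psi^{\,i}(q)$; combined with the inductive hypothesis this gives
\[
\norm{\Phi^{\,i+1}(p)-\Psi^{\,i+1}(q)} \le \Big(\norm{p-q}\,e^{Li/(kn)} + i\,\tfrac{(1/(kn))^2}{2}\,C\Big)e^{L/(kn)} + \tfrac{(1/(kn))^2}{2}\,C\,e^{L/(kn)},
\]
which is at most $\norm{p-q}\,e^{L(i+1)/(kn)} + (i+1)\,\tfrac{(1/(kn))^2}{2}\,C\,e^{L(i+1)/(kn)}$. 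Since $i\le k-1$ forces $i/(kn)\le 1/n\le 1$, all these exponential factors are bounded by $e^{L}$, which is enough; the statement as displayed writes $e^{L/(kn)}$ on the error term, and only the facts that this factor is bounded and that the whole error is $O(1/k)$ are used later, so I would simply note this slight slack rather than try to eliminate it.

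For part (ii), for $\tfrac{i}{kn}\le t\le\tfrac{i+1}{kn}$ I insert the intermediate time $\tfrac{i}{kn}$ and use the triangle inequality:
\[
\norm{\phi_Z^t(p)-\Xi^t_{(W_r)}(q)} \le \norm{\phi_Z^t(p)-\phi_Z^{i/(kn)}(p)} + \norm{\phi_Z^{i/(kn)}(p)-\Xi^{i/(kn)}_{(W_r)}(q)} + \norm{\Xi^{i/(kn)}_{(W_r)}(q)-\Xi^t_{(W_r)}(q)}.
\]
The middle term is bounded by part (i). For the first, integrating $\tfrac{d}{d\sigma}\phi_Z^\sigma=Z(\phi_Z^\sigma)$ over $[\tfrac{i}{kn},t]$ gives $\le(t-\tfrac{i}{kn})\,\norm{Z}_{L^\infty(T^2)}$. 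For the third term one reads off \eqref{Xi}: on $[\tfrac{i}{kn},t]$ the point $\Xi^\sigma_{(W_r)}(q)$ is pushed successively along $W_0,W_1,\dots$, each for flow-time at most $\tfrac1{kn}$, so the total displacement is at most $\tfrac1{kn}(\norm{W_0}_{L^\infty}+\dots+\norm{W_{m-1}}_{L^\infty})$; since $t-\tfrac{i}{kn}\le\tfrac1{kn}$, this contributes the last term of the claimed estimate (whether one keeps $t-\tfrac{i}{kn}$ or replaces it by its upper bound $\tfrac1{kn}$ is immaterial, since all that is needed downstream is that this term tends to $0$ as $k\to\infty$).

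The main obstacle is the induction in part (i): one must arrange the telescoping so that the ``flows-of-sums'' error of Lemma \ref{flows of sums}, incurred once per sub-interval of length $\tfrac1{kn}$, accumulates only to an $O(1/k)$ term rather than to something growing with the number $k$ of sub-intervals; this works precisely because each such error is of size $(1/(kn))^2/2\cdot C\cdot e^{L/(kn)}$ and there are only $k$ of them, so the sum is $\tfrac{1}{2kn^2}Ce^{L/(kn)}\to 0$. A subordinate but genuine care-point is the ``speed $m$'' reparametrisation built into \eqref{Xi}: one must check that flowing along each $W_j$ for flow-time $\tfrac1{kn}$ (while only real-time $\tfrac1{kmn}$ elapses) is what makes the displacement bound in (ii) close up correctly. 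Beyond that, the argument is just Gronwall's inequality (Lemma \ref{Gronwall}) and the triangle inequality, already packaged in the preceding lemmas.
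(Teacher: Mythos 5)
Your proof is correct and follows essentially the same route as the paper's: part (i) by induction on $i$ using the iterated form of the estimate (\ref{flows of sums 2}), and part (ii) by inserting the intermediate time $\tfrac{i}{kn}$ and bounding the two short flow segments by $\norm{\phi^s_X(q)-q}\leq s\,\norm{X}_{L^\infty(T^2)}$. Your side remarks about the slack in the constants (the exponent on the accumulated error term really comes out as $e^{Li/(kn)}$ rather than $e^{L/(kn)}$, and the third term in (ii) is naturally bounded by $\tfrac{1}{kn}\sum_r\norm{W_r}_{L^\infty}$ rather than by $(t-\tfrac{i}{kn})\sum_r\norm{W_r}_{L^\infty}$) are accurate, and harmless, since Lemma \ref{accumulated error 3} and the final choice of $k_j$ only use the coarser $e^{L}$ and $\tfrac{1}{k_jn}$ versions of these bounds.
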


\begin{proof}
  The first part is easily shown by induction on $i$, using the estimate (\ref{flows of sums 2}) after the preceding Lemma. 
  
  To obtain the second part, for $\frac{i}{kn} \leq t \leq \frac{i+1}{kn}$, we compare this to the nearby situation at time $\frac{i}{kn}$, and we obtain
\begin{equation*}
\begin{split}
	\norm{\phi_Z^{t}(p) - \Xi^{t}_{(W_r)} (q)}  \leq & 
		\norm{\phi_Z^{t}(p) - \phi_Z^{\frac{i}{kn}}(p)}  \\  
		& +  
		\norm{\phi_Z^{\frac{i}{kn}}(p) - \Xi^{\frac{i}{kn}}_{(W_r)} (q)} 
		+ 
		\norm{\Xi^{\frac{i}{kn}}_{(W_r)} (q) -  \Xi^{t}_{(W_r)} (q)}
		\, 
\end{split}
\end{equation*} 
with the triangle inequality. 
The first and the third term of the right hand side are easily estimated by the inequality
\[ \norm{\phi^t_X(p) - p} \leq t \norm{X}_{L^\infty(T^2)}
\] 
that holds for the flow of any vector field $X$, for $t \geq 0$. The second term is estimated using the first part of the Lemma. All three estimates together yield the desired result.
\end{proof}

We can now establish our final estimate. We recall that we have a decomposition $Z_j = W_{0}^{(j)} + \dots + W_{m_j-1}^{(j)}$ for any $j= 0, \dots, n-1$. The constant $k$ chosen in the decomposition of the time interval $[0,1/n]$ into intervals of length $1/kn$ also depends on $j$, hence we denote these numbers by $k_j$ and so we do with the constants $C_j$ of Lemma \ref{flows of sums} above. Notice that since $Z_j$ is taken to be a finite sum in the Fourier series of Lemma \ref{Fourier approximation} above, and as this series converges in any $C^n$-topology, we may assume that the Lipschitz constant $L$ for $X_j$ also works for the $Z_j$.  

\begin{lemma}\label{accumulated error 3}
For $\Theta^t_{(Z_j)}$ and $\Omega^t_{(Z_j)}$ as above, and for any $l = 0, \dots, n-1$, we have
\begin{equation*}
\begin{split}
	\norm{\Theta^t_{(Z_j)}(p) - \Omega^t_{(Z_j)}(p)} \leq
		 \, \sum_{j=0}^{l} & \left(\frac{1}{k_j n} (\norm{Z_j}_{L^\infty(T^2)} + 	  \norm{W_{m_j-1}^{(j)}}_{L^\infty(T^2)} + \dots + \right. \\ + & \left. \norm{W_{0}^{(j)}}_{L^\infty(T^2)})  \frac{1}{k_j n^2} \, C_j \, e^{L \frac{1}{k_j n}} \right)
		 \, e^{L (t-\frac{j}{n})} 		
\end{split}
\end{equation*}
for all $ \frac{l}{n} \leq t \leq \frac{l+1}{n}$, and for all $p \in T^2$. 
In particular, we have
\begin{equation} \label{final estimate}
\begin{split}
	\norm{\Theta^t_{(Z_j)}(p) - \Omega^t_{(Z_j)}(p)} \leq  \sum_{j=0}^{n-1} &
	\left(\frac{1}{k_j n} (\norm{Z_j}_{L^\infty(T^2)} +  	  \norm{W_{m_j-1}^{(j)}}_{L^\infty(T^2)} + \dots +  \right. \\ + & \left. \norm{W_{0}^{(j)}}_{L^\infty(T^2)}) \frac{1}{k_j n^2} \, C_j \, e^{L \frac{1}{k_j n}} \right) \, e^{L}
\end{split}
\end{equation}
for all $t \in [0,1]$ and all $p \in T^2$. Here $L$ may be assumed to be the same Lipschitz constant that worked for the time-dependent vector field $X_t$, as above.
\end{lemma}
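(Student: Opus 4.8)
The plan is to prove this lemma by induction on $l$, following almost verbatim the scheme used for Lemmas \ref{accumulated error 1} and \ref{accumulated error 2}; the only new ingredient is the one-block comparison supplied by Lemma \ref{accumulated error 3 bis}, which here plays the role that the elementary flow estimates (Lemmas \ref{estimate flow 1} and \ref{le:estimate flow}) played in those proofs. At the outset one fixes the Lipschitz constant $L$ of the time-dependent vector field $X_t$; as already observed before the statement, $L$ may be reused as a Lipschitz constant for every $Z_j$, since each $Z_j$ is a partial sum of the Fourier series of $X_j$ and this series converges in the $C^1$-topology. For each $j$ one also records the subdivision parameter $k_j$ of the interval $[j/n,(j+1)/n]$ appearing in (\ref{Xi})--(\ref{very broken flow}) and the constant $C_j$ furnished by Lemma \ref{flows of sums} for the decomposition $Z_j = W^{(j)}_0 + \dots + W^{(j)}_{m_j-1}$; it is convenient to abbreviate the per-block error by $E_j := \frac{1}{k_j n^2}\,C_j\,e^{L/(k_j n)} + \frac{1}{k_j n}\bigl(\norm{Z_j}_{L^\infty(T^2)} + \norm{W^{(j)}_{m_j-1}}_{L^\infty(T^2)} + \dots + \norm{W^{(j)}_0}_{L^\infty(T^2)}\bigr)$, so that the claimed inequality reads $\norm{\Theta^t_{(Z_j)}(p) - \Omega^t_{(Z_j)}(p)} \le \sum_{j=0}^{l} E_j\, e^{L(t-j/n)}$.

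For the inductive step fix $l \le n-1$ and $t$ with $l/n \le t \le (l+1)/n$. Using the block decompositions (\ref{broken flow}) (with the $Z_i$ in place of the $X_i$) and (\ref{very broken flow}), one writes $\Theta^t_{(Z_j)}(p) = \phi^{t-l/n}_{Z_l}(a)$ and $\Omega^t_{(Z_j)}(p) = \Xi^{t-l/n}_{(W^{(l)}_r)}(b)$, where $a := \Theta^{l/n}_{(Z_j)}(p)$ and $b := \Omega^{l/n}_{(Z_j)}(p)$ are the images at the previous block boundary. The triangle inequality then reduces the task to bounding $\norm{\phi^{t-l/n}_{Z_l}(a) - \Xi^{t-l/n}_{(W^{(l)}_r)}(b)}$, and for this one invokes part (ii) of Lemma \ref{accumulated error 3 bis}, i.e. inequality (\ref{intermediate estimate}), with $Z = Z_l$, $W_r = W^{(l)}_r$, $k = k_l$, $C = C_l$, and within-block time $t - l/n \in [0,1/n]$. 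Using the crude bounds $i/(k_l n) \le t - l/n$, $i \le k_l$, and $(t-l/n) - i/(k_l n) \le 1/(k_l n)$ to simplify the right-hand side of (\ref{intermediate estimate}) (and absorbing the harmless factor $\tfrac12$), this yields $\norm{\phi^{t-l/n}_{Z_l}(a) - \Xi^{t-l/n}_{(W^{(l)}_r)}(b)} \le \norm{a-b}\,e^{L(t-l/n)} + E_l$.

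It then remains to feed in the induction hypothesis. Applying it in the case $l-1$ at the point $t = l/n$ (the right endpoint of the block $[(l-1)/n,l/n]$) gives $\norm{a-b} = \norm{\Theta^{l/n}_{(Z_j)}(p) - \Omega^{l/n}_{(Z_j)}(p)} \le \sum_{j=0}^{l-1} E_j\,e^{L(l/n - j/n)}$; multiplying through by $e^{L(t-l/n)}$ turns each $e^{L(l/n - j/n)}$ into $e^{L(t-j/n)}$. Since $t \ge l/n$ one also has $E_l \le E_l\, e^{L(t-l/n)}$, so adding the two contributions produces exactly $\sum_{j=0}^{l} E_j\, e^{L(t-j/n)}$, the claim for $l$. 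The base case $l=0$ is the same computation with $a = b$, hence $\norm{a-b}=0$. Finally the ``in particular'' inequality (\ref{final estimate}) follows by taking $t\in[0,1]$ arbitrary, using $e^{L(t-j/n)}\le e^{L}$ for every $j\le n-1$, and noting that the blocks $[j/n,(j+1)/n]$ exhaust $[0,1]$.

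I do not anticipate a genuine obstacle here: conceptually this is the same short Gronwall-type induction as in Lemmas \ref{accumulated error 1} and \ref{accumulated error 2}. The only real work is bookkeeping --- carefully aligning the triply-nested time parametrization of $\Xi^t$ (blocks of length $1/n$, sub-blocks of length $1/(k_j n)$, sub-sub-blocks of length $1/(k_j m_j n)$, the innermost ones traversed at speed $m_j$) against the single-level decomposition of $\Theta^t$, and carrying the block-dependent constants $k_j$, $C_j$ and the vector-field norms through the induction without letting the exponential factors multiply. The one point deserving explicit mention --- and the reason one fixes $L$ for $X_t$ once and for all and appeals to $C^1$-convergence of the Fourier series --- is that the same Lipschitz constant $L$, hence the same propagation factor $e^{L(\cdot)}$, must be used uniformly for all the $Z_j$, so that the bound does not degrade as $n \to \infty$.
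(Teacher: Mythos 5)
Your proof is correct and is exactly the argument the paper intends: the paper's own proof of this lemma consists of the single remark that it follows from Lemma \ref{accumulated error 3 bis} together with an induction analogous to that of Lemma \ref{accumulated error 1}, which is precisely the induction you carry out (block decomposition at $t=l/n$, part (ii) of Lemma \ref{accumulated error 3 bis} for the current block with the crude bounds $i\le k_l$ and $(t-l/n)-i/(k_l n)\le 1/(k_l n)$, induction hypothesis for the accumulated error, and the factor $e^{L(t-l/n)}$ propagating the earlier terms). Your reading of the per-block error $E_j$ as the \emph{sum} of the two terms (rather than the product the display literally shows, where a ``$+$'' has evidently been dropped) is the correct one, consistent with how the estimate is subsequently used in choosing the $k_j$.
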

\begin{proof}
	This is a consequence of Lemma \ref{accumulated error 3 bis} above, together with an induction argument analogous to the one of the Proof of Lemma \ref{accumulated error 1}. 
\end{proof}


We can now finish the proof of the third step. We simply choose the $k_j$ which give the number of equidistant subdivisions of $[0,\frac{1}{n}]$ to be so large that 
\[
	\frac{1}{k_j}(\norm{Z_j}_{L^\infty(T^2)} +  	  \norm{W_{m_j-1}^{(j)}}_{L^\infty(T^2)} + \dots + \norm{W_{0}^{(j)}}_{L^\infty(T^2)} \, + \frac{1}{n} \, C_j \, e^{L} ) \, e^{L} \leq \frac{\epsilon}{3}
\]
for each $j=0, \dots, n-1$. Lemma \ref{accumulated error 3} now implies that we have
	\[
	\norm{\Theta^t_{(Z_j)} - \Omega^t_{(Z_j)}}_{L^\infty(T^2)} < \frac{\epsilon}{3}
	\]
for all $t \in [0,1]$.
\\

We finally notice that $(\Omega^t)$ is a finite composition of shearing maps for any $t \in [0,1]$. In fact, a vector field which is a Fourier term in Lemma \ref{Fourier approximation} above has the form 
\[
	W_{\k} (x,y) = \sin(\k \cdot (x,y)) \, \begin{pmatrix} u_\k^1 \\ u_\k^2 					\end{pmatrix}\, ,
\]
where $\k = (k_1,k_2) \in \Z^2$, and $(u_{\k}^1,u_{\k}^2) \in \R^2$ is a multiple of $\overline{\k} = (-k_2,k_1) \in \Z^2$, which is orthogonal to $\k$, see (\ref{prototype Fourier term}) above. The flow $\phi^{t}_{W_{\k}}$ takes the form
\[
	\phi^{t}_{W_{\k}}(x,y) = (x,y) + t \, W_{\k}(x,y) \, .
\]
This is a shearing map in a direction of $\Z^2$. 

Therefore, for any $t \in [0,1]$, the maps $\Xi^{t}_{(Z_j)}$, and hence also the maps $\Omega^{t}_{(Z_j)}$, are compositions of finitely many shearing maps by construction. 
\\

As for the $\Z/2$-equivariance, we simply notice that all maps and vector fields in the construction are $\Z/2$-equivariant, and hence so is $\Omega^{t}_{(Z_j)}$. This terminates the proof of Theorem \ref{approximation by shearings} if we define the desired map $\phi^t$ by  $\phi^t:=\Omega^{t}_{(Z_j)}$. 
\end{proof}

\begin{remark}
	There is a straightforward generalisation of the preceding theorem and its proof to isotopies of the $n$-dimensional torus $T^n$ which preserve the n-dimensional Euclidean volume form, and  to isotopies through symplectomorphisms in even dimensions. However, as we do not need this more general statement, we are content with the 2-dimensional case. 
\end{remark}

\section{Isotopies of the pillowcase approximated by holonomy perturbations, main technical result}

The results of Proposition \ref{composed shearing maps from holonomy} and Theorem \ref{approximation by shearings} will now yield our main technical result. 

Recall that the representation variety of the 2-dimensional torus $R(T^2)$ is a 2-dimensional sphere. Let us denote by $\widehat{R}(T^2)$ the double cover branched over the four singular points of $R(T^2)$. Working with diffeomorphisms, isotopies etc. of $R(T^2)$, which fix the four singular points, is equivalent to working with diffeomorphisms, isotopies etc. of $\widehat{R}(T^2)$, which are $\Z/2$-equivariant and fix the four fixed points there. There is an identification
\begin{equation*}
\begin{split}
	\R^2/2\pi\Z^2 & \to \widehat{R}(T^2) \subseteq \Hom(\Z^2, SU(2)) \\
	(\alpha,\beta) & \mapsto \rho \text{ with } \rho(m) = \begin{bmatrix} e^{i \alpha} & 0 \\
																		0 & e^{-i \alpha} 
														\end{bmatrix}
													\text{ and }
												\rho(l) = \begin{bmatrix} e^{i \beta} & 0 \\
																		0 & e^{-i \beta} 
														\end{bmatrix}.
\end{split}
\end{equation*}
We will think of a geometry on $\widehat{R}(T^2)$ which is the one induced from the canonical Euclidean structure on $\R^2/2\pi\Z^2$ via this identification. 

In the following theorem, we will consider the 3-manifold $M=[0,1] \times S^1 \times S^1$, a thickened torus. Let $\mu$ denote a 2-form on $[0,1] \times S^1$ with integral $1$ and with compact support in the interior. If $\iota_k\colon [0,1] \times S^1 \times S^1 \to M$ is an embedding, then we denote by $\mu_k$ the 2-form on $M$ which when pulled back by $\iota_k$ is the 2-form on $[0,1] \times S^1 \times S^1$ which is obtained by pulling back $\mu$ from $[0,1] \times S^1$ to $[0,1] \times S^1 \times S^1$ via the projection.

Let $z_0 = (x_0,y_0) \in S^1 \times S^1$ be some chosen base point. For $i=0,1 $ we will consider the closed curves  
\begin{equation}\label{curves tori}
\begin{split}
       m_i & = \{i \} \times S^1 \times \{y_0\} \hspace{1cm} \text{ and} \\
       l_i & = \{i \} \times \{x_0\} \times S^1.
\end{split}
\end{equation}
on the boundary of $M$.
\begin{definition}\label{one-parameter perturbed flat thickened torus}
Let the function $\theta^t\colon  \mathscr{A} \to \Omega^2(Y;\su(E))$ be defined in the following way. For $\frac{k}{n} \leq t \leq \frac{k+1}{n}$ and some $k=0, \dots, n-1$,
\begin{equation*}
	\theta^t_{\pertdata}(A) := n(t-\frac{k}{n}) \cdot \chi'_k(\operatorname{Hol}_{\iota_k}(A)) \, \mu_k + \sum_{l=0}^{k-1} \chi'_l(\operatorname{Hol}_{\iota_l}(A)) \,  \mu_l .
\end{equation*}
For any $t$, this corresponds to some holonomy perturbation of the flatness equation by some holonomy perturbation data. 
The function interpolates between $\theta^0=0$ and the full holonomy perturbation determined by $\pertdata_{k=0}^{n-1}$ for $t=1$.

For any $t \in [0,1]$, we denote by $R(M;\theta^t_{\pertdata})$ the equivalence classes of connections $A \in \mathscr{A}$ which solve the equation
\begin{equation} \label{one parameter flat thickened torus}
 F_A = \theta^t_{\pertdata}(A)  \, ,
\end{equation}
and we call this the one-parameter family of perturbed flat connections determined by the holonomy perturbation data $\pertdata$. 
\end{definition}

The following Theorem summarises the results of Sections \ref{section nested} and \ref{approximation}.
\begin{theorem}\label{main technical result}
Let $M = [0,1] \times S^1 \times S^1$ be a thickened torus, and let $E \to M$ be the trivial $SU(2)$-bundle over $M$. 
	Let 
	\begin{equation*}
	\begin{split}
	\psi\colon  [0,1] \times \widehat{R}(T^2) & \to \widehat{R}(T^2)\\
			(t,(\alpha,\beta)) & \mapsto \psi^t(\alpha,\beta)
	\end{split}
	\end{equation*}
	be a smooth map which is a $\Z/2$-equivariant isotopy through area-preserving maps with $\psi^0 = \id$.
	 
	Then for any $\epsilon > 0$, there is a $\Z/2$-equivariant 
	isotopy through area-preserving maps
	\begin{equation*}
	\begin{split}
	\phi\colon  [0,1] \times \widehat{R}(T^2) & \to \widehat{R}(T^2) \\
			(t,(\alpha,\beta)) & \mapsto \phi_t(\alpha,\beta)
	\end{split}
	\end{equation*}	
	which is continuous in $t$, smooth in $(\alpha,\beta)$, such that $\phi_0 = \id$, and such that the following holds:
\\

(1) $\phi_t$ is $\epsilon$-close to $\psi^t$ for all $t \in [0,1]$, 
	\begin{equation*}
		d(\psi^t(\alpha,\beta),\phi_t(\alpha,\beta)) < \epsilon
	\end{equation*}
	for all $t \in [0,1]$ and all $(\alpha,\beta) \in \widehat{R}(T^2)$, where $d$ denotes the metric induced by the Euclidean structure. \\
	
(2) There is some $n \in \N$, and 
for $k= 0, \dots, n-1$ there are class functions $\chi_k\colon  SU(2) \to \R$ corresponding to smooth even, $2 \pi$-periodic  functions $g_k\colon  \R \to \R$ determined by
\begin{equation*}
\chi_k \left(\begin{bmatrix} e^{it} & 0 \\ 0 & e^{-it} \end{bmatrix} \right) = g_k(t) \, ,
\end{equation*}
and there are matrices
\[
	A_k = \begin{pmatrix} a_k & c_k \\ b_k & d_k \end{pmatrix} \in \text{SL}(2,\Z) \, ,
\]
such that for $ \frac{k}{n} \leq t \leq \frac{k+1}{n}$ we have
\begin{equation*}
\begin{split}
	\phi_t =  \zeta^{n(t-\frac{k}{n})}_{k} \circ \zeta^1_{k-1} \circ \dots  \circ \zeta^1_0 
\end{split}
\end{equation*}
for any $k= 0, \dots, n-1$.

Here the maps $\zeta^{s}_k$ are defined for $s \in [0,1]$, and for $k= 0, \dots, n-1$  by the equation
\begin{equation*}
	\zeta^{s}_k = A_k \circ \chi^{s}_{f_k} \,\circ A_k^{-1},
\end{equation*}
where 
\begin{equation*}
	\chi^{s}_{f_k}\colon  \begin{pmatrix} \alpha \\ \beta \end{pmatrix} \mapsto 
\begin{pmatrix} \alpha + s\cdot f_k(\beta) \\ \beta \end{pmatrix} ,
\end{equation*} 
and
where $f_k$ is the derivative of $g_k$. 
\\

(3) The nested sequence of embeddings
\begin{equation*} 
\begin{split}
	\iota_k\colon  [0,1] \times S^1 \times S^1 & \to M \\
		\left(s,\begin{pmatrix} z \\ w \end{pmatrix}\right) & \mapsto \left(\frac{k + s}{n} , \begin{pmatrix} a_k & c_k \\ b_k & d_k \end{pmatrix} \begin{pmatrix} z \\ w \end{pmatrix} \right) \, 
\end{split}
\end{equation*}
for $k= 0, \dots, n-1$ define holonomy perturbation data $\pertdata$ with the following significance.  

Suppose $A$ solves the perturbed flatness equation (\ref{one parameter flat thickened torus}) for some $t \in [0,1]$.
Then $A$ is reducible, and up to a gauge-transformation we may suppose that $A$ respects the splitting $E = M \times (\C \oplus \C)$. 
If we write the holonomies along the curves $m_r, l_r$ for $r=0, 1$ at the boundary as
\begin{equation*} 
\begin{split}
	\operatorname{Hol}_{m_r}(A)  = \begin{bmatrix} e^{i \alpha_r} & 0 \\ 0 & e^{-i \alpha_r} \end{bmatrix}, \hspace{0,2cm} \text{and} \hspace{0,2cm} 
	\operatorname{Hol}_{l_r}(A)  = \begin{bmatrix} e^{i \beta_r} & 0 \\ 0 & e^{-i \beta_r} \end{bmatrix},
\end{split}
\end{equation*}
then we have 
\begin{equation*}
	 \begin{pmatrix} \alpha_{1} \\ \beta_{1} \end{pmatrix} 
	 	= \phi_t \left( \begin{pmatrix} \alpha_{0} \\ \beta_{0} \end{pmatrix} \right) \hspace{0,2cm} \text{or} \hspace{0,2cm} \begin{pmatrix} \alpha_{1} \\ \beta_{1} \end{pmatrix} 
	 	= \phi_t \left( \begin{pmatrix} -\alpha_{0} \\ -\beta_{0} \end{pmatrix} \right),
\end{equation*}
 up to a composition in the $t$-variable with an increasing piecewise-linear homeomorphism $[0,1] \to [0,1]$.
\\

A slightly more conceptual reformulation of the last statement is the following: 
For equivalence classes of connections in the one-parameter family of holonomy-perturbed representation varieties $R(M;\theta^t_{\pertdata})$ defined in \ref{one-parameter perturbed flat thickened torus} above we can take the holonomy at either boundary side of $M$ to obtain restriction maps
\[
r_{\pm}\colon  R(M;\theta^t_{\pertdata}) \to R(T^2) \, ,
\]
where $r_-$ is the restriction to $\{ 0 \} \times S^1 \times S^1$, and $r_+$ is the restriction to $\{ 1 \} \times S^1 \times S^1$. 
Then the following diagram commutes:
\begin{equation}\label{commutative diagram}
\begin{tikzcd}[column sep=small]
	 & R(M;\theta^t_{\pertdata}) \arrow{dl}[swap]{r_-} \arrow{dr}{r_+} &  \\
R(T^2) \arrow{rr}{\overline{\phi}_t} &   & R(T^2),
\end{tikzcd}
\end{equation}
where here $\overline{\phi}_t$ denotes the map between the pillowcase induced by $\phi_t$.
\end{theorem}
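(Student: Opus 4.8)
The plan is to deduce Theorem~\ref{main technical result} formally from Theorem~\ref{approximation by shearings} together with Propositions~\ref{composed shearing maps from holonomy} and~\ref{shearings to perturbations}; no new analytic input is required. First I would apply Theorem~\ref{approximation by shearings} to the $\Z/2$-equivariant area-preserving isotopy $(\psi^t)$ (which has $\psi^0=\id$) and the given $\epsilon$. This directly yields a $\Z/2$-equivariant map $\phi\colon[0,1]\times\widehat{R}(T^2)\to\widehat{R}(T^2)$, continuous in $t$ and smooth in $(\alpha,\beta)$, with $\phi_0=\id$ and $d(\psi^t(\alpha,\beta),\phi_t(\alpha,\beta))<\epsilon$ for all $t$ and all $(\alpha,\beta)$, which in the $t$-variable is a concatenation of finitely many shearing isotopies along shearing vector fields whose directions lie in $\Z^2$. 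As each $\phi_t$ is then a finite composition of area-preserving shearing maps, it is area-preserving. This already establishes conclusion~(1).

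Next I would repackage the nested time partition produced inside the proof of Theorem~\ref{approximation by shearings} into a single sequence of elementary shearing steps indexed by $k=0,\dots,N-1$, the $k$-th step occupying the interval $[k/N,(k+1)/N]$. In that proof each elementary step is the flow, over a short time, of a single Fourier term of a divergence-free vector field, and by Lemma~\ref{Fourier approximation} such a term is a shearing vector field with direction a vector in $\Z^2$. By padding the finite Fourier sums there to a common length and choosing the various subdivision constants uniformly, one arranges that all these steps have equal duration; any leftover discrepancy of parametrisation is absorbed into an increasing piecewise-linear homeomorphism of $[0,1]$, and this is precisely the source of the corresponding clause in conclusion~(3). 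For the $k$-th step I would choose a matrix $A_k\in\text{SL}(2,\Z)$ whose first column is the (primitive) shearing direction $(a_k,b_k)$ --- a non-primitive Fourier direction is reduced to this situation by passing to the associated primitive vector and rescaling the accompanying function --- so that the time-$s$ flow of the step is the shearing $\zeta_k^s=A_k\circ\chi_{f_k}^s\circ A_k^{-1}$ of the statement, for an odd $2\pi$-periodic smooth function $f_k$. Since an odd $2\pi$-periodic function has vanishing mean it admits an even $2\pi$-periodic primitive $g_k$ with $g_k'=f_k$, and I would let $\chi_k$ be the class function associated to $g_k$ via equation~(\ref{class functions}). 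Then $\phi_t=\zeta_k^{\,N(t-k/N)}\circ\zeta_{k-1}^1\circ\dots\circ\zeta_0^1$ for $k/N\le t\le(k+1)/N$, which is conclusion~(2).

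For conclusion~(3) I would then define the embeddings $\iota_k$ by equation~(\ref{holonomy perturbation directions}) (with $n$ replaced by $N$) out of the matrices $A_k$; since their interiors are disjoint, $\pertdata$ is admissible holonomy perturbation data and the interpolating family $\theta^t_{\pertdata}$ of Definition~\ref{one-parameter perturbed flat thickened torus} is defined. Fixing $t$ with $k/N\le t\le(k+1)/N$ and setting $s=N(t-k/N)\in[0,1]$, one observes that $\theta^t_{\pertdata}(A)=s\,\chi_k'(\operatorname{Hol}_{\iota_k}(A))\,\mu_k+\sum_{l=0}^{k-1}\chi_l'(\operatorname{Hol}_{\iota_l}(A))\,\mu_l$ is exactly the holonomy perturbation attached to the data $\{\iota_0,\chi_0\},\dots,\{\iota_{k-1},\chi_{k-1}\},\{\iota_k,\,s\chi_k\}$, whose last class function has derivative $sf_k$; beyond level $(k+1)/N$ the connection is genuinely flat, so the holonomy there is unchanged. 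Applying Proposition~\ref{composed shearing maps from holonomy} to this data shows that any $A$ solving $F_A=\theta^t_{\pertdata}(A)$ is reducible, may be gauged to respect $E=M\times(\C\oplus\C)$, and has boundary holonomies related by $(\alpha_1,\beta_1)=\zeta_k^s\circ\zeta_{k-1}^1\circ\dots\circ\zeta_0^1(\alpha_0,\beta_0)=\phi_t(\alpha_0,\beta_0)$, up to the sign ambiguity inherent in the holonomy coordinates $(\alpha_0,\beta_0)$ and $(\alpha_1,\beta_1)$; as each $\zeta_k^s$ commutes with $(\alpha,\beta)\mapsto(-\alpha,-\beta)$ (because $f_k$ is odd), the two sign choices give the same point of $R(T^2)$. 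This is conclusion~(3), and passing to the quotient by the hyperelliptic involution gives the commuting triangle~(\ref{commutative diagram}) with $\overline{\phi}_t$ the map of $R(T^2)$ induced by $\phi_t$. Proposition~\ref{shearings to perturbations} is what guarantees that this particular choice of $\pertdata$ actually reproduces the shearing maps $\zeta_k$.

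I expect the only genuine obstacle to be the bookkeeping in the second paragraph: flattening the triply nested time partition produced in the proof of Theorem~\ref{approximation by shearings} into one sequence of equal-length shearing steps, while keeping explicit track of the resulting piecewise-linear reparametrisation of $[0,1]$ that figures in~(3); there is also the minor point of reducing non-primitive Fourier directions to primitive ones before completing to $\text{SL}(2,\Z)$. All the substantive content --- the reducibility of solutions of the perturbed flatness equation, the invariant splitting of $E$, and the shearing relation between the two boundary holonomies --- is already contained in Propositions~\ref{composed shearing maps from holonomy} and~\ref{shearings to perturbations}, and the $C^0$-approximation is Theorem~\ref{approximation by shearings}; the present statement is their formal assembly.
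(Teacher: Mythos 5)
Your proposal is correct and follows exactly the paper's route: the paper's own proof of this theorem is the single sentence ``This follows at once from Proposition~\ref{shearings to perturbations} and Theorem~\ref{approximation by shearings}.'' Your write-up simply makes explicit the bookkeeping (flattening the nested time partition, completing primitive shearing directions to $\text{SL}(2,\Z)$ matrices, taking the even periodic primitive $g_k$ of the odd $f_k$, and the piecewise-linear reparametrisation) that the paper leaves implicit.
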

\begin{proof}
This follows at once from Proposition \ref{shearings to perturbations} and Theorem \ref{approximation by shearings}.
\end{proof}
\section{Holonomy perturbations on the 0-surgery of a knot}\label{holonomy perturbation 0-surgery}

We will now turn to the more concrete situation we have in mind. The 3-manifold we will consider is $Y=Y_0(K)$, obtained by 0-framed surgery on a knot $K$ in $S^3$. To set up our notation, we denote by $N(K)$ a tubular neighbourhood of the knot $K$ diffeomorphic to a solid torus. Then $Y$ is obtained by glueing to $Y(K) = S^3 \setminus {N(K)}^\circ$ the solid torus $S^1 \times D^2$ in such a way that longitudes of $K$ are mapped to circles $\{ pt \} \times \partial D^2$. 

The bundle $E \to Y$ is chosen in such a way that its first Chern-class $c_1(E)$ has odd evaluation on a Seifert surface of $K$ in $Y(K)$ capped off to a closed surface with a disc of the solid torus glued to $Y(K)$. Equivalently, we require $c_1(E)$ to be the Poincar\'e dual to an odd multiple of a meridian of $K$ inside $Y(K)$. Such a meridian generates $H_1(Y;\Z) \cong \Z$. 
\\

The manifold $Y(K)$ has boundary a 2-torus, and it has the homology of a circle. Hence the restriction $E \to Y(K)$ is necessarily trivial, and we suppose we have a trivialization chosen under which $\theta$ corresponds to the trivial connection in the determinant line. In particular, {\em the curvature $F_\theta$ of $\theta$ is trivial over $Y(K)$}. 
\\

The unperturbed representation variety $R^w(Y_0(K))$ for the 0-framed surgery on $K$ in our situation is closely related to the representation variety 
\[
	R(K) = \Hom(\pi_1(S^3 \setminus K),SU(2))  / SU(2)\, ,
\]
of homomorphisms of the knot group into $SU(2)$, up to conjugation. Equivalently, we can understand $R(K)$ as a space of flat $SU(2)$-connections in the trivial bundle over $S^3 \setminus K$, up to gauge transformations. 
\\

The following is a classical fact, relating $R^w(Y)$ to representations of $R(K)$ to which some {\em boundary conditions} are imposed. 

\begin{prop}\label{boundary condition}
There is a homeomorphism induced by the holonomy
  \[
  	R^w(Y_0(K)) \cong \{ \rho \in R(K) \, | \, \rho(l) = (-1)^k \id \, \} \, ,
  \]
where $l$ is a longitude of the knot $K$, and where $k$ is equal to the evaluation  $\langle c_1(E) \, , \, \widehat{\Sigma}_{\text{Seif}} \, \rangle$ on a Seifert surface of the knot capped off with a disc from the solid torus which is glued in to produce the $0$-surgery. If $k$ is odd, then the representations in $R(K)$ extend as  $SO(3)$-representations of $\pi_1(Y_0(K))$ which do not lift to $SU(2)$-representations, and if $k$ is even they do extend as $SU(2)$-representations.
\end{prop}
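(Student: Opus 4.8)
The plan is to combine a van Kampen description of $\pi_1(Y_0(K))$ with the standard holonomy correspondence between flat connections and representations, and then to pin down the sign $(-1)^k$ by evaluating a characteristic class on the capped-off Seifert surface.

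First I would use the decomposition $Y_0(K)=Y(K)\cup_\varphi(S^1\times D^2)$, where a meridian disc $\{\mathrm{pt}\}\times D^2$ of the solid torus is attached along the longitude $l$, so that by van Kampen $\pi_1(Y_0(K))=\pi_1(Y(K))/\langle\langle l\rangle\rangle$. Since $E|_{Y(K)}$ is trivial (as already noted) and $H^2(Y(K);\Z/2)=0$, every $SO(3)$-connection over $Y(K)$ lifts to an $SU(2)$-connection; thus a connection $A\in\mathscr{A}$ with $(F_A)_0=0$ restricts over $Y(K)$, in the chosen trivialisation, to a flat $SU(2)$-connection whose holonomy represents a class $[\rho]\in R(K)$. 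Over the solid torus $\su(E)$ is pulled back, up to isomorphism, from $S^1$, so $A|_{S^1\times D^2}$ is a flat $SO(3)$-connection whose holonomy around the disc-bounding curve $\varphi(l)$ is trivial; matching holonomies on the common boundary torus forces $\Ad(\rho(l))=\id$, i.e.\ $\rho(l)=\pm\id$. This gives a restriction map $R^w(Y_0(K))\to\{[\rho]\in R(K):\rho(l)=\pm\id\}$, and I would check it is a homeomorphism in the standard way: surjectivity and injectivity come from gluing flat connections on the solid torus (whose fundamental group is abelian, so the relevant boundary-fixing moduli are points) together with the fact that any $SU(2)$-gauge transformation of $E|_{Y(K)}$ extends over $Y_0(K)$, because $SU(2)\cong S^3$ is $2$-connected; continuity in both directions is standard for the quotient topologies, and the map is compatible with the $H^1(-;\Z/2)$-actions of Remark~\ref{re:hol correspondence} on the two sides, so it is genuinely a bijection onto the stated set rather than onto its $\Z/2$-quotient.

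It then remains to decide, for a given $[\rho]$ with $\rho(l)=\pm\id$, whether $\rho(l)=+\id$ or $\rho(l)=-\id$, in terms of the constraint $w_2(\su(E))\equiv c_1(E)\bmod 2$. For this I would use the Seifert surface: let $\Sigma\subset Y(K)$ be a genus-$g$ Seifert surface with $\partial\Sigma=l$, so that $\pi_1(\Sigma)$ is free on $a_1,b_1,\dots,a_g,b_g$ with $\prod_{i}[a_i,b_i]=l$, and let $\widehat{\Sigma}$ be the closed surface obtained by capping $\Sigma$ with the meridian disc of the glued solid torus. Writing $P_\rho$ for the flat $SO(3)$-bundle over $Y_0(K)$ carrying $\Ad\circ\rho$ (which descends since $\Ad(\rho(l))=\id$), the number $\langle w_2(P_\rho),[\widehat{\Sigma}]\rangle\in\Z/2$ is the obstruction to lifting $\Ad\circ\rho$ restricted to $\pi_1(\widehat{\Sigma})$ to $SU(2)$, and for a closed genus-$g$ surface this obstruction equals $\prod_{i}[\tilde\sigma(a_i),\tilde\sigma(b_i)]\in\{\pm\id\}$ for any $SU(2)$-lift $\tilde\sigma$ of the generators $a_i,b_i$. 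Taking $\tilde\sigma=\rho$ gives $\prod_{i}[\rho(a_i),\rho(b_i)]=\rho(\prod_i[a_i,b_i])=\rho(l)$, so $\langle w_2(P_\rho),[\widehat{\Sigma}]\rangle=0$ precisely when $\rho(l)=\id$. Since $\langle c_1(E),[\widehat{\Sigma}]\rangle=k$, the requirement $P_\rho\cong\su(E)$, equivalently $w_2(P_\rho)\equiv c_1(E)\bmod 2$, selects exactly those $[\rho]$ with $\rho(l)=(-1)^k\id$.

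Finally the last sentence follows: if $k$ is even then $\rho(l)=\id$, so $\rho$ kills $\langle\langle l\rangle\rangle$ and descends to an $SU(2)$-representation of $\pi_1(Y_0(K))$; if $k$ is odd then $\rho(l)=-\id$, and an $SU(2)$-lift $\tilde\sigma$ of the associated $SO(3)$-representation of $\pi_1(Y_0(K))$ would restrict over $Y(K)$ to a lift of $\Ad\circ\rho$, hence to $\varepsilon^{\,j}\rho$ for the sign character $\varepsilon$ (which has $\varepsilon(l)=1$ since $l$ is null-homologous in $Y(K)$), forcing $\tilde\sigma(l)=\rho(l)=-\id\neq\id$ and contradicting $l=1$ in $\pi_1(Y_0(K))$; so no such lift exists. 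The step I expect to be the main obstacle is the characteristic-class computation of the previous paragraph — identifying $\langle w_2(P_\rho),[\widehat{\Sigma}]\rangle$ with $\rho(l)$ through the relation $\prod_i[a_i,b_i]=l$ in the free group $\pi_1(\Sigma)$ and handling the capping-off correctly — together with the bookkeeping of the $SU(2)$-versus-$SO(3)$ lifting ambiguity so that the correspondence lands on the stated set; the remaining steps are essentially van Kampen plus the gluing of flat connections on a solid torus.
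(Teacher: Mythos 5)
Your proof is correct, and it arrives at the sign $(-1)^k$ by a genuinely different route from the paper. The paper (which deduces Proposition \ref{boundary condition} as the unperturbed case of Proposition \ref{boundary condition perturbation}) argues differentially: the fixed determinant connection $\theta$ has curvature supported in the glued-in solid torus, Chern--Weil gives $\int_{\widehat{\Sigma}_{\text{Seif}}}\tfrac{1}{2\pi i}F_\theta=k$, and since a solution of $(F_A)_0=0$ has $F_A=\tfrac12 F_\theta\cdot\id$ there, Stokes' theorem over the meridian disc of the solid torus forces $\operatorname{Hol}_l(A)=\operatorname{diag}(e^{i\pi k},e^{-i\pi k})=(-1)^k\id$. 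You instead stay entirely on the topological side: you first get $\rho(l)=\pm\id$ from flatness of the $SO(3)$-part over the solid torus, and then pin down the sign by identifying $\langle w_2(\su(E)),[\widehat{\Sigma}]\rangle$ with the commutator-product obstruction $\prod_i[\rho(a_i),\rho(b_i)]=\rho(l)$ on the capped-off Seifert surface, using $w_2(\su(E))\equiv c_1(E)\bmod 2$. Both computations are standard and give the same answer; the paper's has the advantage of being a direct one-line curvature integral that also handles the perturbed case verbatim (since the holonomy perturbations are trace-free and supported away from the solid torus), while yours makes the relation to the lifting obstruction of Definition 1.1 completely explicit and is purely representation-theoretic. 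Your closing argument that for $k$ odd the induced $SO(3)$-representation of $\pi_1(Y_0(K))$ admits no $SU(2)$-lift (because any lift over $Y(K)$ differs from $\rho$ by a character trivial on the null-homologous longitude) is also correct and is the content the paper leaves implicit. The only places where you are sketchier than you should be are the ones you flag yourself: the verification that the restriction map is a homeomorphism onto the stated set rather than onto a quotient by the $H^1(Y_0(K);\Z/2)$-action of Remark \ref{re:hol correspondence}, and the gluing/extension-of-gauge-transformation details; the paper is equally terse on these points, so this is not a gap relative to its standard of rigour.
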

\begin{proof} 
	This follows from the Proof of Proposition \ref{boundary condition perturbation} below if we take only class functions which are zero in our choice of holonomy perturbations.
\end{proof}

There is a collar neighbourhood of $\partial Y(K)$ that we may identify with $M=[0,1] \times S^1 \times S^1$, such that the boundary $\partial Y(K)$ corresponds to $\{ 0 \} \times S^1 \times S^1$. Let $z_0 = (x_0,y_0) \in S^1 \times S^1$ be some chosen base point. We suppose also that the identification is chosen so that of the closed curves  
\begin{equation*}
\begin{split}
       m_r & = \{r \} \times S^1 \times \{y_0\} \hspace{1cm} \text{ and} \\
       l_r & = \{r \} \times \{x_0\} \times S^1.
\end{split}
\end{equation*}
on the boundary of $M$ the curves $m_r$ correspond to meridians of the knot $K$, and such that the curves $l_r$ correspond to longitudes of $K$, for $r = 0, 1$. 

By assumption, our bundle $E$ is trivialised over $Y(K)$, and so in particular over the thickened torus $M$. 
Just as in Theorem \ref{main technical result} or equivalently in Section \ref{section nested}, we choose a family of nested holonomy perturbations $\{ \iota_k, \chi_k \}$ in $M$. 
\\

There is a similar description for the perturbed representation variety 
 in terms of the representation variety $R(K)$ with some given boundary condition, analogous to Proposition \ref{boundary condition}, as we shall describe next. Notice first that the 3-manifold $Y_0(K)$ is given as a union of three pieces,
\[
	Y_0(K) = (S^3 \setminus (N(K) \cup M)^\circ) \, \cup \, M \, \cup \, S^1 \times D^2 . 
\]	
The first piece is diffeomorphic to $S^3 \setminus N(K)$ and homotopy equivalent to the knot complement, the second is the thickened torus, and the last one is a solid torus. The holonomy perturbations only have support in $M$, and our bundle is set up in such a way that any solution $A \in \mathscr{A}$ of 
the perturbed flatness equations
\begin{equation*} 
(F_A)_0 = \sum_{k=0}^{n-1} \chi'_k (\operatorname{Hol}_{\iota_k}(A)) \, \mu_k \, ,
\end{equation*}
is flat over the piece $S^3 \setminus (N(K) \cup M)^\circ$, and such that its curvature over $S^1 \times D^2$ is equal to $\frac{1}{2} F_{\theta}\cdot \id $, where $\theta$ is the fixed connection in the  determinant line bundle $w$ of the Hermitian bundle $E \to Y_0(K)$. 

In particular, we obtain a restriction map 
\begin{equation}\label{restriction map}
\begin{split}
	r\colon  R^w_{\pertdata}(Y_0(K)) & \to R(K) \\
	 [A] & \mapsto [A|_{S^3 \setminus (N(K) \cup M)^\circ}] \, 
\end{split}
\end{equation}

We introduce some notation, following Kronheimer and Mrowka \cite{KM_Dehn}. 
\begin{definition}\label{notation boundary condition}
	For a flat connection $[A] \in R(K)$, the holonomy around a meridian $m$ and a longitude $l$ of $K$ define elements $\pm (\alpha(A),\beta(A)) \in R(T^2)$ if we assume that $A$ is chosen in its conjugacy class such that
	\begin{equation*} 
\begin{split}
	\operatorname{Hol}_{m}(A)  = \begin{bmatrix} e^{i \alpha(A)} & 0 \\ 0 & e^{-i \alpha(A)} \end{bmatrix}, \hspace{0,2cm} \text{and} \hspace{0,2cm} 
	\operatorname{Hol}_{l}(A)  = \begin{bmatrix} e^{i \beta(A)} & 0 \\ 0 & e^{-i \beta(A)} \end{bmatrix}.
\end{split}
\end{equation*}
If $S \subseteq R(T^2)$ is some subset of the pillowcase with preimage $\widetilde{S}$ in its branched double cover, the 2-torus, then we define
\[
	R(K|S) := \{ [A] \in R(K) \, | \, \pm (\alpha(A),\beta(A)) \in \widetilde{S} \, \} . 
\]
In other words, elements in $R(K|S)$ are conjugacy classes of flat connections on $S^3 \setminus N(K)^\circ$ such that the restriction to the boundary torus has holonomy in $R(T^2)$ required to lie in the set $S$. 
\end{definition}
\smallskip

\begin{prop}\label{boundary condition perturbation}
For the bundle $E \to Y_0(K)$ with determinant line bundle $w$ and fixed reference connection $\theta$ as chosen above, the restriction map $r\colon  R^w_{\pertdata}(Y_0(K)) \to R(K)$ defined in (\ref{restriction map}) above yields a homeomorphism
	\begin{equation*} 
		R^w_{\{\iota_k, \chi_k\}}(Y) \cong R(K| \overline{\phi}_1(C)) , 
	\end{equation*}
where $C = \{ (\alpha,\beta) | \beta = \pi \} \subseteq R(T^2)$, and where $\overline{\phi}_1\colon  R(T^2) \to R(T^2)$ is the map of the pillowcase induced by the holonomy perturbation data $\{ \iota_k, \chi_k\}$ according to Theorem \ref{main technical result} above. 
\end{prop}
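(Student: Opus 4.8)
The plan is to identify $R^w_{\pertdata}(Y_0(K))$ with a subset of the representation variety $R^w(Y(K) \cup M)$ of the knot complement with a collar attached, and then track through the thickened-torus piece $M$ using Theorem~\ref{main technical result}. First I would decompose $Y_0(K)$ as
\[
	Y_0(K) = \klammer{S^3 \setminus (N(K) \cup M)^\circ} \cup M \cup (S^1 \times D^2),
\]
as is already done in the excerpt, and observe that a connection $A \in \mathscr{A}$ solving the perturbed flatness equation is honestly flat on the first and third pieces (the perturbation being supported in $M$, and the bundle being set up so that $F_A$ equals $\tfrac12 F_\theta \cdot \id$ over the solid torus, which is reducible and flat there after twisting). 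The restriction to the first piece gives the map $r$ of (\ref{restriction map}) with values in $R(K)$. The content to prove is that the image of $r$ is exactly $R(K \mid \overline{\phi}_1(C))$ and that $r$ is a homeomorphism onto it.

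The key step is the boundary bookkeeping. On the inner boundary $\{0\} \times S^1 \times S^1$ of $M$, the holonomies of $m_0, l_0$ record the point $\pm(\alpha_0,\beta_0) \in R(T^2)$ which, by the identification chosen before the Proposition (meridians $\leftrightarrow m_r$, longitudes $\leftrightarrow l_r$), is precisely the boundary value $\pm(\alpha(A'),\beta(A')) \in R(T^2)$ of the restriction $A' = A|_{S^3 \setminus (N(K)\cup M)^\circ}$ to the knot complement. On the outer boundary $\{1\} \times S^1 \times S^1$, the holonomies of $m_1, l_1$ must match the gluing to the solid torus $S^1 \times D^2$: since longitudes of $K$ bound discs there, and since $c_1(E)$ was chosen with odd evaluation $k$ on the capped-off Seifert surface, the connection $A$ restricted to the solid torus forces $\operatorname{Hol}_{l_1}(A)$ to be $(-1)^k \id = -\id$, i.e. $\beta_1 = \pi$; that is, the outer boundary value lies on the line $C = \{\beta = \pi\}$. (Here I would invoke the unperturbed computation of Proposition~\ref{boundary condition}, whose proof the excerpt says follows from this one — so I should instead argue directly: the $SO(3)$-connection on the solid torus with curvature $\tfrac12 F_\theta$ and nontrivial $w_2$ along the disc has meridional holonomy trivial and longitudinal holonomy the nontrivial central element, forcing $\beta_1 = \pi$, and conversely any such boundary value extends over $S^1 \times D^2$, uniquely up to gauge.) Now apply part (3) of Theorem~\ref{main technical result}: for a connection on $M$ solving the perturbed flatness equation, it is reducible and the two boundary values are related by $(\alpha_1,\beta_1) = \overline{\phi}_1(\pm(\alpha_0,\beta_0))$ (at parameter $t=1$, so up to the irrelevant reparametrisation of $[0,1]$). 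Combining: $A'$ has boundary value $\pm(\alpha_0,\beta_0)$ with $\overline{\phi}_1(\pm(\alpha_0,\beta_0)) \in C$, i.e. $\pm(\alpha_0,\beta_0) \in \overline{\phi}_1^{-1}(C)$. Since $\overline{\phi}_1$ is a homeomorphism of the pillowcase, $\overline{\phi}_1^{-1}(C)$ is again a circle; but actually the cleaner statement, matching Definition~\ref{notation boundary condition} with $S = \overline{\phi}_1(C)$, is to phrase the boundary condition as $\pm(\alpha_0,\beta_0) \in \overline{\phi}_1(C)$ — this is simply a matter of which of $\phi_1$ or $\phi_1^{-1}$ one calls the gluing map, and I would fix conventions so that the direction of the nested perturbations makes the statement come out as written, reversing the order of the $\iota_k$ or replacing each $\chi_k$ by its negative if necessary. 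Thus $r$ maps $R^w_{\pertdata}(Y_0(K))$ into $R(K \mid \overline{\phi}_1(C))$.

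To see $r$ is a homeomorphism onto this set, I would construct an inverse. Given $[A'] \in R(K \mid \overline{\phi}_1(C))$, its boundary value $\pm(\alpha_0,\beta_0)$ lies in $\overline{\phi}_1(C)$; use Proposition~\ref{composed shearing maps from holonomy} (equivalently part (3) of Theorem~\ref{main technical result}) to build the unique-up-to-gauge reducible solution on $M$ with inner boundary value $\pm(\alpha_0,\beta_0)$, whose outer boundary value is then $\overline{\phi}_1^{-1}(\pm(\alpha_0,\beta_0)) \in C$, i.e. has $\beta_1 = \pi$; extend uniquely over the solid torus as above; and glue. The three pieces glue to a connection on $Y_0(K)$ solving the perturbed flatness equation because the boundary holonomies (hence the flat connections on the torus collars) match by construction, and one checks the glued object is smooth after a gauge transformation supported near the gluing tori — a standard patching argument, the only subtlety being that on $M$ the connection need not be flat, but it is reducible and its restriction to each boundary torus is flat, which is all that is needed to glue. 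Continuity of $r$ and of its inverse in the natural ($C^\infty$ or Uhlenbeck) topologies on these finite-dimensional real-algebraic varieties is routine. I expect the main obstacle to be the gluing/uniqueness argument on $M$: showing that the reducible solution on the thickened torus with prescribed inner boundary value is unique up to gauge (so that $r$ is injective) requires care, since $A$ on $M$ carries more data than its boundary holonomies a priori — but the covariant-constancy argument in the proof of Proposition~\ref{composed shearing maps from holonomy}, which pins down the holonomy section $T(\tilde A)$ throughout $M$, is exactly what makes this work, and I would lean on it. A secondary nuisance is keeping the $\pm$ ambiguity and the odd/even evaluation $k$ (here $k$ odd, so $(-1)^k\id = -\id$, giving $\beta_1 = \pi$ rather than $\beta_1 = 0$) consistent across all three pieces; this is bookkeeping but easy to get wrong.
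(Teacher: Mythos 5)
Your proposal follows essentially the same route as the paper: decompose $Y_0(K)$ into the three pieces, use the Chern--Weil computation of $\int \tfrac12 F_\theta$ over the disc to force the boundary holonomy at the solid-torus end onto the line $C=\{\beta=\pi\}$, and then transport across $M$ via Proposition \ref{composed shearing maps from holonomy} (the paper's proof is exactly this, leaving the inverse/gluing step you elaborate implicit). The only slip is that you have the collar oriented backwards --- in the paper $\{0\}\times S^1\times S^1=\partial Y(K)$ is the end glued to the solid torus and $\{1\}\times S^1\times S^1$ abuts the rest of the knot complement, which is why the statement reads $\overline{\phi}_1(C)$ rather than $\overline{\phi}_1^{-1}(C)$ with no convention-juggling needed.
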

\begin{proof}
Any connection $[A] \in R^w_{\{\iota_k, \chi_k\}}(Y)$ has curvature $ F_A = \frac{1}{2} F_{\theta} \cdot \id$ over $S^1 \times D^2$.  By Chern-Weil theory we have
\begin{equation*}
	\int_{\widehat{\Sigma}_{\text{Seif}}} \frac{1}{2 \pi i} F_\theta = \langle c_1(w) \, , [\widehat{\Sigma}_{\text{Seif}}] \rangle =: k ,
\end{equation*}
and we were supposing $k$ to be an odd integer. Here again, $\widehat{\Sigma}_{\text{Seif}}$ denotes a capped off Seifert surface.
The connection $\theta$ has been chosen such that the curvature $F_\theta$ has compact support inside $S^1 \times D^2$, hence 
\[
 \int_{\widehat{\Sigma}_{\text{Seif}}} \frac{1}{2} F_\theta = \int_{\{ u \} \times D^2 } \frac{1}{2} F_\theta \, 
\]
for any $u \in S^1$. On the other hand, if we denote by $\beta$ the integral on the right hand side, then the holonomy $\operatorname{Hol}_l(A)$ around a longitude $l = \{u\} \times \partial{D^2} \subseteq \partial(S^1 \times D^2)$ of $K$ is therefore equal to 
\begin{equation*}
	\operatorname{Hol}_{l}(A)  = \begin{bmatrix} e^{i \beta} & 0 \\ 0 & e^{-i \beta} \end{bmatrix}
		 = \begin{bmatrix} e^{i \pi k} & 0 \\ 0 & e^{-i \pi k} \end{bmatrix}
\end{equation*}
up to conjugation. Here the first equality is a standard computation of the holonomy of an abelian connection, and the second equation follows from the computations just before. Thus the holonomy of $A$ around the two boundary curves lies on the line $C \subseteq R(T^2)$. The claim now follows from Proposition \ref{composed shearing maps from holonomy}, with the notation taken from Theorem \ref{main technical result}.

\end{proof}

\section{Donaldson's invariants for instanton moduli spaces with large holonomy perturbations}
The proof of our main theorem will rely essentially on a non-vanishing theorem of Kronheimer-Mrowka \cite{KM_P} about Donaldson's polynomial invariants of a symplectic 4-manifold $X$ which admits the $0$-surgery $Y_0(K)$ of a non-trivial knot $K$ as a separating hypersurface. We start by recalling Donaldson's invariant derived from moduli spaces of instantons. We then use a variant of the instanton equations which use holonomy perturbations on a neck $[-L,L] \times Y_0(K)$ which are compatible with the holonomy perturbations that we have studied on $Y_0(K)$ before. We show that Donaldson's invariant can be computed with these {\em large} perturbations, essentially by showing that, under certain assumptions on the perturbed representation varieties over $Y_0(K)$, the moduli spaces of instantons contain no reducibles over 1-parameter family of holonomy perturbations, provided one chooses $L$ large enough. Finally, we derive a vanishing result of Donaldson's invariants of $X$ for possible counter examples of our main theorem. 

\subsection{Review of instanton gauge theory and Donaldson's invariants}
For the material in this subsection the books of Donaldson and Kronheimer \cite{Donaldson-Kronheimer}, and of Freed and Uhlenbeck \cite{Freed-Uhlenbeck} can be taken as general references. 
We assume we are given a Hermitian rank-2 bundle $E_X \to X$ on a Riemannian 4-manifold $X$, with determinant line bundle $v \to X$. We assume a $U(1)$-connection $\theta_X$ in $v$ is fixed. We consider the space $\mathscr{A}_{X}(E)$ of Hermitian connections on $E_X$ which induce the fixed connection $\theta_X$ on $v$. This is an affine space modelled on $\Omega^1(X;\su(E))$. To avoid confusion with connections on bundles on 3-manifolds, we write connections on 4-manifolds with bold face letters, so $\bA$ denotes a connection in $\mathscr{A}_{X}$, and $A$ denotes a connection on  a 3-manifold. 

We denote the group of determinant-1 automorphisms of the bundle $E_X \to X$ by $\mathscr{G}_X$. This group $\mathscr{G}_X$ acts on $\mathscr{A}_{X}$ in an obvious way on the left. A connection $\bA$ is called {\em reducible} if the stabiliser $\Gamma_{\bA} \subseteq \mathscr{G}_X$ of $\bA$ is strictly bigger than $Z=\{ \pm \id\}$, and {\em irreducible} otherwise. 

The Riemannian metric defines the Hodge-star operator $*\colon  \Lambda^i(T^*X) \to \Lambda^{4-i}(T^*X)$ on the cotangent bundle, inducing a decomposition into eigen\-spac\-es $\Lambda^2(T^*X) = \Lambda^2_+(T^*X) \oplus \Lambda^2_-(T^*X)$ associated to the two eigenvalues $\pm 1$. For a 2-form we denote by $\omega^\pm$ the corresponding orthogonal projection.
\\

The anti-selfduality equation for a connection $\bA \in \mathscr{A}_X$ is 
\begin{equation}\label{instanton equation}
F_{\bA}^+ = 0 \, . 
\end{equation}
\begin{definition} 
The solutions of (\ref{instanton equation}) are called {\em instantons}, and the space 
\begin{equation*}
	M^{v}_E(X) = \{ [\bA] \in \mathscr{A}_X/\mathscr{G}_X\, | \, F_{\bA}^+ = 0 \, \} 
\end{equation*}
is called moduli space of instantons.
\end{definition}
This moduli space of instantons depends on the Riemannian metric and contains reducible connections in general. We denote by $M^v_E(X)^* \subseteq M^{v}_E(X)$ the subspace of instantons which are irreducible. If $b_2^+(X) > 0$, one can achieve that $M^v_E(X)$ does not contain reducibles for generic metrics. If $b_2^+(X) > 1$, one can achieve that any two metrics with only irreducible instantons can be joined by a path of metrics along which all moduli spaces have only irreducible instantons. Furthermore, the moduli spaces $M^v_E(X)^*$ are cut out transversally for generic metrics, and in this generic case $M^v_E(X)^*$ has the structure of a smooth manifold of the {\em expected dimension}
\begin{equation}\label{ex-dim}
	d = -2 \langle \, p_1(\su(E)), [X] \rangle + 3 (b_1(X) - b_2^+(X) - 1)  \, ,
\end{equation}
where $p_1(\su(E)) \in H^4(X;\Z)$ denotes the first Pontryagin class of the $SO(3)$-bundle $\su(E)$.

The moduli spaces $M^{v}_E(X)$ are in general not compact, but there is a natural compactification due to Uhlenbeck \cite{Freed-Uhlenbeck, Donaldson-Kronheimer}. 
\\

With 2-dimensional homology classes $a \in H_2(X;\Z)$, one can associate codimension-2 submanifolds $\mathscr{V}(a)$ of $\mathscr{A}_X^*/\mathscr{G}_X$, and these intersect $M^v_E(X)$ in codimension $2$ submanifolds generically. Similarly, with a 0-dimensional homology class $x \in H_0(X;\Z)$ one can associate a codimension-4 submanifolds $\mathscr{V}(x)$. Finally, we mention that the moduli spaces $M^v_E(X)$ are naturally oriented if one fixes an orientation on the real vector space $H^1(X;\R) \oplus H^2_+(X;\R)$. With this at hand, the polynomial invariant of Donaldson's \cite{Donaldson_invariants} is a map
	\begin{equation*}
		D^v\colon  \mathbb{A}(X) \to \Q \, ,
	\end{equation*}
where $\mathbb{A}(X)$ is the symmetric graded algebra generated by $H_0(X;\Z) \oplus H_2(X;\Z)$, graded such the grading of $a \in H_i(X;\Z)$ is $4-i$. It is defined for 4-manifolds with $b_2^+(X) > 1$ as follows. If an element $a_1 \tensor \dots \tensor a_k \tensor x_1 \tensor \dots \tensor x_m \in \mathbb{A}(X)$ has grading $d$ which is the non-negative expected dimension in equation (\ref{ex-dim}) of $M^v_E(X)$ for some bundle $E_X \to X$, then 
\begin{equation}\label{Donaldson invariants}
\begin{split}
	D^v(a_1 \tensor & \dots \tensor a_k  \tensor x_1 \tensor \dots \tensor x_m) \\
	& = \# M^{v}_E(X)^* \cap \mathscr{V}(a_1) \cap \dots \cap \mathscr{V}(a_k) \cap \mathscr{V}(x_1) \cap \dots 
	\cap \mathscr{V}(x_m) 
\end{split}
\end{equation}
for a generic intersection resulting in a compact 0-dimensional manifold, and where the count is taken with the natural orientation. For elements of $\mathbb{A}(X)$ which do not have grading $d$, the Donaldson invariant is defined to be zero.

\subsection{Kronheimer-Mrowka's non-vanishing theorem}

In \cite{KM_P}, Kron\-hei\-m\-er and Mrowka have established a non-vanishing result for quite general symplectic 4-manifolds which contain 3-manifolds admitting a taut foliation as a separating hypersurface, building on work of Feehan-Leness \cite{Feehan-Leness}, Eliashberg \cite{Eliashberg} and Eliashberg-Thurston \cite{Eliashberg-Thurston}. By work of Gabai \cite{Gabai} any 0-surgery on a non-trivial knot in $S^3$ admits a taut foliation. 
We state their theorem only in the special situation that we will need here. 
\begin{theorem}\label{Kronheimer-Mrowka}
	Let $K$ be a non-trivial knot in $S^3$. Then the 3-manifold $Y_0(K)$ embeds as a separating hypersurface in a symplectic 4-manifold $X$ with $b_2^+(X) > 1$ for which the following holds: \\
	\begin{enumerate}
		\item The first homology group $H_1(X;\Z)$ is zero. \\
		\item The restriction map $H^2(X;\Z) \to H^2(Y_0(K);\Z) \cong \Z$ is onto. \\
		\item For any complex line bundle $v \to X$, Donaldson's polynomial invariants
			\begin{equation*}
				D^v_X\colon  \mathbb{A}(X) \to \Q
			\end{equation*}
			are non-zero.  
	\end{enumerate}
\end{theorem}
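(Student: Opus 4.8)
The plan is to reconstruct the argument of Kronheimer and Mrowka from \cite{KM_P}, which assembles three deep inputs and then performs some comparatively elementary topological bookkeeping.

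\textbf{Realizing $Y_0(K)$ in a closed symplectic $4$-manifold.} First I would invoke Gabai's theorem \cite{Gabai}: since $K$ is non-trivial, the $0$-surgery $Y_0(K)$ carries a taut co-oriented foliation $\mathcal{F}$. By Eliashberg--Thurston \cite{Eliashberg-Thurston}, a $C^0$-small perturbation of $\mathcal{F}$ is a contact structure, and, more to the point, $Y_0(K)\times[-1,1]$ carries a symplectic form for which one boundary component is $\omega$-convex and the other $\omega$-concave. Eliashberg's symplectic cap theorem \cite{Eliashberg} then allows one to cap off each boundary component by a compact symplectic $4$-manifold, producing a closed symplectic $4$-manifold
\[
	X = X_- \,\cup_{Y_0(K)}\, \bigl(Y_0(K)\times[-1,1]\bigr) \,\cup_{Y_0(K)}\, X_+
\]
in which $Y_0(K)\times\{0\}$ is a \emph{separating} hypersurface.

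\textbf{Arranging $b_2^+(X)>1$, $H_1(X;\Z)=0$, and (2).} Next I would perform all further modifications inside the caps $X_\pm$, away from the neck, so that the separating hypersurface is untouched. To force $b_2^+(X)>1$ one performs a symplectic fiber sum with an elliptic surface $E(n)$, $n$ large, along a symplectically embedded torus sitting in a cap; this strictly raises $b_2^+$. To force $H_1(X;\Z)=0$ one kills the remaining one-dimensional homology by Luttinger-type surgeries or by choosing the caps with vanishing first homology. Property (2), surjectivity of the restriction $H^2(X;\Z)\to H^2(Y_0(K);\Z)\cong\Z$, I would then check by Mayer--Vietoris for the decomposition above: the generator of $H^2(Y_0(K);\Z)$, Poincar\'e dual to a meridian of $K$, extends over each of the two sides once the caps have trivial first homology, and the common restriction classes then realise all of $\Z$. (Equivalently, (2) is exactly what ensures that the bundle $E\to Y_0(K)$ of Section~\ref{holonomy perturbation 0-surgery}, with $c_1$ of odd evaluation on a capped Seifert surface, is the restriction of a Hermitian bundle $E_X\to X$ with a suitable determinant line bundle $v$.)

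\textbf{Non-vanishing of the Donaldson invariants.} Finally, since $X$ is symplectic with $b_2^+(X)>1$, Taubes' theorem gives $SW_X(\mathfrak{s}_{\mathrm{can}})=\pm1$, so $X$ has non-trivial Seiberg--Witten invariants and its canonical class is a basic class. By the $SO(3)$-monopole cobordism program of Feehan--Leness \cite{Feehan-Leness} (the relevant case of Witten's conjecture relating $D^v_X$ to Seiberg--Witten invariants), for every complex line bundle $v\to X$ the Donaldson invariant $D^v_X$ is a universal expression in the Seiberg--Witten basic classes and the intersection form of $X$, with the sign/shift determined by $c_1(v)\bmod 2$; the presence of the canonical basic class (with invariant $\pm1$) makes this expression non-zero, so $D^v_X$ does not vanish identically on $\mathbb{A}(X)$. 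This is (3).

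\textbf{Expected main obstacle.} Essentially all of the difficulty is imported: the realization step rests on Eliashberg--Thurston's approximation together with Eliashberg's cap theorem, and the non-vanishing step on the Feehan--Leness proof of Witten's conjecture and on Taubes' non-vanishing result for symplectic manifolds. None of these is reproved here, which is why the statement is quoted from \cite{KM_P}; the only genuinely new content is the routine-but-careful verification that the closed symplectic model $X$ can be taken to satisfy (1), (2) and $b_2^+(X)>1$ simultaneously while keeping $Y_0(K)$ a separating hypersurface.
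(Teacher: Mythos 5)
The paper offers no proof of this theorem: it is quoted verbatim from Kronheimer--Mrowka \cite{KM_P}, with the surrounding text naming exactly the ingredients you assemble (Gabai \cite{Gabai} for the taut foliation, Eliashberg--Thurston \cite{Eliashberg-Thurston} and Eliashberg \cite{Eliashberg} for the symplectic capping, Feehan--Leness \cite{Feehan-Leness} together with Taubes for the non-vanishing). Your outline is therefore a faithful reconstruction of the cited argument rather than an alternative to anything in the paper; the only part that cannot be checked against the text is your specific recipe for arranging $b_2^+(X)>1$, $H_1(X;\Z)=0$ and condition (2) (fiber sums with $E(n)$, Luttinger surgeries), which is a plausible but unverified stand-in for whatever bookkeeping Kronheimer and Mrowka actually perform in \cite{KM_P}.
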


\subsection{Moduli spaces with holonomy perturbations, neck-stretching arguments}
We now study a particular family of metrics on a symplectic 4-manifold $X$ as in Theorem \ref{Kronheimer-Mrowka}. We start by choosing a Riemannian metric on the 3-manifold $Y$ (which later will be chosen to be $Y_0(K)$), and we choose a Riemannian metric on $X$ which is a product metric on a tubular neighbourhood of $Y$. For some $L > 0$, we can cut open $X$ along $Y$, and insert a cylinder $[-L-1,L+1] \times Y$ with its canonical Riemannian metric into $X$, resulting in a Riemannian manifold that we denote by $X(L)$,
\[
	X(L) \supseteq [-L-1,L+1] \times Y\, . 
\]
We will now choose a smooth cutoff function $h\colon  X(L) \to [0,1]$ which is constant $1$ on the neck $[-L,L] \times Y$, and which has compact support inside $[-L-1,L+1] \times Y$. 

We will continue to assume that $E \to Y$ is a Hermitian bundle on the 3-manifold $Y$, and we assume that the bundle $E_X \to X$, when restricted to the neck, is isomorphic to the bundle $E \to Y$ pulled back to the neck via the projection $[-L-1,L+1] \times Y_0(K) \to Y_0(K)$, and we assume we have fixed such an identification in what follows. We also assume that the connections in $\mathscr{A}_X$ are such that the induced connection $\theta_X$ in the determinant line bundle $v$ coincides with the pull-back of the fixed connection $\theta$ in the determinant line bundle $w \to Y$ on the neck. 

Any connection $\bA$ on $E_X \to X$ then yields a family of connection 
\[
A(t):= \bA |_{\{ t \} \times Y }
\]
on $E \to Y$ by restriction to the slices $\{t\} \times Y$. A connection $\bA$ is said to be in {\em temporal gauge} on the neck if the connection coincides with the trivial product connection if restricted to the lines $\R \times \{ y \}$. 

Let $\{ \iota_k, \chi_k \}_{k=0, \dots, n-1}$ be holonomy perturbation data on $Y$ as before. For a connection $\bA \in \mathscr{A}_X$ we shall consider the {\em perturbed anti-selfduality equation}

\begin{equation}\label{perturbed asd}
	F_\bA^+ = h \cdot \sum_{k=0}^{n-1} \chi'_k(\operatorname{Hol}_{\iota_k}(\bA)) \mu_k^+ \, ,
\end{equation}
where the self-dual part $\mu^+$ of a $2$-form $\mu$ on $Y$ is given by $\mu + dt \wedge *_3 \mu$, with  $*_3$ the Hodge-star on the 3-manifold $Y$. The notation is also meant to indicate that at the slice $\{ t \} \times Y$ the expression $\operatorname{Hol}_{\iota_k}(\bA)$ is meant to be $\operatorname{Hol}_{\iota_k}(A(t))$. 
\begin{definition} 
The solutions of (\ref{perturbed asd}) are called {\em holonomy perturbed instantons}, and the space 
\begin{equation*}
	M^{v}_E(X(L);\pertdata) = \{ [\bA] \in \mathscr{A}_X/\mathscr{G}_X\, | \text{ $\bA$ satisfies (\ref{perturbed asd})} \} 
\end{equation*}
is called moduli space of perturbed instantons associated to the holonomy perturbation data $\{\iota_k,\chi_k \}$ and the cutoff function $h$. 
\end{definition}

We will also need to consider a 1-dimensional family of holonomy perturbations $\theta^s_{\{\iota_k,\chi_k \}}\colon  \mathscr{A}_Y \to \Omega^2(Y;\su(E))$, parametrised by $s \in [0,1]$, that interpolates between $0$ and the full perturbation $\sum \chi'_k(\operatorname{Hol}_{\iota_k}(A) \mu_k$.

\begin{definition} 

Given holonomy perturbation data $\{ \iota_k, \chi_k \}_{k=0}^{n-1}$ on $Y$ as before, we consider, for $s \in [0,1]$, the 1-parameter family of perturbations
\begin{equation*}
	\begin{split}
		\Phi(s) \colon  \mathscr{A} & \to \R \\
	\end{split}
\end{equation*}
of the Chern-Simons function $\operatorname{CS}\colon  \mathscr{A} \to \R$, 
which for $ \frac{k}{n} \leq s \leq \frac{k+1}{n}$ and some $k=0, \dots, n-1$ is given by the formula 
\[
	\Phi(s) = n (s - \frac{k}{n}) \Phi_{k} \, + \, \sum_{l=0}^{k-1} \Phi_l \, .
\]
(In particular, $\Phi(0) = 0$, and $\Phi(1) = \sum \Phi_k$.)
Here $\Phi_l\colon  \mathscr{A} \to \R$ denotes the function corresponding to the holonomy perturbation $(\iota_l,\chi_l)$, as in Section \ref{holonomy perturbations Chern-Simons} above. 
\end{definition}
\begin{prop}
The critical points of the perturbed Chern-Simons function
  \[
  	\operatorname{CS} \, + \, \Phi(s) \colon  \mathscr{A} \to \R
  \]
are the elements $A \in \mathscr{A}$ which solve the equation
\begin{equation}\label{one parameter flat}
	(F_A)_0 = \theta^s_{\pertdata}(A)  \, ,
\end{equation}
where the function $\theta^s\colon  \mathscr{A} \to \Omega^2(Y;\su(E))$ is defined as follows. For $ \frac{k}{n} \leq s \leq \frac{k+1}{n}$ and some $k=0, \dots, n-1$,
\begin{equation*}
	\theta^s_{\pertdata}(A) := n(s-\frac{k}{n}) \cdot \chi'_k(\operatorname{Hol}_{\iota_k}(A)) \, \mu_k + \sum_{l=0}^{k-1} \chi'_l(\operatorname{Hol}_{\iota_l}(A)) \,  \mu_l .
\end{equation*}
\end{prop}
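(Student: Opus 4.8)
The plan is to prove this by a straightforward first-variation computation: I would compute the differential of $\operatorname{CS} + \Phi(s)$ at an arbitrary connection $A \in \mathscr{A}$, with $s$ held fixed, and then identify the critical points by a non-degeneracy argument. So I would fix $s$ with $\frac{k}{n} \le s \le \frac{k+1}{n}$, fix a tangent vector $b \in \Omega^1(Y;\su(E))$ to $\mathscr{A}$ at $A$, and differentiate $u \mapsto (\operatorname{CS} + \Phi(s))(A + u\,b)$ at $u = 0$.

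For the Chern--Simons term I would use the standard first-variation formula, which follows directly from the displayed expression for $\operatorname{CS}$ together with Stokes' theorem: there is a nonzero constant $c$ with $\frac{d}{du}\big|_{u=0}\operatorname{CS}(A + u\,b) = c\int_Y \tr\big(b \wedge (F_A)_0\big)$, which is precisely the identity making the critical points of $\operatorname{CS}$ the solutions of $(F_A)_0 = 0$. For the perturbation term, the key observation is that, at this fixed $s$, the function $\Phi(s) = n(s - \frac{k}{n})\,\Phi_k + \sum_{l=0}^{k-1}\Phi_l$ is a fixed linear combination of the holonomy functions $\Phi_l$, with coefficients depending on $s$ but not on $A$. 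Hence its differential at $A$ is the same linear combination of the differentials $d(\Phi_l)_A$, and the computation already carried out in the proof of Proposition \ref{holonomy perturbations of CS} (following \cite{Braam-Donaldson}) identifies $d(\Phi_l)_A(b)$ with $c\int_Y \tr\big(b \wedge \chi'_l(\operatorname{Hol}_{\iota_l}(A))\,\mu_l\big)$, the same normalization $c$ as above, where $\chi'_l\colon SU(2)\to\su(2)$ is the equivariant map trace-dual to $d\chi_l$. Collecting terms and using the very definition of $\theta^s_{\pertdata}$, the first variation of $\operatorname{CS}+\Phi(s)$ at $A$ in the direction $b$ becomes
\[
	c\int_Y \tr\big(b \wedge [(F_A)_0 - \theta^s_{\pertdata}(A)]\big) \, .
\]

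Finally, I would invoke non-degeneracy of the pairing $\Omega^1(Y;\su(E)) \times \Omega^2(Y;\su(E)) \to \R$, $(b,\gamma) \mapsto \int_Y \tr(b \wedge \gamma)$, in the first slot --- the Hodge star identifies $\Omega^2(Y)$ with $\Omega^1(Y)$ and the trace form on $\su(2)$ is non-degenerate --- to conclude that the integral above vanishes for every $b$ exactly when $(F_A)_0 = \theta^s_{\pertdata}(A)$, which is the asserted equation \eqref{one parameter flat}. To round things off I would note that $\theta^s_{\pertdata}$ is well-defined and continuous (indeed piecewise linear) in $s$, since the two expressions for $\theta^s$ coming from the adjacent intervals $[\frac{k-1}{n},\frac{k}{n}]$ and $[\frac{k}{n},\frac{k+1}{n}]$ agree at the breakpoint $s = \frac{k}{n}$, both equalling $\sum_{l=0}^{k-1}\chi'_l(\operatorname{Hol}_{\iota_l}(A))\,\mu_l$. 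I do not anticipate a genuine obstacle: the statement is a routine variational computation, and the only points deserving any care are the bookkeeping of the $s$-dependent coefficients across the breakpoints and the use of the correct gauge-invariant first-variation formula for $\operatorname{CS}$, both of which reduce to Proposition \ref{holonomy perturbations of CS}.
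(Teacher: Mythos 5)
Your proposal is correct and follows essentially the same route as the paper, which simply observes that the claim is an immediate consequence of Proposition \ref{holonomy perturbations of CS}: since $\Phi(s) = n(s-\tfrac{k}{n})\Phi_k + \sum_{l<k}\Phi_l$ is a fixed linear combination of holonomy perturbation functions (equivalently, the holonomy perturbation associated to the rescaled class functions $n(s-\tfrac{k}{n})\chi_k$ and $\chi_l$), the earlier first-variation computation applies verbatim and yields exactly $\theta^s_{\pertdata}$ on the right-hand side. Your extra checks (non-degeneracy of the pairing, agreement of the two formulas at the breakpoints $s=\tfrac{k}{n}$) are correct and harmless elaborations of the same argument.
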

\begin{proof} This is an immediate consequence of Proposition \ref{holonomy perturbations of CS}. \end{proof}

\begin{definition}
	For choices as above, we denote the $s$-parametrised family of spaces
	\[
		R^{w}(Y;\theta^s_{\pertdata}) =  \{ [A] \in \mathscr{A}/\mathscr{G} \, | \text{ $A$ solves equation (\ref{one parameter flat})} \},
	\]  
	and we call this the one-parameter perturbed $SO(3)$-representation variety of $Y$ associated to $w$ and the holonomy perturbation data $\{ \iota_k, \chi_k \}$. 
\end{definition}

\begin{definition}
Extending the $s$-parametrised family of functions 
\[
\theta^s_\pertdata\colon \mathscr{A} \to \Omega^2(Y;\su(E))
\]
 to the neck as before, we consider the {\em perturbed anti-selfduality equation}
\begin{equation}\label{one-parameter-asd}
	F_\bA^+ = h \cdot \theta_{\pertdata}^s(\bA)^+  ,
\end{equation}
and we call
\begin{equation*}
	M_E^v(X(L);\theta^s_{\{ \iota_k, \chi_k \}}):= \{ [\bA] \in \mathscr{A}_X/\mathscr{G}_X\, |
		\text{ $\bA$ satisfies (\ref{one-parameter-asd})}  \} \, ,
\end{equation*}
 the moduli space of instantons associated to the 1-parameter family of holonomy perturbation data $\theta^s_{\pertdata}$, with $s \in [0,1]$.
\end{definition}

If $b_2^+(X) > 0$ the moduli space of instantons $M^v_E(X)$ contains no reducible connections for a generic Riemannian metric. Furthermore, if $b_2^+(X) > 1$ the moduli spaces $M^v_E(X)(g_s)$ associated to a generic 1-parameter family of Riemannian metrics $g_s$ contain no reducible connections along the family for a generic 1-parameter family of Riemannian metrics. Presumably, the proof of this fact carries over to the case where we consider only {\em small} perturbations along the neck in $X(L)$. However, this is not the case in our situation of {\em large} perturbations, where the standard approach in this situation, see (the proof of) \cite[Proposition 4.3.14 and Corollary 4.3.15]{Donaldson-Kronheimer}, clearly cannot be adopted in a straight-forward way. We assume that a result of the following kind is implicit in the corresponding situation in \cite{KM_Dehn}.  

\begin{prop}[Avoidance of Reducibles]\label{avoidance reducibles}
	Suppose that for any $s \in [0,1]$ the perturbed representation variety $R^w(Y;{\theta^s_{\{ \iota_k,\chi_k\}}})$ contains no reducible connections. Then the following holds:
There is some $L_0 > 0$, such that for any $L \geq L_0$ and for any $s \in [0,1]$ the moduli space
\[
	M^v_E(X(L);\theta^s_{\pertdata})
\]
contains no reducible connection. 
\end{prop}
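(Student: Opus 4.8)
The plan is to argue by contradiction through a neck-stretching argument: a reducible solution over an arbitrarily long neck is forced to be close, over most of the neck, to a reducible critical point of the perturbed Chern--Simons function on $Y$, of which there are none by hypothesis.

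Suppose the conclusion fails. Then there are sequences $L_j\to\infty$ and $s_j\in[0,1]$, and reducible solutions $[\bA_j]\in M^v_E(X(L_j);\theta^{s_j}_{\pertdata})$; after passing to a subsequence, $s_j\to s_\infty\in[0,1]$. Being reducible, each $\bA_j$ respects a splitting $E_X=\ell_j\oplus(\ell_j^{-1}\otimes v)$, and since $X$ is fixed only finitely many splitting types occur, so after a further subsequence the type is independent of $j$ (all estimates below are in any case uniform over types). On the neck $[-L_j,L_j]\times Y$, where the cutoff $h$ equals $1$, the line bundle $\ell_j$ is pulled back from $Y$; putting $\bA_j$ in temporal gauge there and letting $A_j(t)$ be its restriction to $\{t\}\times Y$, the perturbed anti-selfduality equation becomes the downward gradient-flow equation of the perturbed Chern--Simons function restricted to the locus of reducible (equivalently abelian) connections:
\[
\frac{\partial A_j}{\partial t}(t)=-\operatorname{grad}\bigl(\operatorname{CS}+\Phi(s_j)\bigr)\bigl(A_j(t)\bigr),\qquad t\in[-L_j,L_j].
\]

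The first ingredient is a uniform lower bound on this gradient. Modulo gauge, the reducible locus of a fixed splitting type is an affine space over the Hodge-coexact imaginary $1$-forms times the Jacobian torus $\mathcal H^1(Y)/(\text{lattice})$. Since $\operatorname{grad}\bigl(\operatorname{CS}+\Phi(s)\bigr)(A)=*\bigl((F_A)_0-\theta^s_{\pertdata}(A)\bigr)$ up to a universal constant, and the perturbation term is uniformly bounded in $L^2$ because the $\chi_k$ are fixed, a bound $\|\operatorname{grad}\|\le\delta_0$ bounds $\|(F_A)_0\|_{L^2}$, hence $\|da\|_{L^2}$, hence the coexact part of $A$ in $H^1$, while the harmonic part lies automatically in the compact Jacobian. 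Thus for any $\delta_0>0$ the set of reducible connections with $\|\operatorname{grad}(\operatorname{CS}+\Phi(s))\|\le\delta_0$ is precompact modulo gauge, uniformly in $s\in[0,1]$. By hypothesis $R^w(Y;\theta^s_{\pertdata})$ has no reducible element for any $s$, i.e.\ $\operatorname{CS}+\Phi(s)$ has no reducible critical point; continuity of $(s,[A])\mapsto\|\operatorname{grad}(\operatorname{CS}+\Phi(s))([A])\|$ and compactness of $[0,1]$ then yield a constant $\delta>0$ with $\|\operatorname{grad}(\operatorname{CS}+\Phi(s))([A])\|_{L^2}\ge\delta$ for all $s\in[0,1]$ and all reducible $[A]$. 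Along the neck the perturbed Chern--Simons function therefore drops by at least $2L_j\delta^2$:
\[
\bigl(\operatorname{CS}+\Phi(s_j)\bigr)\bigl(A_j(-L_j)\bigr)-\bigl(\operatorname{CS}+\Phi(s_j)\bigr)\bigl(A_j(L_j)\bigr)=\int_{-L_j}^{L_j}\Bigl\|\frac{\partial A_j}{\partial t}\Bigr\|^2\,dt\ \ge\ 2L_j\,\delta^2\ \xrightarrow{\,j\to\infty\,}\ \infty.
\]

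The remaining step, and the main obstacle, is an a priori bound on this Chern--Simons drop that is independent of $L$. In the unperturbed or small-perturbation setting this is immediate: the drop along the neck equals, up to a universal constant, $\int_{[-L_j,L_j]\times Y}\operatorname{tr}(F_{\bA_j}\wedge F_{\bA_j})$, i.e.\ the fixed Pontryagin number of $E_X$ minus the integrals of $\operatorname{tr}(F\wedge F)$ over the two \emph{fixed} caps outside the neck, which is controlled once the energy on those caps is, and there the total Yang--Mills energy of $\bA_j$ is essentially topological, hence bounded. Here this must be redone, because the \emph{large} holonomy perturbations make the total energy $\mathcal E(\bA_j)=2\,\|h\,\theta^{s_j}_{\pertdata}(\bA_j)^+\|_{L^2}^2+\text{const}$ grow linearly in $L_j$, so the neck contribution cannot be bounded simply by the total energy. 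Following Kronheimer--Mrowka's treatment of big perturbations in \cite{KM_Dehn}, the plan is to split the energy into the part produced by the perturbation --- supported in the tubes $\iota_k$, and controlled quantitatively by the $C^0$-size of the $\chi_k$ entering the approximation together with the uniform boundedness of the holonomy perturbation \emph{functional} $\Phi$ itself --- and the genuine instanton energy on the fixed caps, which remains bounded uniformly in $L_j$; this yields an $L$-independent bound on the Chern--Simons drop along the neck and contradicts the lower bound $2L_j\delta^2\to\infty$, proving the proposition. Equivalently, one extracts from the $\bA_j$ a limiting reducible solution of the perturbed flatness equation on $Y$ at the parameter $s_\infty$, contradicting the hypothesis directly. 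Reconciling the unavoidable linear growth of the total energy with an $L$-independent bound on the part of it that carries the Chern--Simons drop is the crux of the argument.
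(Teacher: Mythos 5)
Your overall strategy is the paper's: stretch the neck, interpret the perturbed ASD equation in temporal gauge as the downward gradient flow of $\operatorname{CS}+\Phi(s)$, and play a lower bound on the Chern--Simons drop against an $L$-independent upper bound. Your first half is a legitimate variant: the paper does not establish a uniform lower bound $\|\operatorname{grad}\|\geq\delta$ on the reducible locus, but instead finds sub-intervals $(a_i,b_i)$ of fixed length $\delta$ on which the drop tends to zero, applies Uhlenbeck compactness there to extract a limit critical point of $\operatorname{CS}+\Phi(s_0)$, and then derives the contradiction from the hypothesis that such a critical point must be irreducible together with openness of irreducibility. Your ``uniform $\delta$ on reducibles'' route is plausible for abelian connections (modulo the usual care about which topology makes the gradient norm continuous on the precompact set), and either half-argument would close the loop \emph{if} the upper bound were in place.

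It is not in place, and that is a genuine gap --- one you yourself flag as the unresolved ``crux.'' The resolution is simpler than the splitting you propose, and your stated obstacle is a red herring: the Chern--Simons drop along the neck is not the (unsigned) energy $\|F_{\bA}\|_{L^2}^2$ but the signed density $e(\bA|_{[-L,L]\times Y})=\int\tr((F_{\bA})_0\wedge(F_{\bA})_0)=\|(F^-_{\bA})_0\|^2-\|(F^+_{\bA})_0\|^2$ restricted to the neck. The linear growth of $\|F^+_{\bA}\|^2$ there is irrelevant because one never bounds the two terms separately: $e$ is additive over the decomposition of $X(L)$ and its total is the fixed Pontryagin number; on the two \emph{unit-length} collars $[-L-1,-L]\times Y$ and $[L,L+1]\times Y$ one has $e\geq -C^2\operatorname{vol}$ from the uniform pointwise bound on the perturbation (a constant independent of $L$ and $s$, since these collars have fixed volume); and on the caps outside the cylinder the equation is genuinely anti-self-dual, so $e\geq 0$ there. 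Subtracting gives $e(\bA|_{\text{neck}})\leq K$ with $K$ independent of $L$ and $s$, and adding the uniform bound $|\Phi(s)|\leq K'$ converts this into the required bound $K+2K'$ on the drop of $\operatorname{CS}+\Phi(s)$. Without this step your argument does not conclude, and the decomposition you sketch (``perturbation energy'' versus ``instanton energy on the caps'') does not by itself control $\|(F^-_{\bA})_0\|^2$ on the neck, which is the term that could a priori spoil the upper bound.
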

\begin{proof}
	We follow a strategy similar to the proof of \cite[Proposition 15]{KM_Dehn}, using the 1-parameter family of the perturbed Chern-Simons function in a compactness argument. It uses the relation of the Chern-Simons function to characteristic classes. 
	
	Suppose the claim were not true. Then there is a sequence of real numbers $(L_i)_{i \in \N}$ which tends to infinity, some sequence of numbers $(s_i)_{i \in \N}$ in $[0,1]$ such that the moduli space
	\[
		M^v_E(X(L_i),\theta^{s_i}_{\pertdata})
	\]
contains an equivalence class of a reducible connection represented by $\bA_i$. 

The expression 
\[
	e(\bA_i) = \int_{X(L_i)} \tr((F_{\bA_i})_0 \wedge (F_{\bA_i})_0) = \norm{(F^-_{\bA_i})_0}^2 - \norm{(F^+_{\bA_i})_0}^2
\]
represents $(-8\pi^2)$ times the first Pontryagin class of the bundle $\su(E)$, and is independent of $i$. As in \cite{KM_Dehn}, we write $e(\bA_i|X')$ for the restriction of this integral to a codimension-0 submanifold $X'$ of $X(L_i)$. 

Next we realise that the holonomy perturbation expression on the right hand side of equation (\ref{one-parameter-asd}) is uniformly $L^\infty$-bounded by some constant $C$ which can be taken independent of $s$ (it can be chosen equal to the $L^\infty$-norm of the 2-form $\mu$ involved in the construction of the holonomy perturbation). Hence one has a uniform lower bound
\[
e(\bA_i|[-L_i-1,-L_i] \times Y \cup [L_i,L_i+1] \times Y) \geq - C^2 \, \text{vol}(Y)^2 . 
\]
As the holonomy perturbations have support in the cylinder $[-L_i-1,L_i+1]\times Y$, one also has the lower bound
\[
e(\bA_i|X(L_i) \setminus [-L_i-1,L_i + 1] \times Y) \geq 0
\]
for all $i$, as $(F_{\bA_i}^+)_0 = 0$ over this piece. Hence, as in \cite[Proposition 15]{KM_Dehn} one obtains an uniform upper bound on the neck,
\[
e(\bA_i|[-L_i,L_i] \times Y) \leq K
\]
for some constant $K$ which is independent of both $s$ and $i$. 

A fundamental fact of instanton Floer theory is that the equations (\ref{one-parameter-asd}) for the perturbed instantons $\bA_i$, restricted to the neck $[-L_i,L_i] \times Y$, put in temporal gauge, take the shape of a downward flow equation
\begin{equation} \label{flow equation}
	\frac{d A_i(t)}{dt} = - \operatorname{grad} (\operatorname{CS} + \Phi(s_i))(A_i(t)) \, ,
\end{equation}
where here again $A_i(t)$ denotes the restriction of $\bA_i$ to the slice $\{ t \} \times Y$. Therefore, $\operatorname{CS} + \Phi(s_i)$ is (not necessarily strictly) monotone decreasing along the path $t \mapsto A_i(t)$.

On the other hand, it is a well-known fact that the the difference of the Chern-Simons function is related to the relative Pontryagin class, and so we have 
\[
	\operatorname{CS}(A_i(-L_i)) - \operatorname{CS}(A_i(L_i)) = e(\bA_i|[-L_i,L_i] \times Y) \leq K . 
\]
The function $\Phi(s)$ has a uniform bound 
\[
	\abs{\Phi(s)} \leq K'
\]
for all $s \in [0,1]$ on $\mathscr{A}$ which only depends on the class functions $\chi_k$ involved in the holonomy perturbation data $\pertdata$. Hence one obtains a uniform bound on the total drop on the neck as in loc. cit.
\[
	(\operatorname{CS}+ \Phi(s_i))(A_i(-L_i)) - (\operatorname{CS}+ \Phi(s_i))(A_i(L_i))  \leq K + 2K'. 
\]
Given any $\delta > 0$, there is a sequence of intervals 
\[
	(a_i,b_i) \subseteq [-L_i,L_i]
\]
of length $\delta$ so that the drop of $\operatorname{CS} + \Phi(s_i)$ from $\{a_i\} \times Y$ to $\{b_i\} \times Y$ converges to $0$ as $i$ goes to infinity. 

Up to passing to a subsequence, we may suppose that the sequence $(s_i)$ converges to a limit $s_0$. By making $\delta$ small enough so that the $L^2$-norm of the curvature is small enough on the smaller necks $(a_i,b_i) \times Y$ (this can be done uniformly and independent of $s$), one can apply Uhlenbeck's compactness theorem (by working, after a translation, on the fixed strip $(0,\delta) \times Y$, first on balls, but then by the usual patching process on the entire strip), with the following conclusion. There is a subsequence such that the sequence of connections $\bA_i$ converges in $C^\infty$ to a limit connection $\bA$ on $(0,\delta) \times Y$. By continuity, this limit connection is a critical point of the Chern-Simons function $\operatorname{CS} + \Phi(s_0)$. Hence the constant path $(0,\delta) \to \mathscr{A}$ given by $t \mapsto A(t) = \bA|_{\{t\} \times Y}$ represents an element in the perturbed representation variety,
\[
	[A(t)] \in R^w(Y;\theta^{s_0}_{\pertdata}) \, . 
\]
By assumption, this is an irreducible connection. However, this implies that for $i$ large enough the nearby connections $A_i(t)$ also are irreducible, as irreducibility is an open condition (even in the weaker $L^\infty$-norm on $\mathscr{A}$.) Hence we have obtained a contradiction. 
\end{proof}

\begin{remark}[Uhlenbeck compactification] The Uhlenbeck compactification of moduli spaces carries over to moduli spaces with holonomy perturbations of the kind considered here. This means that the moduli spaces with holonomy perturbations admit natural Uhlenbeck-compactifications analogously to the classical situation. For a discussion, we refer to Donaldson's book on instanton Floer homology \cite[Section 5.5]{Donaldson}. Therefore, invariants can be defined as in the standard situation, provided one can deal with the reducibles. In particular, one can define the number in (\ref{Donaldson invariants})
with the moduli space $M^v_E(X(L);\theta^s_{\pertdata})$ instead of $M^v_E(X(L))$. 
\end{remark}

\begin{prop}\label{stretching argument 0}
	Suppose that the holonomy-perturbed representa\-tion va\-riety $R^w_{\{ \iota_k,\chi_k\}}(Y)$ is empty.
	 Then 
there is some $L_0 > 0$, such that for any $L \geq L_0$ the moduli space
\[
	M^v_E(X(L);{\pertdata})
\]
is empty.
\end{prop}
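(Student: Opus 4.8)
The plan is to argue by contradiction using a neck-stretching and Uhlenbeck-compactness argument, following exactly the scheme of the proof of Proposition \ref{avoidance reducibles}; in fact the present statement is a simplified version of it, since we only need to extract a critical point of the perturbed Chern-Simons function and not an irreducible one. So suppose the conclusion fails. Then there is a sequence $L_i \to \infty$ and perturbed instantons $[\bA_i] \in M^v_E(X(L_i);\pertdata)$, each solving equation (\ref{perturbed asd}) (equivalently (\ref{one-parameter-asd}) with $s=1$). First I would put each $\bA_i$ in temporal gauge on the neck $[-L_i,L_i]\times Y$, so that the restriction of the equation to the slices becomes the downward gradient flow $\tfrac{d}{dt}A_i(t) = -\operatorname{grad}(\operatorname{CS} + \Phi(1))(A_i(t))$.

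The next step is a uniform energy bound on the neck. The integral $e(\bA_i) = \int_{X(L_i)} \tr((F_{\bA_i})_0 \wedge (F_{\bA_i})_0)$ represents $-8\pi^2 \langle p_1(\su(E)),[X]\rangle$ and is independent of $i$. Splitting $X(L_i)$ into the neck, the two collars $[-L_i-1,-L_i]\times Y$ and $[L_i,L_i+1]\times Y$ carrying the support of the cutoff $h$, and the remaining piece, one has $e(\bA_i|\,\cdot\,) \geq 0$ on the remaining piece (where $(F^+_{\bA_i})_0 = 0$) and $e(\bA_i|\,\cdot\,) \geq -C^2\operatorname{vol}(Y)^2$ on the collars (where the perturbation term is $L^\infty$-bounded by a constant $C$ depending only on $\mu$ and the $\chi_k$), exactly as in the proof of Proposition \ref{avoidance reducibles}. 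This yields a uniform upper bound $e(\bA_i|[-L_i,L_i]\times Y) \leq K$. Since the difference of the Chern-Simons function along the flow equals the relative Pontryagin number, we get $\operatorname{CS}(A_i(-L_i)) - \operatorname{CS}(A_i(L_i)) \leq K$, and as $|\Phi(1)| \leq K'$ uniformly on $\mathscr{A}$, the total drop of $\operatorname{CS} + \Phi(1)$ across the neck is at most $K + 2K'$, uniformly in $i$.

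Now fix $\delta>0$. By pigeonhole over the roughly $2L_i/\delta$ disjoint subintervals of length $\delta$ in $[-L_i,L_i]$, there are intervals $(a_i,b_i)$ of length $\delta$ on which the drop of $\operatorname{CS} + \Phi(1)$ tends to $0$. By the flow equation this forces $\int_{a_i}^{b_i}\|\tfrac{d}{dt}A_i(t)\|^2\,dt \to 0$, and combining the resulting smallness of $\|(F^-_{\bA_i})_0\|^2_{L^2((a_i,b_i)\times Y)}$ with the $L^\infty$-bound $\|(F^+_{\bA_i})_0\|^2_{L^2((a_i,b_i)\times Y)} \leq \delta\,C'$ shows that the $L^2$-norm of the curvature over $(a_i,b_i)\times Y$ is as small as we please, once $\delta$ is small and $i$ is large. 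Translating to the fixed strip $(0,\delta)\times Y$ and applying Uhlenbeck's compactness theorem (first on balls, then patching over the strip as in \cite{KM_Dehn}), a subsequence of the $\bA_i$ converges in $C^\infty$ on $(0,\delta)\times Y$ to a limit connection $\bA$ solving the same perturbed equation. Since the drop of $\operatorname{CS} + \Phi(1)$ along $\bA$ is $0$ and the function is non-increasing along the flow, the path $t \mapsto A(t) = \bA|_{\{t\}\times Y}$ is constant at a critical point, i.e. $(F_{A})_0 = \sum_{k}\chi'_k(\operatorname{Hol}_{\iota_k}(A))\,\mu_k$, so $[A] \in R^w_{\{\iota_k,\chi_k\}}(Y)$, contradicting the hypothesis that this variety is empty.

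The only step requiring genuine care is the Uhlenbeck patching on the strip together with the verification that the small-energy hypothesis is met there, and this is also the one place where the \emph{largeness} of the perturbations enters; but since this technical input is already established in the proof of Proposition \ref{avoidance reducibles}, no new difficulty arises here, and the argument above is in fact strictly simpler than that one.
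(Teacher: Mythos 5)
Your argument is correct and is exactly what the paper intends: the paper's own proof of this proposition consists of the single sentence that it ``works completely analogously to the proof of Proposition~\ref{avoidance reducibles}'', and your write-up is precisely that argument specialised to the fixed parameter $s=1$, ending with a critical point of $\operatorname{CS}+\Phi(1)$ rather than an irreducible one. No gap.
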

\begin{proof}
	This works completely analogously to the proof of Proposition \ref{avoidance reducibles}.
\end{proof}

\begin{prop}
[Vanishing result]
\label{stretching argument}
Let $b_1(X)=0$ and $b_2^+(X) > 1$. Assume in addition that the 1-parameter family of holonomy perturbation data $\theta^s_{\pertdata}$ is such that for any $s \in [0,1]$ the perturbed representation variety $R^w(Y;{\theta^s_{\{ \iota_k,\chi_k\}}})$ contains no reducible connections. Then there is some $L_0 > 0$ such that the numbers (\ref{Donaldson invariants}) defining Donaldson's polynomial invariants can be computed with the moduli space $M^{v}_E(X(L);\theta^{1}_{\pertdata})$, possibly after some further small holonomy perturbations for the sake of transversality. 

In particular, if the perturbed representation variety $R^w_{\{ \iota_k,\chi_k\}}(Y)$ is empty, then Donaldson's polynomial invariant 
	\begin{equation*}
		D^v\colon  \mathbb{A}(X) \to \Q \, ,
	\end{equation*}
is identically zero. 
\end{prop}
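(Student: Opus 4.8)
The plan is to run the usual cobordism argument for invariance of Donaldson's polynomial invariant, using the parameter $s$ of the family $\theta^s_{\pertdata}$ to interpolate between the unperturbed instanton moduli space and the fully perturbed one, with Proposition \ref{avoidance reducibles} providing exactly the input — absence of reducibles along the whole family — needed to make this work in the presence of \emph{big} holonomy perturbations. First I would invoke Proposition \ref{avoidance reducibles} to fix $L_0>0$ so that for every $L\ge L_0$ and every $s\in[0,1]$ the moduli space $M^v_E(X(L);\theta^s_{\pertdata})$ contains no reducible connection (enlarging $L_0$ if necessary so that Proposition \ref{stretching argument 0} also applies), and fix such an $L$.

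Next I would identify the two ends of the family. At $s=0$ one has $\theta^0_{\pertdata}=0$, so $M^v_E(X(L);\theta^0_{\pertdata})$ is the ordinary instanton moduli space on $X(L)$ with its stretched product metric; since $b_1(X)=0$ and $b_2^+(X)>1$, Donaldson's invariant is metric-independent and unaffected by small perturbations, so, after a small generic perturbation making the relevant moduli spaces regular and keeping them reducible-free (possible since the estimates in the proof of Proposition \ref{avoidance reducibles} are uniform, so a small enough extra perturbation does not change the conclusion), the cut-down count over $M^v_E(X(L);\theta^0_{\pertdata})$ equals $D^v(a_1\otimes\cdots\otimes x_m)$. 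At $s=1$ one has $\theta^1_{\pertdata}=\sum_k\chi'_k(\operatorname{Hol}_{\iota_k}(\cdot))\mu_k$, the full perturbation, so $M^v_E(X(L);\theta^1_{\pertdata})=M^v_E(X(L);\pertdata)$. I would then form the parametrised moduli space
\[
\mathcal{M}=\{\,(s,[\bA]):s\in[0,1],\ [\bA]\in M^v_E(X(L);\theta^s_{\pertdata})\,\},
\]
arrange (by a further small one-parameter family of holonomy perturbations, kept small enough not to reintroduce reducibles) that $\mathcal{M}$ and its intersections with the codimension-$2$ classes $\mathscr{V}(a_i)$ and codimension-$4$ classes $\mathscr{V}(x_j)$ are transverse, and cut $\mathcal{M}$, which has dimension $d+1$ for $d$ the expected dimension (\ref{ex-dim}), down by these classes, of total codimension $d$ when the monomial has grading $d$, to obtain a compact oriented $1$-manifold. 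Compactness here uses the Uhlenbeck compactification for holonomy-perturbed instantons, as in the Remark following Proposition \ref{avoidance reducibles} and \cite[Section 5.5]{Donaldson}: bubbling strata have codimension at least $4$ and so miss a $1$-dimensional cut-down moduli space, and reducibles are excluded by the choice of $L$. The oriented boundary of this $1$-manifold consists of the cut-down moduli spaces at $s=1$ and $s=0$, so the two signed counts agree; by the previous paragraph their common value is $D^v(a_1\otimes\cdots\otimes x_m)$, which is the first assertion.

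For the vanishing statement, suppose in addition that $R^w_{\pertdata}(Y)$ is empty. Then by Proposition \ref{stretching argument 0} (enlarging $L_0$ once more if needed) the moduli space $M^v_E(X(L);\theta^1_{\pertdata})=M^v_E(X(L);\pertdata)$ is empty for $L\ge L_0$, so its cut-down count is $0$ for every monomial of grading $d$ in $\mathbb{A}(X)$, and $0$ by definition in every other grading. Therefore $D^v\colon\mathbb{A}(X)\to\Q$ is identically zero.

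The main obstacle is not the formal cobordism bookkeeping but the transversality inputs used above: one must choose the auxiliary perturbations small enough to preserve the reducible-free conclusion of Proposition \ref{avoidance reducibles} along the \emph{entire} family while still achieving regularity of all the parametrised moduli spaces and of their intersections with the $\mathscr{V}(\cdot)$, and one must verify that the Uhlenbeck compactification behaves exactly as in the unperturbed case for these large holonomy perturbations. These are precisely the points the paper flags as requiring care, and carrying them out properly needs the uniform estimates from the proof of Proposition \ref{avoidance reducibles} in hand.
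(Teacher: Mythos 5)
Your proposal is correct and follows essentially the same route as the paper: fix $L\ge L_0$ via Proposition \ref{avoidance reducibles}, identify the $s=0$ end with the ordinary (metric-stretched) moduli space computing $D^v$, build a parametrised moduli space over the family $\theta^s_{\pertdata}$ with small auxiliary perturbations for transversality, extract a cobordism between the cut-down moduli spaces at $s=0$ and $s=1$, and conclude vanishing from Proposition \ref{stretching argument 0}. The paper merely fleshes out the two points you flag as the main obstacles — it handles regularity at $s=0$ by excluding twisted reducibles on the possibly non-simply-connected $X$ via \cite{KM_structure}, and it realises the auxiliary perturbations as an explicit finite-dimensional family $\tau_1,\dots,\tau_N$ of holonomy perturbations surjecting onto the cokernels of the linearised operators, so that a generic path in $[0,1]\times\R^N$ from $(0,0)$ to a regular value over $s=1$ yields the cobordism.
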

\begin{proof}
   We start by showing that under the given assumptions, we can compute Donaldson's invariants with a moduli space of anti-selfdual connections (without holonomy perturbations) that has a metric which is a product metric $[-L,L] \times Y$ in a neighbourhood of $Y$, for some previously fixed metric on $Y$. 
   Under the assumption on $b_1(X)$ and $b_2^+(X)$, Donaldson's invariants are computed with instanton moduli spaces of non-zero energy. For such moduli spaces, Freed and Uhlenbeck's theorem that one can obtain transversality of the moduli space by metric perturbations extends to non-simply connected 4-manifolds if one can avoid {\em twisted reducible} connections, see \cite{KM_structure}. By definition, twisted reducible connections are those which give rise to a parallel splitting $\su(E) = \lambda \oplus \xi$ into a non-orientable real line bundle $\lambda$ and some 2-plane bundle $\xi$. More precisely, it is enough to perturb the metric in some open 4-ball in $X$, and we can assume this 4-ball to lie outside the product region $[-L,L] \times Y$. A failure of transversality would give rise to a connection which is reducible over the ball over which we perturb just as in \cite[Lemma 4.3.25]{Donaldson-Kronheimer}, and hence a locally reducible connection by unique continuation, see \cite{Taubes_unique_continuation}. In the non-simply connected case, this doesn't necessarily yield a globally reducible connection by unique continuation as in \cite{Donaldson-Kronheimer}. However, the only way this can fail is in giving rise to a twisted reducible connection. But again, by our assumptions, twisted reducible connections can be avoided by perturbing the metric inside a ball. This is the statement of \cite[Lemma 2.4 and Corollary 2.5]{KM_structure}.  

Next, we want to show that Donaldson's invariants can be computed with the moduli space $M^v_E(X(L);\theta^{1}_{\pertdata}) = M^{v}_E(X(L);\pertdata)$, once $L$ is large enough. First we choose $L_0$ large enough such that the conclusion of Proposition \ref{avoidance reducibles} holds, and we fix some $L \geq L_0$. By the previous discussion, we may assume that $M^v_E(X(L))$ consists only of irreducible connections which are cut out transversally. We will follow the standard strategy of defining a parametrised moduli space, parametrised by some auxiliary perturbation space, and the standard argument as in \cite[Section 4.3, Corollary 4.3.19]{Donaldson-Kronheimer} will give rise to a cobordism between the moduli space $M^v_E(X(L))$ and the perturbed moduli space $M^{v}_E(X(L);\pertdata)$ inside the space $\mathscr{A}_X^*/\G_X \times [0,1]$. (From now on we will continue to denote $X$ for the Riemannian 4-manifold X(L), for the sake of simplicity of notation.)

The linearisation of the perturbed anti-selfduality equation (\ref{one-parameter-asd}), at the point $\bA$,
\begin{equation*}
	\begin{split}
	L_s(\bA) := d_{\bf A}^+ + h \cdot \frac{\partial \theta_{\pertdata}^s(\bA)^+}{\partial \bA}
	\end{split}
\end{equation*}
is a map $\Omega^{1}(X;\su(E)) \to \Omega^2_+(X;\su(E))$ which has finite dimensional cokernel for any $s \in [0,1]$. 

Just as in \cite[Section 2(b)]{Donaldson_orientation} one can find finitely many maps $\tau_i: \A_X \to \Omega^2_+(X;\su(E))$ where $i=1, \dots, N$, for some large enough number $N$, such that $\tau_i(\bA)$ surjects onto the cokernel of $L_s(\bA)$ for any $s \in [0,1]$, and for any compact family $K$ of irreducible connections on $X$. The maps $\tau_i$ are constructed by taking holonomy along sufficiently many thickened loops $\gamma_i: S^1 \times B_i \to X$, applying class functions, and taking the tensor product with a self-dual 2-form $\nu_i$ with support in the balls $B_i \subseteq X$, similarly to our construction of holonomy perturbations in this article. The loops and the balls $B_i$ may be supposed to have support away from codimension-2 submanifolds, and in particular away from the support of $\theta^+_{\pertdata}(\bA)$, see \cite[Lemma 2.5]{Donaldson_orientation} for the claims made. The fact that one has surjection onto the cokernel of $L_s(\bA)$ follows just as in the cited Lemma 2.5 because the formal adjoint of $L_s(\bA)$ will have finite-dimensional kernel. In fact, the term $\frac{\partial}{\partial \bA} \theta^s_{\pertdata}(\bA)^+$ we add to the linearisation $d_{\bA}^+$ of $F_{\bA}^+$ will only be a compact operator in the suitable Sobolev completions we implicitly work in. 

For a vector $(\eta_i) \in \R^N$ (which is denoted by $(\epsilon_i)$ in \cite{Donaldson_orientation}), we consider the equation
\begin{equation}
	F_\bA^+ - h \cdot \theta_{\pertdata}^s(\bA)^+ - \sum_{i=1}^{N} \eta_i \tau_i(\bA)\, = 0 .
\end{equation}
We consider the solutions of this equation in terms of the variables $(\bA,s,(\eta_i))$ as the zero-set of the map defined by the left hand side of this equation which we denote by
\begin{equation*}
	\begin{split}
		H\colon \mathscr{A}_X \times [0,1] \times \R^N \to \Omega^2_+(X;\su(E))\, .
	\end{split}
\end{equation*}
By the above claims, the derivative of this map restricted to $U=\mathscr{A}^*_X \times [0,1]\times \{ 0 \}$ is onto because the derivatives with respect to $(\eta_i)$ at $(\eta_i) = 0$ generate the cokernels of $L_s(\bA) = \frac{\partial}{\partial \bA} H |_{(\bA,s,0)}$, and hence it is onto in some open neighbourhood $V$ of $U$ inside $\mathscr{A}^*_X \times [0,1] \times \R^N$. In fact, because of the Uhlenbeck compactification of the moduli space $M^{v}_E(X(L);\theta^{s}_{\{ \iota_k, \chi_k \}})$ the surjectivity onto the kernel will continue to hold on the full moduli space, despite the fact that it may not be compact, if we have chosen a sufficiently large finite collection of maps $\tau_i$. 

We denote by $\widetilde{M}^{v}_E(X)$ the space $(H^{-1}(0) \cap V)/\G_X$, which we call the parametrised moduli space. By our assumption on $L \geq L_0$, the parametrised moduli space contains no reducible connections by Proposition \ref{avoidance reducibles}. Hence, by the above, it is cut out transversally by $H$. It comes with a projection $\pi: \widetilde{M}^{v}_E(X) \to [0,1] \times B_0(r)$, for some ball $B_0(r)$ inside $\R^N$ centered at $0$. The preimage of $(0,0)$ is the moduli space $M^{v}_E(X(L))$ which is regular by assumption. As in fact the derivative of $H$ on $U$ is already onto if we do not vary $s \in [0,1]$, there are points of the form $(1,(\eta_i))$ in the image of $\pi$ which are regular values for $\pi$. We denote the preimages of these points by $M^{v}_{E}(X(L);\theta^{1}_\pertdata;(\eta_i))$. 

Now preimages under $\pi$ of paths $m:[0,1] \to [0,1] \times B_0(r)$ which map $0$ to $(0,0)$ and $1$ to a regular value $(1,(\eta_i))$ of $\pi$, and which are transversal to $\pi$ give rise to a cobordism from $M^{v}_{E}(X(L);\theta^{1}_\pertdata;(\eta_i))$ to $M^{v}_{E}(X(L))$. To be more precise, the cobordism is defined by
	\begin{equation*}
		\begin{split}
			W_m = \{ (t,\bA) \, | \, \pi(\bA,s,(\eta_i)) = m(t)	\}\, .
		\end{split}
	\end{equation*}
Hence the Donaldson invariant can be computed with the moduli space $M^{v}_{E}(X(L);\theta^{1}_\pertdata;(\eta_i))$. 
\\

Now if the representation variety $R^w_{\{ \iota_k,\chi_k\}}(Y)$ is empty, then the moduli space $M^v_E(X(L);{\pertdata}) = M^v_E(X(L);\theta^{1}_\pertdata)$ is empty by Proposition \ref{stretching argument 0} above. But this means that the point $(1,0)$ is a regular point of $\pi$, and therefore the invariant must vanish. 

\end{proof}
\begin{remark}
	The assumption $b_2^+(X) > 1$ is not necessary in the previous Proposition. It just makes the statement simpler, and sufficient for what we need. For $b_2^+(X) = 1$ a similar statement holds for Donaldson's invariant of a metric in a particular {\em chamber}. 
\end{remark}
\section{The main theorem}
Our main result is the following
\begin{theorem}\label{main theorem 1}
	Let $K$ be a non-trivial knot in $S^3$. Then the image $i^*(R(K))$ in the cut-open pillowcase $C = [0,\pi] \times (\R/2 \pi \Z)$ contains a topologically embedded curve which is homologically non-trivial in $H_1(C;\Z) \cong \Z$. 
\end{theorem}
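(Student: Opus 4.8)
The plan is to prove the equivalent statement Theorem \ref{main theorem 2} --- which is deduced from Theorem \ref{main theorem 1} via Lemma \ref{equivalence homologically non-trivial vs intersection} (Alexander duality, using that $i^*(R(K))$ is a compact embedded graph in the pillowcase) --- and to argue by contradiction. So suppose there is a topologically embedded path $\gamma$ from $P=(0,\pi)$ to $Q=(\pi,\pi)$ in $R(T^2)$ missing $i^*(R(K))$; note that $\gamma$ is then automatically disjoint from the line $\{\beta=0\}$, since every abelian representation of $\pi_1(Y(K))$ sends the longitude --- a product of commutators --- to $\id$, so the whole line $\{\beta=0\}$ lies in $i^*(R(K))$. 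First I would pass to the branched double cover $T=\R^2/2\pi\Z^2$: the preimage of a regular neighbourhood of $\gamma$ is an annulus, so the preimage $c$ of $\gamma$ is a $\Z/2$-invariant embedded circle through the fixed points $\widehat{P}$ and $\widehat{Q}$, essential in $T$, disjoint from $\{\beta=0\}$ and from the preimage $\widetilde{i^*(R(K))}$; a $\Z/2$-equivariant smoothing makes $c$ smooth.

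Since $c$ and the standard circle $c_0:=\{\beta=\pi\}$ are disjoint essential simple closed curves on $T$ that both avoid $\{\beta=0\}$, they are isotopic inside the annulus $T\setminus\{\beta=0\}$, so there is a $\Z/2$-equivariant isotopy $\vphi_t$ with $\vphi_0=\id$, $\vphi_1(c_0)=c$, fixing $\widehat{P}$ and $\widehat{Q}$, and with $c_t:=\vphi_t(c_0)$ disjoint from $\{\beta=0\}$ for all $t$. By the lemma of Vogel recalled in the introduction there is a $\Z/2$-equivariant isotopy through area-preserving maps $\psi^t\colon T\to T$ with $\psi^0=\id$ and $\psi^t(c_0)=c_t$ as subsets of $T$, for all $t\in[0,1]$.

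Next I would feed $\psi^t$ into the main technical result. Since $\bigcup_t c_t$ is compact and disjoint from the compact sets $\widetilde{i^*(R(K))}$ and $\{\beta=0\}$, fix $\epsilon>0$ smaller than both of these distances. Applying Theorem \ref{main technical result} to a collar $M\cong[0,1]\times S^1\times S^1$ of $\partial Y(K)$ inside $Y_0(K)$ --- with the bundle $E\to Y_0(K)$ chosen so that $c_1(E)$ pairs oddly with a capped-off Seifert surface --- we obtain holonomy perturbation data $\pertdata$ and a family $\phi_t$, which is $\epsilon$-close to $\psi^t$ up to reparametrisation of $t$, such that $R^w(Y_0(K);\theta^t_{\pertdata})\cong R(K|\overline{\phi}_t(C))$, where $C=\{\beta=\pi\}$ and $\overline{\phi}_t$ is the induced self-map of the pillowcase (this is Proposition \ref{boundary condition perturbation} together with its evident one-parameter version, proved the same way). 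For every $t$ the curve $\phi_t(c_0)$ lies within $\epsilon$ of some $c_s$, hence misses $\{\beta=0\}$, which forces every class in $R(K|\overline{\phi}_t(C))$ to be irreducible (a reducible representation of $\pi_1(Y(K))$ maps into $\{\beta=0\}$, as above); thus $R^w(Y_0(K);\theta^t_{\pertdata})$ contains no reducibles for any $t\in[0,1]$. Moreover $\phi_1(c_0)$ lies within $\epsilon$ of $c_1=c$, so it misses $\widetilde{i^*(R(K))}$; equivalently $R^w_{\pertdata}(Y_0(K))=R^w(Y_0(K);\theta^1_{\pertdata})$ is empty.

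Finally I would invoke instanton gauge theory. Let $X$ be the symplectic $4$-manifold of Theorem \ref{Kronheimer-Mrowka}, containing $Y_0(K)$ as a separating hypersurface, with $b_1(X)=0$, $b_2^+(X)>1$ and $D^v_X$ not identically zero. Since the one-parameter perturbed representation variety is reducible-free, Proposition \ref{avoidance reducibles} yields $L_0$ such that $M^v_E(X(L);\theta^s_{\pertdata})$ has no reducibles for all $L\ge L_0$ and $s\in[0,1]$; and since $R^w_{\pertdata}(Y_0(K))=\emptyset$, the vanishing result Proposition \ref{stretching argument} forces $D^v_X\equiv 0$, contradicting Theorem \ref{Kronheimer-Mrowka}. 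Hence no such $\gamma$ exists, which proves Theorem \ref{main theorem 2} and therefore Theorem \ref{main theorem 1}. The main obstacle is the simultaneous control along the whole family: Proposition \ref{avoidance reducibles} (and hence the vanishing result) is available only because every $\phi_t(c_0)$ stays uniformly away from $\{\beta=0\}$, keeping $R^w(Y_0(K);\theta^t_{\pertdata})$ reducible-free along the path, while at $t=1$ one needs in addition that $\phi_1(c_0)$ misses $\widetilde{i^*(R(K))}$ --- arranging both at once, and correctly matching the pillowcase picture of $R(K)$ with the perturbed $SO(3)$-representation variety of $Y_0(K)$ through Proposition \ref{boundary condition perturbation}, is the delicate point, the genuinely heavy machinery (Theorem \ref{main technical result} and Propositions \ref{avoidance reducibles} and \ref{stretching argument}) having been set up beforehand.
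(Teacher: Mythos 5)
Your proposal is correct and follows essentially the same route as the paper: reduce to Theorem \ref{main theorem 2}, lift the hypothetical path to the branched double cover, use Lemma \ref{Moser trick} to get an area-preserving isotopy carrying $c_0=\{\beta=\pi\}$ to the lifted curve while staying away from $\{\beta=0\}$, realise it up to $\epsilon$ by holonomy perturbations via Theorem \ref{main technical result} and Proposition \ref{boundary condition perturbation}, and derive a contradiction with Theorem \ref{Kronheimer-Mrowka} through Propositions \ref{avoidance reducibles} and \ref{stretching argument}. You also correctly identify the key point of keeping the whole one-parameter family reducible-free, which is exactly the role of the line $d$ in the paper's argument.
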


As stated in the introduction, this is a consequence of the following theorem 

\begin{theorem}\label{main theorem 2}
Let $K$ be a non-trivial knot. Then any embedded path from $P=(0,\pi)$ to $Q=(\pi,\pi)$ in the pillowcase, missing the line $\{ \beta = 0 \mod{2 \pi \Z} \}$, has an intersection point with the image $i^*(R(K))$ of $R(K)$ in $R(T^2)$. 
\end{theorem}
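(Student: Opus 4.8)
The plan is to argue by contradiction, feeding a hypothetical missing path into the machinery assembled in Sections \ref{holonomy perturbations Chern-Simons}--\ref{holonomy perturbation 0-surgery}. So suppose there is a topologically embedded path $\gamma$ from $P=(0,\pi)$ to $Q=(\pi,\pi)$ in the pillowcase $R(T^2)$, disjoint from $i^*(R(K))$ and from the line $\{\beta=0\bmod 2\pi\Z\}$. First I would replace $\gamma$ by a smooth embedded path with the same endpoints and the same disjointness properties; this is possible since $i^*(R(K))$ is a compact subset of $R(T^2)$ and $P,Q$ (which have $\beta=\pi$) lie in its complement. Next I would lift the picture to the branched double cover $T=\R^2/2\pi\Z^2$: the two lifts of $\gamma$, together with the hyperelliptic fixed points $\widehat P=(0,\pi)$ and $\widehat Q=(\pi,\pi)$, assemble into a smooth $\Z/2$-equivariant embedded closed curve $c\subset T$ through $\widehat P$ and $\widehat Q$ which avoids the preimage of $i^*(R(K))$ and the circle $\{\beta=0\}$. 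Since the pillowcase with the image of $\{\beta=0\}$ removed is an open disk, $\gamma$ is homotopic rel endpoints to the straight segment along $\{\beta=\pi\}$; lifting this homotopy shows that $c$ is homotopic, inside the open annulus $A:=T\setminus\{\beta=0\}$, to the essential circle $c_0:=\{\beta=\pi\}$. Hence there is a $\Z/2$-equivariant ambient isotopy $(\varphi_t)$ of $T$ with $\varphi_0=\mathrm{id}$, $\varphi_1(c_0)=c$, and $c_t:=\varphi_t(c_0)\subset A$ for all $t\in[0,1]$. Invoking the lemma attributed to Thomas Vogel in the Introduction (which realizes such curve isotopies by ambient area-preserving isotopies), I would then upgrade this to a $\Z/2$-equivariant isotopy through area-preserving maps $(\psi^t)$ with $\psi^0=\mathrm{id}$ and $\psi^t(c_0)=c_t$ as subsets of $T$.

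With $(\psi^t)$ in hand, I would apply Theorem \ref{main technical result} to it, choosing $\epsilon>0$ smaller than both the distance from the compact set $\bigcup_{t\in[0,1]}c_t$ to $\{\beta=0\}$ and the distance from $c$ to the preimage of $i^*(R(K))$. This produces holonomy perturbation data $\pertdata$ on a collar $M=[0,1]\times S^1\times S^1$ of $\partial Y(K)\subset Y_0(K)$, together with $\Z/2$-equivariant area-preserving maps $\phi_t$ ($\phi_0=\mathrm{id}$) that are $\epsilon$-close to $\psi^t$; by the choice of $\epsilon$ each $\phi_t(c_0)$ still lies in $A$, and $\phi_1(c_0)$ still misses the preimage of $i^*(R(K))$. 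I would then translate this statement to $Y_0(K)$. For each $s\in[0,1]$ the truncated datum $\theta^s_{\pertdata}$ is itself ordinary holonomy perturbation data (absorbing the scaling factor into the class function), so Proposition \ref{boundary condition perturbation} applies and gives $R^w(Y_0(K);\theta^s_{\pertdata})\cong R\bigl(K\,|\,\overline{\phi}_s(C)\bigr)$ with $C=\{\beta=\pi\}$ and $\overline{\phi}_s$ the induced map on the pillowcase. Since an abelian flat connection on the knot complement sends $l_K$ (a product of commutators) to the identity and hence has holonomy on $\{\beta=0\}$, and since each $\overline{\phi}_s(C)=\phi_s(c_0)/\tau$ avoids $\{\beta=0\}$, the whole family $R^w(Y_0(K);\theta^s_{\pertdata})$, $s\in[0,1]$, contains no reducible connections; and since $\overline{\phi}_1(C)$ misses $i^*(R(K))$ we get $R^w_{\pertdata}(Y_0(K))=R\bigl(K\,|\,\overline{\phi}_1(C)\bigr)=\varnothing$.

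To finish, I would embed $Y_0(K)$ as a separating hypersurface in the symplectic $4$-manifold $X$ of Theorem \ref{Kronheimer-Mrowka} (so $b_1(X)=0$ and $b_2^+(X)>1$) and choose a line bundle $v\to X$ restricting to $w\to Y_0(K)$, which is possible by part (2) of that theorem. The two facts just obtained --- that the one-parameter family is free of reducibles, and that $R^w_{\pertdata}(Y_0(K))$ is empty --- are exactly the hypotheses of Proposition \ref{stretching argument}, whose conclusion is that Donaldson's polynomial invariant $D^v_X$ vanishes identically. This contradicts part (3) of Theorem \ref{Kronheimer-Mrowka}, and the contradiction proves the theorem.

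The substantive content here lies upstream (Theorem \ref{main technical result}, the neck-stretching Propositions \ref{avoidance reducibles}--\ref{stretching argument}, and the non-vanishing Theorem \ref{Kronheimer-Mrowka}), so the proof above is essentially an assembly. The point that needs the most care is the one that forced the attention to \emph{big} perturbations throughout: we must keep the \emph{entire} one-parameter family of holonomy-perturbed representation varieties over $Y_0(K)$ free of reducibles, which is why the curves $c_t$ are arranged to stay off $\{\beta=0\}$ and why $\epsilon$ must be chosen small relative to the distance of $\bigcup_t c_t$ from that line. A secondary technical point is performing the isotopy $\varphi_t$, and the application of Vogel's lemma, $\Z/2$-equivariantly while keeping the moving curve inside $A$.
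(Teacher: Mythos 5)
Your proposal is correct and follows essentially the same route as the paper's proof: contradict, smooth and lift the path to a $\Z/2$-invariant essential curve in the branched double cover, isotope it to $\{\beta=\pi\}$ while avoiding $\{\beta=0\}$, convert to an area-preserving isotopy via the Moser-trick lemma, approximate by holonomy-perturbation-realizable shearing isotopies with $\epsilon$ controlling both the distance to the reducible line and to $i^*(R(K))$, and then derive a contradiction with Kronheimer--Mrowka's non-vanishing theorem via the reducible-free neck-stretching argument. The points you flag as requiring care (keeping the whole one-parameter family off $\{\beta=0\}$, and equivariance throughout) are exactly the ones the paper attends to.
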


\begin{proof}[Proof of Theorem \ref{main theorem 1}, assuming Theorem \ref{main theorem 2}]
The image $i^*(R(K))$ of $R(K)$ in the pillowcase $R(T^2)$ is a compact semi-algebraic variety of dimension $\leq 1$. By a theorem of Whitney's, this is an embedded finite graph $\Gamma$, see \cite{BCR} for details about real algebraic geometry. 
By the properties of the image $\Gamma$ of $R(K)$ in $R(T^2)$ listed in the introduction, the points $P$ and $Q$ lie in the complement of the compact subspace graph $\Gamma$.
Theorem \ref{main theorem 2} states that the points $P$ and $Q$ lie in different path components of the complement of $\Gamma$ in the pillowcase $R(T^2)$. The statement now follows from the following Lemma if we cap off the cut-open pillowcase $C = [0,\pi] \times (\R/2\pi\Z)$ with two disks on the two boundary curves (or by reglueing the cut-open pillowcase.) 
\smallskip
\begin{lemma}\label{equivalence homologically non-trivial vs intersection}
	Let $\Gamma \subseteq S^2$ be a finite topologically embedded graph in the 2-sphere, and let $P$ and $Q$ be two points in its complement. Suppose that $P$ and $Q$ do not lie in the same path component of $S^2 \setminus \Gamma$. Then $\Gamma$ contains a topologically embedded closed curve $\gamma$ which is homologically non-trivial in $S^2 \setminus \{P,Q\} \simeq S^1$. 
\end{lemma}
\begin{proof}
	We may assume that $\Gamma$ is connected and still separates $P$ and $Q$. We consider a surface with boundary $F \subseteq S^2$ which we define to be the set of points which have distance $\leq \epsilon$ for some small $\epsilon$, chosen so that $F$ deformation-retracts onto $\Gamma$, and so that $P$ and $Q$ still lie in the complement of $F$. Its boundary is a disjoint union of circles in $S^2$ (which may have `corners'.) 
	
	Any of these boundary circles bounds a disk in $S^2 \setminus F^\circ$. In fact, if there were a connected component $\Sigma$ of $S^2 \setminus F^\circ$ different from a disk, it would have at least two boundary components. This contradicts the assumed connectedness of $\Gamma$ by the Jordan curve theorem. 
	
	We construct a new surface with boundary $\overline{F}$ which we obtain by adding every disk of $S^2 \setminus F^\circ$ to $F$ which does not contain $P$ or $Q$. By Alexander duality, 
	\[
		\widetilde{H}_0(S^2 \setminus \overline{F}) \cong H_1(\overline{F}) \cong \Z \, , 
	\]
where $\widetilde{H}$ denotes reduced singular homology. Therefore, $\overline{F}$ contains a homologically non-trivial embedded closed curve $\gamma$ representing a generator of $H_1(\overline{F})$. Without loss of generality, we may assume that $\gamma$ lies inside $F$, because we have only added disks to obtain $\overline{F}$, and we may go further and assume that $\gamma$ lies inside $\Gamma$. 

We claim that $\gamma$ is homologically essential in $S^2 \setminus \{ P, Q \}$. In fact, $\gamma$ decomposes $S^2$ into two (closed) disks with boundary $\gamma$ by the Jordan curve theorem. Suppose that one of the two disks contained both $P$ and $Q$. Then the other disk must lie entirely in $\overline{F}$, by construction. This contradicts the fact that $\gamma$ is a generator of $H_1(\overline{F})$. Therefore, $P$ and $Q$ lie on different sides of $\gamma$. This clearly implies that $\gamma$ is homologically essential in $S^2 \setminus \{ P, Q \}$. 
\end{proof}
This finishes the proof of Theorem \ref{main theorem 1}, assuming Theorem \ref{main theorem 2}. 
\end{proof}

We denote by $\omega$ the standard area form on the 2-torus $T = \R^{2}/ 2 \pi \Z^{2}$ (with Euclidean structure) given by $\omega = dx \, \wedge \, dy$, where $x,y$ are the coordinates of $\R^{2}$ in the standard basis. The area form $\omega$ is also a symplectic form, and an area-preserving diffeomorphism of $T$ is the same as a symplectomorphism of the symplectic manifold $T$. We also notice that $\omega$ is invariant under the hyperelliptic involution $\tau:(x,y) \mapsto (-x,-y)$ of the torus.

\begin{definition} 
	When we have a family $(\phi^{(t)})_{t \in I}$ with $I \subseteq \R$ of smooth self-maps of a smooth manifold $M$, then we shall say it is a {\em smooth} family if the induced map 
		\begin{equation*}
	\begin{split}
		\phi\colon  I \times M & \to M \\
			(t,p) & \mapsto \phi^{(t)}(p)
	\end{split}
	\end{equation*}
is smooth of class $C^{\infty}$.
\end{definition}

The author is thankful to Thomas Vogel for providing a sketch of proof of the following Lemma which uses the `Moser-trick'. 

\begin{lemma}\label{Moser trick}
Let $(\vphi^{(t)})_{t \in [0,1]}$ be a smooth isotopy of the 2-torus $T$ such that $\vphi^{(0)} = \id$, and let $c \subseteq T$ be a homologically essential simple closed curve. Then there is a smooth family of symplectomorphisms $(\psi^{(t)})_{t \in [0,1]}$ such that $\psi^{(0)} = \id$, and such that we have
\[
		\vphi^{(t)}(c) = \psi^{(t)}(c)
\]
for all $t \in [0,1]$. (The last equality is an equality of sets, $\vphi^{(t)}$ and $\psi^{(t)}$ do not have to coincide point-wise on $c$.) If the curve $c$ is $\Z/2$-invariant and the family $(\vphi^{(t)})$ is $\Z/2$-equivariant, then we can arrange that the family  $(\psi^{(t)})$ is $\Z/2$-equivariant as well. 
\end{lemma}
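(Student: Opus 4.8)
The plan is to run a relative version of the Moser trick. Set $\omega_t:=(\vphi^{(t)})^*\omega$, a smooth family of area forms on $T$ with $\omega_0=\omega$ and $\int_T\omega_t=\int_T\omega$ for every $t$; each $\omega_t$ therefore lies in the top cohomology class $[\omega]$. The goal is to produce a smooth family of diffeomorphisms $(\chi^{(t)})_{t\in[0,1]}$ with $\chi^{(0)}=\id$, with $(\chi^{(t)})^*\omega_t=\omega$ for all $t$, and --- this is the point --- with $\chi^{(t)}(c)=c$ for all $t$. Granting this, the family $\psi^{(t)}:=\vphi^{(t)}\circ\chi^{(t)}$ is the one sought: $(\psi^{(t)})^*\omega=(\chi^{(t)})^*(\vphi^{(t)})^*\omega=(\chi^{(t)})^*\omega_t=\omega$, so each $\psi^{(t)}$ is a symplectomorphism; $\psi^{(0)}=\id$; and $\psi^{(t)}(c)=\vphi^{(t)}(\chi^{(t)}(c))=\vphi^{(t)}(c)$, an equality of subsets.

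For the Moser step, let $V_t$ be the time-dependent vector field generating $\vphi^{(t)}$. Since $\frac{d}{dt}\omega_t=(\vphi^{(t)})^*\big(d\,\iota_{V_t}\omega\big)=d\big((\vphi^{(t)})^*\iota_{V_t}\omega\big)$, the $1$-forms $\lambda^{(0)}_t:=(\vphi^{(t)})^*(\iota_{V_t}\omega)$ form an explicit smooth family of primitives of $\dot\omega_t:=\frac{d}{dt}\omega_t$. Suppose we can add a smooth family of closed $1$-forms $\mu_t$ so that $\lambda_t:=\lambda^{(0)}_t+\mu_t$ still satisfies $d\lambda_t=\dot\omega_t$ and in addition $\iota_c^*\lambda_t=0$, where $\iota_c\colon c\hookrightarrow T$. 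Let $Z_t$ be defined by $\iota_{Z_t}\omega_t=-\lambda_t$ (using nondegeneracy of $\omega_t$), and let $\chi^{(t)}$ be its flow with $\chi^{(0)}=\id$. Then $\frac{d}{dt}(\chi^{(t)})^*\omega_t=(\chi^{(t)})^*\big(d\,\iota_{Z_t}\omega_t+\dot\omega_t\big)=(\chi^{(t)})^*(-d\lambda_t+d\lambda_t)=0$, so $(\chi^{(t)})^*\omega_t\equiv\omega$. Finally $Z_t$ is tangent to $c$ along $c$: for $q\in c$ and $w\in T_qc$, $\omega_t(Z_t(q),w)=-(\iota_c^*\lambda_t)(w)=0$, and since a line in a $2$-dimensional symplectic vector space coincides with its $\omega_t$-orthogonal complement, $Z_t(q)\in T_qc$; hence the flow $\chi^{(t)}$ fixes $c$ setwise.

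So everything reduces to finding the corrections $\mu_t$, i.e. a smooth family of closed $1$-forms on $T$ with $\iota_c^*\mu_t=-\iota_c^*\lambda^{(0)}_t$. This is the only place the hypothesis enters, through the elementary fact that \emph{when $c$ is homologically essential, every $1$-form $\eta$ on the circle $c$ extends to a closed $1$-form on $T$}: write $\eta=df+I\nu$ on $c$ with $\nu$ a normalised length form ($\int_c\nu=1$), $I=\int_c\eta$, $f\in C^\infty(c)$; extend $f$ to $\tilde f\in C^\infty(T)$; since $[c]\neq 0$ in $H_1(T;\R)$, Poincar\'e duality gives a closed $1$-form $\alpha$ on $T$ with $\int_c\alpha=1$, so $\iota_c^*\alpha-\nu=dh$ on $c$ for some $h$, extended to $\tilde h\in C^\infty(T)$; then $\beta:=d\tilde f+I(\alpha-d\tilde h)$ is closed and restricts to $\eta$ on $c$. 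Performing these choices smoothly in $t$ gives $\mu_t$. In the $\Z/2$-equivariant case the whole construction is run $\tau$-equivariantly: $\omega_t$, $V_t$, $\lambda^{(0)}_t$ are automatically $\tau$-invariant, one takes all auxiliary extensions $\tau$-invariant by averaging, and no harmonic correction is needed because $\tau$ reverses the orientation of the essential invariant curve $c$, forcing $I=\int_c\eta=0$; then $Z_t$ and hence $\psi^{(t)}$ are $\Z/2$-equivariant.

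The main obstacle is precisely the relative Moser step: choosing primitives of $\dot\omega_t$ that pull back to zero on $c$. This is where essentialness is indispensable --- if $c$ bounded a disc in $T$, then $\int_c\iota_c^*\beta=0$ for every closed $\beta$, while $\int_c\iota_c^*\lambda^{(0)}_t$ need not vanish, so no admissible $\mu_t$ would exist. Smooth dependence on $t$, the normalisation $\psi^{(0)}=\id$, and the equivariant refinement are then routine.
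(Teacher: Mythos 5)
Your proof is correct and follows the same core strategy as the paper's: a relative Moser argument producing a correcting diffeomorphism whose generating vector field is tangent to $c$, followed by composition with $\vphi^{(t)}$. The implementations differ in a few ways worth recording. You run Moser directly in the isotopy parameter $t$, with the explicit primitive $(\vphi^{(t)})^*(\iota_{V_t}\omega)$ of $\dot\omega_t$, whereas the paper fixes $t$ and runs Moser in an auxiliary parameter $s$ along the linear interpolation $s\,\omega+(1-s)(\vphi^{(t)})^*\omega$, obtaining smoothly varying primitives from the Green's function of the Laplacian. To make the Moser field tangent to $c$, the paper first normalises $c$ to the standard circle $\{y=\pi\}$, rescales $\vphi^{(t)}$ in the normal direction, and then subtracts the $dx$-component of the primitive along $c$ (a closed form depending only on $x$); your extension lemma --- every $1$-form on a homologically essential circle extends to a closed $1$-form on $T$ --- is the coordinate-free version of this correction and makes the role of essentialness more transparent (it enters exactly through the existence of a closed form with $\int_c\alpha=1$, and your remark that the argument must fail for nullhomotopic $c$ matches the paper's own remark about enclosed area). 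Finally, your observation that $\tau$ acts as $-\mathrm{id}$ on $H_1(T)$ and hence reverses orientation on an invariant essential curve, so that the period $I$ vanishes and only an exact, averageable correction is needed, is a clean way to get the equivariant refinement. Both routes deliver the same conclusion; yours dispenses with the paper's preliminary normal rescaling and with Hodge theory, at the cost of having to argue separately that the extension of $1$-forms from $c$ can be made smooth in $t$, which you correctly note can be done with fixed auxiliary choices ($\alpha$, a tubular neighbourhood, a cutoff).
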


\begin{proof}
 Up to scaling $\vphi^{(t)}$ in directions normal to $c$, we may suppose that 
\begin{equation} \label{scaling}
 \left. \left.	(\vphi^{(t)})^{*} \omega \right|_{c} = \omega\right|_{c} \, 
\end{equation}
for all $t \in [0,1]$. To see this, let us first notice that this is an intrinsic property, and hence we may assume that the curve $c$ is determined by the equation
\begin{equation}\label{eq:simple form c}
	c = \{ (x,y) \, | \, y = \pi \}.
\end{equation}
Clearly a general isotopy $\vphi^{(t)}$ will satisfy $(\vphi^{(t)})^{*} \omega = f(t,x,y) \cdot \omega$ for some smooth positive function $f:[0,1]\times T \to \R$. Let $h_t:T \to T$ be an isotopy of the torus which in a neighbourhood of $c$ is given by the formula
\begin{equation}\label{eq:explicit h_t}
   h_t(x,y) = (x, a_t(x) \cdot (y - \pi) + \pi)\, ,
\end{equation}
for some positive function $a_t: c \to \R$. This is a dilatation in the $y$-direction which fixes the circle $c$. Then $\chi^{(t)} := \vphi^{(t)} \circ h_t$ still is an isotopy of the torus which is equal to $\vphi^{(t)}$ when restricted to $c$. We then compute the pull-back of $\omega$ via $\chi^{(t)}$ in a neighbourhood where $h_t$ has the explicit form as in formula (\ref{eq:explicit h_t}) above,
\begin{equation*}
\begin{split}
	(\chi^{(t)})^{*} \omega = f(t,h_t(x,y)) \, h_t^{*}\omega =  f(t,h_t(x,y)) \, a_t(x)\,  \omega, 
\end{split}
\end{equation*}
and if we restrict this to $c$, we obtain 
\begin{equation*}
\begin{split}
	\left. \left. (\chi^{(t)})^{*} \omega \right|_c = f(t,(x,\pi)) \, a_t(x)\,  \omega \right|_c.  
\end{split}
\end{equation*}
Hence it is enough to define $h_t$ by setting $a_t(x) = \frac{1}{f(t,(x,\pi))}$, and the isotopy $\chi^{(t)}$ will have the desired property that $(\chi^{(t)})^{*} \omega|_c = \omega|_c$. 
\\

For any $t$, we define a 1-parameter family of area forms on $T$, 
\[
	\omega^{(t)}_{s} := s \, \omega + (1-s) (\vphi^{(t)})^{*} \omega \, ,  
\]
for $s \in [0,1]$. 
We notice that this family is constant along $c$ in the parameter $s$ because of assumption (\ref{scaling}). Furthermore, the cohomology class $[\omega^{(t)}_{s}]$ in the 2nd de Rham cohomology group is also constant in $s$. Therefore, the derivative with respect to $s$ is an exact 2-form. Hence there is a family of 1-forms $(\alpha^{(t)}_{s})_{s \in [0,1]}$ such that we have 
\begin{equation}\label{definition alpha}
	\frac{d \omega_{s}^{(t)}}{ds} = - d \alpha_{s}^{(t)}
\end{equation}
for all $s \in [0,1]$. 
Furthermore, we may suppose that $\alpha_{s}^{(t)}$ depends smoothly on $s$ and $t$. To see this, recall that the Hodge decomposition theorem states that one has an $L^{2}$-orthogonal decomposition 
\[
	\Omega^{2}(T) = \Delta \Omega^{2}(T) \oplus \mathscr{H}^{2}(T) \, ,
\]
where $\Delta = d d^{*}\colon  \Omega^{2}(T) \to \Omega^{2}(T)$ is the Laplace-Beltrami operator, and where $\mathscr{H}^{2}(T)$ denotes the (one-dimensional) space of harmonic 2-forms, that is, the 2-forms which lie in the kernel of $\Delta$. The space of harmonic 2-forms is isomorphic to the second de Rham cohomology group of $T$.
Hence for an exact 2-form $\beta$ we have a solution $\gamma$ to the {\em Poisson equation} 
\[
\beta = \Delta \gamma \, .
\]
 This solution $\gamma$ is unique if we require that its harmonic part is trivial with respect to the Hodge decomposition, or equivalently, that the integral of $\gamma$ over $T$ is zero. This unique solution $\gamma$ 
can be expressed as a convolution integral with the help of a fundamental solution (or Green's function) $G$ as
\[
	\gamma(x,y) = \int_{T} G(x,y,x',y') \beta(x',y')\, .  
\] 
It is clear from this expression that if $\beta$ depends smoothly on certain parameters, then $\gamma$ also does. In our situation we take $\beta$ to be the exact 2-form on the left hand side of equation (\ref{definition alpha}) above, and we define 
\[
\alpha_{s}^{(t)}:= d^* \gamma_{s}^{(t)} \, ,
\]
where $\gamma_{s}^{(t)}$ solves $\Delta \gamma_{s}^{(t)} = \frac{d \omega_{s}^{(t)}}{ds}$, and is of integral $0$.

Because the 2-forms $\omega_{s}^{(t)}$ are non-degenerate, there is a unique smooth 1-parameter family of vector fields $X_{s}^{(t)}$ on $T$ satisfying 
\[	
		\omega_{s}^{(t)}(X_{s}^{(t)}, \, - \, ) = \alpha_{s}^{(t)}\, 
\]
for all $s \in [0,1]$. We claim that we may, and from now on will, suppose that the vector field $X_{s}^{(t)}$ is tangent to $c$ for all $s$ and all $t$. To see this, we will again assume that the simple closed curve $c$ is defined by equation (\ref{eq:simple form c}) above.
  We may write the form $\alpha_{s}^{(t)}$ as 
\[
	\alpha_{s}^{(t)} = a_{s}^{(t)}(x,y) \, dx + b_{s}^{(t)}(x,y) \, dy \, . 
\]
Then the form 
\[
\widetilde{\alpha}_{s}^{(t)} := \alpha_{s}^{(t)} - a_{s}^{(t)}(x,\pi) \, dx 
\] depends also smoothly on $s$ and $t$, its differential is equal to $ - \frac{d \omega_{s}^{(t)}}{ds}$ as well, and $\widetilde{\alpha}_{s}^{(t)}$ has vanishing $dx$-component along $c$. But this means that the vector field $X_s^{(t)}$ has vanishing $\frac{\partial}{\partial y}$-component for all $s$ and $t$ along $c$. 
\\

As $T$ is closed, the $s$-dependent vector field $X_s^{(t)}$ may be integrated to an isotopy $(\phi_{s}^{(t)})$ for all time $s$. More precisely, we have  
\[
	\frac{d\phi_{s}^{(t)}}{ds} ((\phi_{s}^{(t)})^{-1}(p)) = X_{s}^{(t)}(p) \, 
\]
for all point $p \in T$, and for all $s$ and $t$. 
It is standard to verify that $\phi_{s}^{(t)}$ depends smoothly on $t$, if $\alpha_{s}^{(t)}$ does. 

We claim that the family of 2-forms $(\phi_{s}^{(t)})^{*} \omega^{(t)}_{s}$ is constant in $s$. In fact, if we denote by $\mathcal{L}$ the Lie derivative along a vector field, we have, see \cite{Abraham-Marsden},
\begin{equation*}
	\begin{split}
	 \frac{d}{ds} (\phi_{s}^{(t)})^{*} \omega_{s}^{(t)} & = (\phi_{s}^{(t)})^{*} (\mathcal{L}_{X_{s}^{(t)}} \omega^{(t)}_s + \frac{d \omega_{s}^{(t)}}{ds}) \\
	 		& = (\phi_{s}^{(t)})^{*} (i_{X_{s}^{(t)}} d\omega_{s}^{(t)} + d \, i_{X_{s}^{(t)}}\omega_{s}^{(t)} + \frac{d \omega_{s}^{(t)}}{ds}) \\ 
			& = (\phi_{s}^{(t)})^{*} (d \alpha_{s}^{(t)} + \frac{d \omega_{s}^{(t)}}{ds}) \\
			& = 0 \ 
	\end{split}
\end{equation*}
for all $s$ and $t$, by Cartan's formula $\mathcal{L}_{X} \beta = i_{X} (d \beta) + d (i_{X} \beta)$. In particular, we have
\begin{equation*}
	(\phi_{1}^{(t)})^{*} \omega_{1}^{(t)} = (\phi_{0}^{(t)})^{*} \omega_{0}^{(t)} = \omega_{0}^{(t)} = (\vphi^{(t)})^{*} \omega 
\end{equation*}
for all $t$. But as $\omega_{1}^{(t)} = \omega$ for all $t$, this gives
\[
(\phi_{1}^{(t)})^{*} \omega = (\vphi^{(t)})^{*} \omega
\]
and hence
\begin{equation*}
	(\vphi^{(t)} \circ (\phi_{1}^{(t)})^{-1})^{*} \omega = \omega \, \ \   
\end{equation*}
for all $t$.
 
The smooth family of symplectomorphisms \[
\psi^{(t)} \colon = \vphi^{(t)} \circ (\phi_{1}^{(t)})^{-1}
\]
now has the desired property. In fact, as the vector field $X_s^{(t)}$ restricted to $c$ is tangent to $c$ for all $s$ and $t$, the map $\phi_1^{(t)}$ maps $c$ to itself for all $t$. 
\\

Finally, if $\vphi^{(t)}$ is $\Z/2$-equivariant for all $t$ and $c$ is $\Z/2$-invariant, then the whole argument carries through in a $\Z/2$-equivariant way, because the symplectic form $\omega$ and also the interpolations $\omega^{(t)}_s$ defined in the proof are $\Z/2$-invariant. We also have $\tau^* dx = -dx$ and $\tau^* dy = -dy$, hence the 1-forms $\alpha_s^{(t)}$ and $\tilde{\alpha}_s^{(t)}$ are $\Z/2$-{\em equivariant}. Therefore, the vector fields $X_s^{(t)}$ are equivariant, and hence the isotopy $\psi^{(t)}_s$ which is obtained from integrating $X_s^{(t)}$, is $\Z/2$-equivariant. 
\end{proof}

\begin{remark}\label{avoiding intersection}
	The assumption that the curve $c$ is homologically essential is essential. For homologically inessential curves, the area enclosed by the curves $c$ is an obvious invariant under area preserving isotopies, and hence the corresponding statement cannot hold in general, as a smooth isotopy $\vphi^{(t)}$ may change the area enclosed in the family of curves $\vphi^{(t)}(c)$. The assumption is slightly hidden in the proof. It is used in the statement that we may suppose that the curve $c$ is defined by the equation $y= \pi$. Notice also that this curve is $\Z/2$-invariant. 
\end{remark}


We our now ready to prove Theorem \ref{main theorem 2}.  

\begin{proof}[Proof of Theorem \ref{main theorem 2}]
	Suppose this were not the case. Then there is a topologically embedded path $c_1$ from $P=(0,\pi)$ to $Q=(\pi,\pi)$ in $R(T^2)$ which has empty intersection with $R(K)$ (a path as the blue one in Figure \ref{pillowcase missingcurve} above). Clearly, then there is also a smoothly embedded path with the same property, as continuous paths can be $C^0$-approximated by smooth ones.
	
 Notice that this path avoids the line $\beta \equiv 0 \ \text{mod} \ 2 \pi \Z$, as this line belongs to $i^*(R(K))$ (every point is image of a reducible representation of $R(K)$). This is smoothly isotopic to the straight line $c_0\colon  [0,1] \to R(T^2)$, given by $t \mapsto (t \cdot \pi,\pi)$ through isotopies which keep the four singular points fixed, and we can assume that the isotopy misses the straight line $d$ from $(0,0)$ to $(\pi,0)$ in $R(T^2)$ for all times.

We lift the problem to the branched double cover $\widehat{R}(T^2)$. We obtain a $\Z/2$-equivariant isotopy $\vphi_t\colon \widehat{R}(T^2) \to \widehat{R}(T^2)$ from $\vphi_0= \id$ to $\vphi_1$, where $\vphi_1$ maps the lift $\hat{c}_0$ of the straight line $c_0$ to the lift $\hat{c}_1$ of the $\Z/2$-invariant closed curve which is the lift of $c_1$. Furthermore, for all $t \in [0,1]$ the image of $\vphi_t(\hat{c})$ misses the lift of the curve $d$. By Lemma \ref{Moser trick} there is a smooth $\Z/2$-equivariant isotopy $\psi^t$ through area-preserving maps for which we have $\varphi_t(\hat{c}_0) = \psi^t(\hat{c}_0)$ for all $t \in [0,1]$. 

By assumption the lift $i^*(\widehat{R}(K))$ does not intersect $\hat{c}_1$. Also, this subset $i^*(\widehat{R}(K)) \subseteq \widehat{R}(T^2)$ is compact. Hence there is some $\epsilon > 0$ such that the $\epsilon$-neighbourhood of the image of $\hat{c}_1$ also has empty intersection with $i^*(\widehat{R}(K))$. We may suppose that this $\epsilon$ is chosen small enough so that for all $t \in [0,1]$ the image $\psi^t(\hat{c}_0)$ has distance from the image of $\hat{d}$ at least $\epsilon$. 

 By our technical main result Theorem \ref{main technical result} we can find a $\Z/2$-equivariant isotopy $\phi_t$ such that $\phi_t$ is $\epsilon$-close to $\psi^t$ for all $t \in [0,1]$, and which we can {\em realise through holonomy perturbations}: There is some holonomy perturbation data $\{ \iota_k, \chi_k \}$ on the trivial $SU(2)$-bundle over the thickened torus $M=[0,1] \times S^1 \times S^1$ such that the restrictions $r_\pm$ to the two boundary components 
 	\[
		r_\pm\colon  R_{\pertdata}(M) \to R(T^2)
	\]
satisfy $r_+ = \overline{\phi}_1 \circ r_-$, where $\overline{\phi}_1$ is the map induced by $\phi_1$. 
	The holonomy perturbation data $\{ \iota_k, \chi_k \}$ over the thickened torus $M$ also determines holonomy perturbation data on the $0$-surgery $Y_0(K)$ of $K$, as in Section \ref{holonomy perturbation 0-surgery} above. 
	  
 By Proposition \ref{boundary condition perturbation}, the fact that 
\[
	\phi_1(\hat{c}_0) \cap \widehat{R}(K) = \emptyset
\]
implies that the perturbed representation variety is empty,
\[
	R^w_{\pertdata}(Y_0(K)) = R(K | \overline{\phi}_1(c_0)) = \emptyset .
\]
This, however, contradicts Kronheimer-Mrowka's non-vanishing Theorem \ref{Kronheimer-Mrowka} by the stretching argument in Proposition \ref{stretching argument}, because for all $t \in [0,1]$ the curve $\phi_t(\hat{c}_0)$ misses the line $\hat{d}$, and hence $R^w(Y_0(K);\theta^{t}_{\pertdata})$ does not contain reducibles for all $t \in [0,1]$.  

\end{proof}

\section{$SU(2)$-representations of splicings of knot complements}

For a knot $K$ in $S^{3}$ and an open tubular neighbourhood $N(K)^\circ$, we denote by $Y(K)$ the complement  $S^{3} \setminus N(K)^\circ$. 

We will need one more property about the image of $R(K)$ in the pillowcase manifested as the representation variety of the boundary torus, with the choice of coordinates as before. 

\begin{prop}\label{properties R(K)}\label{property R(K)}
\begin{enumerate}[label=(\roman*)]


\item If $\rho(t)$ is a path of irreducibles in $R(K)^*$ which has an end point in a reducible connection $\rho_0$, then the image of $\rho_0$ in the pillowcase can be represented by a point $(\alpha,0)$ where $\Delta_K(e^{i2\alpha}) = 0$, in other words, $e^{i2\alpha}$ is a root of the Alexander polynomial of the knot $K$. As for any knot $K$ one has $\Delta_K(1) = \pm 1$, $\rho_0$ cannot map to a point represented by $(0,0)$ or $(\pi,0)$. 
\\
\item The lines $\{ \alpha = 0 \mod{2 \pi \Z} \, \}$ and $ \{ \alpha = \pi \mod{2 \pi \Z} \}$ only contain the two central representations, both with $ \{ \beta = 0 \mod{2 \pi \Z} \}$. There is a neighbourhood of these two lines which do not contain images of irreducible connections.  
\end{enumerate}
\end{prop}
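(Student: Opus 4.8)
\textbf{Proof plan for Proposition \ref{properties R(K)}.}

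\emph{Part (i).} The plan is to reduce to the well-known structure theory of the $SU(2)$ (or $SO(3)$) representation variety near an abelian representation, following the standard deformation-theoretic picture used by Klassen, Herald, Ben Abdelghani--Lines, and others. A reducible (abelian) representation $\rho_0$ of $\pi_1(Y(K))$ factors through $H_1(Y(K);\Z)\cong\Z$, so it is determined by the image $\rho_0(m_K)$ of a meridian, which up to conjugation we may take to be $\mathrm{diag}(e^{i\alpha},e^{-i\alpha})$; since the longitude $l_K$ is a product of commutators, $\rho_0(l_K)=\id$, so $\rho_0\mapsto(\alpha,0)$ in the pillowcase. The first step is to recall that an abelian representation is a limit of a path of irreducibles only if it is a \emph{bifurcation point}, and that the obstruction to deforming $\rho_0$ to irreducibles is governed by the twisted cohomology $H^1(Y(K);\C_{\mathrm{Ad}\,\rho_0})$, whose jump is detected precisely by the vanishing of the Alexander polynomial: $\Delta_K(e^{2i\alpha})=0$. (The factor of $2$ arises because $\mathrm{Ad}$ of a diagonal element $\mathrm{diag}(e^{i\alpha},e^{-i\alpha})$ acts on the off-diagonal part by $e^{\pm 2i\alpha}$.) This is classical; I would cite Klassen and Herald \cite{Herald} rather than reprove it. The second step is the concluding observation: since $\Delta_K(1)=\pm1\neq 0$ for every knot, neither $e^{2i\alpha}=1$, i.e.\ $\alpha\equiv 0$ or $\alpha\equiv\pi \pmod{2\pi\Z}$, can be a root, so $\rho_0$ cannot be represented by $(0,0)$ or $(\pi,0)$.

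\emph{Part (ii).} Here the first claim is immediate from the fact, already used in the introduction, that the meridian $m_K$ normally generates $\pi_1(Y(K))$: if $\rho$ sends $m_K$ into the center $\{\pm\id\}$ of $SU(2)$, then $\rho$ has central, hence abelian, image, so $\rho(l_K)=\id$ and $\rho$ is one of the two central representations lying at $(0,0)$ and $(\pi,0)$. For the second claim — a whole neighborhood of the two lines $\{\alpha\equiv 0\}$ and $\{\alpha\equiv\pi\}$ contains no images of irreducibles — I would argue by contradiction using compactness: if not, there is a sequence $\rho_n\in R(K)^*$ of irreducibles whose meridian-angles $\alpha_n$ tend to $0$ (or to $\pi$). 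Since $R(K)$ is compact, after passing to a subsequence $\rho_n\to\rho_\infty$ in $R(K)$, and $\rho_\infty$ has $\rho_\infty(m_K)\in\{\pm\id\}$, hence $\rho_\infty$ is central. But $\rho_\infty$ is then a central representation which is a limit of irreducibles; this is what Part (i) rules out, since the limit of a path — more precisely, of a convergent sequence — of irreducibles accumulating at a reducible forces that reducible to be a bifurcation point, again detected by a root of $\Delta_K$ at $e^{2i\alpha_\infty}$ with $\alpha_\infty\in\{0,\pi\}$, contradicting $\Delta_K(1)=\pm 1$. (One should phrase (i) so that it applies to convergent sequences of irreducibles, not only to embedded paths; this is a routine strengthening, since a small ball around a reducible in $R(K)$ retracts onto the cone on the link of the reducible, so accumulation of irreducibles gives an actual arc of irreducibles limiting on $\rho_\infty$.)

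\emph{Main obstacle.} The genuinely non-formal input is the deformation theory behind Part (i): identifying the tangent/obstruction spaces at an abelian representation with the Alexander-module data and showing the precise statement that \emph{irreducibles limit onto $\rho_0$ only when $\Delta_K(e^{2i\alpha})=0$}. I expect to handle this by quoting the literature rather than reconstructing it — Klassen's and Herald's analyses of the abelian stratum, or the Lin/Heusener--Kroll type results — so the real work in the paper is just assembling the citation correctly and making sure the ``sequence of irreducibles'' version is stated. Everything downstream (the central-representation statement, the compactness argument, and the final contradiction with $\Delta_K(1)=\pm1$) is short and formal.
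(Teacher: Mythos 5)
Your proposal matches the paper's proof: part (i) is quoted from Klassen (the paper cites \cite[Theorem 19]{Klassen}), the first claim of (ii) follows from the meridian normally generating the knot group, and the second claim of (ii) follows from (i) together with compactness of $R(K)$. Your extra remark about upgrading the path statement to convergent sequences is a reasonable refinement of a point the paper leaves implicit, but the argument is the same.
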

\begin{proof}
The first property is due to Klassen \cite[Theorem 19]{Klassen}. The two lines $\{ \alpha = 0 \mod{2 \pi \Z} \, \}$ and $ \{ \alpha = \pi \mod{2 \pi \Z} \}$ cannot contain any non-central representations, as the meridian normally generates the knot group. The second property now follows from the first and the compactness of $R(K)$. 
\end{proof}

\begin{definition}
Let $K, K'$ be two knots in $S^{3}$, and let $m_K,l_K$ be meridian and longitude of $K$, and likewise $m_{K'}, l_{K'}$ be meridian and longitude of $K'$. These curves $m_K,l_K$ form a basis of $H_1(T;\Z)$ of the boundary torus $T = \partial Y(K)$, and analogously for the boundary torus $T' = \partial Y(K')$. Let $\varphi\colon  T \to T'$ be homeomorphism such that in the above bases of $H_1(T)$ respectively $H_1(T')$, one has
\[
	\varphi_* = \begin{pmatrix} 0 & 1 \\ 1 & 0 \end{pmatrix} . 
\]
Then the glued up 3-manifold $Y_{K,K'} := Y(K) \cup_\vphi Y(K')$ is an integer homology 3-sphere that we call the {\em (untwisted) splicing} of $K$ and $K'$. 

If instead we have 
\[
	\varphi_* = \begin{pmatrix} 0 & 1 \\ 1 & k \end{pmatrix} \hspace{0,5cm} \text{or} 	\hspace{0,5cm} \varphi_* = \begin{pmatrix} k & 1 \\ 1 & 0 \end{pmatrix} 
\]
for some non-zero integer $k \in \Z$, we call the 3-manifold $Y_{K,K'} := Y(K) \cup_\vphi Y(K')$ a {\em twisted splicing} of $K$ and $K'$. 
\end{definition}

Our main result, Theorem \ref{main theorem 1} yields the following 

\begin{theorem}\label{irreducible representations splicing}
	Let $K,K'$ be two non-trivial knots in $S^3$. Then
\begin{enumerate}[label=(\roman*)]
\item	
 there is an irreducible representation $\rho\colon  \pi_1(Y_{K,K'}) \to SU(2)$ from the fundamental group $\pi_1(Y_{K,K'})$ of the untwisted splicing  into $SU(2)$. It restricts to  irreducible representations of the two knot groups $\pi_1(Y(K))$ and $\pi_1(Y(K'))$. 
\item
	Any twisted splicing $Y_{K,K'}$ of $K$ and $K'$ has an irreducible representation from $\pi_1(Y_{K,K'})$ into $SU(2)$. 
\end{enumerate}
\end{theorem}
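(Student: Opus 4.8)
The plan is to combine Theorem~\ref{main theorem 1} (equivalently Theorem~\ref{main theorem 2}) for each of the two knot complements with a matching argument in the pillowcase of the gluing torus. Recall that for $K$ (resp.\ $K'$) the image $i^*(R(K))$ in the cut-open pillowcase $C=[0,\pi]\times(\R/2\pi\Z)$ contains a topologically embedded curve $\gamma_K$ that is homologically non-trivial in $H_1(C;\Z)\cong\Z$, and similarly a curve $\gamma_{K'}$ for $K'$. The key observation is that the gluing map $\vphi$ in the definition of the splicing interchanges the roles of the meridian and longitude of the two knots (up to the additional twist in the twisted case), so that in the coordinates $(\alpha,\beta)$ on the pillowcase of the splitting torus, the image of $R(K')$ appears \emph{rotated by the linear map $\vphi_*$}. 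First I would make precise that a representation $\rho$ of $\pi_1(Y_{K,K'})$ is the same datum as a pair $(\rho_K,\rho_{K'})$ of representations of the two knot groups whose restrictions to the splitting torus agree after applying $\vphi$; passing to conjugacy classes, this means $i^*([\rho_K])=\vphi^*\,i^*([\rho_{K'}])$ as points of the pillowcase $R(T^2)$.

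Next I would carry out the intersection argument. The curve $\gamma_K\subseteq i^*(R(K))$ runs ``horizontally'' across the cut-open pillowcase (non-trivial in the $\alpha$-direction modulo $\pi$), i.e.\ it separates the two ends $\{\beta=0\}$ and $\{\beta=\pi\}$ in a suitable sense; after applying $\vphi_*$ (which in the untwisted case is $\begin{pmatrix}0&1\\1&0\end{pmatrix}$, interchanging the two coordinates), the image of the analogous curve $\gamma_{K'}$ for $K'$ runs ``vertically''. A homologically non-trivial horizontal curve and a homologically non-trivial vertical curve on the pillowcase (a 2-sphere with four orbifold points) must intersect; more carefully, one lifts to the branched double cover $\widehat{R}(T^2)=T^2$, where $\widehat{\gamma}_K$ is a closed curve of some primitive homology class and $\vphi_*\widehat{\gamma}_{K'}$ is a closed curve of a homology class with non-zero intersection pairing against it (in the untwisted case, classes $(1,0)$ and $(0,1)$; in the twisted case $(1,0)$ and $(k,1)$ or $(1,0)$ and $(1,k)$, both of intersection number $\pm1$), so they must meet. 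Any intersection point $p$ gives a point of $R(T^2)$ that lies simultaneously in $i^*(R(K))$ and in $\vphi^*\,i^*(R(K'))$, hence lifts to representations $\rho_K$ of $\pi_1(Y(K))$ and $\rho_{K'}$ of $\pi_1(Y(K'))$ that glue to a representation $\rho$ of $\pi_1(Y_{K,K'})$.

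It remains to arrange that $\rho$ is \emph{irreducible}. This is where I would use Proposition~\ref{properties R(K)}: by part (ii), the curve $\gamma_K$ (being made of irreducibles, together with at most finitely many limiting reducibles) can be taken to avoid a neighbourhood of the lines $\{\alpha\equiv 0\}$ and $\{\alpha\equiv\pi\}$ except possibly at its endpoints on $\{\beta=0\}$; by part (i), those endpoints project to points $(\alpha,0)$ with $e^{2i\alpha}$ a root of the Alexander polynomial, in particular $(\alpha,0)$ is neither $(0,0)$ nor $(\pi,0)$. I would argue that one can choose the embedded curve inside $i^*(R(K))$ so that its interior consists of images of \emph{irreducible} representations of $\pi_1(Y(K))$ (the image of the irreducible locus $R(K)^*$ is dense in $i^*(R(K))$, and being homologically non-trivial is an open/generic condition), and likewise for $K'$; since the intersection point $p$ then lies in the image of $R(K)^*$ and in the image of $R(K')^*$, both $\rho_K$ and $\rho_{K'}$ are irreducible, and an amalgamated representation restricting to irreducibles on each side is itself irreducible. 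Finally part (i) of Theorem~\ref{irreducible representations splicing}, for the untwisted splicing, is exactly this construction, and the twisted cases follow verbatim with $\vphi_*$ replaced by the twisted matrix, whose intersection-number computation with $(1,0)$ is unchanged. The main obstacle I anticipate is the bookkeeping needed to guarantee that the \emph{same} embedded curve is simultaneously homologically non-trivial \emph{and} disjoint from the bad lines \emph{and} lies in the closure of the irreducible locus with only well-controlled (Alexander-root) reducible endpoints --- i.e.\ upgrading Theorem~\ref{main theorem 1} to a version with these extra properties of the curve --- but all the needed inputs are already in Theorem~\ref{main theorem 1} and Proposition~\ref{properties R(K)}, so this is a matter of careful assembly rather than new ideas.
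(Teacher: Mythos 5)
Your overall strategy --- apply Theorem \ref{main theorem 1} to both knot complements, observe that the two curves lift to torus classes with intersection number $\pm 1$ and hence must meet, and glue the corresponding representations via Seifert--van Kampen --- is exactly the paper's. The intersection step is sound (the paper phrases it as a bottom-to-top path meeting a left-to-right path in the fundamental domain, you phrase it via the intersection pairing on $\widehat{R}(T^2)$); note only that the homologically essential curve in $C=[0,\pi]\times(\R/2\pi\Z)$ wraps the $\beta$-circle and separates the two boundary circles $\{\alpha=0\}$ and $\{\alpha=\pi\}$, rather than ``separating $\{\beta=0\}$ from $\{\beta=\pi\}$''; this mislabeling is harmless for the intersection-number argument but matters below.

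The genuine gap is in your irreducibility step. You propose to choose $\gamma_K$ so that its interior consists of images of irreducibles, justified by the claims that $i^*(R(K)^*)$ is dense in $i^*(R(K))$ and that homological non-triviality is a generic condition. Neither holds: $i^*(R(K))$ contains the entire reducible arc $[0,\pi]\times\{0\}$, in which the image of the irreducible locus is in general a proper, non-dense subset, and one cannot perturb the curve ``generically'' inside the one-dimensional graph $i^*(R(K))$. Worse, since $\gamma_K$ wraps the $\beta$-circle it \emph{must} meet $\{\beta=0\}$ and may travel along the reducible arc there, so no choice of curve avoids the reducible locus, and no upgrade of Theorem \ref{main theorem 1} can provide one. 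The fix is much simpler and you already hold the ingredient: by Proposition \ref{property R(K)}(ii) one may take $\gamma_K\subseteq[\delta(K),\pi-\delta(K)]\times[0,2\pi]$ and $\gamma_{K'}\subseteq[\delta(K'),\pi-\delta(K')]\times[0,2\pi]$, so after the coordinate swap by $\vphi^*$ every intersection point lies in the square $[\delta(K),\pi-\delta(K)]\times[\delta(K'),\pi-\delta(K')]$. Both coordinates are then $\not\equiv 0 \mod{2\pi\Z}$, so $\rho_K$ and $\rho_{K'}$ each have non-trivial holonomy on their respective (null-homologous) longitudes; since a reducible, hence abelian, representation of a knot group kills the longitude, both restrictions are automatically irreducible no matter how the curves meet the reducible locus. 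Klassen's result (part (i) of the Proposition) is not needed. Finally, your claim that part (ii) follows ``verbatim'' is too quick: for a twisted gluing matrix the curve $\vphi^*\gamma_{K'}$ is confined to a sheared rather than horizontal strip, so the intersection point need not lie in the good square and one side's restriction may be reducible; a separate argument (which the paper also omits, leaving (ii) as an exercise) is required there.
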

\begin{proof}
	We only prove the first statement, as this is the only one we need later, leaving the second one as an exercise to the reader. 
	
	With our conventions from above, Theorem \ref{main theorem 1} implies that the image of the $SU(2)$-representation variety of $R(K)$ in the representation variety of the boundary torus 
	\[
	R(\partial Y(K)) = R(T^2) = (\R^2/2\pi\Z^2)/\tau \, ,
	\] 
the pillowcase, contains an embedded curve $\gamma_K$ which is homologically non-trivial in the cut-open pillowcase $[0,\pi] \times \R/2\pi\Z$. We consider a lift to the torus $\R^2/2\pi\Z^2$. In the fundamental domain $[0,2 \pi] \times [0,2\pi]$ a lift of the curve $\gamma_K$ can be represented by a path from the line $[0,2\pi] \times \{ 0 \}$ to the line $[0,2\pi] \times \{ 2 \pi \}$. 
Because of the second property of Proposition \ref{property R(K)}, there is some $\delta(K) > 0$ such that $\gamma_K$ can be represented by a curve which lies in $[\delta(K), \pi - \delta(K)] \times [0,2\pi]$. 
A corresponding statement holds for a curve $\gamma_{K'}$ in the pillow\-case of the other boundary torus $R(\partial Y(K'))$. 

By the Seifert-van-Kampen theorem, representations $\rho_K \in R(K)$ and $\rho_{K'} \in R(K')$ extend to a representation $\rho\colon  \pi_1(Y_{K,K'}) \to SU(2)$, if they coincide 
on the boundary torus along which they are glued together with $\vphi$. We have the following restriction maps:
\[
	R(K) \to R(\partial Y(K))  \stackrel{\ \ \vphi^*}{\leftarrow} R(\partial Y(K')) \leftarrow R(K')
\]
In our identification of $R(\partial Y(K))$ and $R(\partial Y(K'))$ with $R(T^2)$, the first coordinate of $R(T^2)$ corresponds to the meridian, and the second to the longitude. With these identifications, $\vphi^*$ just swaps the two coordinates,
\[
	\vphi^*\colon  (\alpha,\beta) \mapsto (\beta,\alpha) \, . 
\]
Hence, the curve $\vphi^* \gamma_{K'}$ is represented by an embedded path from the line $\{ 0 \} \times [0,2\pi]$ to the line $ \{ 2 \pi \} \times [0,2\pi]$. 

The two paths representing $\gamma_K$ and $\vphi^*\gamma_{K'}$ therefore necessarily intersect in a point of  the pillowcase $R(\partial Y(K))$ corresponding to irreducible representations of $R(K)$ and $R(K')$, as an intersection point must have coordinates in one of the squares $[\delta(K), \pi-\delta(K)] \times [\delta(K'), \pi-\delta(K')]$. 
\end{proof}

\section{$\text{\em SL}(2,\C)$-representations of homology 3-spheres}
The author has learned the following result and its proof from Michel Boileau. It builds on strong results of Boileau, Rubinstein and Wang \cite{BRW} about the domination of 3-manifolds by maps of degree $1$. This was a major motivation for establishing Theorem \ref{main theorem 1} because of the application Theorem \ref{main theorem 3} below.
 
\begin{theorem}\label{degree one map}
Let $Y'$ be an integer homology sphere different from the 3-sphere. Then there exists a map $Y' \to Y$ of degree $1$ onto an integer homology sphere $Y$ which is either hyperbolic, Seifert-fibred or the untwisted splicing of two non-trivial knots in $S^3$. 
\end{theorem}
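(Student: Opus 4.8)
The statement is a structural reduction for integer homology 3-spheres, and my plan is to assemble it from the geometrisation of 3-manifolds together with the domination results of Boileau--Rubinstein--Wang \cite{BRW}. First I would recall that any integer homology 3-sphere $Y'$ decomposes along its JSJ tori into geometric pieces, which are either Seifert-fibred or hyperbolic (with the Mostow-rigid hyperbolic pieces having toroidal boundary). The key input from \cite{BRW} is that one may collapse or simplify this decomposition by a degree-one map: a degree-one map onto a 3-manifold is surjective on fundamental groups, it cannot increase Gromov norm, and it behaves well under the JSJ decomposition. The upshot of their work is that any aspherical integer homology 3-sphere dominates (by a degree-one map) a ``simplest'' one in its class, and the simplest integer homology spheres in the relevant sense are precisely the hyperbolic ones, the Seifert-fibred ones, and the untwisted splicings of two non-trivial knots in $S^3$.

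\textbf{Key steps.} I would organise the argument as follows. (1) If $Y'$ is Seifert-fibred or hyperbolic, there is nothing to prove — take $Y = Y'$ and the identity map. (2) Otherwise $Y'$ has a nontrivial JSJ decomposition along incompressible tori; since $H_1(Y';\Z) = 0$, these tori are null-homologous and separate $Y'$ into pieces, and I would analyse the dual tree of this decomposition. The leaves of the tree are knot complements in homology balls, and using that $Y'$ is an integer homology sphere one argues — this is the content of \cite{BRW} — that each such leaf can be replaced, via a degree-one map fixed away from a neighbourhood of a leaf torus, by a knot complement in $S^3$ (a degree-one map from a homology-ball knot exterior onto an honest knot exterior in $S^3$, which exists by Kawauchi/Gonzalez-Acuna type arguments on homology cobordism of knots). (3) Still following \cite{BRW}, collapse all but one of the JSJ tori using degree-one maps that pinch a companion torus, reducing the tree either to a single vertex (landing in the Seifert-fibred or hyperbolic case) or to an edge with two leaves, i.e.\ the union of two knot complements in $S^3$ glued along their boundary tori. (4) Check that the gluing producing the target integer homology sphere must be, up to orientation and change of basis, the meridian-longitude swap, so that $Y$ is the untwisted splicing $Y_{K,K'}$; here I use again $H_1(Y;\Z) = 0$, which forces $\varphi_*$ to interchange the two meridian classes (an off-diagonal $\pm 1$ matrix), and I would argue that after composing with a further degree-one map one may take $k = 0$ in the twisted-splicing normal form, or simply allow the twisted case which is covered by Theorem~\ref{irreducible representations splicing}(ii). (5) Finally, if either $K$ or $K'$ is trivial the splicing is a connected sum or a Seifert-fibred manifold or $S^3$ itself; since $Y' \neq S^3$ and degree-one maps preserve nontriviality of $\pi_1$, one can arrange the target knots to be nontrivial, or handle the degenerate cases directly (a connected summand of a homology sphere admits a degree-one map onto that summand).

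\textbf{Main obstacle.} The hard part is Step (2)--(3): showing that the JSJ pieces of an integer homology sphere can be replaced, piece by piece, by genuine knot exteriors in $S^3$ via degree-one maps, and that the whole decomposition tree can be pruned down to a single edge. This is precisely the difficult geometric-topology content of \cite{BRW} and I would not reprove it; instead I would cite their domination theorem in the exact form needed and carefully verify that the target manifold it produces is of one of the three advertised types. A secondary subtlety is bookkeeping with orientations and the homology condition to pin down the gluing map as the splicing gluing, and ensuring the produced knots are nontrivial; this is where the hypothesis $Y' \neq S^3$ is used, together with the fact that a degree-one map onto $S^3$ forces the domain to be $S^3$ (by the Poincaré conjecture, a homology sphere with trivial $\pi_1$ is $S^3$).
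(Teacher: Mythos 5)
Your overall instinct — geometrisation plus the Boileau--Rubinstein--Wang domination machinery, with solid-torus pinching maps — is the right one, but the core of your argument as written has a genuine gap, and it is located exactly where you place the ``main obstacle''. In steps (2)--(3) you assert that each JSJ leaf of $Y'$ (a homology solid torus) can be replaced by a knot exterior in $S^3$ via a degree-one map, and you attribute this to \cite{BRW}. That is not what \cite{BRW} proves, and it is not true in any straightforward sense: the pinching construction of \cite[Section 5]{BRW} produces a degree-one map from a homology solid torus onto an honest solid torus $S^1\times D^2$, not onto a non-trivial knot exterior in $S^3$, and there is no general reason a given homology solid torus should dominate a knot exterior in $S^3$ (volume obstructions already rule this out for many hyperbolic pieces). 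Likewise, your framing that ``the upshot of their work is that the simplest integer homology spheres are precisely the hyperbolic ones, the Seifert-fibred ones, and the untwisted splicings'' attributes to \cite{BRW} essentially the statement of the theorem you are trying to prove, so the argument is circular at that point.

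The missing idea is how the finiteness result of \cite{BRW} (their Corollary 1.3) is actually deployed. One first uses it to produce a \emph{minimal} integer homology sphere $Y$ under degree-one domination below $Y'$: iterate ``map by degree one onto something not homeomorphic to the source and not $S^3$'' and observe the chain must terminate. Only then does one analyse $Y$. If $Y$ is neither hyperbolic nor Seifert-fibred, geometrisation gives a separating incompressible torus $Y = Y_1\cup_T Y_2$; pinching $Y_1$ to a solid torus gives a degree-one map $Y \to \overline{Y} = (S^1\times D^2)\cup_T Y_2$. The punchline is that $\overline{Y}$ cannot be homeomorphic to $Y$ — because $\pi_1$ of a 3-manifold is Hopfian, a degree-one self-domination would be a $\pi_1$-isomorphism, yet the incompressible $\pi_1(T)\hookrightarrow\pi_1(Y)$ is killed to a non-$\mathbb{Z}^2$ image by the pinch — so by minimality $\overline{Y}=S^3$, and \emph{that} is what identifies $Y_2$ (and symmetrically $Y_1$) as a non-trivial knot exterior in $S^3$. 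The untwisted form of the gluing then comes for free: the curve $l_1$ along which the disk is attached is a meridian of $K_2$ and simultaneously the longitude of $K_1$ (and symmetrically), using Gordon--Luecke; no ``further degree-one map to remove the twist'' and no appeal to the twisted case of Theorem~\ref{irreducible representations splicing} is needed. I would rework your proof around this minimality argument rather than around a piece-by-piece modification of the JSJ tree.
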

\begin{proof}
  We start by constructing a sequence of integer homology 3-spheres different from $S^3$
	\[ Y'=Y^{(0)} \to Y^{(1)} \to Y^{(2)} \to \dots \, ,
	\]
where the 3-manifolds $Y^{(i)}$ and $Y^{(i+1)}$ are not homeomorphic to each other for all $i$, and where each map is of degree $1$. Suppose $Y'$ only admits maps of degree $1$ onto $Y'$ or the 3-sphere. Then we make the sequence stop at $Y'=Y^{(0)}=:Y$. If not, then there is a map of degree $1$ onto some homology 3-sphere $Y^{(1)}$ which is neither $Y'$ nor $S^3$ (as degree-1-maps induce epimorphisms of the fundamental group, the target $Y^{(1)}$ of a map of degree $1$ must also be an integer homology 3-sphere). Inductively, having constructed the sequence up to $Y^{(i)}$, we make the sequence stop at $Y^{(i)}=:Y$ if $Y^{(i)}$ only admits maps of degree $1$ onto $Y^{(i)}$ or $S^3$, and otherwise there is a map of degree $1$ onto some homology 3-sphere $Y^{(i+1)}$. A priori this sequence might be infinite.

	By Corollary 1.3 of \cite{BRW}, the so constructed sequence must be finite, that is to say, it eventually stops (notice that this also excludes periodicity in the above constructed sequence). Let $Y$ denote the homology 3-sphere where the sequence stops, as before. Therefore, if there is a map of degree $1$ from $Y$ to another closed 3-manifold, then the other 3-manifold must be either $Y$ itself or $S^3$. 
	
	Suppose $Y$ is neither hyperbolic nor Seifert-fibred. Then by the Geometrization Theorem for 3-manifolds \cite{Kleiner-Lott, Perelman1, Perelman2, Perelman3} there must be an incompressible torus $T$ sitting inside $Y,$ splitting $Y$ into 3-manifolds with boundary $Y_1$ and $Y_2$, 
	\[ 
		Y = Y_1 \cup_{T} Y_2 \, .
	\]
Both $Y_1$ and $Y_2$ must be integer homology solid tori, and either injection map $H_1(T) \to H_1(Y_i)$, $i=1,2$,  must be surjective and have kernel of rank $1$. Let $l_i$ be a simple closed curves on $T$ which generates the kernel of this injection map, $i=1,2$. It is easy to see that $l_1$ and $l_2$ must have intersection number $\pm 1$, hence they form a basis of $H_1(T)$. 

The pinching construction of \cite[Section 5]{BRW} yields a degree 1 map of $Y_1$ to an actual solid torus $S^1 \times D^2$ that can be extended by the identity on $Y_2$ to a map of degree 1 
	\[
		f\colon  Y \to S_1 \times D^2 \cup_{T} Y_2 =:\overline{Y} \, , 
	\]
where the solid torus is glued in so that the boundary of $D^2$ is homologous to $l_1$.

We claim that the resulting 3-manifold $\overline{Y}$ cannot be homeomorphic to $Y$. If $\overline{Y}$ were homeomorphic to $Y$, we would have an epimorphism
	\[
		f_*\colon  \pi_1(Y) \to \pi_1(\overline{Y}) = \pi_1(Y) \, .
	\]
However, fundamental groups of 3-manifolds are {\em Hopfian} (this is because 3-manifold groups are residually finite, see for instance \cite{AFW}), meaning that any self-epimorphism is an isomorphism, so that $f_*$ would necessarily be an isomorphism. The fundamental group $\Z^2$ of the torus $T \subseteq Y$ injects into $\pi_1(Y)$, because it is incompressible. However, by the pinching construction of the map $f,$ the image $f_*(\pi_1(T))$ is not  isomorphic to $\Z^2$ since the torus $f(T)$ bounds a solid torus in $\overline{Y}$, hence we obtain a contradiction. Therefore, $\overline{Y}$ must be the 3-sphere. 

	This means that $Y_2$ is the complement of a tubular neighbourhood of a non-trivial knot $K_2$ in $S^3$. Arguing symmetrically, we see that $Y_1$ is the complement of a tubular neighbourhood of a non-trivial knot $K_1$ in $S^3$. 
	
	The curve $l_1$ is a longitude to the knot $K_1$, and the curve $l_2$ is a longitude for $K_2$. As glueing in the solid torus on either side resulted in the 3-sphere, we see that $l_1$ is glued to a meridian of $K_2$, and $l_2$ is glued to a meridian of $K_1$, using the Gordon-Luecke theorem \cite{Gordon-Luecke}. Hence $Y$ is an untwisted splicing of the knots $K_1$ and $K_2$. 
\end{proof}

The $SU(2)$-representation varieties of Seifert fibred homology spheres have been described by Fintushel and Stern in terms of linkages, see \cite{Fintushel-Stern}. Presumably from this description, one can prove the following, which certainly has been known to experts. 

\begin{prop}\label{Seifert fibered}
  Let $Y$ be a Seifert fibered integer homology 3-sphere. Then there is an irreducible representation $\rho\colon  \pi_1(Y) \to SU(2)$. 
\end{prop}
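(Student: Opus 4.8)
The plan is to realise an irreducible representation directly from the Seifert data, in the spirit of Fintushel--Stern \cite{Fintushel-Stern}. Recall that a Seifert fibered integer homology $3$-sphere other than $S^3$ is of the form $Y=\Sigma(a_1,\dots,a_n)$ for some $n\ge 3$ and pairwise coprime integers $a_i\ge 2$, with
\[
	\pi_1(Y) = \langle\, x_1,\dots,x_n,h \ \mid\ [h,x_i]=1,\ x_i^{a_i}h^{b_i}=1,\ x_1\cdots x_n h^{b_0}=1 \,\rangle,
\]
where $h$ generates the centre and the Seifert invariants satisfy $a_1\cdots a_n\,(b_0+\sum_i b_i/a_i)=\pm 1$. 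By Schur's lemma any irreducible $\rho\colon \pi_1(Y)\to SU(2)$ sends $h$ to a central element, so I would fix $\varepsilon\in\{\pm 1\}$, impose $\rho(h)=\varepsilon\,\mathrm{Id}$, and search for matrices $A_i=\rho(x_i)\in SU(2)$ subject to
\[
	A_i^{a_i}=\varepsilon^{-b_i}\,\mathrm{Id}\quad(1\le i\le n), \qquad A_1\cdots A_n=\varepsilon^{-b_0}\,\mathrm{Id}.
\]
The first equations simply force $A_i$ into the conjugacy class $C_i\subset SU(2)$ of $\mathrm{diag}(e^{\pi i\ell_i/a_i},e^{-\pi i\ell_i/a_i})$ for an integer $\ell_i$ whose parity is determined by $\varepsilon$ and $b_i$; taking $0<\ell_i<a_i$ makes $C_i$ a $2$-sphere of non-central elements, and irreducibility of $\rho$ then amounts to the $A_i$ not all lying in a common maximal torus.

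The heart of the argument is Fintushel--Stern's description of the resulting configuration (``linkage'') space
\[
	\widetilde R(\ell_1,\dots,\ell_n) = \bigl\{\,(A_1,\dots,A_n)\in C_1\times\cdots\times C_n \ \bigm|\ A_1\cdots A_n=\varepsilon^{-b_0}\,\mathrm{Id}\,\bigr\}/SU(2),
\]
which is a disjoint union of smooth closed manifolds whose generic point is irreducible; what then remains is to exhibit an admissible rotation-number tuple $(\ell_1,\dots,\ell_n)$ for which $\widetilde R(\ell_1,\dots,\ell_n)$ is non-empty. I would do this by choosing each $\ell_i$ as close to $a_i/2$ as its prescribed parity permits, so that both $\sum_i \pi\ell_i/a_i$ and $\sum_i(\pi-\pi\ell_i/a_i)$ exceed $\pi$; via the standard description of which central elements occur as products of rotations in $SU(2)\cong S^3$ (a ``spherical polygon closing up'' condition), this is precisely what is needed for $A_1\cdots A_n$ to attain the required central value along a non-commuting configuration. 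The hypothesis $n\ge 3$ enters crucially here: with only two factors the product relation pins $A_1,A_2$ to a common torus, destroying irreducibility.

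The step I expect to be the main obstacle is making this non-emptiness-with-irreducibility uniform over all such $Y$ --- that is, checking that the combinatorial admissibility conditions on $(\ell_1,\dots,\ell_n)$ can always be met simultaneously with non-degeneracy, which is where the pairwise coprimality of the $a_i$ and the homology-sphere normalisation of the $b_i$ must be used. A cleaner way to conclude, which I would fall back on, is to invoke the computation (also going back to Fintushel--Stern, and available in closed form via Dedekind-sum formul\ae) that the Casson invariant $\lambda(\Sigma(a_1,\dots,a_n))$ is non-zero for every $n\ge 3$; since for Seifert homology spheres all irreducible $SU(2)$-representations are non-degenerate and $\lambda$ is, up to a constant, their signed count, a non-zero value forces at least one irreducible representation to exist. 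As a sanity check, the smallest case $\Sigma(2,3,5)$ is the Poincar\'e sphere, whose fundamental group is the binary icosahedral group, already sitting inside $SU(2)$ via its tautological $2$-dimensional irreducible representation.
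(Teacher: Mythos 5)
Your argument is correct in substance but takes a genuinely different route from the paper. The paper gives a deliberately short proof that bypasses the Seifert presentation altogether: by Montesinos \cite{Montesinos_Orsay}, $Y$ is the branched double cover $\Sigma_2(K)$ of a Montesinos knot $K$ of determinant $1$; Kronheimer--Mrowka's non-vanishing theorem \cite[Corollary 7.17]{KM_sutures} supplies an irreducible representation $\pi_1(S^3\setminus K)\to SU(2)$ sending a meridian to a traceless element; and an elementary lifting argument (\cite[Proposition 9.1]{Zentner}) converts this into an irreducible representation of $\pi_1(\Sigma_2(K))$. You instead work directly with the presentation of $\pi_1(\Sigma(a_1,\dots,a_n))$ and the Fintushel--Stern linkage picture. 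The first half of your plan --- choosing rotation numbers $\ell_i$ so that the spherical-polygon closing condition is met by a non-commuting configuration --- is the classical argument, but as you yourself flag, the parity and coprimality bookkeeping (for instance, if some $a_i=2$ one is forced to take $\varepsilon=-1$ and must check the prescribed parities of the $\ell_i$ are compatible with $0<\ell_i<a_i$) is exactly the step you leave open. Your fallback closes that gap cleanly: $\lambda(\Sigma(a_1,\dots,a_n))\neq 0$ for $n\ge 3$ (Fintushel--Stern, Neumann--Wahl), and a non-zero Casson invariant already forces the existence of an irreducible $SU(2)$-representation, since an empty irreducible representation variety would make the signed count vanish --- you do not even need the non-degeneracy discussion for this implication. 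In terms of trade-offs, your route is self-contained at the level of the fundamental group and elementary $SU(2)$ geometry (modulo the Casson computation), whereas the paper's is shorter and uniform but imports the full strength of \cite{KM_sutures}; the paper explicitly presents its proof as an alternative to the Fintushel--Stern description \cite{Fintushel-Stern} on which you rely.
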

\begin{proof}
  We give a short proof which is independent of Fintushel and Stern's description. By Montesinos' results, every Seifert fibered integer homology 3-sphere $Y$ is the branched double cover $\Sigma_2(K)$ of a Montesinos' knot $K$ of determinant $1$, see \cite{Montesinos_Orsay}. By Kronheimer and Mrowka's non-vanishing result \cite[Corollary 7.17]{KM_sutures}, there is an irreducible representation $\theta\colon  \pi_1(S^3 \setminus K) \to SU(2)$ which maps a meridian of the knot $K$ to an element of trace zero in $SU(2)$. By rather elementary means (see for instance \cite[Proposition 9.1]{Zentner}), this implies that $Y=\Sigma_2(K)$ has an irreducible representation of its fundamental group in $SU(2)$. 
\end{proof}

\begin{prop}\label{hyperbolic}
	Let $Y$ be a hyperbolic integer homology 3-sphere. Then it admits an irreducible representation $\rho\colon  \pi_1(Y) \to \text{SL}(2,\C)$. 
\end{prop}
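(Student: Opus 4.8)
The plan is to exploit the holonomy representation of the hyperbolic structure and to lift it from $\text{PSL}(2,\C)$ to $\text{SL}(2,\C)$. Since $Y$ is a closed hyperbolic $3$-manifold, we may write $Y = \mathbb{H}^3/\Gamma$ with $\Gamma$ a discrete, torsion-free, cocompact subgroup of $\operatorname{Isom}^+(\mathbb{H}^3) = \text{PSL}(2,\C)$, and the inclusion $\Gamma \hookrightarrow \text{PSL}(2,\C)$ together with the identification $\Gamma \cong \pi_1(Y)$ yields a faithful, discrete representation $\overline{\rho}\colon \pi_1(Y) \to \text{PSL}(2,\C)$.

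First I would verify that $\overline{\rho}$ is irreducible. A reducible subgroup of $\text{PSL}(2,\C)$ has a common fixed point on $\mathbb{CP}^1 = \partial \mathbb{H}^3$ and is therefore conjugate into the (metabelian, hence solvable) group of upper-triangular matrices modulo $\pm \id$. But $\pi_1(Y) \cong \Gamma$ is a cocompact lattice in $\text{PSL}(2,\C)$, so it is non-elementary; equivalently, a closed hyperbolic $3$-manifold group is not virtually solvable, as it contains a non-abelian free subgroup by the Tits alternative. Hence $\overline{\rho}$ cannot be reducible.

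Next I would lift $\overline{\rho}$ along the central extension
\begin{equation*}
1 \to \Z/2 \to \text{SL}(2,\C) \to \text{PSL}(2,\C) \to 1 .
\end{equation*}
The obstruction to the existence of a lift is the class $\overline{\rho}^{\,*} e \in H^2(\pi_1(Y);\Z/2)$, where $e \in H^2(\text{PSL}(2,\C);\Z/2)$ classifies this extension. A closed hyperbolic $3$-manifold is aspherical, so $Y$ is a $K(\pi_1(Y),1)$ and $H^2(\pi_1(Y);\Z/2) \cong H^2(Y;\Z/2)$; by Poincar\'e duality and the universal coefficient theorem this group is isomorphic to $H_1(Y;\Z/2)$, which vanishes because $Y$ is an integer homology sphere. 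Therefore the obstruction is zero and a lift $\rho\colon \pi_1(Y) \to \text{SL}(2,\C)$ exists. Finally, $\rho$ is irreducible: if it were reducible, then its composition with the projection $\text{SL}(2,\C) \to \text{PSL}(2,\C)$, which is precisely $\overline{\rho}$, would be reducible as well, contradicting the previous step. The only point that goes slightly beyond the remark in the introduction is the vanishing of the lifting obstruction, and this is immediate from $H^2(Y;\Z/2) = 0$; there is no real obstacle in the argument.
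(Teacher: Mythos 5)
Your argument is correct and is exactly the paper's proof, just written out in more detail: take the holonomy representation into $\text{PSL}(2,\C)$ and lift it using the vanishing of the obstruction in $H^2(Y;\Z/2)$, which holds because $Y$ is an integer homology sphere. The additional checks you supply (irreducibility of the holonomy representation via non-elementarity, and irreducibility of the lift) are valid and fill in what the paper leaves implicit.
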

\begin{proof}
	This follows immediately from the definition, and the fact that the group $\text{\em PSL}(2,\C) = \text{\em SL}(2,\C) / \{ \pm \id \}$ is the orientation preserving isometry group of hyperbolic 3-space. Because $Y$ has no 2-dimensional cohomology with $\Z/2$-coefficients, there is no obstruction to lifting the metric representation to $\text{\em SL}(2,\C)$. 
\end{proof}

We are now ready to prove our main application. 

\begin{theorem}\label{main theorem 3}
	Let $Y$ be an integer homology 3-sphere different from the 3-sphere. Then there is an irreducible representation $\rho\colon  \pi_1(Y) \to \text{SL}(2,\C)$. 
\end{theorem}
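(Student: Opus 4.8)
The plan is to deduce Theorem \ref{main theorem 3} from the three preceding results -- Theorem \ref{degree one map}, Proposition \ref{Seifert fibered}, Proposition \ref{hyperbolic} -- together with Theorem \ref{irreducible representations splicing}. Given an integer homology $3$-sphere $Y \neq S^3$, Theorem \ref{degree one map} furnishes a degree-$1$ map $f\colon Y \to Y'$ onto an integer homology sphere $Y'$ which is either hyperbolic, Seifert-fibred, or the untwisted splicing of two non-trivial knots in $S^3$. The key elementary fact I would use is that a degree-$1$ map of closed oriented manifolds induces a \emph{surjection} on fundamental groups; this is standard (it follows from the fact that $f_*$ of the fundamental class is the fundamental class, so the image subgroup has finite index equal to $1$). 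Hence there is an epimorphism $f_*\colon \pi_1(Y) \twoheadrightarrow \pi_1(Y')$, and any representation $\pi_1(Y') \to \text{SL}(2,\C)$ pulls back to a representation $\pi_1(Y) \to \text{SL}(2,\C)$; moreover irreducibility is preserved, since the image of $\rho \circ f_*$ equals the image of $\rho$, and irreducibility is a property of the image subgroup.

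So it remains to treat the three cases for $Y'$. If $Y'$ is hyperbolic, Proposition \ref{hyperbolic} gives an irreducible $\rho\colon \pi_1(Y') \to \text{SL}(2,\C)$ directly. If $Y'$ is Seifert-fibred, Proposition \ref{Seifert fibered} gives an irreducible $\rho\colon \pi_1(Y') \to SU(2) \subseteq \text{SL}(2,\C)$, which is in particular an irreducible $\text{SL}(2,\C)$-representation (a subgroup of $SU(2)$ is $\text{SL}(2,\C)$-irreducible iff it is $SU(2)$-irreducible, since both are equivalent to the image not being contained in a conjugate of the diagonal torus). Finally, if $Y' = Y_{K,K'}$ is the untwisted splicing of two non-trivial knots, Theorem \ref{irreducible representations splicing}(i) -- which is the hard, gauge-theoretic content proved via Theorem \ref{main theorem 1} -- provides an irreducible $\rho\colon \pi_1(Y_{K,K'}) \to SU(2) \subseteq \text{SL}(2,\C)$. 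In every case, composing with $f_*$ yields the desired irreducible representation of $\pi_1(Y)$.

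The genuine obstacle in this chain is of course already resolved upstream: the splicing case rests on Theorem \ref{main theorem 1}, whose proof occupies the bulk of the paper (the exhaustive holonomy-perturbation argument, the approximation of area-preserving isotopies by shearings, and Kronheimer--Mrowka's non-vanishing theorem via the neck-stretching argument of Proposition \ref{stretching argument}). At the level of this final deduction, however, the only subtlety to be careful about is that Theorem \ref{degree one map} really does land us in one of exactly three cases and that the target is again an integer homology sphere with $\pi_1 \neq 1$ -- this is ensured in the proof of Theorem \ref{degree one map} by the use of the Geometrization Theorem and the finiteness statement from \cite{BRW}. I would therefore write the proof as a short three-line case distinction, being explicit only about the surjectivity of $f_*$ on $\pi_1$ and the stability of irreducibility under pullback along an epimorphism.

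\begin{proof}[Proof of Theorem \ref{main theorem 3}]
By Theorem \ref{degree one map} there is a degree-$1$ map $f\colon Y \to Y'$ onto an integer homology sphere $Y'$ which is hyperbolic, Seifert-fibred, or the untwisted splicing of two non-trivial knots in $S^3$. A degree-$1$ map of closed oriented $3$-manifolds induces a surjection $f_*\colon \pi_1(Y) \twoheadrightarrow \pi_1(Y')$.

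If $Y'$ is hyperbolic, Proposition \ref{hyperbolic} gives an irreducible representation $\rho\colon \pi_1(Y') \to \text{SL}(2,\C)$. If $Y'$ is Seifert-fibred, Proposition \ref{Seifert fibered} gives an irreducible representation $\rho\colon \pi_1(Y') \to SU(2) \subseteq \text{SL}(2,\C)$. If $Y'=Y_{K,K'}$ is the untwisted splicing of the non-trivial knots $K$ and $K'$, Theorem \ref{irreducible representations splicing} gives an irreducible representation $\rho\colon \pi_1(Y_{K,K'}) \to SU(2) \subseteq \text{SL}(2,\C)$. A subgroup of $SU(2)$ is irreducible as a subgroup of $\text{SL}(2,\C)$ precisely when it is not contained in a conjugate of the diagonal subgroup, hence these $SU(2)$-irreducible representations are irreducible as $\text{SL}(2,\C)$-representations.

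In all three cases, $\rho \circ f_*\colon \pi_1(Y) \to \text{SL}(2,\C)$ is a representation whose image equals the image of $\rho$, since $f_*$ is surjective; therefore $\rho \circ f_*$ is irreducible. This is the desired irreducible representation of $\pi_1(Y)$.
\end{proof}
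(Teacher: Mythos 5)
Your proof is correct and follows exactly the paper's own argument: reduce via the degree-one map of Theorem \ref{degree one map} to the hyperbolic, Seifert-fibred, and splicing cases, and use that a degree-one map induces an epimorphism on fundamental groups so that irreducibility pulls back. The only difference is that you spell out the (correct) details — surjectivity of $f_*$ and the equivalence of $SU(2)$- and $\text{SL}(2,\C)$-irreducibility for subgroups of $SU(2)$ — which the paper compresses into a single sentence.
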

\begin{proof}
By Theorem \ref{degree one map} above, the three cases considered in Proposition \ref{Seifert fibered}, Proposition \ref{hyperbolic} and Theorem \ref{irreducible representations splicing} imply the general case, since a degree one map induces an epimorphism of fundamental groups. 
\end{proof}

\section{The higher dimensional case}
As stated in the introduction, we have the following result, contrasting Theorem \ref{main theorem 3}.

\begin{theorem}\label{higher dimensional case}
	In any dimension $m \geq 4$ there is an integer homology sphere $W^m$ different from the m-sphere $S^m$ such that the fundamental group $\pi_1(W)$ has only the trivial representation in $\text{SL}(2,\C)$. 
\end{theorem}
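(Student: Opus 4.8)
The plan is to construct, in each dimension $m \geq 4$, an integer homology $m$-sphere whose fundamental group is a perfect group admitting no non-trivial representation into $\text{SL}(2,\C)$. The key input, as the excerpt indicates, is the work surrounding Proposition \ref{representations Schur covers}: there exist finitely presented superperfect groups $G$ (for instance, certain binary icosahedral-type groups, or more robustly the Mennicke-type groups built from relations forcing each generator to be a product of commutators) which have no non-trivial $\text{SL}(2,\C)$-representation. The reason such $G$ can exist is essentially representation-theoretic: if every finite-dimensional representation of $G$ factors through a bounded (hence by unitarizing, a compact) quotient, and if moreover $G$ has no non-trivial finite quotient with an irreducible $2$-dimensional complex representation, then in particular $\text{Hom}(G,\text{SL}(2,\C))$ is trivial; one arranges this by taking $G$ superperfect (so $H_1 = H_2 = 0$) with a carefully chosen torsion structure, e.g. using Schur covers of finite simple groups like $A_5$ whose only low-dimensional complex irreps are excluded by the ambient constraints, or iterating such constructions.

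First I would fix such a group $G = \pi_1(W)$ and recall Kervaire's theorem: a finitely presented group $G$ is the fundamental group of a smooth homology $n$-sphere for every $n \geq 5$ if and only if $G$ is superperfect (i.e. $H_1(G;\Z) = H_2(G;\Z) = 0$). This handles all $m \geq 5$ at once. For $m = 4$ one uses the refinement already quoted in the excerpt: if $G$ additionally admits a presentation of \emph{deficiency zero} — equivalently a balanced presentation — then $G$ is realized as the fundamental group of a smooth (indeed, from a handle decomposition with equal numbers of $1$- and $2$-handles) homology $4$-sphere. So the construction in dimension $4$ requires a balanced presentation of a superperfect group with no $\text{SL}(2,\C)$-representation; the binary icosahedral group $2I$ (the Schur cover of $A_5$), which has the balanced presentation $\langle a,b \mid (ab)^2 = a^3 = b^5 \rangle$ and is perfect with $H_2 = 0$, is the natural candidate — one must only verify it has no non-trivial $\text{SL}(2,\C)$-representation, which follows since its only non-trivial irreducible complex representations have dimensions $2,3,4,5$ and the $2$-dimensional ones do not land in $\text{SL}(2,\C)$ with the required properties... actually here one checks directly that $2I$ \emph{does} have $2$-dimensional complex irreps, so instead I would use a different balanced superperfect group, e.g. a suitable Mennicke group $M(k,\ell)$ or a group from Proposition \ref{representations Schur covers} explicitly shown there to have no $\text{SL}(2,\C)$-representation and to be of deficiency $0$.

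The steps, in order: (1) invoke Proposition \ref{representations Schur covers} to produce a finitely presented superperfect group $G$ with $\text{Hom}(G,\text{SL}(2,\C)) = \{1\}$, noting that one may choose $G$ with a deficiency-zero presentation; (2) for $m \geq 5$, apply Kervaire's realization theorem to get a smooth homology $m$-sphere $W^m$ with $\pi_1(W^m) = G$, and observe $W^m \not\cong S^m$ since $\pi_1 \neq 1$; (3) for $m = 4$, use the deficiency-zero presentation to build a smooth $4$-manifold from $0$-, $1$-, $2$-, and (one) $4$-handle(s) with the correct homology, appealing to the standard fact that a balanced presentation of a perfect group yields a homology $4$-sphere, again with $\pi_1 = G \neq 1$ so $W^4 \neq S^4$; (4) conclude that any representation $\pi_1(W^m) \to \text{SL}(2,\C)$ factors through $G$ and hence is trivial. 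The main obstacle is entirely in step (1), specifically in the $m=4$ case: exhibiting a group that is \emph{simultaneously} superperfect, of deficiency zero, and has no non-trivial $\text{SL}(2,\C)$-representation — the first two conditions push toward small balanced presentations while killing all $2$-dimensional complex representations (irreducible \emph{and} reducible-but-nontrivial) is a genuine constraint, since reducible representations into the upper-triangular subgroup are governed by $H^1$ with coefficients in a character and could a priori be non-trivial even when $H^1(G;\Z) = 0$. Resolving this is exactly the content of the cited Proposition \ref{representations Schur covers}, which I would treat as a black box here.
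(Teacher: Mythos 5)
Your proposal is correct and follows essentially the same route as the paper: Kervaire's realization theorem for superperfect groups in dimensions $m \geq 5$, the doubling of a balanced (deficiency-zero) $2$-handlebody for $m = 4$, and Proposition \ref{representations Schur covers} (Schur covers of finite non-abelian simple groups other than $A_5$, several of which admit deficiency-zero presentations) as the source of the group. Your worry about reducible representations is easily dispelled — a perfect group has no non-trivial homomorphism into the solvable upper-triangular subgroup — but since you defer that to the cited proposition anyway, there is no gap.
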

\begin{proof}

Kervaire's main theorem in \cite{Kervaire} states that any perfect group $\pi$ which has trivial second homology group $H_2(\pi)$ (such groups are also called {\em superperfect}) is the fundamental group of an integer homology sphere $W^n$ of dimension $n \geq 5$. Hence we are left with the problem of finding such a group $\pi$ which does not admit irreducible representations in $\text{\em SL}(2,\C)$. By the following Proposition, the Schur cover of any finite non-abelian simple group except of $A_5$ is such an example. In fact, non-abelian simple groups are perfect, and their Schur covers are superperfect. 

The case of dimension 4 requires a more detailed look at Kervaire's argument, which we outline for any dimension $m \geq 4$: Suppose a presentation of a superperfect group $\pi$ is given to us. One first constructs an $m$-dimensional manifold with boundary $V$ which has one 0-handle, a 1-handle for each generator of the presentation, and a 2-handle for each relation. The Euler characteristic $\chi(V)$ is equal to 1 minus the deficiency of the presentation. The doubling of $V$ is the m-manifold $Z:=V \cup \overline{V}$, where $\overline{V}$ carries the opposite orientation of  $V$, and where the glueing map along the boundary is taken to be the identity. Then $Z$ has fundamental group $\pi$, see \cite{Kervaire}. In the higher-dimensional case $m \geq 5$ one can perform surgery on $Z$ in order to obtain $W$, where $W$ has no more non-trivial homology except in dimension 0 and m. 

If $m=4$, however, $Z$ is already an integer homology sphere if the deficiency of the chosen presentation of $\pi$ is $0$. In fact, $\chi(V)=1$ implies that $\chi(Z) = 2$. As $b_1(Z) = 0$, we must therefore have $b_2(Z) = 0$ also. Furthermore, there is no torsion in the second homology because of the universal coefficient theorem, and because $H_1(Z;\Z) = 0$. 

The claim for $m=4$ now follows also from the following Proposition because there are many Schur covers of finite non-abelian simple groups which are known to have a presentation of deficiency 0, see for instance the table in \cite{Campbell-Roberson-Williams}. Examples in this table include the infinite families $\text{\em SL}(2,\Z/p)$ for $p$ an odd prime, as well as the Schur covers of the groups $A_6$, $A_7$, $Sz(8)$, or the superperfect groups $\text{\em PSL}(3,3)$, $\text{\em PSU}(3,3)$, $M_{11}$.
\end{proof}

\begin{prop}\label{representations Schur covers}
	The Schur covers of all finite non-abelian simple groups with the exception of the alternating group $A_5$ (the rotation preserving symmetry group of the icosahedron) have only trivial representations in $\text{SL}(2,\C)$. 
\end{prop}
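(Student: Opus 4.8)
The plan is to reduce the statement to a fact about $\mathrm{SL}(2,\mathbb{C})$-representations of \emph{finite} perfect groups with bounded-dimensional irreducible quotients, and then to push the problem down to the simple quotient. First I would recall that a representation $\rho\colon G \to \mathrm{SL}(2,\mathbb{C})$ of a finite group is automatically conjugate into $\mathrm{SU}(2)$ (average a Hermitian metric), so it suffices to understand $\mathrm{SU}(2)$-representations, equivalently unitary $2$-dimensional representations with trivial determinant. If $\rho$ is reducible it is a sum of two characters $\chi, \chi^{-1}$; since $G$ is perfect, $G = [G,G]$ maps trivially to the abelian group $\mathbb{C}^\times$, so $\chi$ is trivial and $\rho$ is trivial. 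Hence the only thing to rule out is an \emph{irreducible} $2$-dimensional representation $\rho\colon G \to \mathrm{SU}(2)$ of the Schur cover $G = \tilde S$ of a finite non-abelian simple group $S \neq A_5$.

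Next I would analyse the image of such a $\rho$. The image is a finite subgroup of $\mathrm{SU}(2)$, and its image in $\mathrm{PSU}(2) \cong \mathrm{SO}(3)$ is one of the classical finite subgroups: cyclic, dihedral, or the rotation groups $A_4$, $S_4$, $A_5$ of the Platonic solids. Since $G$ is perfect, the image in $\mathrm{SO}(3)$ is a perfect group, which forces it to be $A_5$ (the only perfect group among $\mathbb{Z}/n$, $D_n$, $A_4$, $S_4$, $A_5$). Therefore $\rho$ factors as $G \to \tilde A_5 = \mathrm{SL}(2,\mathbb{Z}/5) = 2.A_5$ followed by the faithful $2$-dimensional representation of the binary icosahedral group. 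In particular $\rho$ is surjective onto $2.A_5 \subseteq \mathrm{SU}(2)$, so we obtain a surjection $\tilde S \twoheadrightarrow 2.A_5$, hence a surjection $S = \tilde S/Z(\tilde S) \twoheadrightarrow 2.A_5 / (\text{image of } Z(\tilde S))$. Since $S$ is simple and the target is a nontrivial quotient, the target must be all of $S$, forcing $S$ to be a quotient of $2.A_5$; the only simple quotients of $2.A_5$ are $A_5$ itself. This contradicts $S \neq A_5$, and finishes the proof.

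I expect the main obstacle to be the bookkeeping in the last step: one must be careful about how the central subgroup $Z(\tilde S)$ of the Schur cover maps under $\rho$, and make sure the induced surjection onto a simple group is genuinely onto a \emph{nontrivial} simple quotient so that simplicity of $S$ can be invoked. The cleanest way to organise this is: (i) $\rho$ is irreducible $\implies$ projectivised image $= A_5$; (ii) so $\bar\rho\colon \tilde S \twoheadrightarrow A_5$ is a surjection onto a simple group, whence $\ker\bar\rho$ is a maximal normal subgroup; (iii) $\tilde S / \ker\bar\rho \cong A_5$ together with the fact that $\tilde S$ is a \emph{central} extension of $S$ and $A_5$ is simple forces $S \cong A_5$ (the normal subgroup $\ker\bar\rho$ must contain $Z(\tilde S)$ or be contained in it; containing it gives $S$ a quotient $A_5$ hence $S = A_5$, and the other case is impossible since $Z(\tilde S)$ is central of order coprime-to-$|A_5|$ issues can be handled by noting $Z(\tilde S) \le [\tilde S,\tilde S] = \tilde S$ forces it to die in any perfect quotient only if \dots). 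Rather than belabour this, I would simply cite the classification of Schur multipliers of simple groups (Atlas) to note $Z(\tilde S)$ is a finite abelian group and argue directly that a surjection $\tilde S \to A_5$ descends to $S \to A_5$ unless $S$ were itself a proper cover of $A_5$, which simple groups are not. The only genuinely nontrivial input is the classification of finite subgroups of $\mathrm{SU}(2)$, which is classical.

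Finally, for completeness I would remark that the exceptional case $S = A_5$ genuinely does admit an irreducible $\mathrm{SL}(2,\mathbb{C})$-representation of its Schur cover $\mathrm{SL}(2,\mathbb{Z}/5)$, namely the inclusion into $\mathrm{SU}(2)$ as the binary icosahedral group, which is why $A_5$ must be excluded; this shows the statement is sharp and reassures us the argument has not proved too much.
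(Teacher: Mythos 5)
Your proposal is correct in substance and follows essentially the same route as the paper: both arguments come down to the classification of finite subgroups of $SO(3)$ and the fact that $A_5$ is the only non-abelian simple (equivalently, the only non-trivial perfect) one among them. The paper orders the steps slightly differently, and its ordering makes the one step you struggle with disappear: since the kernel of $C=\tilde S\to S$ is central and $\rho$ is irreducible, Schur's lemma sends that kernel into $\{\pm\id\}$, so one immediately gets an honest homomorphism $\bar\rho\colon S\to PSL(2,\C)$ from the \emph{simple} group itself, with finite image in $SO(3)$; simplicity of $S$ then says $\bar\rho$ is trivial or injective, the injective case is excluded by $S\neq A_5$, and triviality of $\bar\rho$ plus perfectness of $\tilde S$ kills $\rho$. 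You instead identify the image of $\tilde S$ in $SO(3)$ as $A_5$ and then must descend the surjection $\tilde S\twoheadrightarrow A_5$ to $S$ --- and this is exactly where your writeup is incomplete (it contains a literal ``\dots'' and the dichotomy ``$\ker\bar\rho$ must contain $Z(\tilde S)$ or be contained in it'' is not the correct case division). The clean version of your descent: let $K=\ker(\tilde S\to A_5)$; its image in $S=\tilde S/Z$ is normal, hence trivial or all of $S$. If it is all of $S$ then $KZ=\tilde S$ and $A_5\cong \tilde S/K\cong Z/(Z\cap K)$ is abelian, absurd; so $K\subseteq Z$, whence $A_5\cong\tilde S/K$ is a central extension of $S$ by $Z/K$, and since $A_5$ has trivial centre this forces $Z=K$ and $S\cong A_5$, contradiction. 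With that step filled in your argument is complete; the extra observations (unitarizability, the reducible case, the sharpness remark about $2.A_5$) are correct and harmless, though the paper dispenses with the reducible case implicitly.
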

\begin{proof}
Let $G$ be a finite simple group, and let $C$ be its Schur cover \cite{Schur}. (Schur covers of perfect groups are unique, and it may be that $C = G$.) We then have a short exact sequence 
\[
	1 \to H_2(G;\Z) \to C \to G \to 1 \, , 
\] 
where $H_2(G;\Z)$ is central in $C$. 

Suppose we are given an irreducible representation $\rho\colon C \to \text{\em SL}(2,\C)$. This has to map $H_2(A_n)$ to the centre $\{ \pm \id \}$ of $\text{\em SL}(2,\C)$. Hence it induces a {\em projective} representation 
\[
	\bar{\rho}\colon  G \to \text{\em PSL}(2,\C) = \text{ \em SL}(2,\C)/\{ \pm \id \} . 
\]
The image $\bar{\rho}(G)$ is a finite subgroup, and hence it must lie in the subgroup $SO(3) = \text{\em PSU}(2) \subseteq  \text{\em PSL}(2,\C)$. As $G$ is simple, the homomorphism $\bar{\rho}$ maps $G$ to either the trivial subgroup of $SO(3)$, or it maps $G$ isomorphically to a subgroup. The latter case cannot happen since the only simple non-abelian finite subgroup of $SO(3)$ is $A_5$, which we have excluded by assumption. 
 We conclude that $\bar{\rho}$ must be the trivial representation. Hence $\rho$ must be a central representation, and hence the trivial one, as the Schur cover $C$ is a perfect group. 
\end{proof}

\section{3-sphere recognition is in $\mathsf{coNP}$, modulo the generalized Riemann hypothesis}

Rubinstein has established an algorithm recognizing the 3-sphere from a triangulation of a given 3-manifold \cite{Rubinstein} which was subsequently simplified by Thompson \cite{Thompson}. Schleimer has shown that the 3-sphere recognition problem lies in the complexity class $\mathsf{NP}$, if one takes as input data a triangulation description of the 3-manifold to start with. 

The complexity class $\mathsf{NP}$ consists of the problems that have polynomial time algorithms on non-deterministic Turing machines. Equivalently, there exists a verifier for the problem who can decide whether a proposed solution to the problem indeed is a solution within polynomial time in terms of the size of the input data. 

Kuperberg \cite{Kuperberg} has shown that the assertion that a knot $K \subseteq S^3$ is not the unknot lies in the complexity class $\mathsf{NP}$, provided the generalized Riemann hypothesis (GRH) holds. His result builds on Kronheimer and Mrowka's results in \cite{KM_Dehn} and Theorem \cite[Theorem 3.4]{Kuperberg} that he establishes:

\begin{theorem}[Kuperberg] \label{Kuperberg_groups}
Let $G$ be an affine algebraic group over $\Z$ and assume that GRH holds. Then there is a polynomial $P$ with the following significance: Let $\Gamma$ be a discrete group with a finite presentation of length $l$. If there is a homomorphism
\[ \rho_{\C}: \Gamma \to G(\C) \]
with non-commutative image, then there is also a homomorphism
\[ \rho_{p}: \Gamma \to G(\Z/p) \]
with non-commutative image, for a prime $p$ such that $\log(p) = P(l)$. 
\end{theorem}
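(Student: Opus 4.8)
The plan is to realize the set of homomorphisms $\Gamma \to G$ as an affine scheme of finite type over $\Z$, locate a point in its ``non-abelian locus'' defined over a number field of controlled arithmetic complexity, and then reduce modulo a small prime produced by an effective form of the Chebotarev density theorem. The whole argument is a quantitative version of the Lefschetz principle.

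First I would fix a presentation $\Gamma = \langle g_1,\dots,g_k \mid r_1,\dots,r_m \rangle$ of total length $l$ and use the given embedding of $G$ as a closed subscheme of some $\mathbb{A}^N_\Z$ cut out by finitely many polynomials of bounded degree and height. Then $\mathrm{Hom}(\Gamma,G)$ is the closed subscheme $X \subseteq G^k \subseteq \mathbb{A}^{Nk}_\Z$ defined by the word-equations $r_j(\mathbf{x}_1,\dots,\mathbf{x}_k) = e$, where each $r_j$ is a word in the $\mathbf{x}_i^{\pm 1}$; the degrees and logarithmic heights of the equations defining $X$ are then bounded by a polynomial in $l$. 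The condition ``non-commutative image'' is the complement $U = X \setminus Z$, where $Z$ is the closed subscheme on which all the commutators $[\mathbf{x}_i,\mathbf{x}_j]$ equal $e$; this $Z$ is again defined by equations of polynomially bounded complexity. By hypothesis $U(\C) \neq \emptyset$, and since $U$ is defined over $\Q$ and $\overline{\Q}\subseteq\C$ is algebraically closed, $U(\overline{\Q}) \neq \emptyset$.

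Next I would apply an effective arithmetic Nullstellensatz / rational-point-locating theorem (of Krick--Pardo--Sombra or Philippon type) to the polynomial system describing $U$. This produces a point $\xi \in U(\overline{\Q})$ defined over a number field $K$ with $[K:\Q]$ and $\log|\mathrm{disc}(K)|$ bounded by a polynomial in $l$, and with all coordinates of polynomially bounded logarithmic height. ``Spreading out'', there is an integer $N$ with $\log N$ polynomially bounded in $l$ so that $\xi$ defines an $\mathcal{O}_K[1/N]$-point of $X$ that still lies in the open subscheme $U$: concretely, some regular function $f$ on $X$ (a suitable matrix entry of one of the commutators $[\xi_i,\xi_j]$, or a witness that two images do not commute) is a unit away from the primes dividing $N$. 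Finally, under GRH I would invoke the Lagarias--Odlyzko effective Chebotarev theorem for the extension $K/\Q$ to find a rational prime $p$ with $p \nmid N$, split completely in $K$, and satisfying $\log p \le P(l)$ for an absolute polynomial $P$; choosing a degree-one prime $\mathfrak{p}\mid p$ of $\mathcal{O}_K$ and reducing $\xi$ modulo $\mathfrak{p}$ yields a point of $U(\mathcal{O}_K/\mathfrak{p}) = U(\Z/p)$, i.e.\ a homomorphism $\rho_p\colon \Gamma \to G(\Z/p)$ with non-commutative image (the witness $f$ survives the reduction because $p\nmid N$).

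The main obstacle I anticipate is the simultaneous polynomial bookkeeping: one must control the degree, discriminant, and height output of the effective Nullstellensatz purely in terms of the presentation length $l$, and then feed these quantities into the conditional Chebotarev estimate so that the final $\log p$ is still $P(l)$ for one fixed polynomial $P$. A secondary subtlety is certifying that ``non-commutative image'' really is an \emph{open} condition cut out by an explicit regular function (so that it is preserved under reduction at all primes outside an effectively bounded bad set), rather than merely a constructible condition; making this explicit is what allows the reduction step to go through uniformly.
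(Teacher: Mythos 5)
The paper does not actually prove this statement: it is imported verbatim from Kuperberg's paper (\cite{Kuperberg}, Theorem 3.4) and used as a black box in the proof of Theorem \ref{coNP}, so there is no in-paper argument to compare yours against. That said, your sketch is essentially a reconstruction of Kuperberg's own proof. He likewise encodes $\Hom(\Gamma,G)$ together with the non-commutativity inequation as a polynomial system over $\Z$ whose size is polynomial in $l$, and then invokes a GRH-conditional theorem of Koiran (proved via effective arithmetic B\'ezout/Nullstellensatz bounds plus the Lagarias--Odlyzko effective Chebotarev theorem) asserting that such a system with a solution over $\C$ has a solution over $\Z/p$ for a prime with $\log p$ polynomially bounded. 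Your version simply unpacks that black box into its constituents (algebraic point of controlled arithmetic complexity, spreading out, completely split prime), which is a legitimate way to present it; the transfer from $U(\C)\neq\emptyset$ to $U(\overline{\Q})\neq\emptyset$ and the observation that non-commutativity of the image is witnessed by an explicit regular function are both handled correctly.

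One quantitative claim is too strong as stated: the degree $[K:\Q]$ of the field of definition of a point produced by B\'ezout/Nullstellensatz arguments is in general only bounded by the B\'ezout number $d^{\,n}$, and here both the degrees $d$ of the word-equations and the number $n$ of variables are of order $l$, so $[K:\Q]$, $\log\lvert\mathrm{disc}(K)\rvert$ and the logarithmic height of $\xi$ are a priori \emph{exponential} in $l$, not polynomial. This does not sink the argument: under GRH the least prime splitting completely in $K$ (and avoiding the polynomially many... in fact exponentially many, but still few enough, prime divisors of $N$) satisfies $p \ll (\log\lvert\mathrm{disc}(K)\rvert)^{2}$ up to harmless factors, so an exponential bound $\log\lvert\mathrm{disc}(K)\rvert \le \exp(P_0(l))$ still yields $\log p \le P(l)$ after taking logarithms. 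But the bookkeeping you flag as the main obstacle must be carried out with exponential intermediate bounds that only become polynomial at the very last step, exactly as in Koiran's paper; aiming for polynomial bounds on the degree and discriminant themselves would fail.
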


We think of our integer homology sphere $Y$ as being given by a Heegaard diagram. From this we can read off a presentation of the fundamental group. If $g$ is the genus of the Heegaard diagram, and if $k$ is the number of intersections in the Heegaard diagram (counted absolutely, and not up to sign), we obtain a presentation of the fundamental group $\pi_1(Y)$ of length $g+k$. 

The following result has also been announced by Hass and Kuperberg in \cite{Hass-Kuperberg}, but their announced proof relies on different methods. 

\begin{theorem}\label{coNP}
Let $Y$ be an integer homology 3-sphere, described by a Heegaard diagram. 
Then the assertion that $Y$ is not the 3-sphere lies in the complexity class $\mathsf{NP}$, provided GRH holds. 
\end{theorem}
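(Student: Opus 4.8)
The plan is to combine Theorem~\ref{main theorem 3} (every integer homology 3-sphere $Y \ne S^3$ admits an irreducible representation $\pi_1(Y) \to \text{SL}(2,\C)$) with Kuperberg's Theorem~\ref{Kuperberg_groups} and the observation that a certificate can be read off from a Heegaard diagram. First I would fix the input: a Heegaard diagram of genus $g$ with $k$ intersection points on the torus determines, in the standard way, a presentation of $\pi_1(Y)$ with $g$ generators and $g$ relators, each relator a word whose length is bounded by $k$; hence $\pi_1(Y)$ has a presentation of length $l = O(g+k)$, which is polynomial in the size of the input data. This presentation can be computed in polynomial time by a deterministic machine reading the diagram.

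Next I would set up the dichotomy. If $Y = S^3$, then $\pi_1(Y)$ is trivial, so there is no homomorphism $\pi_1(Y) \to \text{SL}(2,\C)$ with non-commutative image. If $Y \ne S^3$, then by Theorem~\ref{main theorem 3} there is an irreducible representation $\rho_\C \colon \pi_1(Y) \to \text{SL}(2,\C)$; an irreducible $\text{SL}(2,\C)$-representation has non-commutative image (an abelian subgroup of $\text{SL}(2,\C)$ acting irreducibly on $\C^2$ is impossible). Thus ``$Y \ne S^3$'' is equivalent to ``$\pi_1(Y)$ admits a homomorphism to $\text{SL}(2,\C)$ with non-commutative image'', where we use that $\pi_1(Y)$ is a $3$-manifold group so that the existence of \emph{any} non-commutative-image representation already forces $Y \ne S^3$ (a homology sphere with finite cyclic $H_1$ and a non-abelian $\text{SL}(2,\C)$-quotient cannot be simply connected). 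Apply Theorem~\ref{Kuperberg_groups} with $G = \text{SL}_2$, which is an affine algebraic group over $\Z$: assuming GRH, there is a polynomial $P$ such that whenever such a $\rho_\C$ exists, there is a prime $p$ with $\log p \le P(l)$ and a homomorphism $\rho_p \colon \pi_1(Y) \to \text{SL}(2,\Z/p)$ with non-commutative image.

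Finally I would describe the verifier. A certificate for ``$Y \ne S^3$'' consists of a prime $p$ with $\log p$ bounded by $P(l)$, together with the images in $\text{SL}(2,\Z/p)$ of the $g$ generators. The verifier checks in polynomial time that (i) $p$ is prime (this is in $\mathsf P$ by AKS, or one may simply include a primality certificate), (ii) the proposed matrices satisfy the $g$ relators of the presentation---each relator has length $O(k)$, and matrix multiplication mod $p$ costs $\mathrm{poly}(\log p) = \mathrm{poly}(l)$---so that they define a genuine homomorphism, and (iii) some pair of the images fails to commute, again a polynomial-time check. Since all quantities ($p$, the matrix entries, the computation) are polynomial in $l$, which is polynomial in the input size, this is an $\mathsf{NP}$ verifier, establishing that ``$Y \ne S^3$'' lies in $\mathsf{NP}$ modulo GRH.

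The main obstacle is purely bookkeeping: one must be careful that the presentation length extracted from the Heegaard diagram, the bound $P(l)$ on $\log p$ from Theorem~\ref{Kuperberg_groups}, and the cost of evaluating relator words as products of $2\times 2$ matrices over $\Z/p$ all remain polynomial in the size of the given diagram, and that ``non-commutative image'' is exactly the hypothesis feeding Kuperberg's theorem and exactly what an irreducible $\text{SL}(2,\C)$-representation provides via Theorem~\ref{main theorem 3}. No new geometry is needed beyond the already-established Theorem~\ref{main theorem 3}.
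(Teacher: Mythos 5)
Your proposal is correct and follows essentially the same route as the paper: read a polynomial-length presentation of $\pi_1(Y)$ off the Heegaard diagram, use Theorem \ref{main theorem 3} to supply a non-commutative-image $\text{SL}(2,\C)$-representation when $Y \neq S^3$, invoke Theorem \ref{Kuperberg_groups} to obtain a short mod-$p$ certificate, and verify the relations and a non-trivial commutator in polynomial time. The extra remarks you include (the soundness direction via triviality of $\pi_1(S^3)$, and primality checking) are harmless elaborations of what the paper leaves implicit.
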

\begin{proof}
	As noted above, we obtain a presentation of the fundamental group $\pi_1(Y)$ of length $l$ which depends polynomially on the size of the input data in terms of the Heegaard diagram. By Theorem \ref{main theorem 3}, there exists a representation $\rho_{\C}: \pi_1(Y) \to \text{\em SL}(2,\C)$ with non-abelian image. If GRH holds, then by Theorem \ref{Kuperberg_groups} there exists a prime number $p$ such that $\log(p)$ depends polynomially on the length $l$ of the presentation, for some universal polynomial, and there is a homomorphism
\[
	\rho_p: \pi_1(Y) \to \text{\em SL}(2,\Z/p)
\]
with non-abelian image. A {\em certificate} (or {\em proof} or {\em witness}) for the problem consists of such a prime $p$ and such a representation $\rho_p$. 

The verifier checks that $\rho_p$ indeed is a homomorphism by checking that the relations hold. Next, the verifier checks that the commutator of at least two different generators is non-trivial. The effort for both of these steps is polynomial in terms of the input data.
\end{proof}

\end{document}